\newcommand{\doots}{,\dots,}
\newcommand{\NN}{\mathbb N}
\newcommand{\CCC}{\mathfrak C}
\newcommand{\CZN}{\CCC_{\ZZ/p^e\ZZ}(\leq n)}
\newcommand{\WW}{\mathbb W}
\newcommand{\QQ}{\mathbb Q}
\newcommand{\OO}{\mathcal O}
\newcommand{\Ov}{\OO_v}
\newcommand{\CC}{\mathbb C}
\newcommand{\RR}{\mathbb R}
\newcommand{\ZZ}{\mathbb Z}
\newcommand{\FF}{\mathbb F}
\newcommand{\Fv}{F_v}
\newcommand{\Ga}{\mathbb G_a}
\newcommand{\AAA}{\mathbb A}
\newcommand{\AAF}{\mathbb A_F}
\newcommand{\Zpez}{\ZZ/p^e\ZZ}
\newcommand{\Wphim}{(\WW_{\phi}^{\leq m})^*}
\DeclareMathOperator{\Aff}{Aff}
\DeclareMathOperator{\Spec}{Spec}
\DeclareMathOperator{\Frob}{Frob}
\DeclareMathOperator{\Gal}{Gal}
\DeclareMathOperator{\Hom}{Hom}
\DeclareMathOperator{\ord}{ord}
\DeclareMathOperator{\type}{type}
\DeclareMathOperator{\pr}{pr}
\DeclareMathOperator{\Imm}{Im}
\DeclareMathOperator{\tr}{tr}
\DeclareMathOperator{\cc}{\mathfrak c}
\DeclareMathOperator{\Id}{Id}
\DeclareMathOperator{\cond}{cond}
\DeclareMathOperator{\disc}{disc}
\DeclareMathOperator{\wdeg}{wdeg}
\DeclareMathOperator{\fppf}{fppf}
\DeclareFontFamily{U}{wncy}{}
\DeclareFontShape{U}{wncy}{m}{n}{<->wncyr10}{}
\DeclareSymbolFont{mcy}{U}{wncy}{m}{n}
\DeclareMathSymbol{\Sh}{\mathord}{mcy}{"58}
\newtheorem{conj}{Conjecture}[subsubsection]
\newtheorem{thm}[conj]{Theorem}
\newtheorem{lem}[conj]{Lemma}
\newtheorem{cor}[conj]{Corrolary}
\newtheorem{prop}[conj]{Proposition}
\newtheorem{rem}[conj]{Remark}
\newtheorem{exam}[conj]{Example}
\newtheorem{mydef}[conj]{Definition}
\title{Counting torsors for wild abelian groups}
\author[Ratko Darda]{Ratko Darda}
\address{Faculty of Engeenering and Natural Sciences, Sabanc\i\hspace{0,05cm} University, Istanbul, Turkey}
\email{ratko.darda@gmail.com, ratko.darda@sabanciuniv.edu}
\author[Takehiko Yasuda]{Takehiko Yasuda}
\address{Department of Mathematics\\ Graduate School of Sciences\\ the University of Osaka, Osaka, Japan}
\email{yasuda.takehiko.sci@osaka-u.ac.jp}
\begin{document}
	\maketitle
	\begin{abstract}
	Let~$F$ be a global field of characteristic $p>0$ and $G$ a finite abelian $p$-group. In this paper we treat the question of counting $G$-torsors over~$F$ for certain heights developed in \cite{dardayasuda2025}.
\end{abstract}
	\section{Introduction}
\subsection{} The distribution of field extensions is an actively studied problem in number theory. A guiding principle is provided by Malle's conjecture \cite{Malle}, which predicts the number of Galois extensions of a number field of bounded discriminant or discriminant-like height. The conjecture has been established in a number of instances \cite{Wright}, \cite{davenportheilbronn}, \cite{densityquartic}, \cite{densityquintic}, etc. Wright's result~\cite{Wright} on the number of abelian extensions of bounded discriminant is valid over a global field for any abelian group~$G$ satisfying that the order of group is coprime to the characteristic of the field, if the characteristic is positive. 
In this paper, we would like to treat the question of counting abelian extensions, or more generally $G$-torsors, {\it without} assuming the condition on the order of the group. Such problems 
have attracted increasing attention in recent research. Lagemann has found asymptotic formulas for the number of abelian extensions of bounded conductor \cite{LAGEMANN2015288}.  Gundlach proved an analogous result for abelian extensions of bounded so-called Artin--Schreir conductor \cite{gundlach2024countingabelianextensionsartinschreier}, while Gundlach--Seguin \cite{gundlach2024asymptoticsextensionssimplemathbb} proved it for certain 2--step nilpotent  extensions of bounded Artin--Screir conductor. 

\subsection{}Let us state the contributions of this paper. Let~$F$ be a global field of characteristic~$p>0$. Let~$G$ be a finite abelian $p$-group. 
We denote by $BG\langle F\rangle$ the set of isomorphism classes of $F$-points of the stack~$BG$, or in other words, the set of isomorphism classes of $G$-torsors over~$F$. 
In \cite{dardayasuda2025}, we have defined heights on wild isotropic $F$-Deligne--Mumford stacks, including the stack~$BG$. 
A height~$H$ possesses $a$- and $b$-invariants, denoted by $a(H)$ and $b(H)$, which are expected to appear in the asymptotic formula for counting, as speculated in \cite[Main Speculation]{dardayasuda2025}. This generalizes the tame situation, previously considered in \cite[Conjecture 9.10]{dardayasudabm}, which generalizes both the Batyrev--Manin conjecture on the distributions of rational points of bounded height on varieties and the Malle's conjecture. We prove that: 
\begin{thm}[Corollary~\ref{ainvariantcorrect}] \label{sheight}
Let $H:BG\langle F\rangle\to\RR_{>0}$ be a height satisfying the conditions stated in  Definition~\ref{suitable}. For any $\varepsilon>0$, we have that $$B^{a(H)}\ll\#\{x\in BG\langle F\rangle|\hspace{0,1cm}H(x)\leq B\}\ll B^{a(H)+\varepsilon}.$$
\end{thm}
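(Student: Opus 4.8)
\emph{Plan of proof.} The plan is to control the height zeta function
$$Z(s)=\sum_{x\in BG\langle F\rangle}H(x)^{-s}$$
from both sides and then pass to the counting function. The two structural inputs are: a height as in Definition~\ref{suitable} factors as a product $H=\prod_{v}H_v$ over the places $v$ of $F$, with $H_v$ depending only on the localization $x_v\in H^1(F_v,G)$ and equal to $1$ when $x_v$ is unramified; and $BG\langle F\rangle=H^1(F,G)$ is described by Artin--Schreier--Witt theory through a presentation $1\to G\to\WW_1\xrightarrow{\phi}\WW_2\to1$ by products of Witt vector group schemes, giving $H^1(F,G)=\coker\bigl(\phi\colon\WW_1(F)\to\WW_2(F)\bigr)$ and its local analogues. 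Since the localization map $H^1(F,G)\to\prod_v H^1(F_v,G)$ has finite kernel ($\Sh^1(F,G)$ being finite), $Z(s)$ is dominated, up to a bounded factor, by the Euler product $\prod_v Z_v(s)$ with $Z_v(s)=\sum_{x_v}H_v(x_v)^{-s}$.

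For the upper bound I would show that $\prod_v Z_v(s)$, hence $Z(s)$, converges absolutely for $\Re(s)>a(H)$. Writing $Z_v(s)=1+\sum_{x_v\neq 0}H_v(x_v)^{-s}$, a Riemann--Roch type count of the local classes of bounded local height at $v$ on the finite pieces $\WW_G^{\leq m}$ of the presentation (carried out in the preceding sections) gives $Z_v(s)-1=O\bigl(q_v^{-\alpha(s)}\bigr)$ for a linear function $\alpha(s)$ whose positivity threshold, after summation over $v$ against $\sum_v q_v^{-\sigma}<\infty$ for $\sigma>1$, is exactly $s=a(H)$; matching this analytic threshold with the combinatorial invariant $a(H)$ of Definition~\ref{suitable} is the content of that computation. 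Granting convergence at $s=a(H)+\varepsilon$, partial summation applied to the nonnegative coefficients of $Z(s)$ yields $\#\{x:H(x)\le B\}\ll_\varepsilon B^{a(H)+\varepsilon}$.

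For the lower bound I would produce an explicit family, exploiting a phenomenon special to the wild positive-characteristic setting: the set of torsors with ramification confined to a single place can already be infinite. Concretely, fix a place $v_0$ realizing the extremal local data and consider the torsors whose non-triviality is supported at $v_0$ and bounded by depth $m$ in the relevant filtration; by the local Witt-vector description these number $\gg q^{cm}$ for an explicit $c>0$, while their height is $\le q_{v_0}^{\,\beta m}$ for an explicit $\beta>0$. Taking $m$ maximal with $q_{v_0}^{\,\beta m}\le B$ and using that $c/(\beta\deg v_0)=a(H)$ --- the same extremum as above, and, as a consistency check, one gets the same power of $B$ by instead spreading the ramification over several optimal places --- gives $\#\{x:H(x)\le B\}\gg B^{a(H)}$. (If the preceding sections already supply the meromorphic continuation of $Z(s)$ with a pole at $s=a(H)$ of order $b(H)\ge 1$, the stronger $\gg B^{a(H)}(\log B)^{b(H)-1}$ follows from a Tauberian theorem, but only the crude exponent is needed here.)

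The main obstacle is the wild local analysis feeding both estimates. Because $\operatorname{char}F=p$ divides $|G|$, the groups $H^1(F_v,G)$ are infinite and filtered by conductor in a way governed by Artin--Schreier--Witt theory and Hasse--Arf constraints, so the exact count of local classes of bounded local height --- and therefore both the abscissa of convergence of $\prod_v Z_v(s)$ and the size of the extremal family --- is not formal, and identifying the resulting exponent with the invariant $a(H)$ of Definition~\ref{suitable} is the crux. This is exactly where the technical work of the earlier sections is invoked, the present statement being its clean corollary.
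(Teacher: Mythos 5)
Your upper bound is essentially the paper's: the paper also dominates $\sum_{x\in BG\langle F\rangle}H(x)^{-s}$ by the adelic integral $\int_{BG\langle\AAF\rangle}H^{-s}\mu=\prod_v\int_{BG\langle F_v\rangle}H_v^{-s}\mu_v$ (using discreteness of $i(BG\langle F\rangle)$ and $K=\prod_vBG\langle\OO_v\rangle$-invariance of $H$, cf.\ the proof of Proposition~\ref{northcott}), and your description of why the abscissa of convergence is $a(H)$ --- namely, the Lang--Weil-style bound $\#c^{-1}(r)\langle\FF_{q_v}\rangle\leq Cq_v^{\dim c^{-1}(r)+\varepsilon r}$ from Definition~\ref{suitable}(2) together with $\dim c^{-1}(r)\leq a(c)r-1$ --- is correct and matches Lemma~\ref{zsrn}/Proposition~\ref{localheightzeta} in spirit. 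Partial summation then gives $\ll B^{a(H)+\varepsilon}$.

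Your lower bound, however, is a genuinely different strategy and has real gaps. The paper does not construct an explicit family; instead it perturbs $c$ to a \emph{strongly} suitable $c'$ with $H\geq H'\geq H^{1/\varepsilon}$ (Lemma~\ref{ssuitable} and the corollary following it), applies the Tauberian result Theorem~\ref{strsuitthm} to $H'$, and transfers back. Your explicit-family route, as written, is incomplete in three places. First, ramification supported at a single $v_0$ only produces the claimed $\gg B^{a(H)}$ for $B$ along the sparse sequence $q_{v_0}^{\,r}$, $r\in c(\Delta_G)$; getting all $B$ requires the ``spread over several places'' variant (a divisor-sum over squarefree ideals $I$ with $N(I)\leq B^{1/r}$), which you relegate to a consistency check but which is in fact the load-bearing step. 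Second, not every local tuple unramified away from a chosen finite set extends to a class in $BG\langle F\rangle$; one must control the finite index of $i(BG\langle F\rangle)K$ in $BG\langle\AAF\rangle$, and this is precisely where the $m_G$-approachability in Definition~\ref{suitable}(4) enters (it lets one correct a local tuple at boundedly many places while changing the height by a bounded factor, as in Lemma~\ref{hrprime}); your sketch never invokes this hypothesis. Third, you assume you can ``fix a place $v_0$ realizing the extremal local data,'' i.e.\ that the supremum $a(c)=\sup_r(1+\dim c^{-1}(r))/r$ is attained; for a merely suitable $c$ the set $D(c)$ may be empty, and then the single-$r$ family only gives $\gg B^{a(H)-\delta}$ for each $\delta>0$. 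Handling exactly this situation is the point of the paper's detour through the strongly suitable modification. Finally, your parenthetical appeal to a Tauberian theorem presupposes meromorphic continuation of $Z(s)$ to $\Re(s)\geq a(H)$ with a pole at $s=a(H)$, which the paper only proves under the strongly suitable hypothesis (Proposition~\ref{labav}); that is not available for a general suitable $H$ without the modification step.
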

The discriminant~$H_{\disc}$ is an example of such height (Lemma~\ref{discsuitable}). It follows from result of Lagemann~\cite[Theorems 2.3 and 2.4]{LAGEMANN2015288} that
$$B^{a}\ll\#\{x\in BG\langle F\rangle|\hspace{0,1cm}H_{\disc}(x)\leq B\}\ll B^{ad},$$ for some~$a>0$ and for some number~$d$ which satisfies $d>1$. Our bounds are hence necessarily stronger than his. With some additional hypothesis, we prove that:
\begin{thm}[Theorem~\ref{strsuitthm}]\label{sssheights}
Let $H:BG\langle F\rangle\to\RR_{>0}$ be a height satisfying the conditions stated in  Definition~\ref{suitable} and in Definition~\ref{strongsuitdef}. One has that $$\#\{x\in BG\langle F\rangle|\hspace{0,1cm}H(x)\leq B\}\asymp B^{a(H)}\log(B)^{b(H)-1}.$$
\end{thm}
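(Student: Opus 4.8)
\emph{Plan of proof.} The plan is to study the height zeta function
$$Z(s) = \sum_{x\in BG\langle F\rangle}H(x)^{-s}$$
and to read off the asymptotics of the counting function from its analytic properties. Since $H=\prod_v H_v$ is a product of local heights, one first makes the local-to-global structure of $BG\langle F\rangle$ explicit. As $G$ is a finite abelian $p$-group, global class field theory (equivalently, Artin--Schreier--Witt theory) identifies $BG\langle F\rangle=H^1(F,G)$ with the group $\Hom_{\mathrm{cont}}(C_F,G)$ of continuous characters of the idèle class group $C_F:=\AAF^\times/F^\times$, realized inside the restricted product $\prod'_v\Hom(\Fvt,G)$ as the subgroup of families trivial on the diagonal copy of $F^\times$. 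Writing $X$ for the smooth proper curve over the field of constants $\FF_q$ with function field $F$, the exact sequence $1\to(\prod_v\Ov^\times)/\FF_q^\times\to C_F\to\Pic X\to 1$, upon applying $\Hom(-,G)$, shows that the restriction map $r\colon\Hom_{\mathrm{cont}}(C_F,G)\to\bigoplus_v\Hom(\Ov^\times,G)$ is finite-to-one (with $\ker r=\Hom(\Pic X,G)$) and has finite cokernel contained in $\mathrm{Ext}^1(\Pic X,G)$. Because $H_v(\chi_v)$ depends only on the ramification of $\chi_v$, i.e.\ on $\chi_v|_{\Ov^\times}$ (for the discriminant this is the conductor--discriminant formula; in general one may replace $\Ov^\times$ by $\Fvt$ throughout without changing anything essential), detecting membership in $\Imm r$ by orthogonality of characters of the finite group $\mathrm{Ext}^1(\Pic X,G)$ rewrites
$$Z(s) = c_0\sum_{\phi}\prod_v Z_{v,\phi}(s),\qquad Z_{v,\phi}(s):=\sum_{\psi_v\in\Hom(\Ov^\times,G)}\phi_v(\psi_v)\,H_v(\psi_v)^{-s},$$
a finite sum of Euler products indexed by a finite group of ``dual'' characters $\phi$ (which are trivial at almost all $v$), the term $\phi=1$ being the principal one.

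Next I would carry out the local analysis. For all but finitely many $v$ one has $\phi_v=1$, and $Z_v(s):=Z_{v,1}(s)=1+N_v\,q_v^{-\alpha_0 s}+\cdots$, where $\alpha_0$ is the smallest local-height exponent carried by a nontrivial character, $N_v$ counts the ``cheapest'' nontrivial characters, and the omitted terms are smaller than $q_v^{-\alpha_0 s}$ for $\Re s$ near $a(H)$; by the description of ramified characters of a $p$-group through the Artin filtration on $\Ov^\times$, $N_v$ depends only on $\FF_{q_v}$ and is periodic in $\deg v$. Grouping places according to the residue class of $\deg v$ and factoring out the corresponding $L$-functions of $X$ (twists of $\zeta_F$), each $\prod_v Z_{v,\phi}(s)$ converges for $\Re s>a(H)$, extends meromorphically past $\Re s=a(H)$, and is a rational function of $t:=q^{-s}$, since every local-height exponent is an integer multiple of $\deg v$. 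The principal product $\prod_v Z_v(s)$ acquires at $s=a(H)$ a pole of order exactly $b(H)$ — matching the invariant of \cite{dardayasuda2025} — with positive leading coefficient; the role of the extra hypotheses of Definition~\ref{strongsuitdef} is to ensure both that the ``cheapest-type'' structure is as expected and, crucially, that for every nontrivial dual character $\phi$ the product $\prod_v Z_{v,\phi}(s)$ has a pole of order strictly less than $b(H)$ at $s=a(H)$, so that no cancellation can lower the order of the pole of $Z$ or annihilate its leading term. Together with Theorem~\ref{sheight}, which already fixes the abscissa of convergence of $Z$ at $a(H)$, this shows that $Z$ is rational in $t=q^{-s}$, holomorphic for $\Re s>a(H)$, with a pole of order exactly $b(H)$ at $s=a(H)$ and poles of order at most $b(H)$ elsewhere on $\Re s=a(H)$.

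Finally, a Tauberian argument tailored to the function-field setting concludes. Every local height takes values in $q^{\ZZ_{\ge 0}}$, so $Z(s)=\sum_{n\ge 0}a_n t^n$ with $t=q^{-s}$ and $a_n=\#\{x\in BG\langle F\rangle: H(x)=q^n\}\ge 0$; by the previous step $\sum_n a_n t^n$ is a rational function of $t$ with radius of convergence $q^{-a(H)}$ and a pole of order $b(H)$ at $t=q^{-a(H)}$. Non-negativity of the coefficients (Pringsheim) forces that pole to have maximal order among all poles of $Z$ on the circle $|t|=q^{-a(H)}$, so that extracting coefficient asymptotics by partial fractions gives $a_n=q^{a(H)n}\,n^{b(H)-1}\,h(n)$ with $h$ non-negative, bounded, and not identically zero on any interval; summing over $q^n\le B$ and using that the weights $q^{a(H)n}$ are geometrically increasing then yields $\#\{x\in BG\langle F\rangle: H(x)\le B\}=\sum_{n\le\log_q B}a_n\asymp B^{a(H)}\log(B)^{b(H)-1}$, the oscillation from secondary poles on $\Re s=a(H)$ being absorbed into the implied constants.

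The hard part is the middle step: pinning the order of the pole of $Z$ at $s=a(H)$ to exactly $b(H)$ and ruling out cancellation. One must show that the contributions of the nontrivial dual characters $\phi$ — which encode the global reciprocity obstruction to a family of local characters descending to $C_F$, together with the $\Hom(\Pic X,G)$-ambiguity — cannot produce a pole of order $\ge b(H)$ at $s=a(H)$; establishing that the strong-suitability conditions of Definition~\ref{strongsuitdef} do guarantee this is the technical heart of the proof. A secondary, genuinely computational, difficulty is the explicit identification of the ``cheapest'' local character types and of the numbers $N_v$ in terms of the ramification filtration on $\Hom(\Ov^\times,G)$ and the weight functions defining the $H_v$ in \cite{dardayasuda2025}, together with the bookkeeping at the finitely many places where $\phi_v\ne 1$ or $H_v$ is exceptional — all of which, thanks to the absence of archimedean places, stays within the realm of rational-function manipulations.
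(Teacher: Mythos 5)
Your proposal takes a genuinely different route from the paper, and the route you chose leaves the step you yourself identify as ``the technical heart'' — ruling out cancellation from the nontrivial dual characters $\phi$ — as an open gap. The paper avoids this issue entirely by a simpler device, and that device is also where the $m_G$-approachability hypothesis of Definition~\ref{strongsuitdef} actually enters.

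The paper does \emph{not} decompose $Z(s)$ as a finite sum over dual characters of Euler products (à la Batyrev--Tschinkel Poisson summation). Instead, it works with the set $A := \bigcup_{i=1}^r y_i\, i(BG\langle F\rangle)$ where $y_1=1,\dots,y_r$ are coset representatives of the finite-index open subgroup $D = i(BG\langle F\rangle)\cdot K$, $K=\prod_v BG\langle\OO_v\rangle$. Then $AK = BG\langle\AAF\rangle$ and, since $H$ is $K$-invariant and $\mu|_{i(BG\langle F\rangle)K}$ is (up to scaling) the product of the probability measure on $K$ and counting measure on $i(BG\langle F\rangle)$, one has the exact identity $\sum_{x\in A} H(x)^{-s} = \int_{BG\langle\AAF\rangle}H^{-s}\mu$, with no sum over $\phi$ and hence no cancellation to control. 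The adelic integral is then analysed as an Euler product (Propositions~\ref{localheightzeta}, \ref{brv}, \ref{labav}), and the pole of order $b(c)$ at $s=a(c)$ is pinned down by comparing local factors to translated $L$-functions attached to the $\Gal$-set $W_r$ of geometric components of $c^{-1}(r)$, using Lang--Weil. Finally, the Tauberian step is \cite[Theorem A.5(c)]{LAGEMANN2015288}, and the transfer from $A$ back to $BG\langle F\rangle$ uses Theorem~\ref{countx}: $m_G$-approachability (via Lemma~\ref{hrprime}) gives $H(y_ix)\asymp H(x)$ uniformly, so the sum $\sum_i\#\{x : H(y_ix)\le B\} = \#\{x\in A : H(x)\le B\}$ controls each summand individually.

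In your proposal, two places would need to be filled in before it could stand as a proof. First, you would still need to establish the analytic continuation, pole order, and positivity of the leading coefficient of the \emph{principal} Euler product $\prod_v Z_{v,1}(s)$; your sketch of periodicity and $L$-function factorization is plausible but is replaced in the paper by the Lang--Weil / translated $L$-function argument of Proposition~\ref{brv}, which you would essentially have to reproduce. Second, and more seriously, you offer no argument that for nontrivial $\phi$ the products $\prod_v Z_{v,\phi}(s)$ have strictly smaller pole order at $s=a(H)$, nor that the sum over $\phi$ has nonzero leading coefficient; the strong-suitability hypotheses as stated in Definitions~\ref{suitable} and~\ref{strongsuitdef} say nothing directly about oscillating character sums, and it is not clear they would suffice for this. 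Conceptually, the paper trades your character-sum decomposition (which introduces a cancellation problem) for a coset decomposition of $BG\langle\AAF\rangle$ combined with a comparison of heights across cosets (which is exactly what approachability buys), and this is why the approachability hypothesis appears in Definition~\ref{suitable} in the first place. Your Tauberian paragraph (Pringsheim plus partial fractions) is consistent with, but weaker than, the paper's direct appeal to \cite[Theorem A.5(c)]{LAGEMANN2015288}, which already packages the needed coefficient asymptotics.
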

An example of such a height is given by the conductor (Lemma~\ref{condss}), which falls within the scope of Lagemann’s aforementioned result~\cite{LAGEMANN2015288}. Further examples are provided in Remark~\ref{newheights}, where an interesting phenomenon is observed: contrary to the tame case, the $b$-invariant can be arbitrarily large.  The tame case for general heights has been treated in \cite{Wood_2010} and \cite{commcase}.
Another source of height functions is linear representations. In \ref{ExamplesFromReps}, we will show that when $G=\ZZ/p\ZZ$, the height function associated to some linear representation satisfy the assumptions of the last theorem.

\subsection{} Our proof of Theorem~\ref{sssheights} uses realization of a finite quotient of $BG\langle F\rangle$ as a discrete subgroup of adelic torsors $BG\langle \AAA_F\rangle$. The conditions imposed on $H$ imply that the height balls in $BG\langle \AAA_F\rangle$ have finite volumes. We deduce the height zeta function $Z(s):=\sum_{x\in BG\langle F\rangle}H(x)^{-s}$ converges absolutely and is holomorphic for $\Re(s)\gg 0$. The heights can be extended on $BG\langle \AAF\rangle$. 
 Then we compare $Z(s)$ and $\int_{BG\langle \AAF\rangle}H^{-s}\mu$, where $\mu$ is a Haar measure. 
 If~$H$ satisfies only the weaker conditions, that is the ones of Theorem~\ref{sheight}, then~$H$ is modified so that the new height satisfies the stronger conditions of Theorem~\ref{sssheights} and that it does not differ ``too much'' from the original height. To show that the discriminant is height satisfying the weaker conditions, we use conductor discriminant formula in Section~\ref{Discriminantloci}. In Section~\ref{msfg} we study the space~$\Delta_G$, introduced by Tonini and the second named author in  \cite{tonini-yasuda} and \cite{tonini-yasuda2}, which over a finite field~$k$ parametrizes $G$-torsors over $k((t))$. 
The analysis of the number of points of bounded heights occupies Section~\ref{npbh}.
\subsubsection{} By~$p$ we will denote a prime number and by~$q$ some of its powers. For primer power~$d$ we will write $\FF_d$ for a fixed finite field of cardinality~$d$. For a ring~$A$ we write $A\llparenthesis t\rrparenthesis$ for the localization at~$t$ of the ring of power series $A[[t]]$. In particular, if $A$ is a field, then $A\llparenthesis t\rrparenthesis$ is the field of Laurent series~$A((t))$. For a finite group~$G$, we denote by~$BG$ the classifying stack of~$G$. By $\NN$ we mean the set $\{1,2,\dots\}$ and by $\NN_0$ we mean $\NN\cup\{0\}$. Assuming~$p$ is known prime, we denote by $\NN'$ the set $\NN-p\NN$. 

\subsubsection*{Acknowledgements}
Ratko Darda has received a funding from the European Union’s Horizon 2023 research and innovation programme under the Maria Skłodowska-Curie grant agreement 101149785. Takehiko Yasuda was supported by JSPS KAKENHI Grant Numbers JP21H04994, JP23H01070 and JP24K00519. 

\section{Moduli stack of formal $G$-torsors}\label{msfg}
Let $p$ be a prime and let~$G$ be a finite abelian $p$-group. The goal of this chapter is to study the stack~$\Delta_G$ from \cite{tonini-yasuda}, with the focus on the group structure. In \cite{tonini-yasuda}, it is established that $\Delta_G$ is an ind-Deligne--Mumford stack and we prove that it is an ind-group Deligne--Mumford stack.
	 \subsubsection{} We define the object of our study.
	\begin{mydef}\label{deltaggpstack}
		Let $G$ be a finite abelian $p$-group. We denote by $\Delta_G$ the following category fibered in groupoids over affine $\FF_q$-schemes
		$$\Delta_G:\Spec(A)\mapsto BG(\Spec(A\llparenthesis t\rrparenthesis)).$$
	\end{mydef}
	\begin{lem}
		One has that~$\Delta_G$ is a commutative group stack (called {\it strictly commutative Picard stack} in \cite[Expos\' e XVIII, Section 1.4]{Dualite}) on the category of affine $\FF_q$-schemes endowed with fpqc topology.
	\end{lem}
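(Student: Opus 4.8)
The statement splits naturally in two: that $\Delta_G$ is a stack for the fpqc topology, and that it carries a strictly commutative Picard structure. For the first part I would invoke \cite{tonini-yasuda}, where $\Delta_G$ is shown to be an ind-Deligne--Mumford stack; an ind-algebraic stack is in particular a stack for the fpqc topology (faithfully flat descent for quasi-coherent modules holds on each of the algebraic substacks exhausting it, and any descent datum along a quasi-compact fpqc cover of an affine scheme is already carried by one of them). Thus the substance of the lemma is the Picard structure, which is purely formal, and that is what I would spell out.

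The classical fact to recall is: for any finite abelian group $G$ — more generally for any sheaf of abelian groups — and any scheme $S$, the groupoid $BG(S)$ of $G$-torsors on $S$ is canonically a strictly commutative Picard groupoid, functorially in $S$. The operation sends $(P,Q)$ to the contracted product $(P\times_S Q)\times^{G\times G}G$ formed along the multiplication $G\times G\to G$; the unit is the trivial torsor; the inverse of $P$ is the pushout of $P$ along $(-1)\colon G\to G$; and the associativity, unit, and commutativity constraints are those induced by the abelian group $G$. Strict commutativity — the symmetry on $P\boxplus P$ being the identity — is precisely the assertion that $BG$ is the strictly commutative Picard stack associated, under the equivalence of \cite[Expos\'e XVIII, Section~1.4]{Dualite}, to the two-term complex of abelian sheaves having $G$ in degree $-1$ and $0$ elsewhere; a complex concentrated in degrees $[-1,0]$ produces no sign, whence strict commutativity. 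Every ingredient commutes with base change, so the whole structure is natural in $S$.

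To conclude, note that by Definition~\ref{deltaggpstack} one has $\Delta_G=BG\circ\omega$, where $\omega$ is the functor $\Spec A\mapsto\Spec A\llparenthesis t\rrparenthesis$ on affine $\FF_q$-schemes; that is, $\Delta_G(\Spec A)=BG(\Spec A\llparenthesis t\rrparenthesis)$ compatibly with pullback. Since $\omega$ is a functor and the Picard structure on $BG$ is natural in its argument, evaluating the operation $\boxplus$, the unit, the inverse, and all coherence isomorphisms of $BG$ over $\Spec A\llparenthesis t\rrparenthesis$ equips $\Delta_G$ with a strictly commutative Picard structure, natural in $\Spec A$. Combined with the stack property this gives the lemma.

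The one point worth flagging is that this naturality argument is the \emph{only} route that works: $A\mapsto A\llparenthesis t\rrparenthesis$ is neither exact nor does it send an fpqc \v{C}ech nerve to the $\omega$-image of one (power series do not commute with tensor products), so the fpqc stack property of $\Delta_G$ cannot be bootstrapped from that of $BG$ by base change along $\omega$ and genuinely relies on the geometric results of \cite{tonini-yasuda}. The Picard structure, by contrast, uses nothing about $\omega$ beyond functoriality, which is why the lemma is ultimately a formal consequence of the classical description of $BG$.
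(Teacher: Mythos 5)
Your proposal is correct and takes essentially the same route as the paper: cite Tonini--Yasuda for the fpqc stack property, then transport the strictly commutative Picard structure of $BG$ along the functor $\omega\colon \Spec A \mapsto \Spec A\llparenthesis t\rrparenthesis$, checking that it evaluates fiberwise to the Picard category $BG(A\llparenthesis t\rrparenthesis)$. The paper cites \cite[Corollary 4.16]{tonini-yasuda} directly for the stack property rather than deducing it from the ind-Deligne--Mumford structure; your parenthetical sketch of why an ind-algebraic stack satisfies fpqc descent is more fragile than you suggest (a filtered colimit of stacks need not be a stack, and the claim that a descent datum over a quasi-compact fpqc cover is ``already carried by one of the levels'' is not automatic), so it is better to lean on the explicit corollary as the paper does. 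Your closing remark that the stack property genuinely cannot be bootstrapped through $\omega$, whereas the Picard structure can, is a useful clarification that the paper leaves implicit.
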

	\begin{proof}Recall by~\cite[Corollary 4.16]{tonini-yasuda} that~$\Delta_G$ is an fpqc stack on the category of affine $\FF_q$-schemes. We define a functor $+_{\Delta_G}:\Delta_G\times\Delta_G\to\Delta_G$ by setting it for an affine $\FF_q$-scheme $\Spec(A)$ to be $+_{BG}\circ (\Spec(A)\mapsto \Spec(A\llparenthesis t \rrparenthesis))$, where $+_{BG}:BG\times BG\to BG$ is the addition in the commutative group stack~$BG$. We define a commutativity functor $c_{\Delta_G}:x_0+_{\Delta_G}y_0\xrightarrow{\sim}y_0+_{\Delta_G}x_0$ by setting it to be the commutativity functor $c_{BG}:x+_{BG}y\xrightarrow{\sim}y+_{BG}x$ in the group stack~$BG$ precomposed with the functor $\Spec(A)\mapsto \Spec(A\llparenthesis t \rrparenthesis)$. Similarly, we define an associativity functor $a_{\Delta_G}:(x_0+_{\Delta_G}y_0)+_{\Delta_G}z_0\xrightarrow{\sim}x_0+_{\Delta_G}(y_0+_{\Delta_G}z_0)$ by setting it to be associativity functor $a_{BG}:(x+_{BG}y)+_{BG}z\xrightarrow{\sim}x+_{BG}(y+_{BG}z)$ precomposed with the functor $\Spec(A)\mapsto \Spec(A\llparenthesis t \rrparenthesis)$. We need to verify that the addition, commutativity and associativity satisfy properties from \cite[Expos\'e XVIII, Paragraph 1.4]{Dualite}. This comes to verify that for any~$A$, one has that $(\Delta_G(A),+_{\Delta_G}(A),c_{\Delta_G}(A),a_{BG}(A))$ is a strictly commutative Picard category (\cite[Definition 1.4.2]{Dualite}). But by definition, one has that 
\begin{multline*}(\Delta_G(A),+_{\Delta_G}(A),c_{\Delta_G}(A),a_{\Delta_G}(A))\\=(BG(A\llparenthesis t \rrparenthesis),+_{BG}(A\llparenthesis t\rrparenthesis)),c_{BG}(A\llparenthesis t \rrparenthesis), a_{BG}(A\llparenthesis t \rrparenthesis)),
	\end{multline*}
which is a strictly commutative Picard category because $BG=0/G$ is a commutative group stack (defined by the chain complex \cite[Paragraph 1.4.11]{Dualite} having the only non zero term at degree~$-1$ which is~$G$).
\end{proof}
	\subsection{Witt vectors}  In this section we review Witt vectors. For $n\geq 1$, let~$W_n$ be the group scheme of Witt vectors of length~$e$.   \subsubsection{}We evaluate degrees of polynomials for the addition of Witt vectors. 
	\begin{mydef}For a non-zero polynomial~$P$ in $X_0\doots X_n$, we say that it is $(X)$-weighted homogeneous of weighted degree~$d$ if it is homogeneous when $\deg(X_i)=p^i$ and of degree~$d$.  For a non-zero polynomial~$P$ in $X_0,Y_0\doots X_n,Y_n$, we say that is $(X,Y)$-weighted homogeneous of degree~$d$ is it is homogeneous when $\deg({X_i})=\deg(Y_i)=p^{i}$ and the degree is~$d$.   Similarly, for a non-zero polynomial~$P$ in $\mathbf X^1\doots \mathbf X^d$ where each $\mathbf X^u$ are variables $X^u_0\doots X^u_n$, we say that it is $(\mathbf X^1\doots \mathbf X^d)$-homogeneous when $\deg(X^u_i)=p^i$ for all~$u$ and~$i$, and the degree is~$d$. Specifically, the polynomial $\mathbf 0$ will be considered weighted homogeneous of any degree. In either of the above cases, write $\wdeg(P)$ for the weighted degree of~$P$.
	\end{mydef}
	\begin{exam}
		\normalfont One has that the polynomial $$\Phi_n(X_0\doots X_n)=\sum_{i=0}^np^iX_i^{p^{n-i}}$$is $(X)$-weighted homogeneous of the degree~$p^n$.
	\end{exam}
	Let $ S_0, S_1\dots $ be the polynomials for addition of Witt vectors. The polynomial~$S_n$ is in variables $X_0,Y_0\doots X_{n}, Y_{n}$. 
	\begin{lem}\label{degwittaddition}
		One has that~$S_n$ is $(X,Y)$-homogeneous of degree~$p^n$.
	\end{lem}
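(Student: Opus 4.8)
The plan is to induct on~$n$, using the recursive characterization of the Witt addition polynomials together with the standard fact that $S_n\in\ZZ[X_0,Y_0\doots X_n,Y_n]$. Recall that the $S_n$ are the unique polynomials satisfying
$$\Phi_n(S_0\doots S_n)=\Phi_n(X_0\doots X_n)+\Phi_n(Y_0\doots Y_n)\qquad(n\geq 0),$$
where $\Phi_n(Z_0\doots Z_n)=\sum_{i=0}^np^iZ_i^{p^{n-i}}$. For $n=0$ this reads $S_0=X_0+Y_0$, which is $(X,Y)$-weighted homogeneous of weighted degree $1=p^0$; this is the base case.

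For the inductive step, assume $S_0\doots S_{n-1}$ are $(X,Y)$-weighted homogeneous of weighted degrees $p^0\doots p^{n-1}$. Peeling off the top term $p^nS_n$ from $\Phi_n(S_0\doots S_n)=\sum_{i=0}^np^iS_i^{p^{n-i}}$ and using the defining identity gives
$$p^nS_n=\sum_{i=0}^np^iX_i^{p^{n-i}}+\sum_{i=0}^np^iY_i^{p^{n-i}}-\sum_{i=0}^{n-1}p^iS_i^{p^{n-i}}.$$
In the grading with $\deg(X_i)=\deg(Y_i)=p^i$ we have $\wdeg(X_i^{p^{n-i}})=\wdeg(Y_i^{p^{n-i}})=p^i\cdot p^{n-i}=p^n$, and by the inductive hypothesis each $S_i^{p^{n-i}}$ for $i\leq n-1$ is weighted homogeneous of weighted degree $p^i\cdot p^{n-i}=p^n$. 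Thus the right-hand side is a $\ZZ$-linear combination of $(X,Y)$-weighted homogeneous polynomials of degree~$p^n$, hence is itself $(X,Y)$-weighted homogeneous of degree~$p^n$; dividing by the nonzero scalar~$p^n$, which preserves weighted homogeneity, shows that $S_n$ is $(X,Y)$-weighted homogeneous of weighted degree~$p^n$, completing the induction.

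The argument is almost entirely formal, so I do not expect a genuine obstacle; the only points needing (minor) attention are the integrality of~$S_n$ — which both makes the displayed identity meaningful over~$\ZZ$ and guarantees that the numerator on the right is indeed divisible by~$p^n$, since it \emph{is} $p^nS_n$ — and the routine bookkeeping of weighted degrees. An alternative, more structural phrasing is to introduce the $\Gm$-action $\lambda\cdot(x_0\doots x_n)=(\lambda x_0,\lambda^{p}x_1\doots\lambda^{p^n}x_n)$ on the Witt scheme: this action makes each ghost polynomial $\Phi_n$ homogeneous of weight~$p^n$, hence makes the ghost map equivariant for the weight-$p^n$ scaling on the target coordinates; since addition of scalars is equivariant and the ghost map is an isomorphism after inverting~$p$ while all the polynomials in play have integer coefficients, equivariance of addition holds over~$\ZZ$, which is exactly the asserted weighted homogeneity of the $S_n$.
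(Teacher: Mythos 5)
Your proof is correct and follows essentially the same induction the paper uses: both isolate $p^nS_n$ from the ghost-component identity $\Phi_n(S_0,\dots,S_n)=\Phi_n(X_0,\dots,X_n)+\Phi_n(Y_0,\dots,Y_n)$ and invoke the inductive hypothesis on $S_0,\dots,S_{n-1}$ to see that the remaining terms are weighted homogeneous of degree~$p^n$. Your write-up is a little more explicit than the paper's (which leaves the subtraction of $\sum_{i<n}p^iS_i^{p^{n-i}}$ and the division by $p^n$ implicit), but the argument is the same; the $\Gm$-equivariance remark is a nice alternative framing but not a different proof.
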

	\begin{proof}
		One has that $\wdeg(S_0)=1=p^0.$ Suppose the claim is valid for some~$n$. One has that \begin{align*}\sum_{i=0}^{n}p^{i}(X_i^{p^{n-i}}+Y_i^{p^{n-i}})&=\Phi_{n}(X_0\doots X_{n})+\Phi_{n}(Y_0\doots Y_{n})\\&=\Phi_{n}(S_0(X_0,Y_0)\doots S_{n}(X_0,Y_0\doots X_n,Y_n))\\
			&=\sum_{i=0}^{n}p^{i}S_i(X_0,Y_0\doots X_i,Y_i)^{p^{n-i}}.
		\end{align*}
		Each monomial in $S_i(X_0,Y_0\doots X_i,Y_i)$ is of weighted degree $p^{i}$ by the induction hypothesis. It follows that each monomial in $S_i(X_0,Y_0\doots X_i, Y_i)^{p^{n-i}}$ is of weighted degree~$p^n$. It follows that $\wdeg(S_n)=p^n.$
	\end{proof}
	For $j=0,1,\dots$ we set $$T_j:=S_j-X_j-Y_j.$$ Then $T_0=0$, while as $X_j+Y_j$ is $(X,Y)$-homogeneous of weighted degree~$p^j$, we have that~$T_j$ is a polynomial in $X_0,Y_0\doots X_{j-1}, Y_{j-1}$ of weighted degree~$p^j$. For $j=0$ we define $I_0=-X_0$ and for $j\geq 1$ we define $$I_j:=-X_j-T_j(X_0, \doots X_{j-1},I_0(X_0)\doots  I_{j-1}(X_0\doots X_{j-1})).$$
	For an element $f=(f_j)_{j=0}^{e-1}\in W_e$, we verify that $$g=(I_0(f_0),\doots I_{j}(f_0\doots f_j))$$ is the additive inverse $g=-f$. Indeed, the $j$-th entry of $f+g$ is \begin{align*}S_j(f_0\doots f_{j},I_0(f_0)\doots I_j(f_0\doots f_j))\hskip-4,5cm&\\&=f_j+I_j(f_0\doots f_j)+ T_j(f_0\doots f_{j-1},I_0(f_0)\doots I_{j-1}(f_0\doots f_{j-1}))\\
		&=0.
	\end{align*}
	\begin{lem}\label{wittinverse}
		One has that $I_j$ is $(X)$-weighted homogeneous of weighted degree~$p^j$.
	\end{lem}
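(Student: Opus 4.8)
\textit{Proof proposal.} The plan is to argue by induction on $j$, feeding the explicit recursion defining $I_j$ into the homogeneity of the polynomials $T_j$ recorded just above the statement. The base case is immediate: $I_0=-X_0$ is $(X)$-weighted homogeneous of weighted degree $1=p^0$.

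For the inductive step, suppose $j\geq 1$ and that $I_i$ is $(X)$-weighted homogeneous of weighted degree $p^i$ for every $i<j$. The heart of the argument is a substitution principle for the weighted grading: if $P\in\FF_q[X_0,Y_0\doots X_{j-1},Y_{j-1}]$ is $(X,Y)$-weighted homogeneous of weighted degree $d$ and, for each $i<j$, $Q_i\in\FF_q[X_0\doots X_i]$ is $(X)$-weighted homogeneous of weighted degree $p^i$, then the polynomial $P(X_0\doots X_{j-1},Q_0\doots Q_{j-1})$ obtained by substituting $Y_i\mapsto Q_i$ is $(X)$-weighted homogeneous of weighted degree $d$. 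This is checked on monomials: a monomial $\prod_i X_i^{a_i}Y_i^{b_i}$ occurring in $P$ satisfies $\sum_i(a_i+b_i)p^i=d$, and its image $\prod_i X_i^{a_i}Q_i^{b_i}$ is a product of $(X)$-weighted homogeneous polynomials of weighted degrees $a_ip^i$ and $b_ip^i$, hence is itself $(X)$-weighted homogeneous of weighted degree $d$; summing over the monomials of $P$ preserves this (and the convention that $\mathbf 0$ is weighted homogeneous of every degree removes any need for case distinctions when cancellation occurs).

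I would then apply this principle with $P=T_j$, which by the discussion preceding the statement is a polynomial in $X_0,Y_0\doots X_{j-1},Y_{j-1}$ that is $(X,Y)$-weighted homogeneous of weighted degree $p^j$, and with $Q_i=I_i(X_0\doots X_i)$. The inductive hypothesis supplies the required homogeneity of the $Q_i$, so $T_j(X_0\doots X_{j-1},I_0\doots I_{j-1})$ is $(X)$-weighted homogeneous of weighted degree $p^j$; since $-X_j$ is too, so is the sum $I_j$, completing the induction. I do not anticipate a genuine obstacle here: the only points demanding care are the monomial-wise verification of the substitution principle and the bookkeeping that $T_j$ really involves only the variables $X_0,Y_0\doots X_{j-1},Y_{j-1}$ with the stated weighted degree, and the latter has already been arranged in the text.
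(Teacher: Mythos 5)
Your proposal is correct and follows essentially the same inductive argument as the paper: both proofs use the base case $I_0=-X_0$, then for the inductive step invoke the fact (recorded in the text) that $T_j$ is a polynomial in $X_0,Y_0\doots X_{j-1},Y_{j-1}$ which is $(X,Y)$-weighted homogeneous of weighted degree $p^j$, and substitute the lower $I_i$ for the $Y_i$. The only difference is that you make the monomial-wise substitution principle explicit, whereas the paper leaves it implicit; that is a cosmetic elaboration, not a different route.
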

	\begin{proof}
		When $j=0$, one has that $I_0=-X_0$, which is $(X)$-homogeneous of weighted degree~$1$. Suppose the claim is valid for all non-negative integers no more than~$j$ for some $j\geq 0$. One has that $-X_{j+1}$ is $(X)$-weighted homogeneous of weighted degree~$p^{j+1}$. The polynomial~$T_{j+1}$ is $(X,Y)$-weighted homogeneous of weighted degree~$p^{j+1}$ and is independent of the variable $X_{j+1}$.  But by induction for any $0\leq k\leq j$ one has that $I_k(X_0\doots X_k)$ is $(X)$-weighted homogeneous of weighted degree~$p^{j+1}$. Hence, one has that $$T_{j+1}(X_0\doots X_{j-1},I_0(X_0)\doots I_j(X_0\doots X_{j-1}))$$ is $(X)$-weighted homogeneous of weighted degree~$p^{j+1}$ and is moreover independent of variable $X_{j+1}$. Thus $I_{j+1}$ is $(X)$-weighted homogeneous of weighted degree~$p^{j+1}$.
	\end{proof}
	\begin{lem}\label{wittbig}
		Let $d\geq 1$. The $j$-th coordinate of the addition $+^d:W_e^d\to W_e$ is given by a polynomial which is $(\mathbf X_1\doots \mathbf X_d)$-weighted homogeneous of weighted degree~$p^j$. For an element $k\in [0\doots p^{e}-1]$, the $j$-th coordinate of multiplication by~$k$ is given by a polynomial which is $(X)$-homogeneous of weighted degree~$p^j$.
	\end{lem}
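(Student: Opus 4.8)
The plan is to deduce both statements from Lemma~\ref{degwittaddition} together with the elementary fact that weighted homogeneity is preserved under appropriate substitutions, exactly as in the proof of Lemma~\ref{degwittaddition} itself. Let me first isolate that fact. Suppose $P$ is a polynomial in variables $Z_1\doots Z_N$ that is homogeneous of degree~$D$ for the grading $\deg(Z_\ell)=d_\ell$, and suppose $Q_1\doots Q_N$ are polynomials in a fixed set of weighted variables $\mathbf X$ such that each $Q_\ell$ is weighted homogeneous of weighted degree~$d_\ell$. Then $P(Q_1\doots Q_N)$ is weighted homogeneous of weighted degree~$D$: it is enough to check this on a single monomial $c\,Z_1^{a_1}\cdots Z_N^{a_N}$ of~$P$, for which $\sum_\ell a_\ell d_\ell=D$, and whose image $c\,Q_1^{a_1}\cdots Q_N^{a_N}$ is a product of weighted homogeneous polynomials of weighted degrees $a_1d_1\doots a_Nd_N$, hence weighted homogeneous of weighted degree $\sum_\ell a_\ell d_\ell=D$. (Here the convention that $\mathbf 0$ is weighted homogeneous of every degree absorbs any vanishing among the $Q_\ell$.)

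For the first assertion I would argue by induction on~$d$. For $d=1$, the map $+^1$ is the identity, whose $j$-th coordinate is $X^1_j$, which is $(\mathbf X^1)$-weighted homogeneous of weighted degree~$p^j$. For the inductive step, write $+^d=+^2\circ(+^{d-1}\times\Id_{W_e})$, so that the $j$-th coordinate of $+^d(\mathbf X^1\doots\mathbf X^d)$ is obtained from $S_j(X_0,Y_0\doots X_j,Y_j)$ by substituting for $X_i$ the $i$-th coordinate of $+^{d-1}(\mathbf X^1\doots\mathbf X^{d-1})$ and for $Y_i$ the variable $X^d_i$ (for $0\leq i\leq j$). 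By the induction hypothesis the $i$-th coordinate of $+^{d-1}$ is $(\mathbf X^1\doots\mathbf X^{d-1})$-weighted homogeneous of weighted degree~$p^i$, and $X^d_i$ is trivially weighted homogeneous of weighted degree~$p^i$ for the weighting $\deg(X^d_i)=p^i$; since in Lemma~\ref{degwittaddition} both variables $X_i,Y_i$ of the $i$-th slot of $S_j$ carry weight~$p^i$, the weighted degrees match. As $S_j$ is $(X,Y)$-homogeneous of degree~$p^j$, the substitution principle above gives that the $j$-th coordinate of $+^d$ is $(\mathbf X^1\doots\mathbf X^d)$-weighted homogeneous of weighted degree~$p^j$.

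For the second assertion, note that on the commutative group scheme $W_e$ the map ``multiplication by~$k$'' is, for $k\geq 1$, the composite of the diagonal $W_e\to W_e^k$ with $+^k$, and for $k=0$ it is the constant map~$0$, which is weighted homogeneous of every degree. When $k\geq 1$, its $j$-th coordinate is obtained from the $j$-th coordinate of $+^k$ --- which by the first assertion is $(\mathbf X^1\doots\mathbf X^k)$-weighted homogeneous of weighted degree~$p^j$ --- via the substitution $X^u_i\mapsto X_i$ for all~$u$ and~$i$; since each $X_i$ is weighted homogeneous of weighted degree~$p^i$ for the weighting $\deg(X_i)=p^i$, the substitution principle again yields that this coordinate is $(X)$-weighted homogeneous of weighted degree~$p^j$. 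I expect no real obstacle: the only point requiring care is the bookkeeping that matches the weighted degree of each substituted polynomial to the weight of the variable slot it occupies, after which the substitution principle does all the work.
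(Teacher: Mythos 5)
Your proposal is correct and follows the same route the paper has in mind: the paper's proof consists of the single sentence "The proof of both claims is immediate by using Lemma~\ref{degwittaddition} and the induction," and your argument (induction on~$d$ using $+^d=+^2\circ(+^{d-1}\times\Id)$ and the substitution principle for weighted-homogeneous polynomials, then specializing the diagonal for multiplication by~$k$, with the $k=0$ case handled by the $\mathbf 0$-convention) is precisely the unpacking of that sentence.
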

	\begin{proof}
		The proof of both claims is immediate by using Lemma~\ref{degwittaddition} and the induction.
	\end{proof}
	\subsubsection{} In this paragraph, we recall the following fact.
	\begin{lem}\label{awnz} Let~$W$ be a group scheme which is isomorphic to an affine space. For every $\FF_q$-ring $A$, one has that $H^1_{\fppf}(A,W)=0$. In particular, this is valid for the group schemes~$W_n$.
	\end{lem}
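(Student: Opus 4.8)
The plan is to reduce everything to the case $W=\Ga$, where the statement is the classical vanishing of the fppf cohomology of the additive group over an affine scheme. Concretely, for any $\FF_q$-ring $A$ one has a canonical identification $H^1_{\fppf}(\Spec A,\Ga)\cong H^1(\Spec A,\OO_{\Spec A})$, because the fppf cohomology of a quasi-coherent module agrees with its Zariski cohomology (fppf descent for quasi-coherent sheaves), and the right-hand side vanishes by Grothendieck's vanishing of quasi-coherent cohomology on an affine scheme. Thus $H^1_{\fppf}(A,\Ga)=0$.

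To pass from $\Ga$ to a general $W$ isomorphic to an affine space, I would exhibit a filtration of $W$ by closed subgroup schemes with successive quotients $\Ga$ and then run the long exact sequence. Over the perfect field $\FF_q$, a group scheme whose underlying scheme is an affine space is smooth and connected, and its coordinate ring, being a polynomial ring, has only the constant units (so no nontrivial character); using the structure theory of smooth connected affine groups over a perfect field one checks that such a $W$ is split unipotent, so (e.g. by SGA 3, Exp.\ XVII) there is a chain $0=W_{0}\subset W_{1}\subset\dots\subset W_{N}=W$ of closed normal subgroup schemes with $W_{i+1}/W_{i}\cong\Ga$. Base-changing to $A$ keeps this a chain of fppf-exact sequences $0\to W_i\to W_{i+1}\to\Ga\to 0$, and from the induced exact sequence of pointed sets
$$H^1_{\fppf}(A,W_i)\to H^1_{\fppf}(A,W_{i+1})\to H^1_{\fppf}(A,\Ga)$$
the vanishing of the two outer terms (the left by induction on $i$, the right by the previous paragraph) forces the middle term to vanish; taking $i=N$ gives $H^1_{\fppf}(A,W)=0$.

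For the group schemes $W_n$ one can bypass the general structure theory altogether. The truncation map $W_n\to W_{n-1}$, $(a_0\doots a_{n-1})\mapsto(a_0\doots a_{n-2})$, is a homomorphism of group schemes, it is smooth (hence an fppf epimorphism), and its kernel is the closed subscheme $\{(0\doots 0,a_{n-1})\}$; since the last addition polynomial is $S_{n-1}=X_{n-1}+Y_{n-1}+T_{n-1}$ with $T_{n-1}$ a polynomial in $X_0,Y_0\doots X_{n-2},Y_{n-2}$ having no constant term, this kernel is a subgroup scheme isomorphic to $\Ga$. One therefore gets $0\to\Ga\to W_n\to W_{n-1}\to 0$, and induction on $n$ with base case $W_1=\Ga$ yields the claim.

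The only two ingredients with any content are the vanishing $H^1_{\fppf}(A,\Ga)=0$ — which is entirely standard — and, for the statement in the generality quoted, the fact that a group scheme isomorphic to an affine space over a perfect field is split unipotent and thus carries a $\Ga$-filtration; this is where I expect to spend a sentence citing the structure theory, and it is the only non-formal step. For the concrete case $W=W_n$ that is actually used later, this step is replaced by the explicit truncation sequence above, so there the argument is completely elementary, and the remaining dévissage is routine long-exact-sequence bookkeeping.
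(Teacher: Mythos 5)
Your proof takes essentially the same route as the paper's: reduce to $\Ga$ via a filtration of $W$ with successive quotients $\Ga$ (the paper cites Lazard's theorem as stated in Milne's book to get the split unipotent structure; you cite SGA~3, Exp.~XVII for the same thing), then invoke the vanishing of $H^1_{\fppf}$ of $\Ga$ over an affine base and a routine dévissage along the filtration. The one place where your sketch is a bit shaky is the parenthetical suggesting that the absence of nonconstant units in a polynomial ring (hence no nontrivial characters) accounts for unipotence — this observation alone does not force unipotence (e.g.\ $SL_n$ also has no nontrivial characters), and the actual content of Lazard's theorem is precisely that a group scheme whose underlying scheme is an affine space is unipotent; but since you do fall back on citing the structure theory for the real step, this is a matter of presentation rather than a gap. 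Your extra paragraph giving the explicit truncation sequence $0\to\Ga\to W_n\to W_{n-1}\to 0$ is a nice, self-contained alternative for the only case the paper actually uses, and it sidesteps Lazard entirely; the paper does not do this, preferring the clean citation, but either works.
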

	\begin{proof}
		It follows from Lazard's theorem \cite[Theorem 16.59]{MilneAlgebraicGroups} that~$W$ is split unipotent, which means that there exists series $$W=D_0\supset D_1\supset \cdots \supset D_k=e,$$
		with $D_j$ normal algebraic subgroup of~$D_{j-1}$ with $D_{j-1}/D_{j}\cong\Ga$ for any $j=1\doots k$. 
		Fix an $\FF_q$-algebra~$A$. By Hilbert 90 theorem, one has that $H^1_{Zar}(\Spec(A),\Ga)=H^1_{\fppf}(\Spec(A),\Ga)=0. $ For $j=k-1\doots 0$, we deduce an exact sequence
		$$H^1_{\fppf}(\Spec(A),D_{j+1})\to H^1_{\fppf}(\Spec(A), D_j)\to H^1_{\fppf}(\Spec(A),\Ga)=0.$$
		An induction argument proves the claim.
	\end{proof}
		\subsection{The stack $\Delta_G$}
\subsubsection{}Suppose that $G=\ZZ/p^e\ZZ$ for certain $e\geq 1$. Consider the fibered category $\widetilde{\mathfrak W}_e$ over the category of affine $\FF_q$-schemes, defined by $\widetilde{\mathfrak W}_e(A):=W_e(A\llparenthesis t \rrparenthesis)$ and such that for $x,y\in \widetilde{\mathfrak W}_e(A)$, one has $\Hom(x,y)=\{u\in W_e(A\llparenthesis t \rrparenthesis)|\hspace{0,1cm}x+\wp_e(u)=y\}$. The inverse of a morphism~$u$ is the morphism~$-u$. 
		\begin{lem}
			One has that~$\widetilde{\mathfrak W}_e$ is a commutative group stack and one has canonical equivalence $\Delta_{\ZZ/p^e\ZZ}\xrightarrow{\sim}\widetilde{\mathfrak W}_e$ of commutative group stacks.
		\end{lem}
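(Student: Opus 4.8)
The plan is to write down an explicit functor $\Phi\colon\widetilde{\mathfrak W}_e\to\Delta_{\ZZ/p^e\ZZ}$, prove it is an equivalence of categories fibered in groupoids with the help of the Artin--Schreier--Witt sequence and Lemma~\ref{awnz}, and then check it intertwines the evident strictly commutative Picard structures; a quasi-inverse of $\Phi$ is then the canonical equivalence in the asserted direction, and the group-stack structure on $\widetilde{\mathfrak W}_e$ together with its being an fpqc stack will follow formally. Morally the statement records that the inclusion $\ZZ/p^e\ZZ\hookrightarrow W_e$ exhibits a quasi-isomorphism of two-term complexes $[\ZZ/p^e\ZZ\to 0]\xrightarrow{\sim}[W_e\xrightarrow{\wp_e}W_e]$ of abelian fppf sheaves (in degrees $-1,0$), so that the associated strictly commutative Picard stacks $\Delta_{\ZZ/p^e\ZZ}$ (as in the previous lemma, $BG=0/G$) and $\widetilde{\mathfrak W}_e$ agree; but to avoid circularity about stackification I would carry the argument out through the explicit $\Phi$.

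First I would recall the Artin--Schreier--Witt exact sequence $0\to\ZZ/p^e\ZZ\to W_e\xrightarrow{\wp_e}W_e\to 0$ over $\FF_q$, where $\wp_e=F-\Id$ with $F$ the Witt-vector Frobenius: $\wp_e$ is finite \'etale of degree $p^e$, hence faithfully flat, so it is an epimorphism of fppf sheaves with kernel the constant group $\ZZ/p^e\ZZ$, and the sequence stays exact over $A\llparenthesis t\rrparenthesis$ for every $\FF_q$-algebra $A$. Then for $\Spec A$ affine over $\FF_q$ and $x\in\widetilde{\mathfrak W}_e(A)=W_e(A\llparenthesis t\rrparenthesis)$ I would set $\Phi(x):=\wp_e^{-1}(x)$, the pullback of the $\ZZ/p^e\ZZ$-torsor $\wp_e\colon W_e\to W_e$ along $x\colon\Spec(A\llparenthesis t\rrparenthesis)\to W_e$; this is an object of $\Delta_{\ZZ/p^e\ZZ}(A)=BG(A\llparenthesis t\rrparenthesis)$. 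A morphism $u\colon x\to y$ in $\widetilde{\mathfrak W}_e(A)$, i.e. $u\in W_e(A\llparenthesis t\rrparenthesis)$ with $x+\wp_e(u)=y$, is sent to the translation $v\mapsto v+u$, which is a $\ZZ/p^e\ZZ$-equivariant isomorphism $\wp_e^{-1}(x)\xrightarrow{\sim}\wp_e^{-1}(y)$ because $\wp_e(v+u)=\wp_e(v)+\wp_e(u)=y$. One checks that $\Phi$ is cartesian, pullback of torsors being compatible with restriction along $\Spec A'\to\Spec A$.

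Next I would show $\Phi$ is an equivalence. For full faithfulness, both $\Hom_{\widetilde{\mathfrak W}_e(A)}(x,y)$ and the set of $\ZZ/p^e\ZZ$-torsor morphisms $\wp_e^{-1}(x)\to\wp_e^{-1}(y)$ are nonempty exactly when $y-x\in\wp_e(W_e(A\llparenthesis t\rrparenthesis))$, and then both are torsors under $(\ZZ/p^e\ZZ)(A\llparenthesis t\rrparenthesis)$; a short computation with an fppf-local trivialization of $\wp_e^{-1}(x)$ and $\wp_e^{-1}(y)$ over $A\llparenthesis t\rrparenthesis$ shows that $u\mapsto(v\mapsto v+u)$ is a bijection. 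For essential surjectivity, a $\ZZ/p^e\ZZ$-torsor $T$ over $A\llparenthesis t\rrparenthesis$ defines a class in $H^1_{\fppf}(A\llparenthesis t\rrparenthesis,\ZZ/p^e\ZZ)$; by the long exact cohomology sequence of the Artin--Schreier--Witt sequence over $\Spec(A\llparenthesis t\rrparenthesis)$ and the vanishing $H^1_{\fppf}(A\llparenthesis t\rrparenthesis,W_e)=0$ of Lemma~\ref{awnz}, the connecting map $W_e(A\llparenthesis t\rrparenthesis)\to H^1_{\fppf}(A\llparenthesis t\rrparenthesis,\ZZ/p^e\ZZ)$ is surjective, and it sends $x$ to the class of $\wp_e^{-1}(x)$; hence $T\cong\Phi(x)$ for some $x$. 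Thus $\Phi$ is an equivalence of categories fibered in groupoids, and since $\Delta_{\ZZ/p^e\ZZ}$ is an fpqc stack by \cite[Corollary 4.16]{tonini-yasuda}, so is $\widetilde{\mathfrak W}_e$.

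Finally I would upgrade to commutative group stacks. The group structure on $\widetilde{\mathfrak W}_e$ is induced by that of the strictly commutative group scheme $W_e$ exactly as in the previous lemma: addition sends $(x,y)$ to the Witt sum $x+y$ and $(u,v)$ to $u+v$ (legitimate since $(x+y)+\wp_e(u+v)=(x+\wp_e(u))+(y+\wp_e(v))$), and the commutativity and associativity constraints are identities, so $\widetilde{\mathfrak W}_e(A)$ is a strictly commutative Picard category functorially in $A$. The functor $\Phi$ becomes a morphism of commutative group stacks via the additivity isomorphism $\wp_e^{-1}(x)\wedge\wp_e^{-1}(y)\xrightarrow{\sim}\wp_e^{-1}(x+y)$, contracted product over $\ZZ/p^e\ZZ$, $[v,w]\mapsto v+w$, which is well defined and $\ZZ/p^e\ZZ$-equivariant because $\wp_e$ is a homomorphism and has the stated target since $\wp_e(v+w)=x+y$; verifying the coherence axioms of \cite[Expos\'e XVIII, Paragraph 1.4]{Dualite} for this datum reduces to associativity and commutativity of Witt addition and to additivity and functoriality of $\wp_e$. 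Passing to a quasi-inverse of $\Phi$ gives the canonical equivalence $\Delta_{\ZZ/p^e\ZZ}\xrightarrow{\sim}\widetilde{\mathfrak W}_e$ of commutative group stacks. I expect the most laborious point to be this last coherence bookkeeping, together with the full-faithfulness step, which requires some care with the fppf-local trivializations over $A\llparenthesis t\rrparenthesis$; once the Artin--Schreier--Witt sequence and Lemma~\ref{awnz} are in place, the rest is formal.
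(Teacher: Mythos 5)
Your proof is correct, but it takes a more hands-on route than the paper. The paper's proof is modular: it quotes \cite[Paragraph~1.4.12]{Dualite} for the canonical equivalence of strictly commutative Picard stacks $B(\ZZ/p^e\ZZ)\xrightarrow{\sim}W_e/_{\wp_e}W_e$ coming from the Artin--Schreier--Witt sequence, applies $(-)(A\llparenthesis t\rrparenthesis)$, and then invokes Proposition~\ref{gogi} from the appendix to replace the quotient stack $A\mapsto(W_e/\wp_eW_e)(A\llparenthesis t\rrparenthesis)$ by its concrete ``one-object-per-element-of-$G_0(A)$'' model $\widetilde{\mathfrak W}_e$. You instead skip the intermediate quotient stack and build an explicit functor $\Phi\colon\widetilde{\mathfrak W}_e\to\Delta_{\ZZ/p^e\ZZ}$ by pulling back the $\ZZ/p^e\ZZ$-torsor $\wp_e\colon W_e\to W_e$ along a section $x\in W_e(A\llparenthesis t\rrparenthesis)$, prove essential surjectivity via the long exact $\fppf$-cohomology sequence and Lemma~\ref{awnz}, and hand-construct the Picard data (additivity isomorphism via contracted product). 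Note that the essential-surjectivity step in your proof and the essential-surjectivity step of the paper's Proposition~\ref{gogi} are really the same observation: $H^1_{\fppf}(A\llparenthesis t\rrparenthesis,W_e)=0$ means every torsor comes from a global section. So the two routes share the same two inputs (Artin--Schreier--Witt and Lemma~\ref{awnz}); what the paper's factoring buys is that the coherence bookkeeping for the Picard-stack equivalence is absorbed once and for all into the cited SGA~4 result and the appendix proposition, so it never has to be re-done; what your explicit construction buys is independence from the appendix and a concrete description of the equivalence which some readers may prefer. One small point to watch in your last paragraph: when you say the coherence axioms ``reduce to associativity and commutativity of Witt addition and to additivity and functoriality of $\wp_e$'', this is true, but since you explicitly disclaim reliance on the quasi-isomorphism-of-complexes formalism of \cite[Paragraph~1.4.12]{Dualite}, you do in fact owe the reader the diagrams, which the paper's proof avoids by citing that formalism; this is a stylistic gap, not a mathematical one.
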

		\begin{proof}
	 By \cite[Paragraph 1.4.12]{Dualite}, the exact sequence $$0\to \ZZ/p^e\ZZ\to W_e\xrightarrow{\wp_e} W_e\to 0 $$ induces an equivalence of Deligne--Mumford commutative group stacks $$B(\ZZ/p^e\ZZ)\xrightarrow{\sim}W_e/_{\wp_e}W_e,$$ where $/_{\wp_e}$ denotes that the quotient is with respect to homomorphism $\wp_e:W_e\to W_e$. From Lemma~\ref{deltaggpstack}, we deduce that $A\mapsto (W_e/\wp_eW_e)(A\llparenthesis t \rrparenthesis)$ is a commutative group stack. We also are given an equivalence $$\Delta_{\Zpez}= (A\mapsto B(\ZZ/p^e\ZZ)(A\llparenthesis t \rrparenthesis))\xrightarrow{\sim}(A\mapsto  (W_e/\wp_eW_e)(A\llparenthesis t \rrparenthesis) )$$ of commutative group stacks. By using Proposition~\ref{gogi}, we see that $A\mapsto (W_e/\wp_eW_e)(A\llparenthesis t \rrparenthesis)$ is equivalent to the category~$\widetilde{\mathfrak{W}}_{e}$ and that the latter category is a commutative group stack. 
		\end{proof}
		\begin{lem}\label{vot}
			\begin{enumerate}
				\item  Suppose that $u=(\sum_i u_{ij}t^{-i})\in W_e(A\llparenthesis t \rrparenthesis)$ is such that for some $i<0$ and some $j\in\{0\doots e-1\}$, one has that $u_{ij}\neq 0$. Then $\wp_e(u)$ is of the same form. 
				\item Suppose that $u$ is of the form as in (1) and $x=(\sum_{i\in\NN\cup\{0\}} x_{ij}t^{-i})_j$. Then $x+\wp_e(u)$ is of the form as in (1)
			\end{enumerate}
		\end{lem}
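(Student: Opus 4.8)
The plan is to rephrase ``being of the form in (1)'' as the statement that $u=(u_0\doots u_{e-1})$ has nonzero \emph{positive part}, and then, by peeling off coordinates, to reduce to the rank-one Artin--Schreier operator $f\mapsto f^p-f$. For $f=\sum_ka_kt^k\in A\llparenthesis t\rrparenthesis$ write $f^+:=\sum_{k\geq 1}a_kt^k$; this is the $A$-linear projection attached to the $A$-module decomposition $A\llparenthesis t\rrparenthesis=A[t^{-1}]\oplus tA[[t]]$. With the indexing in the statement, ``$u_{ij}\neq 0$ for some $i<0$'' means exactly that $u_j^+\neq 0$ for some $j$, while the element $x$ in (2) is precisely one all of whose components lie in the subring $A[t^{-1}]$, that is, $x_j^+=0$ for every $j$. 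So I must show: (1) if $u_j^+\neq 0$ for some $j$ then $(\wp_e u)_j^+\neq 0$ for some $j$; (2) if in addition all $x_j\in A[t^{-1}]$, then $(x+\wp_e u)_j^+\neq 0$ for some $j$. The $A$-linearity of $(\,\cdot\,)^+$ is what will let me discard every term lying in $A[t^{-1}]$.

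For (1), I use that over an $\FF_q$-algebra the operator $\wp_e$ is given by $u\mapsto F(u)\ominus u$, where $F(u_0\doots u_{e-1})=(u_0^p\doots u_{e-1}^p)$ and $\ominus$ is Witt subtraction; thus the $j$-th coordinate of $\wp_e(u)$ is $S_j(u_0^p,b_0\doots u_j^p,b_j)$ with $b_i=I_i(u_0\doots u_i)$ the coordinates of $\ominus u$ produced by the inversion polynomials of this section. Let $j_0$ be the least index with $u_{j_0}^+\neq 0$, so that $u_0\doots u_{j_0-1}\in A[t^{-1}]$. Since $A[t^{-1}]$ is a subring and $S_j,T_j,I_j$ have coefficients in $\FF_q$, every part of the $j_0$-th coordinate that is built only from $u_0\doots u_{j_0-1}$ — the entries $u_i^p$ and $b_i$ for $i<j_0$, the carry $T_{j_0}$ occurring inside $S_{j_0}$, and the carry occurring inside $I_{j_0}(u_0\doots u_{j_0})$ — lies in $A[t^{-1}]$. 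Hence that coordinate equals $u_{j_0}^p-u_{j_0}$ modulo $A[t^{-1}]$, so by linearity its positive part is $(u_{j_0}^p-u_{j_0})^+$, and it remains to prove the rank-one assertion: $f^+\neq 0\Rightarrow(f^p-f)^+\neq 0$. Writing $f=\sum_kc_kt^k$ and using $f^p=\sum_kc_k^pt^{pk}$, the $t^m$-coefficient of $(f^p-f)^+$ is $c_{m/p}^p-c_m$ when $p\mid m$ and $-c_m$ otherwise; if this were identically zero, the coefficients at $m$ with $p\nmid m$ would force $c_m=0$, and then, writing $m=p^\ell m'$ with $p\nmid m'$, the relations $c_m=c_{m/p}^p$ would iterate to $c_m=c_{m'}^{p^\ell}=0$, so $f^+=0$, a contradiction. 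This proves (1).

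For (2), apply (1) to get $(\wp_e u)_j^+\neq 0$ for some $j$, and let $j_1$ be the least such index, so $(\wp_e u)_i\in A[t^{-1}]$ for $i<j_1$. As each $x_i\in A[t^{-1}]$, the $j_1$-th coordinate of $x+\wp_e(u)$ equals $x_{j_1}+(\wp_e u)_{j_1}+T_{j_1}\bigl(x_0,(\wp_e u)_0\doots x_{j_1-1},(\wp_e u)_{j_1-1}\bigr)$, and both the term $x_{j_1}$ and the carry term lie in $A[t^{-1}]$; applying $(\,\cdot\,)^+$ leaves $(\wp_e u)_{j_1}^+\neq 0$, so $x+\wp_e(u)$ is again of the form in (1).

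The step that needs the most care is isolating the $j_0$-th coordinate of $\wp_e(u)$: one has to make sure that every mixing term produced by Witt addition and by Witt inversion at that level depends only on the lower coordinates $u_0\doots u_{j_0-1}$, which do lie in $A[t^{-1}]$ — but this is the same elementary bookkeeping as in Lemmas~\ref{degwittaddition}--\ref{wittbig}. A related subtlety worth noting is that $A$ may have nilpotents, so there is no ``order at $t=\infty$'' on $A\llparenthesis t\rrparenthesis$ to argue with; this is precisely why the rank-one step is carried out by comparing all coefficients and using the positions $m$ with $p\nmid m$, rather than by a leading-term argument.
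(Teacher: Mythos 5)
Your proof is correct and follows essentially the same path as the paper's: both reduce to the least coordinate $j$ whose positive part is nonzero, use linearity of the Witt addition and inversion formulas in the top variable to isolate $u_j^p-u_j$ modulo $A[t^{-1}]$, and reuse that linearity for $x+\wp_e(u)$ in part (2). The only difference is the single-coordinate step, where you argue by contradiction across all positive coefficients of $f^p-f$, while the paper instead picks an exponent $i_0<0$ with $u_{i_0j}\neq 0$ of minimal $p$-adic valuation and directly reads off the coefficient $-u_{i_0j}\neq 0$ of $t^{-i_0}$ in $u_j^p-u_j$; the two are interchangeable.
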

		\begin{proof}
			\begin{enumerate}
				\item 
				Let $j$ be the smallest index for which $\sum_i u_{ij}t^{-i}$ possesses an index $i_0<0$ with $u_{i_0j}\neq 0$ and choose~$v_p(i_0)$ as small as possible. Define $u_{rj}$ to be~$0$ whenever $r\in\QQ-\ZZ$. The formula for the $j$-th term for the addition is a polynomial in $X_0, Y_0\doots X_j, Y_j$, which is linear in $X_j$ and $Y_j$. Thus, for $i<0$, the coefficient in front of $t^{-i}$ of the $j$-th entry of $\wp_e(u)$ is $u^p_{(-i/p)j}-u_{(-i)j}$. Clearly, $u^p_{(-i_0/p)j}-u_{(-i_0)j}=-u_{(-i_0)j}\neq 0$. The statement is verified.
				\item By~(1), we have that $\wp_e(u)$ is of the form as in (1). Let $j$-th coordinate of  $\wp_e(u)$ be given by $\sum_i u_{ij}'t^{-i}$. Let $j$ be the smallest index for which $\sum_i u_{ij}'t^{-i}$ possesses an index $i_0<0$ with $u_{(i_0)j}'\neq 0$. The formula for the $j$-th entry for the addition of Witt vectors is linear in $X_j, Y_j$. Thus the coefficient in front of $t^{-i_0}$ is precisely $u_{(i_0)j}'\neq 0$.
			\end{enumerate}
		\end{proof}
		\begin{lem}\label{wedelta}
			We define ${\mathfrak W_e}$ by setting it for every ring~$A$ to be the subcategory of $\widetilde{\mathfrak W}_e(A)$ having for objects the elements  $(x_0\doots x_{e-1})\in W_e(A\llparenthesis t \rrparenthesis)$ satisfying that each~$x_j$ is of the form  $x_j=\sum_{i\geq 0} b_{ij}t^{-i}$ with sums being finite and having for morphisms between $x,y\in \mathfrak W_e$ the elements $u=(\sum_{i\geq 0}u_{ij}t^{-i})_j$ with sums being finite satisfying $x+\wp_e(u)=y$. One has that $\mathfrak W_e$ is a commutative group stack and that the canonical inclusion ${{\mathfrak W}_e}\hookrightarrow \widetilde{\mathfrak W}_e$ is an equivalence of commutative group stacks.
		\end{lem}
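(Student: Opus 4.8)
The plan is to show that the inclusion $\mathfrak W_e\hookrightarrow\widetilde{\mathfrak W}_e$ is fully faithful and essentially surjective after fpqc localization, hence an equivalence of the underlying categories fibered in groupoids, and that $\mathfrak W_e$ is stable under the addition, the inversion, and the constraint isomorphisms of $\widetilde{\mathfrak W}_e$. The stability of objects is immediate: the universal polynomials $S_j$ and $I_j$ for Witt addition and negation have integer coefficients, hence carry tuples of polynomials in $t^{-1}$ to tuples of polynomials in $t^{-1}$; the stability of morphisms and of the associativity/commutativity/unit isomorphisms then follows from full faithfulness. Thus each $\mathfrak W_e(A)$ is a strictly commutative Picard subcategory of $\widetilde{\mathfrak W}_e(A)$, functorially in $A$; since the stack property is transported along an equivalence of fibered categories, $\mathfrak W_e$ is a commutative group stack and the inclusion is an equivalence of commutative group stacks.

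Full faithfulness is the content of Lemma~\ref{vot}(2). A morphism $x\to y$ in $\widetilde{\mathfrak W}_e(A)$ between objects $x,y$ of $\mathfrak W_e(A)$ is an element $u\in W_e(A\llparenthesis t\rrparenthesis)$ with $x+\wp_e(u)=y$. Since $x$, and hence $y=x+\wp_e(u)$, has no entry with a strictly positive power of $t$, Lemma~\ref{vot}(2) forbids $u$ from having any entry with a strictly positive power of $t$; as each entry of $u$ lies in $A\llparenthesis t\rrparenthesis$ and so has only finitely many negative powers of $t$, every entry of $u$ is a polynomial in $t^{-1}$, i.e.\ $u\in W_e(A[t^{-1}])$ is a morphism of $\mathfrak W_e$.

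The substance is local essential surjectivity: every $x\in W_e(A\llparenthesis t\rrparenthesis)$ becomes, after a finite \'etale base change $A\to A'$, isomorphic in $\widetilde{\mathfrak W}_e(A')$ to an object of $\mathfrak W_e(A')$. I would prove this by recursion on the Witt coordinate $j=0,\dots,e-1$, producing a tower of finite \'etale extensions $A=A_0\to A_1\to\cdots\to A_e$ together with elements $u_j\in A_{j+1}[[t]]$ and $y_j\in t^{-1}A_{j+1}[t^{-1}]$ such that, over $A_e$, the Witt vectors $u=(u_j)_j$ and $y=(y_j)_j$ satisfy $x+\wp_e(u)=y$. The structural input is that over an $\FF_q$-algebra the Witt Frobenius acts coordinatewise by $a\mapsto a^p$ and that, by the identities for $S_j$ and $I_j$ recalled above, the $j$-th coordinate of $\wp_e(u)$ is $u_j^p-u_j$ plus a polynomial in $u_0,\dots,u_{j-1}$; hence at stage $j$ the $j$-th coordinate of $x+\wp_e(u)$ has the form $u_j^p-u_j$ plus an element of $A_j\llparenthesis t\rrparenthesis$ already determined by the previous choices and by $x$. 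Decomposing that element into its power-series part in $A_j[[t]]$ and its polar part in $t^{-1}A_j[t^{-1}]$, I set $y_j$ equal to the polar part and solve $u_j^p-u_j=-(\text{power-series part})$ by the evident Artin--Schreier recursion on $t$-adic coefficients: all coefficients of $u_j$ of positive $t$-degree are forced and lie in $A_j$, while the constant coefficient is obtained by passing to the finite \'etale extension $A_{j+1}:=A_j[\alpha]/(\alpha^p-\alpha-c)$, where $c$ is the relevant constant. After $e$ steps, $\Spec A_e\to\Spec A$ is a finite \'etale (in particular fpqc) cover, $y$ lies in $\mathfrak W_e(A_e)$ since its entries are polynomials in $t^{-1}$ with vanishing constant term, and $u$ exhibits the required isomorphism $x\cong y$ in $\widetilde{\mathfrak W}_e(A_e)$. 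A more conceptual route to the same conclusion uses Lemma~\ref{awnz}: the vanishing $H^1_{\fppf}(\Spec B,W_e)=0$ identifies $\coker(\wp_e\colon W_e(B)\to W_e(B))$ with $H^1_{\fppf}(\Spec B,\ZZ/p^e\ZZ)$ via the sequence $0\to\ZZ/p^e\ZZ\to W_e\to W_e\to 0$; for $B=A[[t]]$ this group equals $H^1_{\etale}(\Spec A,\ZZ/p^e\ZZ)$ because $(A[[t]],(t))$ is a henselian pair, and any prescribed class dies after a finite \'etale cover of $\Spec A$, so over such a cover the power-series part of $x$ lies in the image of $\wp_e$ and $x$ is congruent to its polar part.

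Assembling the three ingredients yields that $\mathfrak W_e\hookrightarrow\widetilde{\mathfrak W}_e$ is a fully faithful, locally essentially surjective morphism of stacks compatible with the commutative group stack structures, hence the desired equivalence of commutative group stacks. I expect the only genuinely delicate point to be the recursion of the third paragraph — keeping track of the Witt-vector carries between coordinates and pinning down exactly where a finite \'etale base change must be made; full faithfulness and the Picard-categorical bookkeeping are routine given Lemma~\ref{vot}.
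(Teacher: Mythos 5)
Your proof is correct, and the decomposition into stability of structure, full faithfulness via Lemma~\ref{vot}(2), and essential surjectivity is the same as the paper's. The divergence is in essential surjectivity, and it is worth pausing on. The paper establishes the stronger fact that $\mathfrak W_e(A)\hookrightarrow\widetilde{\mathfrak W}_e(A)$ is an equivalence of categories for \emph{every} ring $A$, with no base change whatsoever: at each Witt coordinate $j$, the coordinate of $x$ (corrected by the carries from $u_0,\dots,u_{j-1}$) is split into its nonpositive-degree part, which is absorbed into $y_j$, and its strictly positive-degree part $R_j^+\in tA[[t]]$, which is killed by the unique solution $u_j\in tA[[t]]$ of $u_j^p-u_j=-R_j^+$ (a recursion on $t$-adic coefficients that needs no root extraction). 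The key point is that the objects of $\mathfrak W_e$ are sums $\sum_{i\geq 0}b_{ij}t^{-i}$ with $i\geq 0$, so constant terms are permitted in $y_j$. Your recursion instead insists that $y_j\in t^{-1}A_{j+1}[t^{-1}]$ have vanishing constant term, which forces the Artin--Schreier equation $\alpha^p-\alpha=c$ over the constant to have a solution, and that is where the finite \'etale cover $A_{j+1}=A_j[\alpha]/(\alpha^p-\alpha-c)$ enters. This gives only \emph{local} essential surjectivity. That is indeed enough to conclude the equivalence of stacks, and your alternative cohomological route via Lemma~\ref{awnz} and henselianity of $(A[[t]],(t))$ is also sound, so nothing is wrong; but the \'etale covers are entirely avoidable once one notices that the definition of $\mathfrak W_e$ already leaves room for the constant to land in $y_j$. (This distinction \emph{will} matter later in the paper, where the strictly polar subobjects are singled out as the ``very simple'' elements $(W_e^{\leq n})^*$; what you have essentially proved is the local triviality of the map from the very simple locus, a stronger normalization than the lemma asks for.)
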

		\begin{proof}			
			{\it Group structure}.  The addition functor of $\widetilde{\mathfrak {W}}_e$ restricts to a functor $\mathfrak W_e\times \mathfrak W_e\to\mathfrak W_e$, because for any ring~$A$, one has that the sum of two elements in $\mathfrak W_e(A)$ is in $\mathfrak W_e(A)$. The commutativity functor of~$\widetilde{\mathfrak W}_e$ restricts to a functor $$(\mathfrak W_e\times \mathfrak W_e\to \mathfrak W_e)\to (\mathfrak W_e\times \mathfrak W_e\to \mathfrak W_e) $$ because it is given by the element~$(0\doots 0)$. The analogous claim is valid for the associativity functor. 
			We deduce that $\widetilde{\mathfrak W}_e$ is a group substack of $\widetilde{\mathfrak W}_e$. 
			
			{\it Essential surjectivity.}
			We prove that for every $x=(x_0\doots x_{e-1})\in W_e(A)$, one has that there exists $u\in W_e(A)$ and $y=(y_0\doots y_{e-1})\in{\widetilde{\mathfrak W}_e}(A)$ such that $y+\wp_e(u)=x.$ By \cite[Lemma 4.11]{tonini-yasuda}, there exists $u_0\in A\llparenthesis t \rrparenthesis$ and $y_0\in {\mathfrak W}(A)$ such that $y_0+\wp_1(u_0)=x_0$. Then $$x-\wp_e((u_0)_{k=0},(0)_{k=1\doots e-1})=x-(\wp_1(u_0)_{k=0},(0)_{k=1\doots e-1})=(y_0,x_1'\doots x_{e-1}'),$$for some $x_1'\doots x_{e-1}'$. Suppose that for $j\in\{0\doots e-2\}$, we have proven that the existance of $u^j:=((u_0\doots u_j),(0)_{k=j+1}^{e-1})$ such that $$x-\wp_e(u^j)=((y_0\doots y_{j}),(x_k'')_{k=j+1}^{e-1}) $$for some $x_k''$. Let $u_{j+1}$ be such that $x_{j+1}''-\wp_1(u_{j+1})=y_{j+1}$ for some $y_{j+1}\in {\mathfrak W^{\leq 0}_1}(A)$, which is possible thanks to \cite[Lemma 4.11]{tonini-yasuda}. Set $u^{j+1}=(u_k)_{k=0}^{j+1},(0)_{k=j+2}^{e-1}$. Then \begin{align*}x-\wp_e(u^{j+1})&=x-\wp_e(u^j)-\wp_e((0)_{k=0}^j,u_{j+1},(0)_{k=j+2}^{e-1} )\\
				&=x-\wp_e(u^j)- (0)_{k=0}^j,\wp_1(u_{j+1}),(u_k')_{k=j+2}^{e-1})\\
				&=((y_0\doots y_{j+1}), x_{k+1}'''\doots x_{e-1}''')
			\end{align*}
			for some $u_{j+2}'\dots u_{e-1}'$ and some $x_{j+2}'''\doots x_{e-1}'''$. The essential surjectivity is established.
			
			{\it Faithfulness.} The faithfulness is immediate, as $\Hom_{\mathfrak W_e}(x,y)$ is defined to be a subset of $\Hom_{\widetilde{\mathfrak W}_e}(x,y)$.
			
			{\it Fullness.} We need to show that if $x,y\in\mathfrak W_e$, then every morphism $u\in\Hom_{\widetilde{\mathfrak W}_e}(x,y)$ is of the form $(\sum_{i\geq 0}u_{ij}t^{-i})_j$. But this follows from Lemma~\ref{vot}(2).
		\end{proof}
		From now on, we will implicitly identify~${\mathfrak W}_e$ and~$\Delta_{\ZZ/p^e\ZZ}$. 
		\begin{mydef} Let~$A$ be a ring.
			Given $b:=\sum_{i\in\NN_0}b_it^{-i}\in A\llparenthesis t \rrparenthesis$ we define $$-\ord(b):=\sup (\{i\in\NN|\hspace{0,1cm}b_i\text{ is not nilpotent}\}).$$
			Note that, in particular $-\ord(b_0)=-\infty $ if $b_0\in A$.
		\end{mydef}
		\begin{mydef}Let $n\in\NN_0$ and let~$A$ be a ring. We denote by~$W^{\leq n}_e(A\llparenthesis t \rrparenthesis)$ the subset of $W_e(A\llparenthesis t \rrparenthesis)$ that has for objects the elements of form $b=(b_j)_{j=0}^{e-1}$ satisfying for $j=0\doots e-1$ that $$-\ord(b_j)\leq \frac{n-1}{p^{e-1-j}}.$$
			The elements of this form we call simple. If furthermore each $b_j$ is of the form $b_j=\sum_ib_{ij}t^{-i}$, we call them very simple elements and denote them by $(W^{\leq n}_e)^*(A\llparenthesis t \rrparenthesis)$.
		\end{mydef} 
		\begin{lem}\label{vsaddition} The simple and very simple elements are closed for the addition. 
		\end{lem}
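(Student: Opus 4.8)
The plan is to deduce the statement from the weighted homogeneity of the Witt addition polynomials (Lemma~\ref{degwittaddition}, see also Lemma~\ref{wittbig}) together with a sub‑multiplicativity property of $-\ord$. For $b=(b_j)_{j=0}^{e-1}$ and $c=(c_j)_{j=0}^{e-1}$ in $W_e(A\llparenthesis t\rrparenthesis)$, the $j$-th coordinate of $b+c$ is $S_j(b_0,c_0\doots b_j,c_j)$, where $S_j\in\FF_q[X_0,Y_0\doots X_j,Y_j]$ is the $j$-th addition polynomial; by Lemma~\ref{degwittaddition} every monomial occurring in $S_j$ has the form $\prod_{i=0}^{j}X_i^{a_i}Y_i^{a_i'}$ with $\sum_{i=0}^{j}(a_i+a_i')p^i=p^j$. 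It therefore suffices to bound $-\ord$ of such a monomial evaluated at the $b_i$ and $c_i$.

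For $u\in A\llparenthesis t\rrparenthesis$ put $\mu(u):=\max\{-\ord(u),0\}\in\NN_0$, so that $-\ord(u)\le\mu(u)$; writing $u=\sum_a u_at^a$, the definition of $-\ord$ says exactly that $u_a$ is nilpotent whenever $a\le-\mu(u)-1$. Using only this, one checks the following, valid for all $u,v\in A\llparenthesis t\rrparenthesis$ and every constant $\lambda$:
\[
\mu(u+v)\le\max\{\mu(u),\mu(v)\},\qquad
\mu(uv)\le\mu(u)+\mu(v),\qquad
\mu(\lambda u)\le\mu(u).
\]
For the middle one: if $k>\mu(u)+\mu(v)$, then each summand $u_av_b$ (with $a+b=-k$) of the coefficient of $t^{-k}$ in $uv$ satisfies $a\le-\mu(u)-1$ or $b\le-\mu(v)-1$, hence is nilpotent, so the coefficient is nilpotent. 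This bookkeeping with nilpotents — forced on us because $-\ord$ discards nilpotent coefficients and may be $-\infty$, which is why one works with $\mu$ rather than $-\ord$ — is the only genuinely delicate point of the argument.

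Now assume $b\in W^{\le n}_e(A\llparenthesis t\rrparenthesis)$ and $c\in W^{\le m}_e(A\llparenthesis t\rrparenthesis)$ with $n,m\ge 1$, so that $\mu(b_i)\le\frac{n-1}{p^{e-1-i}}$ and $\mu(c_i)\le\frac{m-1}{p^{e-1-i}}$ for every $i$. For a monomial $\prod_{i=0}^{j}b_i^{a_i}c_i^{a_i'}$ with $\sum_i(a_i+a_i')p^i=p^j$ the inequalities above give
\[
\mu\!\left(\prod_{i=0}^{j}b_i^{a_i}c_i^{a_i'}\right)\le\sum_{i=0}^{j}\bigl(a_i\mu(b_i)+a_i'\mu(c_i)\bigr)\le\frac{\max\{n,m\}-1}{p^{e-1}}\sum_{i=0}^{j}(a_i+a_i')p^i=\frac{\max\{n,m\}-1}{p^{e-1-j}}.
\]
Using $\mu(u+v)\le\max\{\mu(u),\mu(v)\}$ and $\mu(\lambda u)\le\mu(u)$ and taking the maximum over the monomials of $S_j$, we obtain $-\ord\bigl((b+c)_j\bigr)\le\mu\bigl(S_j(b_0,c_0\doots b_j,c_j)\bigr)\le\frac{\max\{n,m\}-1}{p^{e-1-j}}$ for all $j$, i.e. $b+c\in W^{\le\max\{n,m\}}_e(A\llparenthesis t\rrparenthesis)$. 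In particular $W^{\le n}_e(A\llparenthesis t\rrparenthesis)+W^{\le n}_e(A\llparenthesis t\rrparenthesis)\subseteq W^{\le n}_e(A\llparenthesis t\rrparenthesis)$, which is the asserted closure for the simple elements; the case $n=0$ is immediate, since then each coordinate is a constant plus a nilpotent element, and $S_j$, which vanishes at the origin, sends a tuple of such elements to a tuple of the same shape.

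For the very simple elements one adds only the remark that if every $b_i$ and $c_i$ is a polynomial in $t^{-1}$, then so is $S_j(b_0,c_0\doots b_j,c_j)$, as $S_j$ has coefficients in $\FF_q\subseteq A$ and $A[t^{-1}]$ is a ring; together with the simple case this yields their closure under addition as well.
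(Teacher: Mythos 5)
Your treatment of the simple case is essentially the paper's argument (weighted homogeneity of the $S_j$ via Lemma~\ref{degwittaddition}), and your introduction of $\mu(u)=\max\{-\ord(u),0\}$ is a genuine refinement: as you note, $-\ord$ itself is not submultiplicative when one factor is a unit constant (there $-\ord=-\infty$, yet $-\ord$ of the product may be positive), and passing to $\mu$ removes this awkwardness cleanly. The bookkeeping with nilpotent coefficients and the three sub-additivity/sub-multiplicativity inequalities are correct for the (finitely supported) polynomials in $t^{-1}$ that occur here, and the final bound $\mu(S_j(\ldots))\le(n-1)/p^{e-1-j}$ is exactly what is needed.

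However, your argument for the \emph{very simple} case has a gap. From the identification with $(\WW_{\Zpez}^{\le n})^*=\prod_{j}\AAA^{\{i\in\NN\,\mid\,i\le(n-1)/p^{e-1-j}\}}$ — whose coordinates are indexed over $\NN=\{1,2,\dots\}$, i.e.\ starting at $i=1$ — the distinguishing feature of a very simple element is that each coordinate has \emph{zero constant term}, i.e.\ $b_j\in t^{-1}A[t^{-1}]$, not merely $b_j\in A[t^{-1}]$. Your remark ``$S_j$ has coefficients in $\FF_q$ and $A[t^{-1}]$ is a ring'' only shows $S_j(b_0,c_0\doots b_j,c_j)\in A[t^{-1}]$, which is already true for sums of merely simple elements and does not show that the constant term of the result vanishes. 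The needed additional fact — and this is the one the paper invokes — is that each $S_j$ has no constant term (i.e.\ $S_j(0,\dots,0)=0$), so that $t^{-1}A[t^{-1}]$, being an ideal of $A[t^{-1}]$, is preserved under substitution into $S_j$. Once you add that observation the proof is complete.
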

		\begin{proof} Let 
			$$a:=\bigg(\sum_{i\in\NN\cup\{0\}}a_{ij}t^{-i}\bigg)_{j=0}^{e-1}\text{ and }b:= \bigg(\sum_{i\in\NN\cup\{0\}}b_{ij}t^{-i}\bigg)_{j=0}^{e-1}\in W^{\leq n}_e(A\llparenthesis t \rrparenthesis).$$ 
			For $j=0$, one has that the $0$-th coordinate of $a+b$ is given by $\sum_{i\in\NN\cup\{0\}}(a_{i0}+b_{i0})t^{-i}$  and clearly for every $i>(n-1)/p^{e-1-j}$, one has that $a_{i0}+b_{i0}=0$. Suppose for some $0\leq j\leq e-2$ one has for every $0\leq k\leq j-1$, that the $k$-th coordinate of~$a+b$ is of the form $\sum_{i\in\NN\cup\{0\}}c_{ij}t^{-i} $ with $c_{ij}=0$ whenever $i>(n-1)/p^{e-1-k}$. The $j$-th coordinate of $a+b$ is given by $$S_j\bigg(\bigg(\sum_{i\in\NN\cup\{0\}}a_{ij}t^{-i}, \sum_{i\in\NN\cup\{0\}}b_{ij}t^{-i}\bigg)_{k=0\doots j}\bigg).$$By Lemma~\ref{degwittaddition}, any monomial in~$S_j$ is of form $CX_0^{\alpha_0}Y_0^{\beta_0}\cdots X_j^{\alpha_j}Y_j^{\beta_j}$ for some $C\in\ZZ$ and some $\alpha_k,\beta_k$ satisfying that $\sum_{k=0}^jp^k(\alpha_k+\beta_k)=p^j$. Hence,
			\begin{align*}
				-\ord\bigg(\bigg(CX_0^{\alpha_0}Y_0^{\beta_0}\cdots X_j^{\alpha_j}Y_j^{\beta_j}\bigg)\bigg(\bigg(\sum_{i\in\NN\cup\{0\}}a_{ik}t^{-i}\bigg)_{k=0}^j,\bigg(\sum_{i\in\NN\cup\{0\}}b_{ik}t^{-i}\bigg)_{k=0}^j\bigg)\bigg)\hskip-11cm&\\
				&\leq\sum_{k=0}^{j}\alpha_k\cdot \bigg(-\ord\bigg(\sum_{i\in\NN\cup\{0\}}a_{ik}t^{-i}\bigg)\bigg) +\beta_k\cdot \bigg(-\ord\bigg(\sum_{i\in\NN\cup\{0\}}b_{ik}t^{-i}\bigg)\bigg)\\
				&\leq \sum_{k=0}^j (\alpha_k+\beta_k)\cdot \frac{n-1}{p^{e-k-1}}\\
				&=\sum_{k=0}^jp^k(\alpha_k+\beta_k)\frac{n-1}{p^{e-1}}\\
				&=\frac{(n-1)p^j}{p^{e-1}}\\
				&=\frac{n-1}{p^{e-j-1}}.
			\end{align*}
			It follows that for $j=0\doots e-1$, the minus order of $$S_{j}\bigg(\bigg(\sum_{i\in\NN\cup\{0\}}a_{ij}t^{-i}\bigg)_{k=0}^j,\bigg(\sum_{i\in\NN\cup\{0\}}b_{ij}t^{-i}\bigg)_{k=0}^j\bigg)$$ is thus no more than $(n-1)/p^{e-1-j}$ as claimed. It remains to verify that if~$a$ and~$b$ are very simple then their sum is such. But this follows because none of the polynomials $S_j$ has a constant term.
		\end{proof}
		\begin{mydef} Let $n\in\NN$. 
			Let us set $(\mathbb W^{\leq n}_{\Zpez})^*$ to denote the following $\FF_q$-algebraic group. As a scheme $$(\mathbb W_{\Zpez}^{\leq n})^*:=\prod_{j=0}^{e-1}\AAA^{\{i\in\NN|\hspace{0,1cm}i\leq (n-1)/p^{e-1-j}\}}.$$ 
			To obtain a group structure,   we identify the elements $$((a_{ij})^{\lfloor(n-1)/p^{e-1-j}\rfloor}_{i=1})^{e-1}_{j=0},((b_{ij})^{\lfloor(n-1)/p^{e-1-j}\rfloor}_{i=1})^{e-1}_{j=0}\in \mathbb  (W_{\Zpez}^{\leq n})^*$$ with the very simple elements $$\bigg(\sum_{i\in\NN}a_{ij}t^{-i}\bigg)_{j=0}^{e-1}, \bigg(\sum_{i\in\NN}b_{ij}t^{-i}\bigg)_{j=0}^{e-1}\in (W^{\leq n}_e)^*(A\llparenthesis t \rrparenthesis)$$ and add them there. As the very simple elements are closed for the addition, by reading the coefficients of the sum, we obtain the group law. We set $$\mathbb W_{\Zpez}^{\leq n}:=W_e\times (\mathbb W_{\Zpez}^{\leq n})^*$$which again is an algebraic group. We will often write $(\WW^{\leq n})^*$ and $\WW^{\leq n}$ for $(\WW^{\leq n}_{\Zpez})^*$ and $\WW^{\leq n}_{\Zpez}$, respectively.
		\end{mydef}
		By \cite[Theorem 16.59]{MilneAlgebraicGroups}, one has that $\WW^{\leq n}$ and $(\WW^{\leq n})^*$ are unipotent. For every $n\geq 1$ and every ring~$A$, the homomorphism $\wp_e(A\llparenthesis t \rrparenthesis):W_e(A\llparenthesis t \rrparenthesis)\to W_e(A\llparenthesis t \rrparenthesis)$ takes $W^{\leq n}_e(A\llparenthesis t \rrparenthesis)^*$ to~$W^{\leq n}_e(A\llparenthesis t \rrparenthesis)^*$ and is injective. 
		We deduce an injective homomorphism of algebraic groups  $$(\rho_{\Zpez}^n )^*:(\mathbb W^{\leq n})^*\to(\mathbb W^{\leq n+1})^*$$ for $n\geq 1$.  We obtain a homomorphism $$\rho_{\Zpez}^n=\wp_e\times (\rho^n)^*:\mathbb W^{\leq n}=W_e\times (\mathbb W^{\leq n})^*\to W_e\times (\mathbb W^{\leq n})^*=\mathbb W^{\leq n}.$$ 
		We set $$\rho_{\Zpez}^0:=\wp_e:W_e=\mathbb W^{\leq 1}\to W_e=\mathbb W^{\leq 1}.$$ 
		Again,  we may drop $\ZZ/p^e\ZZ$ from the index. One has a commutative diagram
		\[\begin{tikzcd}
			{(\mathbb W^{\leq n-2})^*} & {(\mathbb W^{\leq n-1})^*} \\
			{(\mathbb W^{\leq n-1})^*} & {(\mathbb W^{\leq n})^*.}
			\arrow["{(\rho^{n-1})^*}", from=1-1, to=1-2]
			\arrow[hook, from=1-1, to=2-1]
			\arrow[hook, from=1-2, to=2-2]
			\arrow["{(\rho^n)^*}"', from=2-1, to=2-2]
		\end{tikzcd}\]
		\begin{mydef}For $n\geq 2$, we denote by $\CZN$ the quotient algebraic group \cite[Theorem 5.14]{MilneAlgebraicGroups} 
			$$\CZN:=(\WW^{\leq n})^*/_{(\rho^{n-1})^*}(\WW^{\leq n-1})^*$$ which is necessarily commutative. 	We let $\CCC(\leq 1):=\Spec(\FF_q)$. We have a { canonical inclusion }$\CCC(\leq n-1)\hookrightarrow\CZN$.
		\end{mydef}
		By \cite[Corollary 14.7]{MilneAlgebraicGroups}, one has that $\CZN$ is a connected unipotent group. By \cite[Theorem 16.59]{MilneAlgebraicGroups}, we deduce $\CZN\cong \AAA^{\dim(\CZN)}$, where  
		$$\dim(\CZN)=\sum_{j=0}^{e-1}\bigg(\bigg\lfloor\frac{n-1}{p^{e-1-j}}\bigg\rfloor-\bigg\lfloor\frac{n-1}{p^{e-j}}\bigg\rfloor\bigg).$$
	Given $k\in\NN$, we define a morphism $\widetilde{\phi_k}: (\WW^{\leq n})^*\to\Delta_{\Zpez} $ by 
	$$\widetilde{\phi_k}:(a_{ij})_{i,j} \mapsto \bigg(\sum_{i\in\NN}a_{ij}t^{-ip^k}\bigg)_{j=0}^{e-1}.$$
	Set $\widetilde{\psi_k}:\WW^{\leq n}\to \Delta_{\Zpez}$ to be $(a_0,a)\mapsto a_0+\widetilde{\phi_k}(a).$
	\begin{lem}\label{psikinv} One has that $\widetilde{\psi_k}$ is a strict homomorphism of commutative group stacks. If $n\geq 2$, one has that $\widetilde{\psi_k}$ is invariant for the homomorphism $\rho^{n-1}:\WW^{\leq n-1}\to \WW^{\leq n}.$ If $n=1$, then $\widetilde{\psi_k}$ is $\rho^0$-invariant.
	\end{lem}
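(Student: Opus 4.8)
\emph{Plan.} The idea is to recognize $\widetilde{\phi_k}$, hence $\widetilde{\psi_k}$, as being induced by a ring endomorphism of Laurent series applied to Witt vectors, so that the homomorphism property is inherited from functoriality of $W_e$, and then to reduce the invariance assertion to the elementary fact that an object of $\mathfrak{W}_e$ of the shape $\wp_e(u)$ is canonically isomorphic to $0$.

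\textbf{Step 1 (strictness).} For $k\geq1$ let $\tau_k\colon A\llparenthesis t\rrparenthesis\to A\llparenthesis t\rrparenthesis$ be the $\FF_q$-algebra homomorphism determined by $t\mapsto t^{p^k}$, and let $\iota\colon A\to A\llparenthesis t\rrparenthesis$ be the structural inclusion. Under the identification $(\WW^{\leq n})^*(A)\cong(W^{\leq n}_e)^*(A\llparenthesis t\rrparenthesis)$ of the preceding definition, the map $\widetilde{\phi_k}$ sends a very simple element $\bigl(\sum_i a_{ij}t^{-i}\bigr)_j$ to $\bigl(\sum_i a_{ij}t^{-ip^k}\bigr)_j=\bigl(\tau_k(\sum_i a_{ij}t^{-i})\bigr)_j$; in other words it is the composite $(W^{\leq n}_e)^*(A\llparenthesis t\rrparenthesis)\hookrightarrow W_e(A\llparenthesis t\rrparenthesis)\xrightarrow{W_e(\tau_k)}W_e(A\llparenthesis t\rrparenthesis)$, whose image consists of finite‑polar‑part elements, hence lies in $\mathfrak{W}_e(A)=\Delta_{\Zpez}(A)$; likewise the first component of $\widetilde{\psi_k}$ is $W_e(\iota)$. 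Since $W_e$ is a functor, $W_e(\tau_k)$ and $W_e(\iota)$ are ring homomorphisms, in particular strictly additive. Because the commutative group stack $\Delta_{\Zpez}\cong\mathfrak{W}_e$ is the strict commutative Picard category which, for each $A$, is attached to the two‑term complex $W_e(A\llparenthesis t\rrparenthesis)\xrightarrow{\wp_e}W_e(A\llparenthesis t\rrparenthesis)$ (with finite polar parts), its group law is Witt addition on objects and its associativity and commutativity constraints are identity morphisms. Hence $\widetilde{\psi_k}(x+y)$ and $\widetilde{\psi_k}(x)+\widetilde{\psi_k}(y)$ are the \emph{same} object, the structural $2$‑morphism may be taken to be the identity, and the two coherence diagrams collapse to the associativity and commutativity of Witt addition in $W_e(A\llparenthesis t\rrparenthesis)$. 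As $\WW^{\leq n}$ is a scheme there is no further automorphism datum to match on the source, so $\widetilde{\psi_k}$ is a strict homomorphism of commutative group stacks.

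\textbf{Step 2 (the composite with $\rho^{n-1}$).} Let $n\geq2$ and write an $A$‑point of $\WW^{\leq n-1}=W_e\times(\WW^{\leq n-1})^*$ as $(a_0,a)$. By construction $\rho^{n-1}(a_0,a)=(\wp_e(a_0),(\rho^{n-1})^*(a))$, and $(\rho^{n-1})^*$ is simply $\wp_e$ applied to the very simple element $a$, viewed inside $(W^{\leq n}_e)^*(A\llparenthesis t\rrparenthesis)$. Therefore, by Step 1,
\[\widetilde{\psi_k}\bigl(\rho^{n-1}(a_0,a)\bigr)=\wp_e(a_0)+W_e(\tau_k)\bigl(\wp_e(a)\bigr).\]
In characteristic $p$ the Artin--Schreier--Witt map $\wp_e=F-\Id$ is functorial in the ring (the Witt Frobenius $F$ is coordinatewise $p$‑th powering), so $W_e(\tau_k)\circ\wp_e=\wp_e\circ W_e(\tau_k)$ and likewise $W_e(\iota)\circ\wp_e=\wp_e\circ W_e(\iota)$; combined with additivity of $\wp_e$ this yields
\[\widetilde{\psi_k}\bigl(\rho^{n-1}(a_0,a)\bigr)=\wp_e\bigl(a_0+W_e(\tau_k)(a)\bigr),\]
where $a_0$ is read as the constant Witt vector $W_e(\iota)(a_0)$. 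For $n=1$ one has $(\WW^{\leq1})^*=\Spec\FF_q$, so $\WW^{\leq1}=W_e$, the map $\widetilde{\psi_k}$ is just $W_e(\iota)$, and $\widetilde{\psi_k}(\rho^0(a_0))=\wp_e(a_0)$, of the same shape.

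\textbf{Step 3 (triviality of the composite).} In $\mathfrak{W}_e(A)$ we have $\Hom(x,y)=\{v: x+\wp_e(v)=y\}$, so for any finite‑polar‑part $u$, taking $v=-u$ exhibits a morphism $\wp_e(u)\xrightarrow{\sim}0$. Applying this with $u=a_0+W_e(\tau_k)(a)$ (a finite‑polar‑part element, being a Witt sum of two such) produces an isomorphism
\[\theta_{(a_0,a)}\colon\widetilde{\psi_k}\bigl(\rho^{n-1}(a_0,a)\bigr)\xrightarrow{\ \sim\ }0,\]
natural in $A$ and polynomial, hence natural, in $(a_0,a)$. It then remains to see that $\theta$ is a $2$‑isomorphism of homomorphisms of commutative group stacks from $\widetilde{\psi_k}\circ\rho^{n-1}$ to the zero homomorphism; since $u=a_0+W_e(\tau_k)(a)$ is additive in $(a_0,a)$ and the zero homomorphism carries the identity structural $2$‑morphism, the required compatibility $\theta_{x+x'}=\theta_x+\theta_{x'}$ holds. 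Thus $\widetilde{\psi_k}$ is $\rho^{n-1}$‑invariant when $n\geq2$, and $\rho^0$‑invariant when $n=1$. \textbf{Expected main difficulty.} The mathematical content is slight; the only part that takes care is the $2$‑categorical bookkeeping in Steps 1 and 3 — being precise about what ``strict homomorphism'' and ``invariant'' mean for a morphism from the group \emph{scheme} $\WW^{\leq n}$ to the group \emph{stack} $\Delta_{\Zpez}$, and checking that the identity (resp.\ the explicit $-u$) structural $2$‑morphisms verify the coherence axioms. Once one records that $\mathfrak{W}_e$ is the strict Picard category of a two‑term complex and that everything in sight is induced by ring homomorphisms, these verifications are mechanical, so I expect no genuine obstacle.
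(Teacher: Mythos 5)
Your proof is correct and follows essentially the same path as the paper: you verify strictness via the observation that $\widetilde{\phi_k}$ is induced by the substitution $t\mapsto t^{p^k}$ (the paper phrases this as ``replace $t^{-i}$ with $t^{-ip^k}$ and read the same coefficients''), and for $\rho^{n-1}$-invariance you exhibit the isomorphism via the element $W_e(\tau_k)(u)$, exactly as the paper does, using that $\wp_e$ commutes with $W_e(\tau_k)$. The only cosmetic difference is that you reformulate invariance as $\widetilde{\psi_k}\circ\rho^{n-1}\cong 0$ and recover $\widetilde{\psi_k}(x+\rho^{n-1}(u))\cong\widetilde{\psi_k}(x)$ from strictness, whereas the paper writes down the latter isomorphism directly; these are equivalent and the witnessing morphism is the same.
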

	\begin{proof}
		We verify that $\widetilde{\psi_k}$ is a strict homomorphism. Obviously, it suffices to verify that $\widetilde{\phi_k}$ is a strict homomorphism. The element $a=(a_{ij})_{i,j}$ will be identified with  simple element $(\sum a_{ij}t^{-i})_j$. Then for an element $b=(b_{ij})_{i_j}$ the sum of $a$ and $b$ is defined by reading the corresponding coefficients of $a+b$ when~$b$ is identified with $(\sum b_{ij}t^{-i})_j$. But one can replace $t^{-i}$ with $t^{-ip^k}$ and read the same coefficients. This means that $\widetilde{\psi_k}$ is indeed a strict homomorphism. Let $n\geq 2$. In order to verify that~$\widetilde{\psi_k}$ is $\rho^{n-1}$-invariant, for $u\in \WW^{\leq n-1}$, we will construct an isomorphism of $\widetilde{\psi_k}(x+\rho^{n-1}(u))$ and $\widetilde{\psi_k}(x)$. 
		Let $(\sum u_{ij}t^{-i})_j$ be the  simple element corresponding to~$u$ and $(\sum x_{ij}t^{-i})_j$ the one corresponding to~$x$. The element $\rho^{n-1}(u)$ corresponds to $\wp_e(u).$ 
		Let $(\sum c_{ij}t^{-i})_j:=(\sum x_{ij}t^{-i})_j+\wp_e(u)$. The fact that $$\wp_e\bigg(\bigg(\sum u_{ij}t^{-ip^k}\bigg)_j\bigg)=\bigg(\sum a_{ij}t^{-ip^k}\bigg)_j-\bigg(\sum x_{ij}t^{-ip^k}\bigg)_j$$ shows that $(\sum u_{ij}t^{-ip^k})_j$ is an isomorphism between $(\sum x_{ij}t^{-ip^k})_j=\widetilde{\psi_k}(x)$ and $(\sum c_{ij}t^{-ip^k})_j=\widetilde{\psi_k}(x+\rho^{n-1}(u))$. The proof when $n=1$ is established analogously.
	\end{proof}
	For a scheme~$X$, let us denote by $F_X$ the Frobenius morphism $F_X:X\to X$. It is immediate that the following diagram is commutative \[\begin{tikzcd}
		{W_e} & {W_e} \\
		{W_e} & {W_e.}
		\arrow["{F_{W_e}}", from=1-1, to=1-2]
		\arrow["{\wp_e}"', from=1-1, to=2-1]
		\arrow["{\wp_e}", from=1-2, to=2-2]
		\arrow["{F_{W_e}}"', from=2-1, to=2-2]
	\end{tikzcd}\] For $n\geq 2$ this gives the commutativity of the diagram:
	\[\begin{tikzcd}
		{(\mathbb W^{\leq n-1})^*} & {(\mathbb W^{\leq n})^*} \\
		{(\mathbb W^{\leq n-1})^*} & {(\mathbb W^{\leq n})^*}
		\arrow["{(\rho^{n-1})^*}", from=1-1, to=1-2]
		\arrow["{F_{(\mathbb W^{\leq n-1})^*}}"', from=1-1, to=2-1]
		\arrow["{F_{(\mathbb W^{\leq n})^*}}", from=1-2, to=2-2]
		\arrow["{(\rho^{n-1})^*}"', from=2-1, to=2-2]
	\end{tikzcd}\]
	and the vertical maps are homomorphisms of algebraic groups.
	Lemma~\ref{psikinv} and the fact that $(\rho^{n-1})^*=\rho^{n-1}|_{\{0\}\times (\WW^{\leq n-1})^*}$ provide a homomorphism of group stacks $${\phi_k}:\CZN\to \Delta_G $$ $${\psi_k}:\WW^{\leq n}/_{\rho^{n-1}}\WW^{\leq n-1}=(W_e/\wp_eW_e)\times \CZN \to \Delta_G$$ for $n\geq 1$ and $k\geq 1$. One has that $W_e/\wp_eW_e$ is none other than $B(\ZZ/p^e\ZZ)$. Moreover, one has an arrow
	$${\psi_{k+1}}\circ (\Id_{B(\ZZ/p^e\ZZ)}\times F_{\CZN})(a_0,a)\xrightarrow{-{\phi_k}(a)}\psi_k(a_0,a)$$
	which provides a homomorphism of commutative group stacks $$\psi:B(\ZZ/p^e\ZZ)\times \CZN^{\infty}\to \Delta_G,$$
	where $\CZN^{\infty}$ stands for the ind-perfection. Let us write $\psi^n$ of $\phi^n$ when we want to stress the dependence on~$n$.  One has an inductive system of categories fibered in $2$-groups $$(B(\Zpez)\times \CCC(\leq 1)^{\infty}\to B(\Zpez)\times \CCC(\leq 2)^{\infty} \to\cdots$$ such that the transition maps commute with $\psi^n$. 
	\begin{thm}\label{cycequiv}
		One has an equivalence \begin{align*}\psi:=\varinjlim_n\psi^n:\varinjlim_n\big(B(\Zpez)\times\CZN^{\infty}\big)&=B(\Zpez)\times \varinjlim_n\big(\CZN^{\infty}\big) \\&\xrightarrow{\sim}\Delta_{\Zpez}
		\end{align*}	of categories fibered in $2$-groups.
	\end{thm}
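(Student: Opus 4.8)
\emph{Proof proposal.}
The plan is to exploit that $\psi$ is, by construction, a homomorphism of categories fibered in $2$--groups (in fact of ind--strictly--commutative Picard stacks) over $\FF_q$--schemes; such a homomorphism is an equivalence precisely when it induces isomorphisms on the two homotopy sheaves, the sheaf $\pi_1$ of automorphisms of the unit and the sheaf $\pi_0$ of isomorphism classes. Since both sides are filtered colimits over~$n$ and $\pi_0,\pi_1$ commute with filtered colimits, it suffices to show that for every affine $\FF_q$--scheme $\Spec A$ the map $\varinjlim_n\pi_i\big(B(\Zpez)\times\CZN^{\infty}\big)(A)\to\pi_i(\Delta_{\Zpez})(A)$ is bijective for $i=0,1$. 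I will use the identifications $\Delta_{\Zpez}\simeq\widetilde{\mathfrak W}_e\simeq\mathfrak W_e$ of the previous lemmas together with Lemma~\ref{awnz} and the sequence $0\to\Zpez\to W_e\xrightarrow{\wp_e}W_e\to0$, which give $\pi_1(\Delta_{\Zpez})=\underline{\Zpez}$, $\pi_0(\Delta_{\Zpez})(A)=W_e(A\llparenthesis t\rrparenthesis)/\wp_e W_e(A\llparenthesis t\rrparenthesis)$, and $B(\Zpez)=W_e/\wp_eW_e$. The case $i=1$ is immediate: the source has $\pi_1=\underline{\Zpez}$ (the factor $\CZN^{\infty}$ being schematic), and since $\widetilde{\psi_k}$ has the shape $(a_0,a)\mapsto a_0+\widetilde{\phi_k}(a)$ with $\widetilde{\phi_k}$ defined on the scheme $(\WW^{\leq n})^{*}$, the restriction of $\psi$ to $B(\Zpez)$ is the base--change homomorphism $B(\Zpez)\to\Delta_{\Zpez}$ along $A\mapsto A\llparenthesis t\rrparenthesis$, which induces the identity on automorphism sheaves. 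So the whole content lies in the bijectivity on $\pi_0$.

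For surjectivity on $\pi_0$, take $w\in W_e(A\llparenthesis t\rrparenthesis)$. By the essential surjectivity in Lemma~\ref{wedelta}, $w$ is $\wp_e$--equivalent to a very simple element, which therefore lies in some $(W^{\leq N}_e)^{*}(A\llparenthesis t\rrparenthesis)$; write it as a Witt sum $c+w'$ with $c\in W_e(A)$ its $t^0$--part and $w'\in(\WW^{\leq N})^{*}(A)$ polar. The class of~$c$ in $W_e(A)/\wp_e W_e(A)$ will be the $B(\Zpez)$--coordinate. To handle~$w'$, run the Artin--Schreier--Witt reduction coordinate by coordinate as in the proof of Lemma~\ref{wedelta} (via \cite[Lemma~4.11]{tonini-yasuda}), but carried one step further: subtract $\wp_e$ of polar very simple elements so as to delete every monomial whose $t$--exponent is divisible by~$p$. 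Lemma~\ref{degwittaddition} (and Lemma~\ref{wittbig}) controls the weighted degrees, hence the poles, and shows that each such step only requires extracting a $p$--th root of a coefficient and strictly lowers the pole; the process therefore terminates, stays inside $(\WW^{\leq N})^{*}$, and, after a common number~$m$ of $p$--th root extractions, produces a polar very simple element $\tilde a_{\mathrm{red}}$ with all exponents prime to~$p$ and coefficients in $A^{\mathrm{perf}}$ representable at level~$m$ by coefficients $c_{ij}\in A$. The telescoping identity $\sum_{l=0}^{m-1}\wp_e\big(F^{l}(\tilde a_{\mathrm{red}})\big)=F^{m}(\tilde a_{\mathrm{red}})-\tilde a_{\mathrm{red}}$ (with $F$ the Frobenius, so $\wp_e=F-\Id$), combined with $F^{m}(\tilde a_{\mathrm{red}})=\widetilde{\phi_m}\big((c_{ij})_{i,j}\big)$, shows that $\widetilde{\phi_m}$ evaluated on the $A$--point $(c_{ij})_{i,j}$ of $(\WW^{\leq N})^{*}$ represents $\tilde a_{\mathrm{red}}$, hence~$w'$, modulo $\wp_e$. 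Thus $\overline w=\psi^{N}(\overline c,a)$ with $a\in\CZN^{\infty}(A)$ the class of $(c_{ij})_{i,j}$ placed at level~$m$.

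For injectivity on $\pi_0$, suppose $(\overline c,a)$ lies in the kernel, with $\tilde a$ a representative of~$a$ at level~$k$, a point of $(\WW^{\leq n})^{*}(A)$, so that $c+\widetilde{\phi_k}(\tilde a)=\wp_e(v)$ for some $v\in W_e(A\llparenthesis t\rrparenthesis)$. By the fullness part of Lemma~\ref{wedelta} (i.e.\ Lemma~\ref{vot}(2)) we may take~$v$ very simple. On very simple elements the assignment $t^{-1}\mapsto 0$ is a ring homomorphism to~$A$, so taking $t^0$--parts in the displayed equality, and using that $\widetilde{\phi_k}(\tilde a)$ is polar, gives $c=\wp_e(v_{\mathrm{const}})$, whence $\overline c=0$; subtracting, $\widetilde{\phi_k}(\tilde a)=\wp_e\big(v\ominus v_{\mathrm{const}}\big)$ with $v\ominus v_{\mathrm{const}}$ polar very simple. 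Replacing $\tilde a$ by its Artin--Schreier--Witt reduced form $\tilde a_{\mathrm{red}}$ and using the homomorphism property of $\widetilde{\phi_k}$ from Lemma~\ref{psikinv} together with the telescoping identity above, one obtains that $\tilde a_{\mathrm{red}}$ is a $\wp_e$--exact polar very simple element with all exponents prime to~$p$. By the injectivity of $\wp_e$ on the groups $(\WW^{\leq m})^{*}$ recalled above --- equivalently, by uniqueness of the Artin--Schreier--Witt reduced form --- such an element vanishes, and tracing back through the reduction and the colimits this forces $a=0$ in $\varinjlim_n\CZN^{\infty}(A)$.

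The heart of the argument, and the main obstacle, is the $\pi_0$ statement, and inside it making the Artin--Schreier--Witt reduction uniform over an \emph{arbitrary} $\FF_q$--algebra~$A$ (with nilpotents): one must check that the reduction terminates, that it never leaves a fixed $\CZN$ so that the colimit over~$n$ captures every torsor, that the $p^{k}$--rescalings built into the $\widetilde{\phi_k}$ match exactly the passage to the ind--perfection $\CZN^{\infty}$ (this is where Lemmas~\ref{degwittaddition}, \ref{wittinverse}, \ref{wittbig} and the telescoping property of $\wp_e$ do the work), and that uniqueness of the reduced form survives base change to $A^{\mathrm{perf}}$. The remaining points --- that $\varinjlim_n\big(B(\Zpez)\times\CZN^{\infty}\big)=B(\Zpez)\times\varinjlim_n\CZN^{\infty}$ and that the $\psi^{n}$ are compatible with the transition maps --- are formal, following from Lemma~\ref{psikinv}.
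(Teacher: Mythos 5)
Your framing via the homotopy sheaves $\pi_0,\pi_1$ of Picard stacks is a legitimate alternative to the paper's direct verification of essential surjectivity, faithfulness, and fullness, and your $\pi_1$ step is fine. The problem is in the $\pi_0$ step, and it is exactly the issue you flag as ``the main obstacle'' but then do not overcome.

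Your Artin--Schreier--Witt reduction runs in the wrong direction. You propose to kill every monomial with $p$--divisible $t$--exponent by subtracting $\wp_e$--exact elements, which as you note requires extracting a $p$--th root of a coefficient at each step. Over an arbitrary $\FF_q$--algebra $A$ those roots simply do not exist in~$A$; the reduced element $\tilde a_{\mathrm{red}}$ and the intermediate terms $F^l(\tilde a_{\mathrm{red}})$, $0\le l<m$, have coefficients in $A^{1/p^m}$, not in~$A$. Consequently the telescoping identity $F^m(\tilde a_{\mathrm{red}})-\tilde a_{\mathrm{red}}=\sum_{l=0}^{m-1}\wp_e(F^l(\tilde a_{\mathrm{red}}))$ is an equality in $W_e(A^{1/p^m}\llparenthesis t\rrparenthesis)$, and the conclusion $\widetilde{\phi_m}((c_{ij}))\equiv w'$ holds only modulo $\wp_eW_e(A^{1/p^m}\llparenthesis t\rrparenthesis)$. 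That does not give surjectivity of $\pi_0(\psi)(A)$, which is a statement about the quotient $W_e(A\llparenthesis t\rrparenthesis)/\wp_e W_e(A\llparenthesis t\rrparenthesis)$; passing to $A^{\mathrm{perf}}$ is not harmless, since that ring is not faithfully flat over $A$ in general (it kills nilpotents), so one cannot descend the equivalence. The same difficulty infects your $\pi_0$--injectivity paragraph, where you again replace $\tilde a$ by its reduced form. The paper's resolution is precisely the opposite move: it \emph{raises} coefficients to $p$--th powers, pushing the $t$--exponents \emph{up} to a common level $i'p^{r}$, so every auxiliary element subtracted is of the form $u^p-u$ with $u$ still having coefficients in~$A$. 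This upward direction is what matches the transition maps $F$ in the ind--perfection $\CZN^{\infty}=\varinjlim(\CZN\xrightarrow{F}\CZN\to\cdots)$: one represents the torsor at the \emph{higher} level $r$, rather than trying to descend to a lower one. Without that reversal of direction your surjectivity argument does not close.

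For injectivity you would also want the coefficient analysis carried out in the paper's fullness step (using the linearity of $S_j$ and $I_j$ in the top variable, Lemmas~\ref{degwittaddition} and~\ref{wittinverse}) to show that any $u\in W_e(A\llparenthesis t\rrparenthesis)$ with $\psi_k(x)+\wp_e(u)=\psi_k(y)$ has $u_{ij}=0$ whenever $v_p(i)<k$; appealing to ``uniqueness of the reduced form'' hides exactly this computation and again presupposes a well--defined reduction over~$A$.
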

	\begin{proof} We need to verify that $\varinjlim_n\psi^n$ is essentially surjective and fully faithful.
		
		{\it Essential surjectivity.} Let $$a:=(a_j)_{j}=\bigg(\sum_{i\in\NN\cup\{0\}}a_{ij}t^{-i}\bigg)_j\in\Delta_{\Zpez}.$$ First, note that for $j=0\doots e-1$, one can inductively choose elements~$c_j$ such that the free term of the entries $0\doots j$ of $$a-(c_0,0\doots 0)-(0,c_1\doots ,0)-\cdots (0\doots c_j, 0\doots 0)$$are~$0$. Indeed, choose $c_0=-a_{00}$, then the free term of the $0$-th entry of $a-(c_0,0\doots 0)$ is $0$, then choose $c_1$ so that the free term of $1$-st entry of $a-(c_0,0\doots 0)-(0,c_1,0\doots 0)$ is~$0$, etc.\ Thus, up to replacing~$a$ by $a-(c_0\doots c_{e-1})$ we can assume that~$a$ is very simple. We now show that~$a$ is isomorphic to an element of form $b=(\sum_{i\in\NN'} b_{ij}t^{-ir_j})_j$ with $r_j$ powers of~$p$. We do it by induction on $j=0\doots e-1$. Suppose $j=0$. If $0$-th coordinate of~$b$ is~$0$ we are done, and if not, set $$r_0:=p^{\max_{a_{i0}\neq 0}v_p(a_i)}.$$ Set $i':=i/p^{v_p(i)}$. The element:\begin{align*}w(a_0):&=\sum_{a_{i0}\neq 0}(a_{i0})^{p^{r_0-v_p(i)}}t^{-i'p^{r_0}}-(a_{i0})t^{-i}\\
			&=\sum_{a_{i0}\neq 0} \sum_{k=0}^{r_0-v_p(i)-1}(a_{i0})^{p^{k+1}}t^{-i'p^{k+1}}-(a_{i0})^{p^{k}}t^{-ip^k}
		\end{align*}is a sum of the elements of the form $u^p-u$, hence itself is of the form $u^p-u$. We have that $a+(w(a_0),0\doots 0)=(a_0'\doots a_{e-1}')$ with $a_0'$ of the wanted form and $(a_j')_j\simeq (a_j)_j$. Now we add $(0,w(a_1'),0\doots ,0)$ and obtain an element where the $0$-th and $1$-st coordinate are of the wanted form. We continue inductively to reach an element of the wanted form. Now suppose that $b=\sum_{i\in\NN'}b_{ij}t^{-r_j}=:(b_j)_j$ with $r_j$ powers of~$p$. Let us establish that we can choose $r_j$ the same. Indeed, let $r $ be the largest among $r_j$. One has  $$b=(b_j)_j:=\sum_{j=0}^{e-1}((b_j)_{k=j}, 0\doots 0).$$ We have that $$((b_j)_{k=j}, (0)_{k\neq j})\simeq ((b_j^p)_{k=j},(0)_{k\neq j})\simeq\cdots \simeq ((b_j^{p^{r-r_j}})_{k=j}, (0)_{k\neq j}).$$ Here~$p$ in exponent means taking $p$-th power and not taking element at this index. The claim is established. To prove essential surjectivity, it suffices to observe that $$(\sum _{i\in\NN'}b^{p^r}_{ij}t^{-ip^r})_j$$ is the image  of the element $$\widehat b:=(b^{p^r}_{ij})_{i,j}\in \WW^{\leq n},$$where $$n\geq 1+\max_{j=0\doots e-1} (\max_{i: b^{ip^r}_j\neq 0} p^{e-j-1}i),$$ under  the map $\widetilde{\phi_r}.$ 
		
		{\it Faithfulness.}  Consider two elements $\overline{(b_0,b)}$ and $\overline{c_0,c}$ of $\varinjlim_n(B(\ZZ/p^e\ZZ)\times\CZN)$ and two morphisms $\overline{u},\overline{v}$ between them. Suppose $\psi(\overline u)=\psi(\overline v)$ Let~$n$ be sufficiently big so that there exists $(b_0,b),(c_0,c)\in B(\Zpez)\times\CZN^{\infty}$ and $u,v\in \Hom(b,c)$  mapping to the two elements and two morphisms, respectively.  Let $k$ be such that $(b_0,b),(c_0,c),u,v$ are in the image of $k$-th copy of $B(\Zpez)\times\CZN$ for the canonical map $B(\Zpez)\times\CZN\to B(\Zpez)\times\varinjlim_n\CZN.$ Let $\widetilde x$ denotes a preimage of~$x$ for $x\in\{b,c,u,v\}$. But $\CZN$ is a scheme, thus $\widetilde b=\widetilde c$. We also deduce $b_0+\wp_e(\widetilde u)-\widetilde u=c_0$ and $b_0+\wp_e(\widetilde v)-\widetilde v=c_0$. In particular, one has that $u,v$ are constants. Now $\psi(\overline u)=\psi (\overline v)$ implies $\psi_k(u)=\psi_k(v)$ which implies $u=v$. The faithfulness follows.
		
		{\it Fullness.} Take two elements in~$\Delta_{\Zpez}$. Up to changing them by isomorphic elements, they are necessarily of the form $\psi_k(x),\psi_k(y)$ with $x,y\in (B(\Zpez)\times \CZN)$ for some $k\geq 1$ and some $n\geq 1$. The set of isomorphisms is $\Hom(\psi_k(x),\psi_k(y))=\{u|\hspace{0,1cm}\psi_k(x)+\wp_e(u)=\psi_k(y)\}$. Thanks to Lemma~\ref{wedelta}, one has $u=(\sum_{i\geq 0} u_{ij}t^{-i})_j$ for certain $u_{ij}$. We prove that whenever $u^i_j\neq 0$, then $v_p(i)\geq k$. Suppose there exists $(i,j)$ such that $u^i_j\neq 0$ and $v_p(i)<k$. Choose the smallest possible~$j$ and then choose~$i$ with the smallest value of $v_p(i)<k$. The $j$-th entry of $\wp_e(u)$ is equal to $$S_j(u^p_0\doots u^p_j, I_0(u_0)\doots I_j(u_0\doots u_j)),$$where $p$ in the exponent means taking $p$-th power, while $I_k$ is the $k$-th polynomial for the additive inverse. By Lemma~\ref{wittinverse}, one has that $I_j$ is linear in~$X_j$. Moreover, by Lemma~\ref{degwittaddition}, one has that~$S_j$ is linear in~$X_j$ and~$Y_j$. Thus the formula for the $j$-th entry of $\wp_e$ is linear in~$X_j$ and~$Y_j$. By using the minimality of $v_p(i)$, we have that the coefficient in front of~$t^{-i}$ in the $j$-th coordinate of $\wp_e(u)$ is $-u^i_j$. Finally, by using that $\psi_k(x)$ is of the form $(\sum_{i'\in\NN'}d_{ij}t^{-i'p^k})_j$ and by using that the formula for~$j$-th term of $S_j$ is linear in $X_j$ and $Y_j$, hence, the $j$-th term of $\psi_k(x)+u$ has the coefficient in front of~$t^{-i}$ also $u_{ij}$, in particular non-zero. We deduce a contradiction, as $\psi_k(y)$ is also of form $(\sum_{i'\in\NN'} c^{}_{i'j}t^{-{i'}p^k})_j$ and $v_p(i)<k$.  Hence, one has that $u=(\sum_{i\geq 0}u_{ij}t^{-i})$ with the condition that $u^i_j=0$ whenever $v_p(i)<k$. Then, one has that $u$ is the image of $(u_{ij})_{i,j}\in\WW^{\leq n-1}$. The proof is completed.
	\end{proof}
	\subsubsection{}Let us now treat the case of a general finite $p$-group $G=\prod_{u=1}^d\ZZ/p^{e_u}\ZZ$. We set $$(\WW^{\leq n}_G)^*:=\prod_{u=1}^d(\WW^{\leq n}_{\ZZ/p^{e_u}\ZZ})^*$$ and $$\WW^{\leq n}_G=W_G\times (\WW^{\leq n}_G)^*.$$
	We have homomorphisms $$\rho^n_{G}=\prod_{u=1}^e\rho^{n}_{\ZZ/p^{e_u}\ZZ}:(\WW^{\leq n}_G)\to (\WW^{\leq n+1}_G)^*.$$
	We set \begin{align*}\CCC_G(\leq n):=(\WW^{\leq n+1}_G)^*/_{\rho^n_G}\WW^{\leq n}_G=\prod_{u=1}^d\CCC_{\ZZ/p^{e_u}\ZZ}(\leq n).
	\end{align*}
	We also set $\CCC(\leq 1):=\Spec(\FF_q)$.  By setting~$\psi_G$ to be the product of the corresponding maps~$\psi$ for each~$u$, we obtain the following corollary. 
	\begin{cor}\label{strdeltag}
		One has that
		$$\psi_G: BG\times\varinjlim_n \CCC_G(\leq n)^{\infty}\xrightarrow{\sim}\Delta_G$$
		is an equivalence of categories fibered in $2$-groups.
	\end{cor}
	\subsubsection{} Let us prove a functoriality. Given a homomorphism of finite abelian $p$-groups $f:G\to H$, we have an induced homomorphism of categories fibered in $2$-groups $\Delta_f:\Delta_G\to\Delta_H$. We will write~$\Delta_f$ as a limit of certain functors. 
	Given $n\geq 1$, let~$W_n$ or $W_{\ZZ/p^n\ZZ}$ denote the group scheme of Witt vectors of length~$n$ \cite[Page 215]{MotivicIntegration}. When $n=0$, one should understand~$W_0$ to be the trivial group scheme. Set $\wp_n:W_n\to W_n$ for the homomorphism $x\mapsto F(x)x^{-1}$, where $F(x)$ denotes the Frobenius homomorphism. One has that $\ker (\wp_n)=W_n(\FF_p).$ By Lang theorem \cite[Corollary of Theorem 1]{Langtheorem} the sequence
	$$1\to \ZZ/p^n\ZZ=W_n(\FF_p) \to W_n\xrightarrow{\wp_n}  W_n\to 1$$
	is exact. 
	For a finite abelian $p$-group $G=\prod_{u=1}^d (\ZZ/p^{e_u}\ZZ),$ we set $$W_G:=\prod_{i=1}^dW_{\ZZ/p^{e_u}\ZZ}.$$ We deduce an exact sequence:
	$$1\to G=W_G(\FF_p)\to W_G\xrightarrow{\wp_G}W_G\to 1,$$
	where $\wp_G$ is the product of morphisms $\wp_{e_u}$. 
	In this subsection, for any homomorphism~$\phi$ of finite abelian $p$-groups, we construct a homomorphism~$W_{\phi}$ of certain group schemes of Witt vectors such that $W_{\phi}(\FF_p)=\phi$. 
	\subsubsection{}\label{cyclicwphi} 
	Let $n\geq 1$ be an integer and let $G=\ZZ/p^n\ZZ$. 
	Whenever $n\geq k\geq 1$, we have a canonical surjection $$s_{n,k}:\ZZ/p^{n}\ZZ\twoheadrightarrow \ZZ/p^k\ZZ,\hspace{1cm}x+p^n\ZZ\mapsto x+p^k\ZZ.$$Write~$W_{s_{n,k}}$ for the surjection of Witt group schemes $$W_{s_{n,k}}:W_{n}\twoheadrightarrow W_k\hspace{1cm}(a_j)_{j=0}^{n-1}\mapsto (a_{j})_{j=0}^{k-1}.$$ Whenever $n\geq k\geq 1$, we have a canonical injection $$i_{k,n}:\ZZ/p^k\ZZ\hookrightarrow\ZZ/p^n\ZZ,\hspace{1cm}x+p^k\ZZ\mapsto p^{n-k}x+p^n\ZZ.$$
	One has that $W_{s_{n,k}}(\FF_p)=s_{n,k}.$ 
	Write~$W_{i_{k,n}}$ for the inclusion of Witt group schemes $$W_{i_{k,n}}:W_{k}\hookrightarrow W_{n},\hspace{1cm} (a_j)_{j=0}^{k-1}\mapsto ((0)_{j=0}^{n-k-1},(a_{j-(n-k)})_{j=n-k}^{n-1})$$ 
	One has that $W_{i_{k,n}}(\FF_p)=i_{k,n}$. Moreover, if $n=k$, then $i_{n,n}=s_{n,n}=\Id_{\ZZ/p^n\ZZ}$ and $W_{i_{n,n}}=W_{s_{n,n}}=\Id_{W_{n}}$. 
	\begin{mydef}
		Let $G=\prod_{u=1}^d\ZZ/p^{e_u}\ZZ$ be a finite abelian $p$-group. We define a type of a homomorphism $\phi:G\to H$, with $H$ finite abelian $p$-group.
		\begin{enumerate}
			\item If $H=\ZZ/p^h\ZZ$ is cyclic, then the type of $\phi:G\to H$ is given by $d$-tuple of pairs $$\type(\phi):=\big(\phi((0)_{i\neq u}, (\overline{1})_{i=u}), \rho_{e_u,h}\big)_{u=1}^d,$$ where $\overline 1$ denotes the image of~$1$ in~$\ZZ/p^{e_u}\ZZ,$ and
			$$\rho_{e_u,h}:=\begin{cases}s_{e_u,h},\text{ if $e_u\geq h$}\\i_{e_u,h}, \text{otherwise}  \end{cases}.$$
			\item More generally, if $H=\prod_{h=1}^r(\ZZ/p^{g_h}\ZZ),$ for a homomorphism $\phi:G\to H$, let $\phi_h:G\to \ZZ/p^{g_h}\ZZ$ be such that $\phi=\prod_{h=1}^r\phi_h$. We define $$\type(\phi):=(\type(\phi_h))_{h=1}^r.$$
		\end{enumerate}
	\end{mydef}
	It is immediate from the definition that $\type$ is well-defined for any homomorphism of finite abelian $p$-groups.
	For an element $k\in\ZZ/p^h\ZZ$, we denote by $\widetilde k\in[0,\doots p^h-1]$ the element the image of which under $\ZZ\to\ZZ/p^h\ZZ$ is~$k$. For $m,n\in\NN$, we have a homomorphisms of group schemes $$m\times-:W_n\to W_n,\hspace{1cm}x\mapsto mx$$ and $$+^m:W_n^m\to W_n^m,\hspace{1cm}(x_i)_{i=1}^n\mapsto \sum_{i=1}^nx_i.$$
	We may write $m\times-$ and $+^m$ for $(m\times-)(\FF_p)$ and $+^m(\FF_p)$, respectively.
	\begin{mydef}
		Given a homomorphism $$\phi:G=\prod_{u=1}^d\ZZ/p^{e_u}\ZZ\to H=\prod_{h=1}^r\ZZ/p^{g_h}\ZZ$$ of the type $$((k_u, \rho_{e_u,g_h})_{u=1}^d)_{h=1}^r ,$$  we define $W_\phi:W_G\to W_H$ by $$W_{\phi}:=\prod_{h=1}^r+^{d}\circ ((({\widetilde{k_u}\times-})\circ W_{\rho_{e_u,g_h}})_{u=1}^d)$$
	\end{mydef}
	\begin{lem} \label{phighw} For any homomorphism of finite abelian $p$-groups $\phi:G\to H$, one has that  $$W_{\phi}(\FF_p)=\phi.$$ 
		Moreover, one has that
		\[\begin{tikzcd}
			0 & G & {W_G} & {W_G} & 0 \\
			0 & H & {W_H} & {W_H} & 0
			\arrow[from=1-1, to=1-2]
			\arrow[from=1-2, to=1-3]
			\arrow["\phi"', from=1-2, to=2-2]
			\arrow["{\wp_G}", from=1-3, to=1-4]
			\arrow["{W_{\phi}}"', from=1-3, to=2-3]
			\arrow[from=1-4, to=1-5]
			\arrow["{W_{\phi}}"', from=1-4, to=2-4]
			\arrow[from=2-1, to=2-2]
			\arrow[from=2-2, to=2-3]
			\arrow["{\wp_H}", from=2-3, to=2-4]
			\arrow[from=2-4, to=2-5]
		\end{tikzcd}\]
		is commutative.
	\end{lem}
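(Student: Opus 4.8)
The plan is to prove the two assertions separately, reducing each to facts already recorded: the behaviour on $\FF_p$-points of the elementary building blocks $W_{s_{n,k}}$, $W_{i_{k,n}}$, $m\times-$ and $+^m$, and the naturality of Frobenius.

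\emph{The identity $W_\phi(\FF_p)=\phi$.} Taking $\FF_p$-points commutes with products of group schemes and with composition of morphisms, and both $\phi=\prod_{h}\phi_h$ and $W_\phi=\prod_h W_{\phi_h}$ are assembled componentwise over the cyclic factors of $H$; so I first reduce to the case $H=\ZZ/p^{g}\ZZ$, where $W_\phi={+^d}\circ\big((\widetilde{k_u}\times-)\circ W_{\rho_{e_u,g}}\big)_{u=1}^{d}$. Evaluating on $\FF_p$-points and inserting the recalled identities $+^d(\FF_p)=(\text{summation})$, $(\widetilde{k_u}\times-)(\FF_p)=(\text{multiplication by }\widetilde{k_u})$, $W_{s_{n,k}}(\FF_p)=s_{n,k}$ and $W_{i_{k,n}}(\FF_p)=i_{k,n}$, one gets $W_\phi(\FF_p)\colon(x_u)_u\mapsto\sum_{u=1}^{d}\widetilde{k_u}\,\rho_{e_u,g}(x_u)$. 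To finish, observe that a homomorphism out of $\prod_u\ZZ/p^{e_u}\ZZ$ is determined by, and may be prescribed arbitrarily on, the standard generators $\mathbf e_u$, and that $\type(\phi)$ encodes exactly the relation $\phi(\mathbf e_u)=\widetilde{k_u}\,\rho_{e_u,g}(\overline 1)$ in $\ZZ/p^{g}\ZZ$ — which is checked in the two cases $e_u\ge g$, where $\rho_{e_u,g}=s_{e_u,g}$ is reduction mod $p^{g}$, and $e_u<g$, where $\rho_{e_u,g}=i_{e_u,g}$ is multiplication by $p^{\,g-e_u}$. Additivity of both $\phi$ and $W_\phi(\FF_p)$ then gives equality on all of $G$, and re-assembling over $h$ gives the general case.

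\emph{Commutativity of the diagram.} First note that $W_\phi$ is a homomorphism of group schemes (being built by products, sums and compositions of the homomorphisms $W_{s_{n,k}}$, $W_{i_{k,n}}$ and $m\times-$) and is defined over $\FF_p$, indeed over $\ZZ$. Consequently the $p$-power Frobenius $F$ is natural for it: $F_{W_H}\circ W_\phi=W_\phi\circ F_{W_G}$, since $p$-th power maps commute with $\FF_p$-algebra homomorphisms. Writing $\wp_G=F_{W_G}-\Id$ and $\wp_H=F_{W_H}-\Id$ and using additivity of $W_\phi$, we get
\[
\wp_H\circ W_\phi \;=\; F_{W_H}\circ W_\phi-W_\phi \;=\; W_\phi\circ F_{W_G}-W_\phi \;=\; W_\phi\circ\wp_G ,
\]
which is the commutativity of the middle square (the right one being trivial). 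For the left square: $\ker\wp_G=W_G(\FF_p)=G$ and $\ker\wp_H=W_H(\FF_p)=H$, and the restriction of the morphism $W_\phi$ to $\FF_p$-points is $\phi$ by the first part, so the square of inclusions commutes; exactness of the rows is Lang's theorem, recalled above.

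\emph{Main obstacle.} Conceptually the argument is light; the only real work is the bookkeeping in the first part — tracking the type datum through the composite $(\widetilde{k_u}\times-)\circ W_{\rho_{e_u,g}}$ and verifying the relation $\phi(\mathbf e_u)=\widetilde{k_u}\,\rho_{e_u,g}(\overline 1)$ separately for $e_u\ge g$ and $e_u<g$ (in the latter case one must be attentive to which integer lift is intended, namely the one forced by this relation). The commutativity of the $\wp$-squares is then purely formal, resting only on naturality of Frobenius and the fact that $W_\phi$ is an $\FF_p$-homomorphism.
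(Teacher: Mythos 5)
Your proposal is correct and takes essentially the same approach as the paper's proof: both reduce $W_\phi(\FF_p)=\phi$ to the elementary facts that the building blocks $W_{\rho_{e_u,g_h}}$, $\widetilde{k_u}\times-$, and $+^d$ have the expected $\FF_p$-points, and both derive the $\wp$-square from the fact that $\wp = F\cdot(\Id)^{-1}$ and the Frobenius is natural for homomorphisms of $\FF_p$-group schemes. You supply somewhat more detail (the explicit formula $(x_u)_u\mapsto\sum_u\widetilde{k_u}\,\rho_{e_u,g}(x_u)$ and the generator check in the two cases $e_u\ge g$ vs.\ $e_u<g$) where the paper simply asserts "by taking the products we obtain $W_\phi(\FF_p)=\phi$"; you are also right to flag that in the $e_u<g$ case the integer lift $\widetilde{k_u}$ must be understood as the lift in $[0,p^{e_u}-1]$ of the preimage of $\phi(\mathbf e_u)$ under $i_{e_u,g}$ — this matches the paper's later convention following the definition of $\WW_\phi^{\le n}$ — but this is a matter of unpacking the paper's notation, not a different argument.
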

	\begin{proof}
		Recall that $W_{\rho_{e_u,g_h}}$ are defined so that $W_{\rho_{e_u,g_h}}(\FF_p)=\rho_{e_u,g_h}.$
		The maps $\widetilde{k_u}\times -$ and $+^d $ clearly satisfy that $(\widetilde{k_u}\times -)(\FF_p)$ is the multiplication by $\widetilde k_u$ in $W_{g_h}(\FF_p)$ and the addition in $W_{g_h}(\FF_p)^d.$ By taking the products we obtain $W_{\phi}(\FF_p)=\phi$. We now verify the commutativity of the second square, that is $W_{\phi}\circ \wp_G=\wp_H\circ W_{\phi}$. But the map $\wp_G$ is the multiplication  of the Frobenius and the inverse map, hence commutes for a homomorphism of algebraic groups.
	\end{proof}
	By \cite[Section 1.4.2]{Dualite}, we obtain that:
	\begin{cor} Let $\phi:G\to H$ be a homomorphism of finite abelian $p$-groups. The homomorphism $\Delta_{\phi}:\Delta_G\to \Delta_H$ of categories fibered in $2$-groups is isomorphic to the homomorphism $$\Delta_{G}\simeq W_G/\wp_GW_G\to W_H/\wp_H W_H\simeq  \Delta_H$$ which is induced from the diagram in Lemma~\ref{phighw}.
	\end{cor}
	\subsubsection{}
	\begin{lem} For an integer~$m$, we set $$W^{\leq m}_G(A\llparenthesis t \rrparenthesis):=\prod_{u=1}^d W^{\leq m}_{e_i}(A\llparenthesis t \rrparenthesis).$$ Let $\phi:G\to H$ be  homomorphism of finite abelian $p$-groups. Let $m\geq 1$ be an integer. One has that $$W_{\phi}\big(W_G^{\leq m}(A\llparenthesis t \rrparenthesis)\big)\subset W_H^{\leq m}(A\llparenthesis t \rrparenthesis).$$
		One also has that $$W_{\phi}\big(W_G^{\leq m}(A\llparenthesis t \rrparenthesis)^*\big)\subset W_H^{\leq m}(A\llparenthesis t \rrparenthesis)^*.$$
	\end{lem}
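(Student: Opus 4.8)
The plan is to exploit the explicit description $W_{\phi}=\prod_{h=1}^{r}+^{d}\circ\big((\widetilde{k_u}\times-)\circ W_{\rho_{e_u,g_h}}\big)_{u=1}^{d}$ and to verify, block by block, that each constituent map carries simple elements to simple elements and very simple elements to very simple elements. Since $W^{\leq m}_G(A\llparenthesis t \rrparenthesis)=\prod_{u}W^{\leq m}_{e_u}(A\llparenthesis t \rrparenthesis)$ and $W^{\leq m}_H(A\llparenthesis t \rrparenthesis)=\prod_{h}W^{\leq m}_{g_h}(A\llparenthesis t \rrparenthesis)$ (and likewise for the very simple loci), and since the projections $W_G\to W_{e_u}$ trivially preserve these loci, both inclusions will follow once we know: that each $W_{\rho_{e_u,g_h}}$ maps $W^{\leq m}_{e_u}(A\llparenthesis t \rrparenthesis)$ into $W^{\leq m}_{g_h}(A\llparenthesis t \rrparenthesis)$; that $\widetilde{k_u}\times-\colon W_{g_h}\to W_{g_h}$ maps $W^{\leq m}_{g_h}(A\llparenthesis t \rrparenthesis)$ into itself; that $+^{d}\colon W_{g_h}^{d}\to W_{g_h}$ maps $\big(W^{\leq m}_{g_h}(A\llparenthesis t \rrparenthesis)\big)^{d}$ into $W^{\leq m}_{g_h}(A\llparenthesis t \rrparenthesis)$; together with the analogous statements for very simple elements.

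First I would dispose of the maps $W_{\rho_{e_u,g_h}}\colon W_{e_u}\to W_{g_h}$. When $e_u\geq g_h$ this is the truncation $W_{s_{e_u,g_h}}$, which only forgets the last coordinates; since $m\geq 1$ and $p^{e_u-1-j}\geq p^{g_h-1-j}$ for $0\leq j\leq g_h-1$, the hypothesis $-\ord(a_j)\leq (m-1)/p^{e_u-1-j}$ yields $-\ord(a_j)\leq (m-1)/p^{g_h-1-j}$, so simplicity is preserved, and obviously so is very simplicity. When $e_u<g_h$ this is the shift-inclusion $W_{i_{e_u,g_h}}$, which prepends $g_h-e_u$ zero coordinates; a zero coordinate has $-\ord=-\infty$, and a coordinate coming from position $j'$ of the source lands at position $j=j'+(g_h-e_u)$, for which $g_h-1-j=e_u-1-j'$, so its level bound is transported exactly. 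Padding by zeros keeps the element very simple.

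Then I would treat $\widetilde{k_u}\times-$ and $+^{d}$ uniformly via Lemma~\ref{wittbig}: the $j$-th coordinate of $\widetilde{k_u}\times-$ on $W_{g_h}$ is $(X)$-weighted homogeneous of weighted degree $p^{j}$, and the $j$-th coordinate of $+^{d}$ is $(\mathbf X_1\doots\mathbf X_d)$-weighted homogeneous of weighted degree $p^{j}$. The $-\ord$ estimate from the proof of Lemma~\ref{vsaddition} then applies with no change: feeding into such a polynomial Laurent series whose $i$-th argument has $-\ord\leq (m-1)/p^{g_h-1-i}$, each monomial $C\prod(\cdot)^{\alpha}$ with $\sum p^{i}\alpha=p^{j}$ produces a value with $-\ord\leq\sum\alpha\cdot (m-1)/p^{g_h-1-i}=(m-1)p^{j}/p^{g_h-1}=(m-1)/p^{g_h-1-j}$, using that $-\ord$ is subadditive for products, sub-max for sums, and unchanged under multiplication by an integer; the maximum over the monomials then gives the bound. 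Each such monomial has positive weighted, hence positive ordinary, degree, so these polynomials have no constant term; since the very simple locus consists of tuples of polynomials in $t^{-1}$ with vanishing constant term, it is stable as well. Assembling these facts over all $u$ and $h$ proves both assertions.

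I do not expect a genuine obstacle here: the content is essentially bookkeeping, the only mildly delicate points being the two elementary inequalities $p^{e_u-1-j}\geq p^{g_h-1-j}$ and the index identity $g_h-1-j=e_u-1-j'$ that govern truncation and shift-inclusion, and the observation that the weighted-homogeneity estimate of Lemma~\ref{vsaddition} is really a statement about weighted-homogeneous polynomials of weighted degree $p^{j}$, hence applies verbatim to the multiplication and addition polynomials.
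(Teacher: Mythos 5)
Your proposal is correct and takes essentially the same route as the paper: decompose $W_\phi$ into the constituent maps $W_{\rho_{e_u,g_h}}$, $\widetilde{k_u}\times-$, and $+^d$, and check each preserves simple and very simple elements. The only cosmetic difference is that you spell out the truncation/shift-inclusion bookkeeping that the paper calls an ``obvious property,'' and you invoke Lemma~\ref{wittbig} plus a redo of the $-\ord$ estimate where the paper simply cites Lemma~\ref{vsaddition} and (implicitly) iterates the binary case; both routes rest on the same weighted-homogeneity fact and yield the same bound.
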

	\begin{proof}
		Obviously, it suffices to treat the case when~$H=\ZZ/p^{h}\ZZ$ is cyclic. Let $(k_u,\rho_{e_u,h})_{u=1}^d$ be the type of~$\phi$. By using the obvious property that $W_{\rho_{e_u,h}}$ takes $W_{e_u}^{\leq m}(A\llparenthesis t \rrparenthesis)$ to $W^{\leq m}_h(A\llparenthesis t \rrparenthesis)$ and by using the two properties implied by Lemma~\ref{vsaddition} that $(\widetilde{k_u}\times -)$ takes $W_{h}^{\leq m}(A\llparenthesis t \rrparenthesis)$ to $W^{\leq m}_h(A\llparenthesis t \rrparenthesis)$ and that $+^{d}$ takes $W_{h}^{\leq m}(A\llparenthesis t \rrparenthesis)^d$ to $W_{h}^{\leq m}(A\llparenthesis t \rrparenthesis)$, we obtain that:
		\begin{align*}
			W_{\phi}\big(W^{\leq m}_{e_u}(A\llparenthesis t \rrparenthesis)\big)&=\big(+^{d}\circ\big(((\widetilde{k_u}\times -)\circ(W_{\rho_{e_u},h}))_{u=1}^d\big)\big) (W^{\leq m}_{e_u}(A\llparenthesis t \rrparenthesis))\\
			&\subset +^{d} \big((\widetilde{k_u}\times-)(W^{\leq m}_h(A\llparenthesis t \rrparenthesis))_{u=1}^d\big)\\
			&\subset +^{d}\big( W_h^{\leq m}(A\llparenthesis t \rrparenthesis)^d\big)\\
			&\subset W_h^{\leq m}(A\llparenthesis t \rrparenthesis).
		\end{align*} 
		Let us prove the analogous claim for $W_G^{\leq m}(A\llparenthesis t \rrparenthesis)^*$. Then clearly, one has that $W_{\rho_{e_u},h}$ takes $W_{e_u}^{\leq m}(A\llparenthesis t \rrparenthesis)^*$ to $W^{\leq m}_h(A\llparenthesis t \rrparenthesis)^*$.  One has that $\widetilde {k_u}\times -$ takes $W_{h}^{\leq m}(A\llparenthesis t \rrparenthesis)^*$ to $W_h^{\leq m}(A\llparenthesis t \rrparenthesis)^*$, because multiplication by $\widetilde {k_u}\times -$ is given by a polynomial without free coefficient. Similarly, the addition in $W_h^d$ is given by a polynomial without free coefficient, hence $+^d$ takes $((W_h^{\leq m}(A\llparenthesis t \rrparenthesis))^*)^d$ to $W_h^{\leq m}(A\llparenthesis t \rrparenthesis)^*$. The statement follows.
	\end{proof}
	Recall that for a ring~$A$, one has  $$(\WW_G^{\leq m})^*(A)=W_G^{\leq m}(A\llparenthesis t \rrparenthesis)^*\text { and }(\WW_H^{\leq m})^*(A)=W_H^{\leq m}(A\llparenthesis t \rrparenthesis)^*.$$
	From a homomorphism $W_\phi^{\leq m}(A\llparenthesis t \rrparenthesis):W^{\leq m}_G(A\llparenthesis t \rrparenthesis)^*\to W^{\leq m}_H(A\llparenthesis t \rrparenthesis)^*$, we deduce a group functor $\WW_{\phi}^{\leq m}: (\WW^{\leq m}_G)^*\to (\WW^{\leq m}_H)^*$ satisfying $\WW_{\phi}^{\leq m}(A)=W_\phi^{\leq m}(A\llparenthesis t \rrparenthesis)$, which, by the Yoneda lemma, is necessarily a morphism of group schemes. Moreover, if $G=\prod_{u=1}^d\ZZ/p^{e_u}\ZZ$, $H=\prod_{h=1}^g\ZZ/p^{e_h}\ZZ$ and  $$\type(\phi)=\prod_{h=1}^g(+^d\circ ((k_u\times -)\circ \rho_{e_u,e_h})_{u=1}^d),$$then 
	$$\WW_{\phi}^{\leq n}=\prod_{h=1}^g(+^d\circ((\widetilde k_u\times -)\circ \WW_{\rho_{e_u,e_h}})_{u=1}^d),$$where as usual $\widetilde{k_u}\in [0\doots p^{e_u}-1]$ is the lift of~$k_u\in\ZZ/p^{e_u}\ZZ$.  
	\subsubsection{} For any finite abelian $p$-group $G=\prod_{u=1}^d(\ZZ/p^{e_u}\ZZ)$ and any element $a\in(\WW_G^{\leq m})^*(\overline{\FF_q})$, where $m\geq 0$, we define a ``path'' from $0$ to~$a$. 
	Write $a=(a_u)_u,$ with $$a_u=(a_{uj})_{j=0}^{e_u}\in\prod_{j=0}^{e_u}\AAA^{\{x\in\NN|\hspace{0,1cm}x\leq (m-1)/p^{e-j}\}}(\overline{\FF_q}).$$
	We define an $\overline{\FF_q}$-morphism:$$E_{G,a}:\AAA^1\to(\WW_G^{\leq m})^*,\hspace{1cm} t\mapsto (t^{p^j}a_{uj})_{j=0}^{e_u}.$$
	We set $\eta_{G,a}:E_{G,a}(\overline{\FF_q})$ to be the image of this morphism on $\overline{\FF_q}$-points. When one identifies $(\WW_G^{\leq m})^*$ with closed subvariety of $(\WW_G^{\leq m+1})^*$ by setting the latter coordinates to be~$0$, it is clear that~$E_{G,a}$ and $\eta_{G,a}$ do not depend on the choice of~$m$, as the notation suggests.
	%
	\begin{lem}\label{pathsave} Let $m\geq 0$. Consider a homomorphism of finite abelian $p$-groups $\phi:G\to H$.  Let $a\in (\WW_G^{\leq m})^*(\overline{\FF_q}).$  One has that $$(\WW_{\phi}^{\leq m})^*(\eta_{G,a})=\eta_{H,(\WW_{\phi}^{\leq m})^*(a)}.$$
	\end{lem}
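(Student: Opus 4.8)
The plan is to reduce to the case where~$H$ is cyclic and then to check that $(\WW_\phi^{\leq m})^*$ is compatible, slot by slot, with the weighted rescaling of Witt coordinates that defines the path. \emph{Reduction to cyclic~$H$:} if $H=\prod_{h=1}^{r}\ZZ/p^{g_h}\ZZ$ and $\phi=\prod_{h}\phi_h$ with $\phi_h\colon G\to\ZZ/p^{g_h}\ZZ$, then $(\WW_\phi^{\leq m})^*=\prod_h(\WW_{\phi_h}^{\leq m})^*$ while $E_{H,b}=(E_{\ZZ/p^{g_h}\ZZ,\,b_h})_{h=1}^{r}$, so the identity for~$H$ follows from those for the cyclic quotients~$\ZZ/p^{g_h}\ZZ$. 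Hence we may assume $H=\ZZ/p^h\ZZ$.

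\emph{A gradedness criterion.} Call a morphism between two of the Witt-coordinate affine spaces $\psi\colon(\WW_{G}^{\leq m})^*\to(\WW_{G'}^{\leq m})^*$ \emph{graded} if, for each target Witt slot~$j$, it is given by a polynomial that is $(X)$-weighted homogeneous of weighted degree~$p^j$ when the source coordinates in slot~$i$ are assigned weight~$p^i$. Gradedness is preserved by composition and by products. Since $E_{G,a}$ sends the parameter~$t$ to the point obtained from~$a$ by multiplying its coordinates in Witt slot~$j$ by~$t^{p^j}$, any graded morphism~$\psi$ satisfies $\psi\circ E_{G,a}=E_{G',\psi(a)}$ with the same parameter; in particular it maps~$\eta_{G,a}$ onto~$\eta_{G',\psi(a)}$. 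It therefore suffices to prove that $(\WW_\phi^{\leq m})^*$ is graded.

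\emph{Gradedness of $(\WW_\phi^{\leq m})^*$.} Writing~$\phi$ in its type $(k_u,\rho_{e_u,h})_{u=1}^{d}$, the morphism $(\WW_\phi^{\leq m})^*$ is a composite of: the maps $\WW_{\rho_{e_u,h}}^{\leq m}$ on the $u$-th factor; the integer multiplications~$\widetilde{k_u}\times-$; and the sum~$+^d$. The maps~$+^d$ and~$\widetilde{k_u}\times-$ are graded by Lemma~\ref{wittbig}. The truncation~$W_{s_{e_u,h}}$ (case $e_u\geq h$) has slot-$j$ polynomial equal to the source coordinate of weighted degree~$p^j$, hence is graded. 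For the injection~$W_{i_{e_u,h}}$ (case $e_u<h$) one checks from the explicit formula that, over an $\FF_q$-algebra, its slot-$j$ coordinate polynomial is the $p^{\,h-e_u}$-th power of the source coordinate in slot~$j-(h-e_u)$, of weighted degree $p^{\,j-(h-e_u)}\cdot p^{\,h-e_u}=p^j$, so it too is graded. Hence $(\WW_\phi^{\leq m})^*$ is graded, and the criterion gives $(\WW_\phi^{\leq m})^*(\eta_{G,a})=\eta_{\ZZ/p^h\ZZ,\,(\WW_\phi^{\leq m})^*(a)}$, as required.

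\emph{Anticipated difficulty.} The point that needs care is the gradedness of the injection maps~$W_{i_{e_u,h}}$: one must keep track of the Frobenius twist by which these act on Witt coordinates over a general $\FF_q$-algebra — so that~$W_{i_{e_u,h}}$ is genuinely multiplication by~$p^{\,h-e_u}$ — since otherwise the slot-$j$ polynomial would carry weighted degree~$p^{\,j-(h-e_u)}$ and the path would only be intertwined up to a power of the Frobenius on~$\AAA^1$; such a reparametrization is harmless for a single cyclic factor (the Frobenius is bijective on $\overline{\FF_q}$-points, so~$\eta$ is unchanged) but would not obviously be absorbed by~$+^d$ when distinct factors~$u$ contribute distinct twists. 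Once this bookkeeping is settled, the proof is a routine composition of graded morphisms, using Lemma~\ref{wittbig} for~$+^d$ and~$\widetilde{k_u}\times-$.
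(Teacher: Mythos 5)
Your graded-morphism approach is a genuinely different, and in principle cleaner, route than the paper's: you force the map-level identity $\psi\circ E_{G,a}=E_{G',\psi(a)}$ for each atomic piece, and such identities compose, whereas the paper's case of an injection establishes only the weaker set-level identity (the two maps differ by pre-composition with a power of Frobenius on $\AAA^1$) and then composes. But the step on which your argument hinges is not what the paper defines. You assert that $W_{i_{e_u,h}}$ has slot-$j$ coordinate equal to the $p^{h-e_u}$-th power of the source slot $j-(h-e_u)$; the paper's explicit formula is the untwisted shift $(a_j)_{j=0}^{k-1}\mapsto((0)_{j=0}^{n-k-1},(a_{j-(n-k)})_{j=n-k}^{n-1})$, with no Frobenius power, so its slot-$j$ polynomial has weighted degree $p^{j-(h-e_u)}$, not $p^j$. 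Thus $W_{i_{e_u,h}}$ is \emph{not} graded, and your criterion does not apply as stated. Replacing $V^{h-e}$ by $V^{h-e}F^{h-e}$ does make it graded (and also repairs $W_\phi(\FF_p)=\phi$), but then $W_\phi$ no longer carries $W_G^{\leq m}$ into $W_H^{\leq m}$: the Frobenius multiplies $-\ord$ by $p^{h-e}$ and overshoots the truncation bound, breaking the preceding lemma. So your ``anticipated difficulty'' is not bookkeeping that can be settled later; no single choice of $W_{i_{e,h}}$ appears to serve both requirements.

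Moreover, the worry you raise --- that distinct factors $u$ contribute distinct Frobenius reparametrizations which are then not absorbed by $+^d$ --- is a genuine gap, in the paper's proof as well as in yours. Composition preserves $\psi(\eta_{G,a})=\eta_{H,\psi(a)}$ only if the intermediate reparametrizations are compatible; this fails exactly when several factors have $e_u<h$ with different deficits $h-e_u$ (or one factor has a deficit and another has none). Concretely, take $p=2$, $G=\ZZ/2\ZZ\times\ZZ/4\ZZ$, $H=\ZZ/4\ZZ$, $\chi(a,b)=2a+b$, $m=2$, and write $W_\chi=+\circ(V,\Id)$, the version with $W_\chi(\FF_p)=\chi$. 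Then in the single coordinate of $(\WW_H^{\leq 2})^*$ one computes $(\WW_\chi^{\leq 2})^*(\eta_{G,(1,1)})=\{t+t^2:t\in\overline{\FF_2}\}=\overline{\FF_2}$, while $\eta_{H,(\WW_\chi^{\leq 2})^*(1,1)}=\eta_{H,0}=\{0\}$; the two sets do not agree. (The paper's literal recipe multiplies by $\widetilde{k_1}=2$ and so kills the offending term, making the example degenerate --- but then $W_\chi(\FF_p)\neq\chi$, contradicting Lemma~\ref{phighw}.) Neither proof goes through until the definitions of $W_{i_{k,n}}$ and $W_\phi$, or of the path $E_{G,a}$, are reconciled.
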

	\begin{proof}
		If the property is satisfied for the two composable morphisms~$f$ and~$g$, then it is clearly satisfied for~$g\circ f$. By the definition of~$(\WW_{\phi}^{\leq m})^*$, it suffices thus to verify it in the following five cases. 
		\begin{enumerate}
			\item The case $G=\ZZ/p^e\ZZ$, $H=\ZZ/p^h\ZZ$, $e\geq h$ and $\phi=s_{e,h}$. Then the image of an element $t$ for the composite $(\WW_{\phi}^{\leq m})^*\circ E_{G,a}$ is $$(\WW_{\phi}^{\leq m})^{*}(E_{G,a}(t)) =(t^{p^j}a_{j})_{j=0}^{h-1},$$which is precisely the image of~$t$ for $E_{H,(\WW_{\phi}^{\leq m})^*(a)}.$ We deduce that in this case $(\WW_{\phi}^{\leq m})^*\circ E_{G,a}=E_{H,(\WW_{\phi}^{\leq m})^*(a)}$ which is stronger than the wanted claim. 
			\item The case $G=\ZZ/p^e\ZZ$ and $H=\ZZ/p^h\ZZ$ and $e< h$ and $\phi=i_{e,h}$. Then the image of an element $t$ for the composite $(\WW_{\phi}^{\leq m})^*\circ E_{H,a}$ is $$(\WW^{\leq m}_{\phi})^{*}(E_{G,a}(t)) =((0)^{h-e-1}_{j=0}),(t^{p^{j-(h-e)}}a_{j-(h-e)})_{j=h-e}^{h-1}.$$ On the other hand, the image of~$t$ for $E_{H,(\WW_{\phi}^{\leq m})^*a}$ is $$E_{H,(\WW_{\phi}^{\leq m})^*a}(t)=((0)_{j=0}^{h-e-1}, (t^{p^j}a_{j-(h-e)})_{j=h-e}^{j=h-1}).$$
			We deduce $$\Wphim(\eta_{G,a})=\eta_{H,\Wphim(a)} $$.
			\item Suppose that $G=(\ZZ/p^h\ZZ)^d$, for certain $d\geq 1$, while  $H=\ZZ/p^h\ZZ$ and $\phi=+^d$ is the addition in~$G$. The element $a$ corresponds to $$\bigg(\bigg(\sum_{i\in\NN}a_{uij}t^{-i}\bigg)_j\bigg)_{u}\in W_{G}(\overline{\FF_q}\llparenthesis t \rrparenthesis).$$The element $\Wphim (a)$ corresponds to $\sum _u a_{u}$. The $j$-th coordinate of the latter element is, by Lemma~\ref{wittbig}, given by a polynomial in $a_{uij}$ and its degree is $p^j$ when $a_{uij}$ is given weight $p^j$.  We deduce that $$ (\Wphim)\circ E_{G,a} =E_{H,\Wphim(a)}.$$
			\item Suppose that $G=H=\ZZ/p^h\ZZ$ and $\phi=k\times-$ is the multiplication by $k\in\ZZ/p^h\ZZ$. By the definition, one has $(\WW_{\phi}^{\leq m})$ is multiplication by $\widetilde k$ in $(\WW_{\phi}^{\leq m})$, with $\widetilde k\in[0\doots p^{e_u}-1].$ This corresponds to the multiplication by~$\widetilde k$ in $W_G^{\leq m}(\overline{\FF_q}\llparenthesis t \rrparenthesis)$. It follows from Lemma~\ref{wittbig}, that $$(\Wphim)\circ E_{G,a} =E_{H,\Wphim(a)},$$ which is stronger that our claim.
			\item Suppose that $\phi=\phi_1\times \phi_2:G=G_1\times G_2\to H_1\times H_2$, with $G_1, G_2, H_1,H_2$ finite abelian $p$-groups and $\phi_1,\phi_2$ homomorphisms of finite abelian $p$-groups. Then $\Wphim= (\WW_{\phi_1}^{\leq m})^*\times (\WW_{\phi_2}^{\leq m})^*$ and clearly $E_{G, (a_1,a_2)}= E_{G,a_1}\times E_{G,a_2}.$ By using induction, we can conclude that the property in the question is satisfied.
		\end{enumerate}
	\end{proof}
	\subsubsection{}The commutativity in the diagram in Lemma~\ref{phighw} gives the commutativity of 
	\[\begin{tikzcd}
		{(\WW^{\leq m}_G)^*} & {(\WW^{\leq m+1}_G)^*} \\
		{(\WW_H^{\leq m})^*} & {(\WW_H^{\leq m+1})^*}
		\arrow["{(\rho_G^m)*}", from=1-1, to=1-2]
		\arrow["{\WW_{\phi}^{\leq m}}"', from=1-1, to=2-1]
		\arrow["{\WW_{\phi}^{\leq m}}"', from=1-2, to=2-2]
		\arrow["{(\rho_H^{m})^*}", from=2-1, to=2-2]
	\end{tikzcd}\]
	for $m\geq 1$. We deduce a homomorphism of commutative algebraic groups: $$\CCC_{\phi}(\leq m):\CCC_G(\leq m)\to\CCC_H(\leq m)$$
	and consequently a morphism of sheaves
	$$\CCC_{\phi}(\leq m)^{\infty}:\CCC_G(\leq m)^{\infty}\to\CCC_H(\leq m)^{\infty}.$$
	\begin{thm}\label{comdeltag}
		For any $m\geq 1$, one has that the restriction of $\Delta_{\phi}$ to the subcategory $BG\times \CCC_{G}(\leq m)^{\infty} $ is isomorphic to the homomorphism $B\phi\times \CCC_{\phi}(\leq m)^{\infty}$ of categories fibered in~$2$-groups.
	\end{thm}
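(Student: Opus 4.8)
The plan is to reduce everything to the explicit Witt--vector model of the equivalence $\psi_G$ of Corollary~\ref{strdeltag} and to exploit that \emph{all} the maps involved --- $\Delta_\phi$, $\WW_\phi^{\leq m}$, $\CCC_\phi(\leq m)$ and $B\phi$ --- are induced by the single morphism of group schemes $W_\phi\colon W_G\to W_H$ of Lemma~\ref{phighw}, together with the fact that $\widetilde{\phi_k}$ and $\widetilde{\psi_k}$ are defined purely by the substitution $t\mapsto t^{p^k}$. By the corollary following Lemma~\ref{phighw} I would replace $\Delta_\phi$ by the functor on the model $W_G/\wp_GW_G$ of $\Delta_G$ --- objects $W_G(A\llparenthesis t\rrparenthesis)$, and morphisms from $x$ to $y$ the elements $u$ with $x+\wp_G(u)=y$ --- given on objects and morphisms by $W_\phi$; this is well defined exactly because $W_\phi\circ\wp_G=\wp_H\circ W_\phi$. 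Since $\Delta_\phi$ and the two equivalences $\psi_G,\psi_H$ are compatible with products over the cyclic factors of the target, I may assume $H=\ZZ/p^h\ZZ$ is cyclic.

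\textbf{Compatibility with $W_\phi$.} Fix $k\geq 1$. Identifying $a\in(\WW_G^{\leq m})^*(A)$ with the very simple Witt vector $\big(\sum_i a_{ij}t^{-i}\big)_j\in W_G(A\llparenthesis t\rrparenthesis)$, the morphism $\WW_\phi^{\leq m}$ is by definition the restriction of $W_\phi(A\llparenthesis t\rrparenthesis)$. Let $\sigma_k\colon A\llparenthesis t\rrparenthesis\to A\llparenthesis t\rrparenthesis$ be the $A$-algebra endomorphism $t\mapsto t^{p^k}$; applying the Witt functor, $W_G(\sigma_k)$ sends $\big(\sum_i a_{ij}t^{-i}\big)_j$ to $\big(\sum_i a_{ij}t^{-ip^k}\big)_j$, which is precisely $\widetilde{\phi_k}(a)$ viewed as an object of the model of $\Delta_G$. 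Because $W_\phi$ is assembled from $W_{s_{e_u,h}}$, $W_{i_{e_u,h}}$, the multiplications $\widetilde{k_u}\times-$ and $+^{d}$ --- all morphisms of group schemes over $\ZZ$ --- it is natural in the ring, and in particular commutes with $W_G(\sigma_k)$ and $W_H(\sigma_k)$. Hence
\[
\Delta_\phi\big(\widetilde{\phi_k}(a)\big)=W_\phi\big(W_G(\sigma_k)(a)\big)=W_H(\sigma_k)\big(W_\phi(a)\big)=\widetilde{\phi_k}\big(\WW_\phi^{\leq m}(a)\big),
\]
and likewise on morphisms; the same argument, using Lemma~\ref{phighw} on the $W_G$--factor, gives the corresponding strict commutativity for $\widetilde{\psi_k}$ on $\WW_G^{\leq m}=W_G\times(\WW_G^{\leq m})^*$.

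\textbf{Descent to the quotient.} By Lemma~\ref{psikinv} the $\widetilde{\psi_k}$ are $\rho^{m-1}$-invariant and descend to $\psi_k^{m}$ on $\WW_G^{\leq m}/_{\rho_G^{m-1}}\WW_G^{\leq m-1}=(W_G/\wp_GW_G)\times\CCC_G(\leq m)=BG\times\CCC_G(\leq m)$; by the commutative square displayed immediately before the statement, $\WW_\phi^{\leq m}$ descends to $\CCC_\phi(\leq m)$, and on the $W_G/\wp_GW_G=BG$ factor $W_\phi$ descends to $B\phi$ (Lemma~\ref{phighw}). So the strict square of the previous step descends to a strict square relating $\psi_k^{m}$, its analogue for $H$, $\Delta_\phi$ and $B\phi\times\CCC_\phi(\leq m)$, and then --- harmlessly, since $W_\phi$ commutes with the Frobenius --- its ind-perfection.

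\textbf{Coherence, and the main obstacle.} What remains is to glue these $2$-isomorphisms over $k$, i.e.\ to check that $\Delta_\phi$ carries the connecting isomorphism $\psi_{k+1}^{m}\circ(\Id_{BG}\times F_{\CCC_G(\leq m)})\xrightarrow{-\phi_k(a)}\psi_k^{m}$ of $\Delta_G$, out of which $\psi$ is assembled, to the corresponding one of $\Delta_H$. The connecting data is exactly what is controlled by the paths $E_{G,a}$ of the preceding subsubsection, and Lemma~\ref{pathsave} states precisely that $\WW_\phi^{\leq m}(\eta_{G,a})=\eta_{H,\WW_\phi^{\leq m}(a)}$; combined with the strict commutativity established above, this forces the connecting $2$-isomorphisms to match, and gluing over $k$ produces the desired isomorphism $\Delta_\phi|_{BG\times\CCC_G(\leq m)^\infty}\cong B\phi\times\CCC_\phi(\leq m)^\infty$ of categories fibered in $2$-groups. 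I expect this final coherence step to be the only non-formal part: the earlier steps follow once one observes that $W_\phi$ is natural in the ring, commutes with Frobenius, and commutes with $t\mapsto t^{p^k}$, whereas pinning down the connecting $2$-isomorphisms in the ind-perfection and verifying their compatibility with $\Delta_\phi$ against Lemma~\ref{pathsave} is a homotopy-coherence bookkeeping that has to be carried out carefully by hand.
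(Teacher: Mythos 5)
Your overall strategy — reframe the paper's explicit coefficient calculations as a consequence of the naturality of $W_\phi$ in the ring, so that $W_\phi$ commutes with the substitution $\sigma_k\colon t\mapsto t^{p^k}$ — is the conceptual content of the paper's proof, and it is a valid and even cleaner packaging of it. The paper checks agreement of $\Delta_\phi$ and $B\phi\times\CCC_\phi(\leq m)^\infty$ separately on elements $(b_0,0)$ and $(0,b)$, unwinds the formula $+^d\circ((\widetilde{k_u}\times-)\circ W_{\rho_{e_u,h}})$ at the Laurent-series level, and then observes that applying $\widetilde{\psi}_k$ (i.e.\ $\sigma_k$) on either side of $W_\phi$ gives the same answer because $\widetilde{\psi}_k$ is a strict homomorphism. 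That is precisely your "$W_\phi$ commutes with $W_G(\sigma_k)$" step; the two arguments are the same in substance.

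However, your last step misidentifies the tool. You write that the connecting isomorphism $\psi_{k+1}^{m}\circ(\Id_{BG}\times F)\xrightarrow{-\phi_k(a)}\psi_k^{m}$ is ``exactly what is controlled by the paths $E_{G,a}$'' and invoke Lemma~\ref{pathsave}. This is not so: $E_{G,a}\colon\AAA^1\to(\WW_G^{\leq m})^*$, $t\mapsto(t^{p^j}a_{uj})_j$, is an interpolating curve from $0$ to $a$ in the \emph{scheme} $(\WW_G^{\leq m})^*$, introduced only to prove connectedness/irreducibility of the discriminant loci in Section~\ref{Discriminantloci}; the parameter $t$ there is an affine coordinate on $\AAA^1$, completely unrelated to the Laurent variable or to the Frobenius transition in the ind-perfection. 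Lemma~\ref{pathsave} therefore says nothing about the connecting $2$-morphisms. The good news is that you already have the right tool in hand: the connecting morphism $-\phi_k(a)$ is built from $\widetilde{\phi}_k$, and $\widetilde{\phi}_k$ is nothing but the identification of $(\WW_G^{\leq m})^*(A)$ with very simple elements of $W_G(A\llparenthesis t\rrparenthesis)$ followed by $W_G(\sigma_k)$. Since $\WW_\phi^{\leq m}$ intertwines these identifications by its very definition, and $W_\phi$ commutes with $W(\sigma_k)$ by naturality, one gets $W_\phi\circ\widetilde{\phi}_k^G=\widetilde{\phi}_k^H\circ\WW_\phi^{\leq m}$ directly, which after descent gives $\Delta_\phi(-\phi_k^G(a))=-\phi_k^H(\CCC_\phi(\leq m)(a))$. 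In other words, the coherence step is \emph{not} a separate obstacle requiring Lemma~\ref{pathsave} or extra bookkeeping: it is the same naturality computation you already used, applied to $\widetilde{\phi}_k$ instead of $\widetilde{\psi}_k$. You should replace the appeal to Lemma~\ref{pathsave} with this observation.
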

	\begin{proof}It suffices to treat the case $H=\ZZ/p^h\ZZ$ is cyclic. We take an element of the form $(b_0,0)\in BG\times \CCC_G(\leq m)^{\infty}$. Its image in $\Delta_G$ is $b_0$. The image of~$b_0$ for $\Delta_{\phi}$ is the class of $(B(\phi))(b_0)$. This is the image of $((B(\phi))(b_0),0)$ for the canonical morphism $BH\times \CCC_{H}(\leq m)^{\infty}$. We now take an element of the form $(0,b)\in BG\times \CCC_G(\leq m)^{\infty}(A)$, where~$A$ is a ring. For some $k\geq 1$, one has that $(0,b)$ is isomorphic to the image of some $(0,b)\in BG\times\CCC_G(\leq m)$ under the canonical map from the~$k$-th copy of $\CCC_G(\leq m)$ in $\CCC_G(\leq m)\to\cdots$ to $\CCC_G(\leq m)$. To differentiate between the~$\psi$ map for~$G$ and~$H$, we write $\psi^G$ and $\psi ^H$. The image of $(0,b)$ in~$\Delta_G$ is then isomorphic to~$\psi^G_k(0,b_k)$.  By Lemma~\ref{awnz}, the element $b_k$ lifts to an element $\widetilde{b}_k\in (\WW^{\leq m}_G)^*(A')$. One has that $\psi^G_k(0,b_k)\simeq \widetilde{\psi}^G_k(0,\widetilde b_k)$. If we write $\widetilde{b}_k=(b_{i,j,u,k})_{i,j}$
		then $$\widetilde\psi^G_k(0,\widetilde b_k)=\bigg(\sum_{i\in\NN'}b_{i,j,u}t^{-ip^k}\bigg)_{j,u}.$$ 
		If the type of $\phi$ is $(k_u,\rho_{e_u,h})_{u=1}^d,$ the image of the latter element under~$\Delta_{\phi}$ is isomorphic to \begin{align*}
			\overline b:=	\bigg(0,\bigg(+^d\circ \bigg( (\widetilde{k_u}\times -)\circ W_{\rho_{e_u,h}} \bigg)\bigg)\bigg(\bigg(\sum_{i\in\NN}b_{i,j,u}t^{-ip^k}\bigg)_{j,u}\bigg)\bigg).
		\end{align*}
		On the other side, the element $(0,b)$ maps  under $B\phi\times \CCC_G(\leq m)^*$ to an element isomorphic to~$b'$ corresponding to \begin{align*}(0,\WW_{\phi}(\widetilde {b}_k))= \bigg(0,\bigg(+^d\circ \bigg( (\widetilde{k_u}\times -)\circ W_{\rho_{e_u,h}} \bigg)\bigg)\bigg(\bigg(\sum_{i\in\NN}b_{i,j,u}t^{-i}\bigg)_{j,u}\bigg)\bigg)
		\end{align*}
		with the standard identification of  $(\WW_H^{\leq m})^*(A)=W_H^{\leq m}(A\llparenthesis t \rrparenthesis)^*$. We need to verify that $\widetilde \psi_k(b')\simeq \overline b$. But this follows from the fact $\widetilde\psi_k((b_{i,j,u})_{j,u})=(\sum b_{i,j,u}t^{-ip^k})_{j,u}$ and the fact that~$\widetilde\psi_k$ is a strict homomorphism.
	\end{proof}
	\subsubsection{} 
		Let us denote $\pi(\leq n):(\WW_G^{\leq n})^*\to \CCC_G(\leq n)$ the quotient map. For $a\in \CCC_G(\leq n)(\overline {\FF_q})$, we define $$[\eta_{G,a}]:=\pi(\leq n)(\eta_{G,\widetilde a}),$$where $\widetilde a$ is any lift of $a$. 
		\begin{lem}\label{quotpath}
			Given a homomorphism $\phi:G\to H$, for any $a\in (\WW_G^{\leq m})^*(\overline {\FF_q})$, one has that $$\CCC_{\phi}({\leq m})([\eta_{G,a}]) =[\eta_{H, \CCC_G(\leq m)(a)}].$$
		\end{lem}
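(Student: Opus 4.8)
The plan is to deduce the identity directly from Lemma~\ref{pathsave} together with the fact that $\CCC_\phi(\leq m)$ is, by its very construction, the morphism induced by $\WW_\phi^{\leq m}$ (written $(\WW_\phi^{\leq m})^*$ in Lemma~\ref{pathsave}) on the quotients $\CCC_G(\leq m)=(\WW_G^{\leq m})^*/_{(\rho_G^{m-1})^*}(\WW_G^{\leq m-1})^*$ and $\CCC_H(\leq m)$. All maps in sight being morphisms of $\FF_q$-algebraic groups, it suffices to work on $\overline{\FF_q}$-points, where $\eta_{G,a}$ and the various images are honest subsets; there the commutative square obtained from Lemma~\ref{phighw} at level $m-1\to m$ is what guarantees that $\WW_\phi^{\leq m}$ carries $\ker\pi(\leq m)=(\rho_G^{m-1})^*\big((\WW_G^{\leq m-1})^*\big)$ into $\ker\pi(\leq m)$, so that $\CCC_\phi(\leq m)$ exists, and it yields the relation
\[
\CCC_\phi(\leq m)\circ\pi(\leq m)=\pi(\leq m)\circ\WW_\phi^{\leq m},
\]
where $\pi(\leq m)$ stands for both of the relevant quotient maps.

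Next I would unwind the left-hand side. Since $a$ is itself an element of $(\WW_G^{\leq m})^*(\overline{\FF_q})$, it serves as a lift of its own class in $\CCC_G(\leq m)(\overline{\FF_q})$, so $[\eta_{G,a}]=\pi(\leq m)(\eta_{G,a})$, writing $\pi(\leq m)$ also for the induced map on subsets of $\overline{\FF_q}$-points. Applying $\CCC_\phi(\leq m)$ and the square above gives
\[
\CCC_\phi(\leq m)\big([\eta_{G,a}]\big)=\CCC_\phi(\leq m)\big(\pi(\leq m)(\eta_{G,a})\big)=\pi(\leq m)\big(\WW_\phi^{\leq m}(\eta_{G,a})\big).
\]
By Lemma~\ref{pathsave} one has $\WW_\phi^{\leq m}(\eta_{G,a})=\eta_{H,\WW_\phi^{\leq m}(a)}$, hence this equals $\pi(\leq m)\big(\eta_{H,\WW_\phi^{\leq m}(a)}\big)=[\eta_{H,\WW_\phi^{\leq m}(a)}]$; and since $\WW_\phi^{\leq m}(a)$ is a lift of the image of $a$ in $\CCC_H(\leq m)(\overline{\FF_q})$, this is precisely $[\eta_{H,\CCC_\phi(\leq m)(a)}]$, the right-hand side of the statement.

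I do not anticipate any real difficulty: granting Lemma~\ref{pathsave}, the argument is a formal diagram chase. The only points worth a sentence of justification are that $\WW_\phi^{\leq m}$ descends to the quotients, which is supplied by the square coming from Lemma~\ref{phighw}, and that the notation $[\eta_{H,-}]$ is insensitive to the choice of lift, i.e.\ the well-definedness already recorded when $[\eta_{G,-}]$ was introduced; alternatively one can sidestep the latter entirely by reading both sides as $\pi(\leq m)$ applied to the paths associated with the specific lifts $a$ and $\WW_\phi^{\leq m}(a)$.
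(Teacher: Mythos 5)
Your proof is correct and follows essentially the same route as the paper's: the paper's proof consists of citing Lemma~\ref{pathsave} together with the commutative square relating $(\WW_G^{\leq m})^*\to(\WW_H^{\leq m})^*$ and $\CCC_G(\leq m)\to\CCC_H(\leq m)$ via the quotient maps $\pi(\leq m)$, which is exactly the relation $\CCC_\phi(\leq m)\circ\pi(\leq m)=\pi(\leq m)\circ\WW_\phi^{\leq m}$ you write down. You simply unwind the one-line diagram chase into explicit equations, and correctly note that $a$ itself may serve as the lift in the definition of $[\eta_{G,a}]$.
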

		\begin{proof}
			This is immediate by Lemma~\ref{pathsave} which gives that $(\WW_G^{\leq m})^*(\eta_{G,\widetilde a}) =\eta_{H,(\WW_H^{\leq m})^*(\widetilde a)},$  where $\widetilde a$ is a lift of~$a$, and  the following commutative diagram \[\begin{tikzcd}
				{(\WW_G^{\leq m})^*} & {(\WW_H^{\leq m})^*} \\
				{\CCC_G(\leq m)} & {\CCC_H(\leq m)}
				\arrow[from=1-1, to=1-2]
				\arrow[from=1-1, to=2-1]
				\arrow[from=1-2, to=2-2]
				\arrow[from=2-1, to=2-2]
			\end{tikzcd}\]
		\end{proof}
		\begin{lem} Let $m\geq n\geq 1$. Given a homomorphism $\phi:G\to H$, for any $$a\in (\CCC_G(\leq m)\times_{\CCC_{\phi}(\leq m), \CCC_H(\leq m)}\CCC_G(\leq n))(\overline{\FF_q}) $$  one has that $[\eta_{G,a}]\subset (\CCC_G(\leq m)\times_{\CCC_{\phi}(\leq m), \CCC_H(\leq m)}\CCC_G(\leq n))(\overline {\FF_q})$.
		\end{lem}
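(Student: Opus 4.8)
The plan is to reduce the statement to Lemma~\ref{quotpath} together with the (parameter‑preserving) compatibility of the level inclusions with the path construction. Write $a=(x,y)$ with $x\in\CCC_G(\leq m)(\overline{\FF_q})$ and $y\in\CCC_G(\leq n)(\overline{\FF_q})$; by definition of the fiber product this means $\CCC_\phi(\leq m)(x)=\iota_H(\CCC_\phi(\leq n)(y))$, where $\iota_H\colon\CCC_H(\leq n)\hookrightarrow\CCC_H(\leq m)$ is the canonical inclusion (equivalently $\CCC_\phi(\leq m)(x)=\CCC_\phi(\leq m)(\iota_G(y))$ with $\iota_G\colon\CCC_G(\leq n)\hookrightarrow\CCC_G(\leq m)$, the two phrasings agreeing by the functoriality of $\CCC_\phi$ built from Lemma~\ref{phighw}). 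Choosing lifts $\widetilde x\in(\WW_G^{\leq m})^*(\overline{\FF_q})$ and $\widetilde y\in(\WW_G^{\leq n})^*(\overline{\FF_q})$ of $x$ and $y$ (possible by Lemma~\ref{awnz}, as in the proof of Theorem~\ref{comdeltag}), the subset $[\eta_{G,a}]$ is the common‑parameter path $\{(\pi(\leq m)(E_{G,\widetilde x}(t)),\pi(\leq n)(E_{G,\widetilde y}(t))):t\in\overline{\FF_q}\}$, independent of the lifts exactly as in the well‑definedness of $[\eta_{H,-}]$ recorded before Lemma~\ref{quotpath}. Hence it is enough to check, for every $t\in\overline{\FF_q}$, that $\CCC_\phi(\leq m)(\pi(\leq m)(E_{G,\widetilde x}(t)))=\iota_H(\CCC_\phi(\leq n)(\pi(\leq n)(E_{G,\widetilde y}(t))))$.

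I would then rewrite both sides using the square $\CCC_\phi(\leq m)\circ\pi(\leq m)=\pi_H(\leq m)\circ\WW_\phi^{\leq m}$ defining $\CCC_\phi(\leq m)$ (preceding Theorem~\ref{comdeltag}), the analogous square at level $n$, the identity $\iota_H\circ\pi(\leq n)=\pi(\leq m)\circ\iota'$ for the level inclusion $\iota'\colon(\WW_H^{\leq n})^*\hookrightarrow(\WW_H^{\leq m})^*$, and Lemma~\ref{pathsave}, which identifies the subsets $\{\WW_\phi^{\leq m}(E_{G,\widetilde x}(t)):t\}$ and $\{\iota'(\WW_\phi^{\leq n}(E_{G,\widetilde y}(t))):t\}$ with the paths $\eta_{H,\WW_\phi^{\leq m}(\widetilde x)}$ and $\eta_{H,\iota'(\WW_\phi^{\leq n}(\widetilde y))}$ respectively (using for the second that $\iota'$ commutes with $E_{G,-}$, which is the remark following the definition of $E_{G,a}$). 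Since $a$ lies in the fiber product the two base points $\WW_\phi^{\leq m}(\widetilde x)$ and $\iota'(\WW_\phi^{\leq n}(\widetilde y))$ have the same image under $\pi_H(\leq m)$, and a path in $\CCC_H(\leq m)$ depends only on its image in $\CCC_H(\leq m)$; pushing down by $\pi_H(\leq m)$ and matching parameters then yields the required equality.

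I expect the real content to lie in the last matching step. The maps $\CCC_\phi(\leq\bullet)$ respect the path construction only as subsets: by Lemma~\ref{pathsave}(2) the parametrization can be twisted by an inverse Frobenius, and when $\type(\phi)$ mixes components $s_{e,g}$ and $i_{e,g}$ the morphism $\WW_\phi^{\leq m}\circ E_{G,-}$ is not of the form $E_{H,\WW_\phi^{\leq m}(-)}\circ\tau$ for a single reparametrization $\tau$ but a sum of contributions scaled at different $p$‑power speeds. To make the comparison parameter by parameter one therefore has to keep track that the scalings occurring on the two factors of the fiber product are induced by the very same datum $\type(\phi)$, or else to run the contraction on the $\WW$‑level fiber product $(\WW_G^{\leq m})^*\times_{(\WW_H^{\leq m})^*}(\WW_G^{\leq n})^*$ (where the defining relation is a genuine equality rather than an equality modulo $\ker\pi_H(\leq m)$) and only then quotient. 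Everything else—lifting along the $\pi$'s, the functoriality squares, the level‑independence of $E_{G,a}$—is formal.
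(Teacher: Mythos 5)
Your proposal reads the fiber product $\CCC_G(\leq m)\times_{\CCC_\phi(\leq m),\CCC_H(\leq m)}\CCC_G(\leq n)$ literally, with two copies of $\CCC_G$, and therefore works with pairs $(x,y)$ and a home-made ``common-parameter path'' over the pair. Comparing with the paper's own proof (which speaks only of $a\in\CCC_G(\leq m)$ and of $\CCC_\chi(\leq m)(a)\in\CCC_H(\leq n)$) and with the later definition of $\mathfrak G(\chi,m,n)=\CCC_G(\leq m)\times_{\CCC_\chi(\leq m),\CCC_H(\leq m)}\CCC_H(\leq n)$, it is clear that the second factor in the statement is a typo for $\CCC_H(\leq n)$; the fiber product is just $\CCC_\phi(\leq m)^{-1}(\CCC_H(\leq n))\subset\CCC_G(\leq m)$. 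Once this is corrected, the argument the paper intends is two lines: by Lemma~\ref{quotpath}, $\CCC_\phi(\leq m)([\eta_{G,a}])=[\eta_{H,\CCC_\phi(\leq m)(a)}]$, and since $\CCC_\phi(\leq m)(a)\in\CCC_H(\leq n)$ and the morphism $E_{H,-}$ only rescales coordinates (hence cannot make a vanishing high-index coordinate nonzero), the right-hand path lies inside $\CCC_H(\leq n)$. Your common-parameter construction is not what $[\eta_{G,a}]$ means and is unnecessary; the heavy lifting is entirely in Lemma~\ref{quotpath}.

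The caveat you raise at the end is, however, genuine and is worth recording, because the paper's proof leans on the same point. Case~(5) in the proof of Lemma~\ref{pathsave} claims by induction that $(\WW^{\leq m}_{\phi_1\times\phi_2})^*(\eta_{G,a})=\eta_{H,(\WW^{\leq m}_{\phi_1\times\phi_2})^*(a)}$, but case~(2) only identifies the two paths after an inverse-Frobenius reparametrization $t\mapsto t^{p^{-(h-e)}}$. When the components of $\type(\phi)$ carry different shifts (for instance $G=\ZZ/p^2\ZZ$, $H=\ZZ/p\ZZ\times\ZZ/p^3\ZZ$, $\phi=(s_{2,1},i_{2,3})$, so $\tau_1=\mathrm{id}$ and $\tau_2(t)=t^{1/p}$), the parametrizations of the two factors are incompatible, and the set-level equality $\{(A(\tau_1(t)),B(\tau_2(t)))\}=\{(A(s),B(s))\}$ fails for injective $A,B$. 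You are right that the argument must be made robust to this. You also correctly sense that what the application really needs is only a containment $\CCC_\phi(\leq m)([\eta_{G,a}])\subset\CCC_H(\leq n)$ rather than equality of paths; but the repair you sketch (carrying out the contraction on the $\WW$-level fiber product) does not by itself mend the case-(5) step, which is where the actual gap sits.
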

		\begin{proof}
			For any such $a$, one has that $\CCC_\phi^{\leq m}([\eta_{G,a}])=[\eta_{R, \CCC_{\chi}({\leq m})(a)}]$. But as $\CCC_{\chi}({\leq m})(a)\in \CCC_R(\leq n)$ one must have $[\eta_{R, \CCC_{\chi}(\leq m)(a)}]\subset \CCC_R({\leq n})(\overline{\FF_q})$  and hence $$[\eta_{G,a}]\subset (\CCC_G(\leq m)\times_{\CCC_{\phi}(\leq m), \CCC_H(\leq m)}\CCC_G(\leq n))(\overline {\FF_q}).$$
		\end{proof}
		\subsubsection{}Let $k/\FF_q$ be an extension with $k$-perfect.
		\begin{prop}\label{condmostn}
			The equivalence $$\psi_G:BG\times \varinjlim_n \CCC_G(\leq n)^{\infty}\xrightarrow{\sim} \Delta_G $$identifies the group $$ (BG\times \CCC_{G}(\leq n)^{\infty})\langle k\rangle=BG\langle k\rangle\times \CCC_G(\leq n)(k)$$ with the subgroup $$\mathcal C_G(\leq n)\langle k\rangle\subset\Delta_G\langle k\rangle=BG\langle k\llparenthesis t \rrparenthesis\rangle $$ given by the $G$-torsors with the conductor no more than $n.$ Here, for a $G$-torsor we define its  conductor to be the conductor of any of the fields corresponding to some of its connected components. 
		\end{prop}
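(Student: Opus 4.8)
The plan is to reduce to the cyclic case $G=\ZZ/p^e\ZZ$, where by the identification $\mathfrak W_e \simeq \Delta_G$ a $G$-torsor over $k\llparenthesis t\rrparenthesis$ is represented by a very simple Witt vector $b=(b_j)_{j=0}^{e-1}$ with $b_j=\sum_{i\geq 0}b_{ij}t^{-i}$, and two such vectors represent the same torsor iff they differ by $\wp_e(u)$ for some $u\in W_e(k\llparenthesis t\rrparenthesis)$. First I would recall the known formula for the conductor of an Artin--Schreier--Witt extension attached to such a $b$: after using $\wp_e$-equivalence to put $b$ in the ``standard form'' used in the proof of Theorem~\ref{cycequiv} (each $b_j$ supported on exponents prime to $p$, or rather on a single $p$-power times prime-to-$p$ exponents), the conductor is governed precisely by the quantities $-\ord(b_j)$, and the relevant bound is $-\ord(b_j)\leq (n-1)/p^{e-1-j}$ for all $j$ — which is exactly the condition defining $W^{\leq n}_e$. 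This is the content I would cite or prove via the standard ramification computation for Witt-vector Artin--Schreier extensions (the Schmid--Witt symbol / the explicit conductor formula, e.g.\ as used by Lagemann \cite{LAGEMANN2015288} or Kato's refined Swan conductor theory).

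The second ingredient is the explicit description, under the equivalence $\psi_G$ of Corollary~\ref{strdeltag}, of which elements of $\Delta_G\langle k\rangle$ land in $(BG\times\CCC_G(\leq n)^\infty)\langle k\rangle = BG\langle k\rangle\times\CCC_G(\leq n)(k)$. Here I would argue as in the essential-surjectivity part of the proof of Theorem~\ref{cycequiv}: an arbitrary torsor is isomorphic, after subtracting a constant Witt vector (which accounts for the $BG\langle k\rangle$ factor, i.e.\ the unramified part) and after the ``spreading'' isomorphisms $b_j\mapsto b_j^{p}$ that do not change the torsor, to $\widetilde\phi_r$ of an element $\widehat b\in (\WW^{\leq n}_G)^*$; and the minimal such $n$ is exactly $1+\max_{j}\, p^{e-1-j}\cdot(\text{largest exponent appearing in }b_j)$ in the normalized form. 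Combining with the conductor formula from the previous paragraph, ``$\widehat b$ lies in $(\WW^{\leq n}_G)^*$'' translates into ``the corresponding torsor has conductor $\leq n$'', and passing to the quotient $\CCC_G(\leq n)=(\WW^{\leq n+1}_G)^*/\rho^n_G \WW^{\leq n}_G$ is harmless because the relation being quotiented out is exactly the $\wp_e$-equivalence that does not change either the torsor or its conductor (Lemma~\ref{psikinv}). For general $G=\prod_u\ZZ/p^{e_u}\ZZ$, a $G$-torsor is a product of $\ZZ/p^{e_u}\ZZ$-torsors, its conductor is the maximum of the conductors of the factors (the conductor of the compositum of the corresponding fields), and $\CCC_G(\leq n)=\prod_u\CCC_{\ZZ/p^{e_u}\ZZ}(\leq n)$, so the cyclic case gives the general case; I would also use Theorem~\ref{comdeltag} to see that the product decomposition is compatible with $\psi_G$.

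The main obstacle is the first step: matching the combinatorial condition $-\ord(b_j)\leq (n-1)/p^{e-1-j}$ with the arithmetic conductor of the Witt-vector Artin--Schreier extension. One must be careful that the naive order of the components of a Witt vector does not directly give the conductor — one must first put the Witt vector in a reduced form where no further ``$\wp_e$-cancellation'' is possible (equivalently, the form produced in Theorem~\ref{cycequiv}'s proof, with exponents prime to $p$), because $\wp_e$ can both raise and lower the apparent order of lower components, as Lemma~\ref{vot} already illustrates. Once the torsor is in reduced form, the conductor equals $1+\max_j p^{e-1-j}(-\ord(b_j))$, and the equivalence $-\ord(b_j)\leq (n-1)/p^{e-1-j}$ for all $j$ $\iff$ conductor $\leq n$ is immediate. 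The remaining steps (essential surjectivity bookkeeping, passing to the quotient, the product reduction) are routine given Theorem~\ref{cycequiv}, Corollary~\ref{strdeltag}, and Theorem~\ref{comdeltag}.
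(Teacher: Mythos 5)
Your strategy is essentially the paper's: reduce to the cyclic case $G=\ZZ/p^e\ZZ$, identify $\Delta_G$ with the Witt-vector presentation $\mathfrak W_e$, put a representative in ``reduced form'' with exponents prime to $p$ (exactly as in the essential-surjectivity step of Theorem~\ref{cycequiv}), and then compare the condition $-\ord(b_j)\le (n-1)/p^{e-1-j}$ to the arithmetic conductor of the corresponding Artin--Schreier--Witt extension. The paper draws the needed conductor formula from \cite[Theorem 3.9]{tanno-yasuda} (and \cite[Lemma 2.4]{tanno-yasuda} for the reverse direction) rather than from Schmid--Witt/Kato/Lagemann, but these are equivalent in content, so that is a cosmetic difference. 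Two points you leave implicit that the paper spells out: (i) the reduction to $k$ algebraically closed, done via a fiber-product diagram so that the $k$-points are the $\Gal(\overline k/k)$-fixed points of the $\overline k$-points; and (ii) the case where the torsor is \emph{not} a field — the paper treats this by an induction on a proper subgroup $H\hookrightarrow G$ inducing $\Delta_H\to\Delta_G$, together with the functoriality in Theorem~\ref{comdeltag}, which shows that the right-hand vertical map preserves conductors. You gesture at (ii) only via the convention on the conductor of a non-connected torsor; if you were to flesh out your proposal, this induction (or an equivalent Weil-restriction argument) is the missing bookkeeping. With those two points added, your argument would be a complete proof along the same lines as the paper's.
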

		\begin{proof}
			Let us treat the case when $G=\ZZ/p^e\ZZ$ is cyclic, because the general case is then deduced immediately by taking products and observing that for $G_1\times\cdots G_m$-torsor its conductor is the maximum of the conductors of the corresponding $G_i$-torsors for $i=1\doots m$. The map $\psi\langle k\rangle$ is a homomorphism, hence it suffices to show that the image of $(BG\times \CCC_G(\leq n)^{\infty})\langle k\rangle$ is the set of~$G$-torsors of the conductor no more than~$n$. Moreover, the map is bijective, hence we need to verify that the image of the former set is contained in the latter and that the preimage of the latter is the former. We have a commutative diagram:
			\[\begin{tikzcd}
				{\{0\}\times \CCC_G(\leq n)(k)} & {\Delta_G(k)} \\
				{\{0\}\times \CCC_G(\leq n)(\overline k)} & {\Delta_G(\overline k)},
				\arrow[from=1-1, to=1-2]
				\arrow[from=1-1, to=2-1]
				\arrow[from=1-2, to=2-2]
				\arrow[from=2-1, to=2-2]
			\end{tikzcd}\]
			and as $BG\langle k\rangle \times \CCC(\leq n)(k)$ and $\mathcal C_G(\leq n)\langle k\rangle$ are the preimages of $\{0\}\times \CCC_G(\leq n)(\overline k)$ and $\mathcal C_G(\leq n)\langle\overline k\rangle$ for the vertical maps, it suffices to prove the claim for the case $k$ is algebraically closed. Let us verify the first claim. Clearly, the image of $BG\langle k\rangle\times\{0\}$ is contained in $\CCC_G(\leq n)(k)$ and we need to show that the image of $0\times \CCC_G(\leq n)(k)$ is contained in $\mathcal C_G(\leq n)\langle k\rangle $.  An element in $\CCC_G(\leq n)(k)$ corresponds to an element $$(a_{ij})_{i,j}\in(\WW_G^{\leq n})^*(k)=\prod_{j=0}^{e-1}k^{\{i\in\NN|\hspace{0,1cm}i\leq (n-1)/p^{e-1-j}\}}.$$ The image of $(a_{ij})_{i,j}$ under~$\psi\langle k\rangle$ is given by the isomorphism class of $$\bigg(\sum_{i\in\NN'}a_{ij}t^{-ip^r}\bigg)_{j},$$where~$r$ can be any positive integer. By taking $p^r$-th roots of $a^i_j$, which we can do in the perfect field~$k$, we obtain that the last element is isomorphic to an element of the form $$\bigg(\sum_{i'\in\NN'}(a_{i'j})'t^{-i'}\bigg)_{j}$$with the condition that $(a_{i'j})'=0$ whenever $i>(n-1)/p^{e-1-j}$.   
			If the image of the last element is a field in $BG\langle k\llparenthesis t \rrparenthesis\rangle=H^1(k\llparenthesis t \rrparenthesis,G),$ we are done by \cite[Theorem 3.9]{tanno-yasuda}. If its image is not a field, then its image is the image under the canonical map $\Delta_H\langle k\rangle\to \Delta_G\langle k\rangle$ of a field for an injection $H\hookrightarrow G$. 
			By the commutativity of the diagram (implied by Theorem~\ref{comdeltag})
			\[\begin{tikzcd}
				{\{0\}\times \CCC_H(\leq n)(k)} & {\Delta_H\langle k\rangle} \\
				{\{0\}\times \CCC_G(\leq n)(k)} & {\Delta_G\langle k\rangle}
				\arrow[from=1-1, to=1-2]
				\arrow[from=1-1, to=2-1]
				\arrow[from=1-2, to=2-2]
				\arrow[from=2-1, to=2-2]
			\end{tikzcd}\]
			and the defining fact that the right vertical map preserves conductors, we are done by a simple induction. Let us now show that any element in $\mathcal C_G(\leq n)\langle k\rangle$ is the image of an element in $BG\langle k\rangle\times \CCC_G(\leq n)(k)$. For an element $a$, which is not a field, we have that it is the image of a field for the canonical map $\Delta_H\langle k\llparenthesis t \rrparenthesis\rangle\to \Delta_G\langle k\rangle$ hence the claim would follow by induction. Let us take~$a$ to be a field. We can write $a=a_0+a'$ with $a_0\in BG\langle k\llparenthesis t \rrparenthesis\rangle$ and $a'\in \mathcal C_G(\leq n)\langle k\rangle$. Then by \cite[Lemma 2.4 and Theorem 3.9]{tanno-yasuda}, the element $a'\in H^1(k\llparenthesis t \rrparenthesis,G)$ is the image of an element of form $$\widehat a:=\bigg(\sum _{i\in\NN'}(a_{ij})'t^{-i}\bigg)_j,$$ with $(a_{ij})'=0$ whenever $i\geq (n-1)/p^{e-1-j}$, for the canonical map $W_G(k\llparenthesis t \rrparenthesis)\to H^1(k\llparenthesis t \rrparenthesis,G.)$ It suffices to see that the element~$\widehat a$ is in the image of $\psi\langle k\rangle.$  It is isomorphic to the element $$\bigg(\sum_{i\in\NN'}((a_{ij})')^p t^{-pi}\bigg) $$
			which is in the image under~$\psi$ of $(((a_{ij})')^p)_{i,j}\in\CCC_G(\leq n)(k)$. The proof is completed.
		\end{proof}
		We will denote by~$\mathcal C_G(\leq n)$ the substack of~$\Delta_G$ corresponding to the image of $BG\times \CCC_G(\leq n)^{\infty}$ under~$\psi_G$, and notation will be consistent with the one in the above proposition.
		\section{Discriminant loci}\label{Discriminantloci}In this section, we study the loci where the discriminant is constant. Let $G$ be a finite abelian $p$-group.
		\subsection{Long flags}\subsubsection{}Let~$G^*$ be a group of characters, that is of homomorphisms $\chi:G\to S^1$. Each of this homomorphism factorizes as $\chi :G\to\ZZ/p^{e_{\chi}}\ZZ,$ for some $e_{\chi}\geq 1$, composed with $$\ZZ/p^{e_\chi}\ZZ\to S^1,\hspace{1cm} x+p^{e_\chi}\ZZ\mapsto \exp(2i\pi x/p^{e}).$$
		\begin{mydef}
			A long flag is a sequence $S=(S_n)_{n\in\NN\cup\{0\}} $ with $S_n\subset G^*$ subgroup satisfying that $S_{m}\supset S_{n}$ whenever $m\geq n$ and $S_n=G^*$ for~$n$ large enough. A jump of a long flag is an index $n\in\NN\cup\{0\}$ such that $S_n\supsetneq S_{n-1}$ with convention that $S_{-1}=\emptyset$. The finite set of jumps of~$S$ will be denote by $J(S)$.
		\end{mydef}
		One has that $\# J(S)\leq \# G.$ 
		\begin{mydef}
			Given a long flag~$S$, denote by $\disc(S)$ the number
			\begin{equation*}\disc(S):=\sum_{n\in\NN} n\cdot (\# S_n-\# S_{n-1})=\sum_{n\in J(S)}n\cdot (\# S_n-\# S_{n-1}).
			\end{equation*}
		\end{mydef}
		\begin{lem}\label{estdisc}
			For any long flag~$S$, one has that $$\disc(S)\leq \max(J(S))\cdot \# G(1-1/p).$$
		\end{lem}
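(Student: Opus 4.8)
The plan is to pass to the ``entry level'' statistic of characters, reduce $\disc(S)$ to a bounded sum, and then extract the factor $1-1/p$ from the fact that orders of subgroups of the $p$-group $G^*$ are powers of $p$. Put $N:=\max(J(S))$. Since $S_n=G^*$ for all large $n$ and $N$ is the last index at which $S_\bullet$ strictly grows, we have $S_N=G^*$ while $S_{N-1}$ is a proper subgroup of $G^*$; as $G^*$ is a $p$-group, $[G^*:S_{N-1}]$ is a power of $p$ greater than $1$, hence $\#S_{N-1}\le \#G/p$. For $\chi\in G^*$ set $\ell(\chi):=\min\{n\ge 0:\chi\in S_n\}$; then $\#\{\chi:\ell(\chi)=n\}=\#S_n-\#S_{n-1}$ with the convention $S_{-1}=\emptyset$, so that
\[
\disc(S)=\sum_{n\in J(S)} n\,(\#S_n-\#S_{n-1})=\sum_{\chi\in G^*}\ell(\chi),
\]
and every summand satisfies $\ell(\chi)\le N$.

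Next I would bound this sum by splitting the characters according to the flag level they reach: those with $\ell(\chi)\le N-1$ are exactly the ones lying in $S_{N-1}$, while the remaining $\#G-\#S_{N-1}$ characters contribute at most $N$ each, and by the first paragraph there are at most $\#G(1-1/p)$ of them. Applying the same analysis recursively to the truncated flag $(S_n\cap S_{N-1})_{n\ge 0}$ inside the smaller $p$-group $S_{N-1}$ — whose largest jump is some $N'\le N-1$ — should bound $\sum_{\chi\in S_{N-1}}\ell(\chi)$ in terms of $N'$ and $\#S_{N-1}$. Assembling the two pieces, using $\#S_{N-1}\le \#G/p$ together with the fact that each descent in the flag lowers the order of $S_\bullet$ by a factor of at least $p$ (so the geometric contribution of the lower levels is absorbed), should give $\disc(S)\le N\cdot\#G\,(1-1/p)$.

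The step I expect to be the main obstacle is precisely this last assembly. A careless application of the bounds only yields the cruder estimate $\disc(S)\le \max(J(S))\,(\#G-1)$, and obtaining the sharper constant $1-1/p$ requires genuine use of the $p$-power structure — that subgroup orders, hence the jump sizes $\#S_n-\#S_{n-1}$, are of a very restricted shape — rather than only the telescoping identity for $\sum_{n\in J(S)}(\#S_n-\#S_{n-1})$. I would set up the induction on $\#G$ with the constant $1-1/p$ already built into the inductive hypothesis, which is what makes the recursion close cleanly.
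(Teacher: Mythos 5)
The key step in your assembly, ``and by the first paragraph there are at most $\#G(1-1/p)$ of them,'' has the inequality reversed. From $\#S_{N-1}\leq\#G/p$ you get $\#G-\#S_{N-1}\geq\#G(1-1/p)$, not $\leq$: there are \emph{at least} $\#G(1-1/p)$ characters outside $S_{N-1}$, each contributing up to $N$ to $\sum_\chi \ell(\chi)$, and the characters inside $S_{N-1}$ still contribute a positive amount on top of that. So the recursion you describe does not close with the constant $1-1/p$.

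In fact the statement as printed is false, and no rearrangement can rescue it. Take $G=\ZZ/p^2\ZZ$ (so $G^*\cong\ZZ/p^2\ZZ$) and the full flag $S_0=\{1\}$, $S_1$ the subgroup of order $p$, $S_n=G^*$ for $n\geq 2$. Then $\max(J(S))=2$ and
\[
\disc(S)=1\cdot(p-1)+2\cdot(p^2-p)=2p^2-p-1,
\]
while the claimed bound is $2\cdot p^2\cdot(1-1/p)=2p^2-2p$, which is smaller by $p-1>0$ for every $p\geq 2$. The paper's own one-line proof has exactly the gap you flagged: after telescoping one gets $\disc(S)\leq\max(J(S))\cdot(\#G^*-\#S_0)$, and a general long flag can have $\#S_0=1$, so one cannot improve $\#G^*-\#S_0$ past $\#G^*-1$. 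The crude estimate you correctly identified, $\disc(S)\leq\max(J(S))\cdot(\#G-1)$, follows at once from $\ell(\chi)\leq\max(J(S))$ for all $\chi$ and $\ell(1)=0$, and that is the honest bound. It is weaker than the stated one whenever $\#G>p$, but it is also all that is needed downstream: in Lemma~\ref{finsupdisc} one only needs a lower bound of the form $\max(J(T))\geq\disc(T)/C$ with $C$ depending only on $G$, and $C=\#G-1$ serves just as well.
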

		\begin{proof}
			One has that 
			\begin{align*}
				\disc(S)&=\sum_{n\in J(S)}n\cdot(\# S_n-\# S_{n-1})\\&\leq \max (J(S))\cdot (\# G^*-\# S_{n-1})\\&\leq \max (J(S))\cdot \# G^*\cdot (1-1/p).
			\end{align*}
		\end{proof}
		\begin{lem}
			Let $f\in\Delta_G$ and $m\geq 1$. One has that $$\{\chi\in G^*|\hspace{0,1cm}\Delta_{\chi}(f)\in\mathcal C_{\ZZ/p^{e_\chi}\ZZ}(\leq m)\} $$
			is a subgroup of $G^*$.
		\end{lem}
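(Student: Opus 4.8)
The plan is to transport every character $\chi\in G^*$ to a single cyclic group, so that the set in question becomes the preimage of one fixed group substack under a group homomorphism; the subgroup property is then formal. Let $p^E$ be the exponent of~$G$, so that every $\chi\in G^*$ factors through $\ZZ/p^{e_\chi}\ZZ$ with $e_\chi\leq E$. For $\chi\in G^*$, viewed as a homomorphism $\chi\colon G\to\ZZ/p^{e_\chi}\ZZ$, I set $\widehat\chi\colon G\to\ZZ/p^E\ZZ$ to be the composite of $\chi$ with the standard inclusion $\ZZ/p^{e_\chi}\ZZ\hookrightarrow\ZZ/p^E\ZZ$, $x\mapsto p^{E-e_\chi}x$. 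Under the identification $\mu_{p^E}\cong\ZZ/p^E\ZZ$ coming from $x\mapsto\exp(2\pi i x/p^E)$, the assignment $\chi\mapsto\widehat\chi$ is exactly the canonical isomorphism $G^*=\Hom(G,\mu_{p^E})\xrightarrow{\sim}\Hom(G,\ZZ/p^E\ZZ)$; in particular it is an isomorphism of groups.

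The first step is to show that the standard inclusion $\iota\colon\ZZ/p^e\ZZ\hookrightarrow\ZZ/p^E\ZZ$ preserves conductors, i.e.\ $\cond(\Delta_\iota(g))=\cond(g)$ for every~$g$, equivalently $\Delta_\iota^{-1}\big(\mathcal C_{\ZZ/p^E\ZZ}(\leq m)\big)=\mathcal C_{\ZZ/p^e\ZZ}(\leq m)$. One inclusion is immediate from Theorem~\ref{comdeltag} together with Proposition~\ref{condmostn}: on $\mathcal C_{\ZZ/p^e\ZZ}(\leq m)$ the morphism $\Delta_\iota$ is isomorphic to $B\iota\times\CCC_{\iota}(\leq m)^{\infty}$, hence it maps $\mathcal C_{\ZZ/p^e\ZZ}(\leq m)$ into $\mathcal C_{\ZZ/p^E\ZZ}(\leq m)$. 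For the reverse inclusion I would argue on torsors over a field: $\Delta_\iota$ is change of structure group along the injection~$\iota$, so it sends a $\ZZ/p^e\ZZ$-torsor $g=\Ind_H^{\ZZ/p^e\ZZ}(L)$, with $L$ the field corresponding to one of its connected components, to $\Ind_H^{\ZZ/p^E\ZZ}(L)$ by transitivity of induction; the connected components, and hence the conductor, are unchanged. Granting this, for every $\chi\in G^*$ one has $\Delta_\chi(f)\in\mathcal C_{\ZZ/p^{e_\chi}\ZZ}(\leq m)$ if and only if $\Delta_{\widehat\chi}(f)=\Delta_\iota(\Delta_\chi(f))\in\mathcal C_{\ZZ/p^E\ZZ}(\leq m)$.

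By the previous paragraph the set in the statement is the preimage, under the group isomorphism $\chi\mapsto\widehat\chi$, of $T:=\{\alpha\in\Hom(G,\ZZ/p^E\ZZ)\mid\Delta_\alpha(f)\in\mathcal C_{\ZZ/p^E\ZZ}(\leq m)\}$, so it is enough to check that $T$ is a subgroup. Here I would use that $\alpha\mapsto\Delta_\alpha(f)$ is additive: writing $\alpha+\beta$ as the diagonal $G\to G\times G$ followed by $\alpha\times\beta$ and then the addition $\ZZ/p^E\ZZ\times\ZZ/p^E\ZZ\to\ZZ/p^E\ZZ$, and using that $\Delta_{(-)}$ is functorial, commutes with finite products, and carries the addition of $\ZZ/p^E\ZZ$ to the addition of the commutative group stack $\Delta_{\ZZ/p^E\ZZ}$ (by construction of the latter), one gets $\Delta_{\alpha+\beta}(f)\cong\Delta_\alpha(f)+\Delta_\beta(f)$. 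Hence $\alpha\mapsto[\Delta_\alpha(f)]$ is a homomorphism into the group of isomorphism classes of objects of $\Delta_{\ZZ/p^E\ZZ}$; membership in the full substack $\mathcal C_{\ZZ/p^E\ZZ}(\leq m)$ is an isomorphism-invariant condition and, by Proposition~\ref{condmostn}, cuts out a subgroup there; so $T$, being the preimage of a subgroup under a homomorphism, is a subgroup.

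I expect the only non-formal point to be the reverse inclusion in the first step, $\Delta_\iota^{-1}\big(\mathcal C_{\ZZ/p^E\ZZ}(\leq m)\big)\subseteq\mathcal C_{\ZZ/p^e\ZZ}(\leq m)$: Theorem~\ref{comdeltag} describes $\Delta_\iota$ only on the conductor-bounded substacks and does not by itself rule out a drop of the conductor, so one genuinely needs the description of $\Delta_\iota$ as induction given above -- or, equivalently, the injectivity of $\CCC_\iota(\leq m)$ together with the fact that the square relating the maps $\CCC_\iota(\leq\bullet)$ to the inclusions $\CCC(\leq m)\hookrightarrow\CCC(\leq N)$ is cartesian. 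Everything else is bookkeeping.
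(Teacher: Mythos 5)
Your proposal is correct and follows essentially the same route as the paper's proof: factor every character through the exponent group $\ZZ/p^E\ZZ$ via the canonical inclusions $i_{e_\chi,E}$, use that this preserves membership in the conductor-bounded substack (the paper records this as an ``observation''), and then conclude because $\mathcal C_{\ZZ/p^E\ZZ}(\leq m)$ is a group substack and $\alpha\mapsto\Delta_\alpha(f)$ is a homomorphism. You simply make explicit a few steps the paper leaves implicit — that $\chi\mapsto\widehat\chi$ is a group isomorphism, the additivity of $\alpha\mapsto\Delta_\alpha(f)$, and the induction argument for the conductor-preservation of $\Delta_{i_{a,b}}$ — all of which are correct and are precisely what the paper's ``observe'' statement amounts to.
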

		\begin{proof}
			First, let us observe that for any integers $b\geq a$, one has for some $g\in\Delta_{\ZZ/p^a\ZZ}$ that $g\in\mathcal C_{\ZZ/p^{a}\ZZ}$ if and only if $\Delta_{i_{a,b}}(g)\in \mathcal C_{\ZZ/p^b\ZZ}(\leq m)$. Thus the set in the question coincides with $$\{\chi\in G^*|\hspace{0,1cm}\Delta_\chi(f)\in\mathcal C_{\Zpez}(\leq m)\},$$where $e$ is the exponent of~$G$. But $\mathcal C_{\Zpez}(\leq m)$ is a group stack, hence the claim follows.
		\end{proof}
		\begin{mydef}
			Given an element $f\in \Delta_G$, the sequence $$S_n(f):=\{\chi\in G^*|\hspace{0,1cm} \Delta_{\chi}(f)\in \mathcal C_{\ZZ/p^{e_{\chi}}\ZZ} (\leq n)\}$$ defines a long flag that we denote by $S(f)$. 
		\end{mydef}
		\begin{lem}
			Let~$\FF_{q'}/\FF_q$ be a finite extension and let~$f\in\Delta_G({\FF_{q'}})$. One has that $c_{\disc}(f)=\disc(S(f)).$ 
		\end{lem}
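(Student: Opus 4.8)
The plan is to deduce the identity from the conductor--discriminant formula together with the description of conductors of $G$-torsors via push-forwards that was established above. Write $K:=\FF_{q'}\llparenthesis t\rrparenthesis$, which is a local field, and let $\Gamma$ be its absolute Galois group; since $\Delta_G(\FF_{q'})=BG(K)$ and $G$ is abelian, the point $f$ corresponds to a continuous homomorphism $\rho_f\colon\Gamma\to G$. For $\chi\in G^*$ set $\chi_f:=\chi\circ\rho_f\colon\Gamma\to S^1$; this is the character whose associated push-forward $G$-torsor is $\Delta_\chi(f)$, regarded as a $\ZZ/p^{e_\chi}\ZZ$-torsor. I will prove the two equalities
\begin{equation*}
c_{\disc}(f)=\sum_{\chi\in G^*}\cond(\chi_f)\qquad\text{and}\qquad \disc(S(f))=\sum_{\chi\in G^*}\cond(\chi_f),
\end{equation*}
which together give the claim.

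For the first equality, the finite étale $K$-algebra underlying the $G$-torsor $f$ is a disjoint union of $[G:\rho_f(\Gamma)]$ copies of $\Spec M$, where $M:=\overline K^{\ker\rho_f}$ is the cyclic (abelian) extension of $K$ with $\Gal(M/K)=\rho_f(\Gamma)$; hence $c_{\disc}(f)=[G:\rho_f(\Gamma)]\cdot v_t(\disc(M/K))$. The conductor--discriminant formula for $M/K$ reads $v_t(\disc(M/K))=\sum_{\psi}\cond(\psi)$, the sum being over the characters $\psi$ of $\rho_f(\Gamma)$. Every such $\psi$ is the image of exactly $[G:\rho_f(\Gamma)]$ characters $\chi\in G^*$ under the surjection $G^*\twoheadrightarrow\rho_f(\Gamma)^*$, and $\cond(\chi_f)=\cond(\psi)$ whenever $\chi\mapsto\psi$; regrouping the sum over $G^*$ accordingly yields $c_{\disc}(f)=\sum_{\chi\in G^*}\cond(\chi_f)$.

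For the second equality I first describe the long flag $S(f)$ combinatorially, claiming $S_n(f)=\{\chi\in G^*\mid\cond(\chi_f)\le n\}$. Indeed, $\Delta_\chi(f)$ is the $\ZZ/p^{e_\chi}\ZZ$-torsor given by $\chi_f$, and by Proposition~\ref{condmostn} its conductor is the conductor of one of the fields attached to its connected components, that is, of the cyclic extension of $K$ cut out by $\ker(\chi_f)$; this is precisely the Artin conductor $\cond(\chi_f)$ --- using also that the canonical injection $\Delta_{i_{a,b}}$ preserves conductors, so replacing $\ZZ/p^{e_\chi}\ZZ$ by the exact order $p^{a}$ of the image of $\chi_f$ changes nothing. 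Hence $\chi\in S_n(f)\iff\Delta_\chi(f)\in\mathcal C_{\ZZ/p^{e_\chi}\ZZ}(\le n)\iff\cond(\chi_f)\le n$, so $\#S_n(f)-\#S_{n-1}(f)=\#\{\chi\in G^*\mid\cond(\chi_f)=n\}$ and
\begin{equation*}
\disc(S(f))=\sum_{n\in\NN}n\bigl(\#S_n(f)-\#S_{n-1}(f)\bigr)=\sum_{n\in\NN}n\cdot\#\{\chi\in G^*\mid\cond(\chi_f)=n\}=\sum_{\chi\in G^*}\cond(\chi_f).
\end{equation*}
The only delicate point is the bookkeeping: matching the normalization of the classical conductor--discriminant formula (valuation versus ideal, and the multiplicities arising because $\rho_f$ may be non-surjective and $\chi$ non-faithful) with the stack-theoretic notion of conductor from Proposition~\ref{condmostn}. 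Once these identifications are pinned down, the rest of the argument is purely formal.
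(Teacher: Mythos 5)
Your proof is correct and takes essentially the same approach as the paper's: apply the conductor--discriminant formula $c_{\disc}(f)=\sum_{\chi\in G^*}c_{\cond}(\Delta_\chi(f))$ and then observe that $c_{\cond}(\Delta_\chi(f))=n$ exactly when $\chi\in S_n(f)-S_{n-1}(f)$. The only difference is that you also derive the torsor version of the conductor--discriminant formula from the classical one for the cyclic field extension cut out by $\rho_f$, whereas the paper cites it directly; this bookkeeping checks out (fibers of $G^*\twoheadrightarrow\rho_f(\Gamma)^*$ all have size $[G:\rho_f(\Gamma)]$, matching the $[G:\rho_f(\Gamma)]$ copies of $\Spec M$).
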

		\begin{proof}
			Applying the conductor discriminant formula: 
			$$c_{\disc}(f)=\sum_{\chi\in G^*}c_{\cond}(\Delta_{\chi}(f)).$$
			But for $n\in\NN$, the value of $c_{\cond}(\Delta_{\chi}(f))$ is~$n$ precisely for~$\chi\in S_n-S_{n-1}$. The statement follows. 
		\end{proof}
				\begin{lem}\label{numbofflag}
					The number of long flags~$S$ with $\disc(S)=n$ is $O(n^{\#G})$. 
				\end{lem}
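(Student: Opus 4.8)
The plan is to show that a long flag is determined by two pieces of finite data --- the set of its jumps together with the chain of subgroups it passes through --- and that the first ranges over a family of size $O(n^{\#G})$ while the second ranges over a family of bounded size.

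First I would record the elementary observation that every jump is at most $\disc(S)$: indeed, in $\disc(S)=\sum_{m\in J(S)}m\cdot(\#S_m-\#S_{m-1})$ all summands are non-negative and $\#S_m-\#S_{m-1}\ge 1$ for $m\in J(S)$, so each individual index $m$ is bounded by the whole sum. Hence if $\disc(S)=n$ then $J(S)\subseteq\{0,1,\dots,n\}$; together with the already-noted bound $\#J(S)\le\#G$, this shows that $J(S)$ lies in a family of cardinality $\sum_{k=0}^{\#G}\binom{n+1}{k}=O(n^{\#G})$.

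Next I would observe that $S$ is reconstructed from the pair $\bigl(J(S),(S_m)_{m\in J(S)}\bigr)$: by the very definition of a jump, $S_m=S_{m-1}$ for every $m\ge 0$ with $m\notin J(S)$, so $S$ is constant on the intervals cut out by consecutive jumps and equals $G^*$ beyond the largest one; thus the whole sequence is read off from its values at the jumps. Moreover $(S_m)_{m\in J(S)}$ is a strictly increasing chain of subgroups of $G^*$ with top term $G^*$, and since $G^*$ is finite there are only finitely many --- say at most $c(G)$ --- such chains, a number depending on $G$ but not on $n$. Combining the two counts, the number of long flags with $\disc(S)=n$ is at most $c(G)\cdot O(n^{\#G})=O(n^{\#G})$.

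The whole argument is essentially bookkeeping, and I do not expect a genuine obstacle; the one point deserving a careful statement is the reconstruction step, namely that a long flag really is the step function attached to its jump data, which is immediate from the definitions of ``jump'' and of ``long flag''. (Fixing the subgroup chain first and using that the single linear relation $\sum_m m(\#S_m-\#S_{m-1})=n$ then pins down the largest jump in terms of the remaining ones would in fact yield the sharper bound $O(n^{\#G-1})$, but the stated estimate is all that is needed.)
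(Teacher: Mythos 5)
Your proof is correct and follows essentially the same route as the paper: both decompose a long flag into the pair consisting of its jump set (a subset of $\{0,\dots,n\}$ of size at most $\#G$, hence $O(n^{\#G})$ choices) and the chain of subgroups attained at the jumps (a quantity bounded by a constant depending only on $G$). The additional remark that fixing the chain and using the linear relation would sharpen the bound to $O(n^{\#G-1})$ is a nice observation not pursued in the paper, but the stated estimate is all that is used.
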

				\begin{proof}
					A long flag~$S$ with $\disc(S)=n$ is completely determined by $(j, S_j)$ with $j\in J(S)$. The set $J(S)$ is a set of no more than $\# G$ positive integers of size no more than~$n$. The number of such sets is ${n \choose J(S)}$ and if~$n$ is sufficiently large the number is no more than ${n\choose \#G}$. The set $\{S_j|\hspace{0,1cm}_{j\in J(S)}\}$ is a set of~$J(S)$ subgroups of~$G^*$ and let~$A$ be the number of such sets. We deduce that the number of long flags~$S$ is no more than $A\cdot {n\choose {\# G}}=O(n^{\# G}).$ The statement is proven. %
				\end{proof}
		For $m\geq 0$ we define $$\mathcal C_G(m):=\begin{cases}
		\mathcal C_G(\leq m)-\mathcal C_G(\leq m-1)&\text{ if $m\geq 1$}\\
		\mathcal C_G(0)=0&\text{ if $m=0$.}
	\end{cases} $$
				\begin{lem}\label{condstays}
					Given $m\geq 1$ and $f\in\mathcal C_G(m)$, there exists a character $\chi: G\to\ZZ/p^{e_\chi}\ZZ$ such that $$\Delta_{\chi}(f)\in\mathcal C_{\ZZ/p^{e}\ZZ}(m).$$
				\end{lem}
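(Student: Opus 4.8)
The plan is to reduce immediately to the cyclic case via a coordinate projection. Write $G=\prod_{u=1}^{d}\ZZ/p^{e_u}\ZZ$. From $\Delta_G(A)=BG(A\llparenthesis t\rrparenthesis)=\prod_{u=1}^{d}B(\ZZ/p^{e_u}\ZZ)(A\llparenthesis t\rrparenthesis)$ we obtain a canonical identification $\Delta_G=\prod_{u=1}^{d}\Delta_{\ZZ/p^{e_u}\ZZ}$, under which we write $f=(f_u)_{u=1}^{d}$. Since $\psi_G$ is the product of the maps $\psi$ attached to the cyclic factors (Corollary~\ref{strdeltag}), the substack $\mathcal C_G(\leq n)$ corresponds under this identification to $\prod_{u=1}^{d}\mathcal C_{\ZZ/p^{e_u}\ZZ}(\leq n)$ for every $n\geq 0$.

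Next I would translate the hypothesis $f\in\mathcal C_G(m)=\mathcal C_G(\leq m)\smallsetminus\mathcal C_G(\leq m-1)$. The inclusion $f\in\mathcal C_G(\leq m)=\prod_u\mathcal C_{\ZZ/p^{e_u}\ZZ}(\leq m)$ forces $f_u\in\mathcal C_{\ZZ/p^{e_u}\ZZ}(\leq m)$ for every $u$, whereas $f\notin\mathcal C_G(\leq m-1)=\prod_u\mathcal C_{\ZZ/p^{e_u}\ZZ}(\leq m-1)$ forces $f_{u_0}\notin\mathcal C_{\ZZ/p^{e_{u_0}}\ZZ}(\leq m-1)$ for at least one index $u_0$. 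Hence $f_{u_0}\in\mathcal C_{\ZZ/p^{e_{u_0}}\ZZ}(m)$. (This is simply the statement, recalled in the proof of Proposition~\ref{condmostn}, that the conductor of a product torsor is the maximum of the conductors of its components.)

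It then remains to exhibit the character. Take $\chi\colon G\to\ZZ/p^{e_{u_0}}\ZZ$ to be the projection onto the $u_0$-th factor, viewed as a character of $G$ through the standard embedding $\ZZ/p^{e_{u_0}}\ZZ\hookrightarrow S^{1}$; then $e_\chi=e_{u_0}$, and the induced homomorphism $\Delta_\chi\colon\Delta_G\to\Delta_{\ZZ/p^{e_{u_0}}\ZZ}$ is, on $A$-points, the map $B\chi$ on $BG(A\llparenthesis t\rrparenthesis)$, that is the coordinate projection $(f_u)_u\mapsto f_{u_0}$ (this compatibility of $\Delta_{(-)}$ with products is a special case of Theorem~\ref{comdeltag}). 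Therefore $\Delta_\chi(f)=f_{u_0}\in\mathcal C_{\ZZ/p^{e_\chi}\ZZ}(m)$, which is the desired conclusion; if one prefers to land in $\mathcal C_{\ZZ/p^{e}\ZZ}(m)$ with $e$ the exponent of $G$, one further applies $\Delta_{i_{e_{u_0},e}}$, which preserves the conductor.

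I do not anticipate any genuine obstacle: the argument is essentially bookkeeping with the product decompositions $\Delta_G=\prod_u\Delta_{\ZZ/p^{e_u}\ZZ}$ and $\mathcal C_G(\leq n)=\prod_u\mathcal C_{\ZZ/p^{e_u}\ZZ}(\leq n)$. The only mildly delicate points are that the chosen $\Delta_\chi$ really is the coordinate projection and that the product decomposition of $\mathcal C_G(\leq n)$ holds, and both are already available from the setup (Corollary~\ref{strdeltag} and Theorem~\ref{comdeltag}).
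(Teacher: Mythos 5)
Your proof is correct and takes essentially the same approach as the paper: decompose $G$ into cyclic factors, choose $\chi$ to be a coordinate projection, and pigeonhole to find a factor where the conductor is exactly $m$. The only difference is cosmetic: the paper verifies that $\Delta_\chi$ restricted to $\mathcal C_G(\leq m)$ is the coordinate projection by explicitly computing the type of $\chi$ and unwinding the $\WW_\phi$ formula (via Theorem~\ref{comdeltag}), whereas you argue more directly from the product decompositions $\Delta_G=\prod_u\Delta_{\ZZ/p^{e_u}\ZZ}$ and $\mathcal C_G(\leq n)=\prod_u\mathcal C_{\ZZ/p^{e_u}\ZZ}(\leq n)$ (the latter via Corollary~\ref{strdeltag}); this is a slightly cleaner presentation of the same idea.
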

				\begin{proof}
					If $G=\prod_{u=1}^d\ZZ/p^{e_u}\ZZ$ is a finite product of cyclic $p$-groups, we verify that for~$\chi$  one can take one of~$u$ projections $\chi:G\to\ZZ/p^{e_u}\ZZ$ composed with the canonical inclusion $\ZZ/p^{e_u}\ZZ$. 
					The type of a projection $\chi:G\to\ZZ/p^{e_u}\ZZ$ is $$((\overline{1},\Id_{e_u,e_u})_{i=u},(0, \rho_{e_i,e_u})_{i\neq u})_{i=1}^d,$$where $\overline 1$ stands for the class of $1\in\ZZ$ in $\ZZ/p^{e_u}\ZZ.$ 
					One has that \begin{align*}\Delta_{\chi}|_{BG\times\CCC_G(\leq m)^{\infty}}\hskip-2cm&\\&=B\chi\times\bigg(+^d\circ (((1\times -)\circ \CCC_{\Id_{e_u,e_u}}(\leq m)^{\infty})_{u}\circ (0)_{i\neq u})_{i=1}^d\bigg)\\
						&=B\chi\times\bigg(+^d\circ ((\CCC_{\Id_{e_u,e_u}}(\leq m)^{\infty})_{i=u},(0)_{i\neq u})_{i=1}^d\bigg)\\
						&=B\chi\times\bigg(\pr_u :\CCC_G(\leq m)^{\infty}=\prod_{i=1}^d\CCC_{\ZZ/p^{e_u}\ZZ}(\leq m)^{\infty}\to \CCC_{\ZZ/p^{e_u}\ZZ}(\leq m)^{\infty}\bigg),
					\end{align*}
					where $\pr_u$ stands for the projection to the $u$-th coordinate. Now, if for all~$u$ one has $\Delta_{\chi}(f)\in\CCC_{\ZZ/p^{e_u}\ZZ}(\leq m-1)$, then $f\in \prod_{u=1}^d\CCC_{\ZZ/p^{e_u}\ZZ}(\leq m-1)= \CCC_{G}(\leq m-1), $ a contradiction.
				\end{proof}
	\begin{mydef}
	Given a long flag $T=(T_n)_{n\in\NN\cup\{0\}}$, denote by 
	\begin{align*}
		Z(T):&=\bigcap_{n\in J(T)}\bigcap_{\chi\in T_n-T_{n-1}}\Delta_G\times_{\Delta_{\chi},\Delta_{\ZZ/p^{e_\chi}\ZZ}}\mathcal C_{\ZZ/p^{e_\chi}\ZZ}(n).
	\end{align*}
\end{mydef}
If $T=(T_n)_{n\in\NN}$ is a flag, then for every $n\in J(T)$ and every $\chi\in T_n-T_{n-1}$, one has that $$\Delta_G\times_{\Delta_{\chi},\Delta_{\Zpez}}\mathcal C_{\ZZ/p^{e_\chi}\ZZ}(n)\subset \mathcal C_G(\leq \max(J(T)))$$ and we deduce that 
\begin{equation} \label{ztschem} Z(T)\subset \mathcal C_G(\leq  \max({ J(T)})).\end{equation}
	\begin{lem}\label{finsupdisc}
One has that $$\sup_{n\neq 0}\dfrac{1+\sup_{\substack{ T\text{ long flag }\\ \disc(T)=n}}\dim(Z(T))}{n} <\infty$$
	\end{lem}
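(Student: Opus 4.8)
The plan is to show that $\dim(Z(T))$ grows at most linearly in $n=\disc(T)$, with a slope depending only on $G$; once that is in hand the finiteness of the supremum is immediate. The reduction I would start from is the containment \eqref{ztschem}, namely $Z(T)\subseteq\mathcal C_G(\leq\max(J(T)))$, which turns the problem into two elementary estimates: a linear upper bound for $\dim(\mathcal C_G(\leq m))$ in $m$, and a linear upper bound for $\max(J(T))$ in $\disc(T)$.

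For the first estimate I would use that, by Corollary~\ref{strdeltag}, the equivalence $\psi_G$ restricts to an equivalence $BG\times\CCC_G(\leq m)^{\infty}\xrightarrow{\sim}\mathcal C_G(\leq m)$. Since $BG$ is $0$-dimensional ($G$ being finite) and passing to the ind-perfection does not change dimension, this gives $\dim(\mathcal C_G(\leq m))=\dim(\CCC_G(\leq m))$. Writing $G=\prod_{u=1}^d\ZZ/p^{e_u}\ZZ$ we have $\CCC_G(\leq m)=\prod_{u}\CCC_{\ZZ/p^{e_u}\ZZ}(\leq m)$, and the explicit formula for $\dim(\CZN)$ recorded just after its definition makes each factor have dimension at most $m-1$: the floor differences telescope to $(m-1)-\lfloor(m-1)/p^{e_u}\rfloor$. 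Hence $\dim(\mathcal C_G(\leq m))\le d(m-1)$, with the constant $d$ depending only on $G$.

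For the second estimate I would simply observe that in $\disc(T)=\sum_{n\in J(T)}n\cdot(\#T_n-\#T_{n-1})$ every summand is non-negative, while the summand at $n=\max(J(T))$ is at least $\max(J(T))$, because that index is a jump and so its multiplicity is $\ge 1$; thus $\max(J(T))\le\disc(T)$ (the easy reverse of Lemma~\ref{estdisc}). Combining the three points, for every long flag $T$ with $\disc(T)=n\neq 0$ one obtains $\dim(Z(T))\le d\cdot\max(J(T))\le dn$ (reading $\dim(\emptyset)=-\infty$ in the degenerate case $Z(T)=\emptyset$, which only helps), whence $(1+\sup_{\disc(T)=n}\dim(Z(T)))/n\le(1+dn)/n\le 1+d$; taking the supremum over $n\neq 0$ bounds the quantity by $1+d<\infty$.

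I do not expect a genuine obstacle here: the estimate on $\max(J(T))$ and the combinatorics of $\disc$ are trivial, and the dimension formula is already established in the excerpt. The one point that deserves a careful word is the identity $\dim(\mathcal C_G(\leq m))=\dim(\CCC_G(\leq m))$ used above — that is, the assertion that neither the $BG$-factor nor the ind-perfection inflates the dimension — and everything after that is bookkeeping.
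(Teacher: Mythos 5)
Your proof is correct and follows the same overall strategy as the paper's: bound $\dim(Z(T))$ linearly in $\max(J(T))$ via the containment $Z(T)\subset\mathcal C_G(\leq\max(J(T)))$ from~\eqref{ztschem} together with the explicit dimension formula for $\CCC_G(\leq m)$, and then bound $\max(J(T))$ linearly in $n=\disc(T)$. Your telescoping of the floor-sum to $(m-1)-\lfloor(m-1)/p^{e_u}\rfloor$ and the resulting bound $\dim(\mathcal C_G(\leq m))\le d(m-1)$ is a cleaner way to get the constant than the paper's unexpanded double sum, but the content is identical.

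The genuinely different (and better) step is the second one. The paper invokes Lemma~\ref{estdisc}, which as stated gives $\disc(T)\le\max(J(T))\cdot\#G(1-1/p)$, i.e.\ a \emph{lower} bound $\max(J(T))\ge n/(\#G(1-1/p))$. That inequality runs the wrong way for what is needed — one cannot deduce $\dim(Z(T))\le C\max(J(T))\le Cn/(\#G(1-1/p))$ from a lower bound on $\max(J(T))$ — so the chain in the paper's proof does not close as written. (One can check that the reversed inequality $\disc(T)\ge\max(J(T))\cdot\#G(1-1/p)$ is in fact the true one, since the term of $\disc(T)$ at $\max(J(T))$ alone is already $\ge\max(J(T))\cdot(\#G^*-\#G^*/p)$; so the statement of Lemma~\ref{estdisc} appears to have the inequality backwards, and the application in Lemma~\ref{finsupdisc} tacitly uses the corrected version.) Your observation $\max(J(T))\le\disc(T)=n$ — simply that the largest jump already contributes at least $\max(J(T))$ to the sum — is elementary, manifestly correct, and suffices. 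It gives a slightly worse constant ($1+d$ rather than $1+C/(\#G(1-1/p))$) but the lemma only claims finiteness, so this is immaterial, and it avoids relying on Lemma~\ref{estdisc} altogether.
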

\begin{proof}
Given a long flag~$T$, we have an estimate $$\dim(Z(T))\leq \sum_{u=1}^d\sum_{j=0}^{e_u-1}\bigg(\bigg\lfloor\frac{\max(J(T))-1}{p^{e_u-j-1}}\bigg\rfloor-\bigg\lfloor\frac{\max(J(T))-1}{p^{e_u-j}}\bigg\rfloor\bigg)\leq C \max(J(T))$$ for certain $C>0$. On the other hand, for any long flag~$T$ with $\disc(T)=n,$ by Lemma~\ref{estdisc}, one must have that  $$\max(J(T))\geq \frac{n}{\# G\cdot (1-1/p)}.$$ one has that $$\sup_{n\neq 0}\frac{1+\sup_{\substack{T\text{ long flag }\\\disc(T)=n}}\dim(Z(T))}{n}\leq \frac{1+\frac{Cn}{\# G\cdot (1-1/p)}}{n}<\infty$$
\end{proof}
			\subsection{Irreducible components}
			\subsubsection{}
				%
			\begin{lem}
				Let $\chi:G=\prod_{u=1}^d\ZZ/p^{e_u}\ZZ\to \ZZ/p^h\ZZ=R$ be a character. Let $m\geq n\geq 0$. For the $\FF_q$-group scheme $$\mathfrak G(\chi,m,n):=\CCC_G(\leq m)\times_{\CCC_{\chi}(\leq m),\CCC_R(\leq m)}\CCC_R(\leq n)$$ one has that $$BG\times \mathfrak G(\chi,m,n)^{\infty}\cong\mathcal C_G(\leq m)\times_{\Delta_{\chi}|_{\mathcal C_G(\leq m)},\mathcal C_R(\leq m)}\mathcal C_R(\leq n).$$
				Moreover, 
				it is a connected unipotent algebraic group and satisfies a stronger property: for any $a\in\mathfrak G(\chi,m,n)$ one has that $[\eta_{G,a}]\subset\mathfrak G(\chi,m,n)$. 
			\end{lem}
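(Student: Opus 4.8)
I would establish the three assertions in turn: the equivalence of the two fibre products, then affineness and unipotence of $\mathfrak G(\chi,m,n)$, then the path property, deducing connectedness from the last of these.

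\emph{The equivalence of fibre products.} The plan is to transport the right‑hand side through the equivalences of Corollary~\ref{strdeltag}. Restricting $\psi_G$ to its $m$‑th term gives $BG\times\CCC_G(\leq m)^\infty\xrightarrow{\sim}\mathcal C_G(\leq m)$ — this is how $\mathcal C_G(\leq m)$ is defined, cf.\ Proposition~\ref{condmostn} — and similarly $BR\times\CCC_R(\leq m)^\infty\xrightarrow{\sim}\mathcal C_R(\leq m)$ and $BR\times\CCC_R(\leq n)^\infty\xrightarrow{\sim}\mathcal C_R(\leq n)$. By Theorem~\ref{comdeltag}, under these equivalences $\Delta_\chi|_{\mathcal C_G(\leq m)}$ corresponds to $B\chi\times\CCC_\chi(\leq m)^\infty$, and since the component maps commute with the transition morphisms, the inclusion $\mathcal C_R(\leq n)\hookrightarrow\mathcal C_R(\leq m)$ corresponds to $\Id_{BR}\times j^\infty$, where $j\colon\CCC_R(\leq n)\hookrightarrow\CCC_R(\leq m)$ is the canonical closed immersion. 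A $2$‑fibre product of a product of stacks splits as the product of the $2$‑fibre products, so the right‑hand side is equivalent to $(BG\times_{B\chi,BR}BR)\times\bigl(\CCC_G(\leq m)^\infty\times_{\CCC_\chi(\leq m)^\infty,\CCC_R(\leq m)^\infty}\CCC_R(\leq n)^\infty\bigr)$. The first factor is equivalent to $BG$ because its second leg is $\Id_{BR}$, and in the second factor ind‑perfection commutes with the fibre product (it does not change the functor of points on perfect test‑rings), so it equals $\mathfrak G(\chi,m,n)^\infty$. This gives the asserted equivalence.

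\emph{Affineness and unipotence; the path property.} Since $j$ is a closed immersion of group schemes, the first projection exhibits $\mathfrak G(\chi,m,n)$ as the closed subgroup scheme $(\CCC_\chi(\leq m))^{-1}(\CCC_R(\leq n))$ of $\CCC_G(\leq m)=\prod_u\CCC_{\ZZ/p^{e_u}\ZZ}(\leq m)$, which is a finite‑dimensional affine space carrying a unipotent group structure; hence $\mathfrak G(\chi,m,n)$ is an affine algebraic group of finite type over $\FF_q$ and, being a closed subgroup of a unipotent group, is unipotent. For the path property, let $a\in\mathfrak G(\chi,m,n)(\overline{\FF_q})$, viewed inside $\CCC_G(\leq m)(\overline{\FF_q})$ via the above closed immersion, so that $\CCC_\chi(\leq m)(a)\in\CCC_R(\leq n)(\overline{\FF_q})\subset\CCC_R(\leq m)(\overline{\FF_q})$. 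By Lemma~\ref{quotpath}, $\CCC_\chi(\leq m)([\eta_{G,a}])=[\eta_{R,\CCC_\chi(\leq m)(a)}]$; and for any $c\in\CCC_R(\leq n)(\overline{\FF_q})$ one has $[\eta_{R,c}]\subset\CCC_R(\leq n)(\overline{\FF_q})$, because $c$ lifts along the quotient map $\pi(\leq n)$ (surjective on $\overline{\FF_q}$‑points, e.g.\ by Lemma~\ref{awnz}) and $E_{R,-}$ preserves the truncation level. Therefore $\CCC_\chi(\leq m)$ carries $[\eta_{G,a}]$ into $\CCC_R(\leq n)$, i.e.\ $[\eta_{G,a}]\subset(\CCC_\chi(\leq m))^{-1}(\CCC_R(\leq n))=\mathfrak G(\chi,m,n)$; this is the stated stronger property (and essentially the preceding lemma).

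\emph{Connectedness.} I would deduce this from the path property. Over $\overline{\FF_q}$, for each closed point $a$ the path $[\eta_{G,a}]$ is the image of $\AAA^1$ under $\pi(\leq m)\circ E_{G,\widetilde a}$ for some lift $\widetilde a$; by the path property every $\overline{\FF_q}$‑point of $\AAA^1$ maps into the closed subscheme $\mathfrak G(\chi,m,n)$, so (closed points being dense in $\AAA^1$) this morphism factors through $\mathfrak G(\chi,m,n)_{\overline{\FF_q}}$, and its image is an irreducible — hence connected — subset containing both $0$ and $a$. Thus every closed point of $\mathfrak G(\chi,m,n)_{\overline{\FF_q}}$ lies in the connected component of $0$; since closed points are dense, $\mathfrak G(\chi,m,n)_{\overline{\FF_q}}$ is connected, and therefore so is $\mathfrak G(\chi,m,n)$.

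\emph{Main difficulty.} The only delicate point is connectedness. A direct argument via the exact sequence $1\to\ker\CCC_\chi(\leq m)\to\mathfrak G(\chi,m,n)\to(\text{its image})\to 1$ fails, because in characteristic $p$ kernels and intersections of connected unipotent groups need be neither smooth nor connected; one is forced to produce explicit connecting paths inside $\mathfrak G(\chi,m,n)$, which is precisely what the morphisms $E_{G,a}$ and Lemmas~\ref{pathsave} and~\ref{quotpath} are designed to supply.
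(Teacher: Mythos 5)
Your proposal is correct and follows essentially the same route as the paper's proof: transport the fibre product through Corollary~\ref{strdeltag} to split off the $BG$ factor and commute ind-perfection with the fibre product, then use Lemma~\ref{quotpath} to establish the path property and deduce connectedness. You are somewhat more explicit than the paper on a few points it leaves implicit (why $[\eta_{R,c}]\subset\CCC_R(\leq n)$, why $\mathfrak G(\chi,m,n)$ is unipotent as a closed subgroup of $\CCC_G(\leq m)$, and the density argument for connectedness), but there is no methodological difference.
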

			\begin{proof}
				One has that \begin{multline*}\mathcal C_G(\leq m)\times_{\Delta_{\chi}|_{\mathcal C_G(\leq m),\mathcal C_R(\leq m)}}\mathcal C_R(\leq n)\\=BG\times\bigg(\CCC_G(\leq m)^{\infty}\times_{\CCC_{\chi}(\leq m)^{\infty},\CCC_R(\leq m)^{\infty}}\CCC_R(\leq n)^{\infty}\bigg).
				\end{multline*}
				We have that \begin{align*}
					\CCC_G(\leq m)^{\infty}\times_{\CCC_{\chi}(\leq m)^{\infty},\CCC_R(\leq m)^{\infty}}\CCC_R(\leq n)^{\infty}
					\hskip-1cm&\\&=\bigg(\CCC_G(\leq m)\times_{\CCC_{\chi}(\leq m),\CCC_R(\leq m)}\CCC_R(\leq n)\bigg)^{\infty}\\
					&=(\mathfrak G(\chi,m,n))^{\infty}
				\end{align*}
				and the first claim follows. Let $a\in\CCC_G(\leq m)(\overline {\FF_q})$. One has that $$\CCC_{\chi}(\leq m)([\eta_{G,a}])=[\eta_{R,\CCC_{\chi}([a])}].$$ If $\CCC_{\chi}(a)\in\CCC_R(\leq n)$ then $$[\eta_{R,\CCC_{\chi}(a)}]\subset\CCC_R(\leq n).$$ But in this case, we get $[\eta_{G,a}]\subset\mathfrak G(\chi,m,n). $ Hence any $a\in\mathfrak G(\chi,m,n)$ satisfies that $[\eta_{G,a}]\subset\mathfrak G(\chi,m,n)$. We deduce that $\mathfrak G(\chi,m,n)$ is connected and hence irreducible. 
						\end{proof}
						\begin{prop}\label{irrflag}
							For every long flag, one has that $Z(S)=BG\times Z'(S),$ with $Z'(S)$ an irreducible open subset of an affine space or empty.
						\end{prop}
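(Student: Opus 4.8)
The plan is to descend to a finite level of the inductive system and describe $Z(S)$ there as the complement, inside an affine space, of finitely many proper closed subschemes. Put $m:=\max J(S)$; this makes sense since $S_0$ is a nonempty subgroup, so $0\in J(S)$, and $J(S)$ is finite. By \eqref{ztschem} we have $Z(S)\subset\mathcal C_G(\leq m)$, hence $Z(S)=Z(S)\cap\mathcal C_G(\leq m)$ and we may intersect each factor appearing in the definition of $Z(S)$ with $\mathcal C_G(\leq m)$. Fix $n\in J(S)$ and $\chi\in S_n-S_{n-1}$, write $R_\chi:=\ZZ/p^{e_\chi}\ZZ$, and use $\mathcal C_{R_\chi}(n)=\mathcal C_{R_\chi}(\leq n)\setminus\mathcal C_{R_\chi}(\leq n-1)$ for $n\geq 1$, with the convention $\mathcal C_{R_\chi}(\leq -1)=\emptyset$ so that the case $n=0$ is included as well. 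By Theorem~\ref{comdeltag}, $\Delta_\chi$ carries $\mathcal C_G(\leq m)$ into $\mathcal C_{R_\chi}(\leq m)$, so the preceding lemma on the group schemes $\mathfrak G(\chi,m,n)$ gives
$$\big(\Delta_G\times_{\Delta_\chi,\Delta_{R_\chi}}\mathcal C_{R_\chi}(n)\big)\cap\mathcal C_G(\leq m)=BG\times\big(\mathfrak G(\chi,m,n)^{\infty}\setminus\mathfrak G(\chi,m,n-1)^{\infty}\big),$$
in which $\mathfrak G(\chi,m,n-1)$ is a closed subgroup scheme of $\mathfrak G(\chi,m,n)$ and both are closed in $\CCC_G(\leq m)$. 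Intersecting over the finite set $I:=\{(n,\chi):n\in J(S),\ \chi\in S_n-S_{n-1}\}$ and pulling out the common factor $BG$ shows, via the equivalence $\psi_G$, that $Z(S)=BG\times Z'(S)$ with $Z'(S)=\bigcap_{(n,\chi)\in I}\big(\mathfrak G(\chi,m,n)^{\infty}\setminus\mathfrak G(\chi,m,n-1)^{\infty}\big)\subset\CCC_G(\leq m)^{\infty}$.

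Next I would study the closed part $W:=\bigcap_{(n,\chi)\in I}\mathfrak G(\chi,m,n)$, a closed subgroup scheme of $\CCC_G(\leq m)$, hence unipotent. The path construction is what makes it tractable: by the preceding lemma, for each $a\in W(\oFq)$ and each $(n,\chi)\in I$ one has $[\eta_{G,a}]\subset\mathfrak G(\chi,m,n)$, so $[\eta_{G,a}]\subset W$; and $[\eta_{G,a}]$, being the image of $\AAA^1$ under $E_{G,\widetilde a}$ followed by $\pi(\leq m)$, is an irreducible subset of $W$ containing both $0$ and $a$. Therefore $W$ is connected, and as a connected unipotent algebraic group over the perfect field $\FF_q$ it is isomorphic to an affine space by \cite[Corollary 14.7, Theorem 16.59]{MilneAlgebraicGroups}; in particular $W$, and hence its ind-perfection $W^{\infty}$, is irreducible.

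To finish one rewrites
$$Z'(S)=W^{\infty}\setminus\bigcup_{(n,\chi)\in I}\big(W^{\infty}\cap\mathfrak G(\chi,m,n-1)^{\infty}\big),$$
where each $W^{\infty}\cap\mathfrak G(\chi,m,n-1)^{\infty}$ is closed in $W^{\infty}$. If one of these equals $W^{\infty}$, then $Z'(S)=\emptyset$. Otherwise they are all proper, and since $I$ is finite and $W^{\infty}$ is irreducible their union is a proper closed subset; hence $Z'(S)$ is a nonempty open subscheme of $W^{\infty}\cong(\AAA^{\dim W})^{\infty}$, and an open subscheme of an irreducible scheme is irreducible. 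This presents $Z'(S)$ as an irreducible open subset of an affine space, or as the empty set, which is the assertion.

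I expect the only real obstacle to be the irreducibility of the core subgroup $W$: an intersection of affine subspaces of $\CCC_G(\leq m)$ need not be connected or reduced, and it is precisely the stability of $W$ under the scaling paths $[\eta_{G,a}]$---the same device used for a single $\mathfrak G(\chi,m,n)$---that forces connectedness, after which the structure theory of connected unipotent groups over $\FF_q$ delivers the conclusion.
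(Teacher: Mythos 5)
Your proposal is correct and follows essentially the same route as the paper's proof: decompose $Z(S)$ via $\mathcal C_{R_\chi}(n)=\mathcal C_{R_\chi}(\leq n)\setminus\mathcal C_{R_\chi}(\leq n-1)$ into the core $W=\bigcap\mathfrak G(\chi,m,n)$ minus a finite union of closed subsets, then use the path construction $[\eta_{G,a}]$ to force $W$ to be connected and hence, as a connected unipotent group over $\FF_q$, an affine space. The only (harmless) differences are notational and that you spell out the $n=0$ convention and the ``proper closed subset of an irreducible scheme'' step a bit more explicitly.
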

						\begin{proof}
							Let $m=\max(J(T))$. Thanks to~\ref{ztschem}, we have that \begin{align*}
								Z(T)&=\bigcap_{n\in J(T)}\bigcap_{\chi\in T_n-T_{n-1}}\Delta_G\times _{\Delta_{\chi},\Delta_R}\mathcal C_R(n)\\
								&=\bigcap_{n\in J(T)}\bigcap_{\chi\in T_n-T_{n-1}}\mathcal C_G(\leq m)\times _{\Delta_{\chi}|_{\mathcal C_G(\leq m)},\mathcal C_R(\leq m)}\mathcal C_R(n)\\
								&=\bigg(\bigcap _{n\in J(T)}\bigcap_{\chi\in T_n-T_{n-1}}\mathcal C_G(\leq m)\times_{\Delta_{\chi}|_{\mathcal C_G(\leq m)},\mathcal C_R(\leq m)}\mathcal C_R(\leq n)\bigg)\\&\quad\quad\quad\quad\setminus \bigg(\bigcup _{n\in J(T)}\bigcup_{\chi\in T_n-T_{n-1}}\mathcal C_G(\leq m)\times_{\Delta_{\chi}|_{\mathcal C_G(\leq m)},\mathcal C_R({\leq m})}\mathcal C_R(\leq n-1)\bigg)
							\end{align*}
							with convention $\mathcal C_R(\leq -1)=\emptyset$.
							One has that \begin{multline*}\bigcap_{n\in J(T)}\bigcap_{\chi\in T_n-T_{n-1}}\mathcal C_G(\leq m)\times _{\Delta_{\chi}|_{\mathcal C_G(\leq m)},\mathcal C_R(\leq m)}\mathcal C_R(n)\\
								=\bigcap_{n\in J(T)}\bigcap_{\chi\in T_n-T_{n-1}} BG\times (\mathfrak G(\chi, m,n))^{\infty}
							\end{multline*} and 
							\begin{multline*}
								\bigcup _{n\in J(T)}\bigcup_{\chi\in T_n-T_{n-1}}\mathcal C_G(\leq m)\times_{\Delta_{\chi}|_{\mathcal C_G(\leq m)},\mathcal C_R({\leq m})}\mathcal C_R(\leq n-1)\\=\bigcup _{n\in J(T)}\bigcup_{\chi\in T_n-T_{n-1}}	BG\times \mathfrak G(\chi,m,n)^{\infty}
							\end{multline*}
							For topological purposes we can ignore $BG$ and the perfections. 	 It follows from Lemma~\ref{quotpath} that $$\mathfrak G(m,n):=\bigcap_{n\in J(T)}\bigcap_{\chi\in T_n-T_{n-1}}\mathfrak G(\chi,m,n)$$ is a connected unipotent group because for any $a$ contained in the intersection of spaces $\mathfrak G(\chi,m,n),$ 
							one has that $[\eta_{G,a}]$ is also contained there, which is a connected subspace containing both $0$ and~$a$. 		To conclude, we observe that the space $Z(T) $ is obtained from the irreducible space $\mathfrak G(m,n)$ by removing closed subvarieties, so it is either empty or irreducible. 
						\end{proof}
	\begin{mydef}
Let us define $$\mathfrak D(n):=\bigcup_{\substack{ T\text{ long flag}\\ \disc (T)=n}}Z(T).$$
	\end{mydef}
					\begin{thm}\label{discirred}
			The constructible set $\mathfrak D(n)$ a union of no more than $O(n^{\# G})$ irreducible components which are open subspaces of affine spaces. 
					\end{thm}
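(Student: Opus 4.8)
The plan is to read the statement off directly from Proposition~\ref{irrflag} and Lemma~\ref{numbofflag}, since the substantive work has already been carried out in those results. Unwinding the definition, $\mathfrak D(n)=\bigcup_{T}Z(T)$, the union ranging over long flags $T$ with $\disc(T)=n$. First I would apply Lemma~\ref{numbofflag}: there are only $O(n^{\#G})$ such flags, so this is a \emph{finite} union of at most $O(n^{\#G})$ terms, and in particular $\mathfrak D(n)$ is constructible, being a finite union of the locally closed sets $Z(T)$. Next, for each such $T$, Proposition~\ref{irrflag} gives $Z(T)=BG\times Z'(T)$ with $Z'(T)$ either empty or an irreducible open subspace of an affine space; discarding the empty terms leaves the count $O(n^{\#G})$ unchanged. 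Hence $\mathfrak D(n)$ is already exhibited as a union of at most $O(n^{\#G})$ irreducible, locally closed subspaces, each of the form $BG$ times an open subspace of an affine space.

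It remains to identify these pieces with the irreducible components. From the proof of Proposition~\ref{irrflag}, with $m=\max(J(T))$, the set $Z'(T)$ is open and dense in a connected unipotent group, which by the structure results recalled earlier is isomorphic to an affine space $\AAA^{N_T}$; thus $\overline{Z(T)}=BG\times\AAA^{N_T}$ is irreducible and contains $Z(T)$ as a dense open subspace. Therefore $\overline{\mathfrak D(n)}=\bigcup_{T}\overline{Z(T)}$ is a finite union of irreducible closed subspaces, and its irreducible components are precisely the maximal members of the finite family $\{\overline{Z(T)}\}_{\disc(T)=n}$; there are at most $O(n^{\#G})$ of them. Each such component carries the distinguished dense open subspace $Z(T)=BG\times Z'(T)$ of the asserted form, and every point of $\mathfrak D(n)$ lies on one of these, which is the content of the theorem. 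If desired, one may additionally record, using~\eqref{ztschem}, that all these components sit inside the single substack $\mathcal C_G(\leq \max(J(T)))$, and, using Lemma~\ref{finsupdisc}, that their dimensions grow at most linearly in $n$; neither refinement is needed for the statement.

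In short, the argument is bookkeeping on top of Lemma~\ref{numbofflag} (the count of flags) and Proposition~\ref{irrflag} (the geometry of each stratum $Z(T)$). The only point that deserves a sentence of care is the passage from the locally closed strata $Z(T)$ to their closures $\overline{Z(T)}$ — one must check that this preserves irreducibility and does not inflate the number of components — and this is immediate from the explicit description of $Z'(T)$ as an open dense subset of an affine space supplied by Proposition~\ref{irrflag}. No genuinely new ingredient enters; I expect no real obstacle beyond keeping the indexing by flags consistent throughout.
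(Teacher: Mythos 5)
Your first paragraph is exactly the paper's proof: combine Lemma~\ref{numbofflag} (there are at most $O(n^{\#G})$ long flags with $\disc(T)=n$) with Proposition~\ref{irrflag} (each $Z(T)$ is $BG$ times an irreducible open subset of an affine space, or empty), discard empty strata, and read off the statement. The second paragraph, where you pass to closures and discuss the honest irreducible components of $\overline{\mathfrak D(n)}$, goes beyond what the statement asks (it only asserts that $\mathfrak D(n)$ is a union of $O(n^{\#G})$ irreducible pieces of the indicated form, not that these pieces are the irreducible components of a closure) and is not needed; apart from that overreach, the argument is correct and matches the paper.
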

					\begin{proof}
						This follows by combining Lemma~\ref{numbofflag} and Proposition~\ref{irrflag}.
					\end{proof}
\section{Commutative case}\label{npbh} Let~$F$ be a global field of characteristic $p>0$. For each place~$v$ in the set of places~$M_F$, let $F_v$ be the completion of~$F$ at~$v$ and $\OO_v$ its ring of integers. Let~$q_v$ be the cardinality of the residue field at~$v$. For a prime power~$r$, we denote by $\FF_r$ the finite field of cardinality~$r$. For every~$v$, let us fix an isomorphism $F\xrightarrow{\sim}\FF_{q_v}\llparenthesis t \rrparenthesis$ with the field of Laurent series over the finite field~$\FF_{q_v}$. Let $G$ be a finite abelian $p$-group.  
\subsection{Heights}We define heights that we work with.\subsubsection{}We recall some notions from \cite{dardayasuda2025}.
\begin{mydef}[{\cite[Definition 4.1]{dardayasuda2025}}]\label{defrais}
A raising function $c:|\Delta_G|\to\RR_{\geq 0}$ is a constructible function satisfying the following properties:
\begin{enumerate}
	\item one has that $c(x)=0$ if and only if $x=0$.
	\item one has that~$c$ is invariant for the isomorphisms of $|\Delta_G|$ which are induced from an automorphism $\FF_{q'}\llparenthesis t\rrparenthesis\xrightarrow{\sim}\FF_{q'}\llparenthesis t \rrparenthesis$.
\end{enumerate}
\end{mydef}
Given a field extension $K/\FF_q$ and $x\in \Delta_G(K)$ or $x\in \Delta_G\langle K\rangle$, we may write $c(x)$ for~$c(\widetilde x)$, where $\widetilde x$ is the point of $|\Delta_G|$ corresponding to~$x$. We may also write $c(\Delta_G)$ for what is formally $c(|\Delta_G|)$.
\begin{rem}
\normalfont The definition is identical to the one in \cite{dardayasuda2025}, except that we ask that~$c$ takes values in $\RR_{\geq 0}$, while there it is allowed to take the values in $\RR\cup\{\infty\}$ and that we ask that~$c$ is constructible (fibers are quasi--compact), while in \cite{dardayasuda2025} it is only asked to be locally constructible. In \cite[Definition 4.1]{dardayasuda2025} the word ``pseudo-raising'' is used, but in our situation the two notions coincide.
\end{rem}
\begin{rem}
	\normalfont We may use some abuse of notation and write $c:\Delta_G\to\RR_{\geq 0}$ or $c:\Delta_G(K)\to\RR_{\geq 0}$ where $K/\FF_q$ is an extension.
\end{rem}
\begin{rem}
	\normalfont 
In 
 \cite[Lemma 8.6 and Example 4.3]{dardayasuda2025} it is verified that 
 the conductor exponent and the discriminant exponent are examples of raising functions with~$c$ assumed only to be locally constructible. We will verify below that they are constructible and hence raising functions in the sense of Definition~\ref{defrais}.
\end{rem}
\begin{mydef}[{\cite[Definition 4.2]{dardayasuda2025}}] Let $c:|\Delta_G|\to\RR_{\geq 0}$ be a raising function. For $v\in M_F$, denote by $i_v:BG\langle F\rangle\to BG\langle F_v\rangle$ the canonical map. We define a height function $H=H(c),$ by $$H:BG\langle F\rangle \to \RR_{>0},\hspace{1cm}x\mapsto \prod_v q_v^{c(i_v(x))}.$$
We may write $H_v$ for $H_v:BG\langle F_v\rangle\to\RR_{>0}$ given by $x\mapsto q_v^{c(x)}$. 
\end{mydef}
\subsubsection{} 
For a discrete set~$X$, we denote $\mu^{\#}_X$ the counting measure on~$X$. We may omit~$X$ from the index, if the set~$X$ is clear from the context. For a place~$v$, we define a Haar measure~$\mu_v$ on~$BG\langle F_v\rangle$ by setting it to be $(\# G)^{-1}\cdot \mu^{\#}_{BG\langle F_v\rangle}$. We note that \begin{multline*}\mu_v(BG\langle\OO_v\rangle)=\frac{\#\{BG\langle \OO_v\rangle\}}{\# G}=\frac{\# H^1(\FF_{q}, G)}{\# G}=\frac{\# \Hom(\Gal(\overline {\FF_q}/\FF_q),G)}{\# G}\\=\frac{\#\Hom (\widehat{\ZZ}, G)}{\# G}=1.
	\end{multline*}
\begin{mydef}
	We define $$BG\langle\AAF\rangle:=\sideset{}{'}\prod_{v} BG\langle F_v\rangle,$$ where the restricted product is with respect to the finite subgroups $BG\langle\Ov\rangle\subset BG\langle F_v\rangle$.
\end{mydef}
\begin{rem}
\normalfont Note that $BG\langle \AAF \rangle$ does {\it not} mean the isomorphism classes of morphisms $\Spec(\AAF)\to BG$.
\end{rem}
One has that $BG\langle\AAA_F\rangle$ is a locally compact abelian group. Let~$\mu$ be the Haar measure on~$BG\langle \AAF\rangle$ given by $\bigotimes _v\mu_v$. 
\begin{prop}\begin{enumerate}
	
\item {\normalfont 	\cite[Proposition 4.12]{Cesnaviciuss}} The diagonal map $i:BG\langle F\rangle\to BG\langle\AAF\rangle$ is of closed, discrete and cocompact image.
\item {\normalfont\cite[Lemma 4.4(b)]{Cesnaviciuss}} The kernel of~$i$ is the finite group $\Sh^1(F,G):=\ker (i).$
\item The group $BG\langle\AAF\rangle/\big((i(BG\langle F\rangle) \prod_vBG\langle\Ov\rangle)\big) $ is finite.
\end{enumerate}
\end{prop}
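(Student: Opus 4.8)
Assertions (1) and (2) are quoted verbatim from \cite[Proposition 4.12]{Cesnaviciuss} and \cite[Lemma 4.4(b)]{Cesnaviciuss}, so I would simply cite them; no argument is needed there. The content to prove is (3), and the plan is to deduce it formally from (1) together with the fact that $K_0:=\prod_v BG\langle\Ov\rangle$ is a \emph{compact open} subgroup of $BG\langle\AAF\rangle$.

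First I would record the two topological properties of $K_0$. By the very definition of $BG\langle\AAF\rangle$ as the restricted product $\prod'_v BG\langle F_v\rangle$ with respect to the subgroups $BG\langle\Ov\rangle$, the subgroup $K_0$ is one of the distinguished basic open subgroups, hence open. Moreover each $BG\langle\Ov\rangle$ is finite (from the computation preceding the proposition, $\#BG\langle\Ov\rangle=\#H^1(\FF_q,G)$), so $K_0$ is a product of finite groups and therefore compact. Now set $H:=i(BG\langle F\rangle)\cdot K_0$, a subgroup of $BG\langle\AAF\rangle$. Since $H\supseteq K_0$ with $K_0$ open, $H$ is open, and an open subgroup of a topological group is automatically closed; thus the quotient $Q:=BG\langle\AAF\rangle/H$ is a Hausdorff topological group, and it is \emph{discrete} because $H$ is open.

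It remains to see that $Q$ is finite, and for a discrete group this is equivalent to compactness. But $Q$ is a continuous (quotient) image of $BG\langle\AAF\rangle/i(BG\langle F\rangle)$, which is compact by part (1); hence $Q$ is compact, and a compact discrete group is finite. This gives (3).

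I do not anticipate a genuine obstacle here: the only points requiring a moment of care are (a) that the topology on $BG\langle\AAF\rangle$ is indeed the restricted product topology in which $K_0$ is open — this is how the group was defined — and (b) that the cocompactness in (1) is cocompactness for exactly this topology, which is part of the cited statement. Everything else is a standard fact about locally compact abelian groups.
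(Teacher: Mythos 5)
Your proof is correct and follows essentially the same approach as the paper's (openness of the compact subgroup $K_0=\prod_v BG\langle\Ov\rangle$, hence the quotient by $i(BG\langle F\rangle)K_0$ is discrete, and cocompactness from part (1) makes it compact, hence finite). Note that the paper's one-line proof states the conclusion "follows from (2)," which appears to be a typo for "(1)"; your invocation of cocompactness from (1) is the correct step.
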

\begin{proof}
We prove 3. By the definition of topology on~$BG\langle\AAF\rangle$, one has that the compact subgroup $\prod_vBG\langle\Ov\rangle\subset BG\langle\AAF\rangle$ is also open. Now the claim follows from~(2).   
\end{proof} 
Recall that the group $BG\langle\AAF\rangle/i(BG\langle F\rangle)\prod_vBG\langle\OO_v\rangle$ is finite.  
\begin{mydef}\label{approachable}
	We let $m_G$ be the smallest non-negative integer~$m$ such that there exists a set of representatives $y_1=1, y_2\doots y_k\in BG\langle \AAF\rangle$, where $k\geq 1$, modulo $i(BG\langle F\rangle)\prod_vBG\langle \OO_v\rangle$ such that for all~$v\in M_F$ and all $i=1\doots k$, one has that $$(y_i)_v\in BG\langle\FF_{q_v}\rangle\times \CCC_G(\leq m)(\FF_{q_v})$$ for the chosen (and hence any) isomorphism $F_v\cong \FF_{q_v}\llparenthesis t \rrparenthesis$.   
\end{mydef}
\subsubsection{}We define a property that will be used in the proof. Let~$G$ be a finite abelian $p$-group. We have a canonical identification $$\Delta_G\langle\overline{\FF_q}\rangle=BG\langle\overline{\FF_q}\rangle\times\varinjlim_n\CCC_G(\leq n)^{\infty}(\overline{\FF_q})=\varinjlim_n\CCC_G(\leq n)(\overline\FF_q).$$ 
Thus $\Delta_G(\overline{\FF_q})$ is canonically identified with $\Delta_G\langle\overline{\FF_q}\rangle$ we may occasionally use the former notation.
\begin{mydef}
Let $m\geq 1$. We say that a raising function $c:|\Delta_G|\to \RR_{\geq 0}$ is $m$-approachable, if there exists $0<C_1<C_2$ such that for every $x\in\Delta_G(\overline{\FF_q})$, one has that \begin{align*}C_1&\leq \inf\{c(x+w)-c(x)|\hspace{0,1cm}x\in \Delta_G(\overline{\FF_q}), w\in \CCC_G(\leq m)(\overline{\FF_q})\}\\&\leq \sup\{c(x+w)-c(x)|\hspace{0,1cm}x\in \Delta_G(\overline{\FF_q}), w\in \CCC_G(\leq m)(\overline{\FF_q})\}\\&\leq C_2.
\end{align*} 
\end{mydef}
\begin{rem} \label{condapp}
\normalfont The conductor counting function, which is defined to be $m$ on $\CCC_G(\leq m)-\CCC_G(\leq m-1)$ is clearly $k$-approachable for any $k>0$.
\end{rem} 
\begin{lem}\label{discap}
The discriminant exponent~$c_{\disc}$ is $k$-approachable for any $k>0$.
\end{lem}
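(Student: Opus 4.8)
The plan is to reduce the assertion for the discriminant exponent to the (essentially trivial) case of the conductor counting function (Remark~\ref{condapp}), via the conductor--discriminant formula and the functoriality developed in Section~\ref{msfg}. Recall that over any finite extension $\FF_{q'}/\FF_q$ one has
$$c_{\disc}(f)=\sum_{\chi\in G^*}c_{\cond}\big(\Delta_{\chi}(f)\big)\qquad(f\in\Delta_G(\FF_{q'})),$$
with $c_{\cond}$ the conductor counting function of $\Delta_{\ZZ/p^{e_{\chi}}\ZZ}$; since both sides are constructible functions on $|\Delta_G|$ agreeing on all finite-field points, the identity persists on $\Delta_G(\overline{\FF_q})=\varinjlim_n\CCC_G(\leq n)(\overline{\FF_q})$. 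The key structural input is that for every character $\chi\colon G\to\ZZ/p^{e_{\chi}}\ZZ$ and every $m\geq 1$ the homomorphism $\Delta_{\chi}$ sends $\CCC_G(\leq m)$ into $\mathcal C_{\ZZ/p^{e_{\chi}}\ZZ}(\leq m)$: this follows from Theorem~\ref{comdeltag}, which identifies the restriction of $\Delta_{\chi}$ to $BG\times\CCC_G(\leq m)^{\infty}$ with $B\chi\times\CCC_{\chi}(\leq m)^{\infty}$, together with Proposition~\ref{condmostn}, identifying $\mathcal C_G(\leq m)$ with the $G$-torsors of conductor at most $m$.

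Fix $k>0$ and take $x\in\Delta_G(\overline{\FF_q})$ and $w\in\CCC_G(\leq k)(\overline{\FF_q})$. Additivity of each $\Delta_{\chi}$ gives $\Delta_{\chi}(x+w)=\Delta_{\chi}(x)+\Delta_{\chi}(w)$ with $\Delta_{\chi}(w)\in\mathcal C_{\ZZ/p^{e_{\chi}}\ZZ}(\leq k)(\overline{\FF_q})$, so Remark~\ref{condapp}, applied to each of the finitely many cyclic groups $\ZZ/p^{e_{\chi}}\ZZ$, controls every increment $c_{\cond}(\Delta_{\chi}(x+w))-c_{\cond}(\Delta_{\chi}(x))$ by the constants it furnishes. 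Summing over $\chi\in G^*$ through the conductor--discriminant formula then controls
$$c_{\disc}(x+w)-c_{\disc}(x)=\sum_{\chi\in G^*}\Big(c_{\cond}\big(\Delta_{\chi}(x+w)\big)-c_{\cond}\big(\Delta_{\chi}(x)\big)\Big),$$
the constants of the conclusion being those of Remark~\ref{condapp} multiplied by $\#G^*=\#G$. Here the trivial character contributes $0$, and since $\Delta_G=\prod_u\Delta_{\ZZ/p^{e_u}\ZZ}$ splits over the cyclic factors of $G$, a nonzero $w$ has nonzero image under some projection character, which is what supplies the genuine lower contribution.

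The step I expect to require the most care is the uniform lower bound. Many per-character increments can vanish — when $\Delta_{\chi}$ kills $w$, or when $\Delta_{\chi}(x)$ already has conductor exceeding that of $\Delta_{\chi}(w)$, the conductor being ``ultrametric'' under addition — so the crux is to show that enough of the positivity and boundedness built into Remark~\ref{condapp} survives the passage through the conductor--discriminant sum: a nonzero element of $\CCC_G(\leq k)(\overline{\FF_q})$ must be detected, with a uniform positive gain, by an appropriate character, and no character may contribute a negative increment outweighing that gain. Once this is arranged, the remaining work — keeping track of how $C_1,C_2$ depend on $\#G$ — is routine bookkeeping.
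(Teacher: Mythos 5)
Your outline follows the paper's proof exactly: apply the conductor--discriminant formula to write $c_{\disc}(y)=\sum_{\chi\in G^*}c_{\cond}^{\ZZ/p^{e_\chi}\ZZ}(\Delta_\chi(y))$, use that each $\Delta_\chi$ is a homomorphism so $\Delta_\chi(x+w)=\Delta_\chi(x)+\Delta_\chi(w)$ with $\Delta_\chi(w)\in\mathcal C_{\ZZ/p^{e_\chi}\ZZ}(\leq k)(\overline{\FF_q})$, import $k$-approachability of the conductor exponent on each cyclic quotient from Remark~\ref{condapp}, and sum over the finitely many characters.

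The step you flag as the crux --- a uniform positive lower bound $C_1$ on the increment --- is unobtainable for \emph{any} raising function and is not what the paper's proof establishes. Since $0\in\CCC_G(\leq k)(\overline{\FF_q})$, taking $w=0$ gives $c(x+w)-c(x)=0$, so the infimum in Definition of $k$-approachable is always $\leq 0$; and the ultrametric phenomenon you notice already kills the positive lower bound for the conductor exponent itself (pick any $x$ with $c_{\cond}(x)>k$). Hence Remark~\ref{condapp}, which asserts the conductor exponent is $k$-approachable, cannot be promising a positive $C_1$ either. The only reading consistent with Remark~\ref{condapp}, with the paper's proof of this lemma, and with its sole use in Lemma~\ref{hrprime} is a two-sided bound $|c(x+w)-c(x)|\leq C$ on the increment. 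Under that reading, the triangle inequality completes your argument without any lower-bound hurdle:
$$|c_{\disc}(x+w)-c_{\disc}(x)|\leq\sum_{\chi\in G^*}\big|c_{\cond}^{\ZZ/p^{e_\chi}\ZZ}(\Delta_\chi(x)+\Delta_\chi(w))-c_{\cond}^{\ZZ/p^{e_\chi}\ZZ}(\Delta_\chi(x))\big|\leq\sum_{\chi\in G^*}C(\chi,k),$$
which is precisely the paper's computation (the paper writes the first step as an equality, but only the inequality is used or true). So your plan is correct and essentially identical to the paper's proof; the hesitation about a positive lower bound points to an infelicity in the stated definition rather than to a genuine gap you would need to fill.
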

\begin{proof}
Let $c^H_{\cond}:\Delta_H\to\RR_{\geq 0}$ denotes the conductor exponent of a finite abelian $p$-group~$H$. Let $\chi:G\to\ZZ/p^{e_\chi}\ZZ$ be a character, where $e_{\chi}\geq 1$ is such that~$\chi$ is surjective. It follows from above 
that there exists a constant $C(\chi,m)$ so that $$|c^{\ZZ/p^{e_\chi}\ZZ}_{\cond}(y+m)-c^{\ZZ/p^{e_\chi}}_{\cond}(y)|\leq C(\chi, W)$$as soon as $m\in \Delta_{\chi}(\CCC_G(\leq m)(\overline{\FF_q}))$ and any $y\in \Delta_G(\overline{\FF_q}).$ Let $w\in \CCC_G(\leq m)(\overline{\FF_q})$ and $x\in \Delta_G(\overline{\FF_q})$. One has that:
\begin{align*}|c_{\disc}(x+w)-c_{\disc}(x)|&=\sum_{\chi\in G^*}|c^{\ZZ/p^{e_\chi}\ZZ}_{\cond}(\Delta_{\chi}(x+w))-c^{\ZZ/p^{e_\chi}\ZZ}_{\cond}(\Delta_{\chi}(x))|\\
	&=\sum_{\chi\in G^*}|c^{\ZZ/p^{e_\chi}\ZZ}_{\cond}(\Delta_\chi(x)+\Delta_{\chi}(w))-c^{\ZZ/p^{e_\chi}\ZZ}_{\cond}(\Delta_{\chi} (x))|\\
	&\leq \sum_{\chi\in G^*}C(\chi,W).
\end{align*}
The statement is proven.
\end{proof}
\begin{lem}\label{hrprime} Let~$c:\Delta_G\to\RR_{\geq 0}$ be a raising function which is $m_G$-approachable. Let~$y_1=1\doots y_r$ be a set of representatives as in Definition~\ref{approachable}. Let~$H:BG\langle\AAF\rangle\to\RR_{\geq 0}$ be the height induced by~$c$. For any $y_i$, one has that there exist $C_2>C_1>0$ with 
$$C_1<\frac{H(y_ix)}{H(x)}<C_2.$$
\end{lem}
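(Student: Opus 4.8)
The plan is to reduce the global statement to a local, place-by-place comparison and then to invoke the $m_G$-approachability of $c$. Fix $i$ and write $y := y_i$. Since $H(x) = \prod_v q_v^{c(i_v(x))}$, we have
$$
\frac{H(y_i x)}{H(x)} = \prod_v q_v^{\,c((y_i)_v \cdot i_v(x)) - c(i_v(x))},
$$
so it suffices to bound the exponent $\sum_v \bigl(c((y_i)_v \cdot i_v(x)) - c(i_v(x))\bigr)$ above and below by constants independent of $x$. By the choice of the representatives $y_1,\dots,y_r$ in Definition~\ref{approachable}, for every place $v$ the local component $(y_i)_v$ lies in $BG\langle\FF_{q_v}\rangle \times \CCC_G(\leq m_G)(\FF_{q_v})$ under the fixed isomorphism $F_v \cong \FF_{q_v}\llparenthesis t\rrparenthesis$. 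In particular, identifying $BG\langle F_v\rangle$ with $\Delta_G\langle \FF_{q_v}\rangle$ via the results of Section~\ref{msfg}, we may regard $(y_i)_v$ as an element $w_v \in \CCC_G(\leq m_G)(\FF_{q_v}) \subset \Delta_G(\overline{\FF_q})$ (the $BG\langle\FF_{q_v}\rangle$-part contributes nothing to $c$ beyond a translation inside a fixed fibre, and $c$ is invariant under the relevant automorphisms by Definition~\ref{defrais}(2)).

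The key step is then to apply $m_G$-approachability of $c$: there are constants $0 < C_1 < C_2$, depending only on $c$ and $m_G$ (not on $v$ or on $x$), such that
$$
C_1 \leq c\bigl(z + w\bigr) - c(z) \leq C_2
\quad\text{for all } z \in \Delta_G(\overline{\FF_q}),\ w \in \CCC_G(\leq m_G)(\overline{\FF_q}).
$$
Applying this with $z = i_v(x)$ (viewed in $\Delta_G(\overline{\FF_q})$ via base change to the algebraic closure, which is harmless since $c$ is defined on $|\Delta_G|$ and constant on fibres) and $w = w_v$, we get $C_1 \leq c((y_i)_v \cdot i_v(x)) - c(i_v(x)) \leq C_2$ for every $v$. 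However, this is not yet enough, because the product is over infinitely many places; we must observe that the difference vanishes for all but finitely many $v$. This is because for all but finitely many $v$ one has $w_v = 0$ (the representative $y_i$ has trivial component outside a finite set of places, since $BG\langle\OO_v\rangle$ is the "integral" part and $y_i$ differs from an adelic point with almost all components in $\prod_v BG\langle\OO_v\rangle$), and $c(z + 0) - c(z) = 0$. Let $S_i$ be the finite set of places where $w_v \neq 0$; then the exponent equals $\sum_{v \in S_i}\bigl(c((y_i)_v \cdot i_v(x)) - c(i_v(x))\bigr)$, which lies in the interval $[\#S_i \cdot C_1', \#S_i \cdot C_2']$ — here one must be slightly careful, since for a place where $w_v = 0$ we get difference $0$, not something in $[C_1, C_2]$, so the true lower bound on the full exponent is $\min(0, \#S_i C_1)$ and the upper bound is $\max(0, \#S_i C_2)$; in any case these are finite constants depending only on $i$. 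Exponentiating, we obtain $C_1 < H(y_i x)/H(x) < C_2$ with new constants depending on $i$ but not on $x$, as claimed.

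The main obstacle I anticipate is the bookkeeping in the second-to-last step: one must verify cleanly that $(y_i)_v$ is "integral" (equivalently $w_v = 0$) for all but finitely many $v$, and that the contribution of such places to the height ratio is exactly $1$ — this requires knowing that $c$ vanishes only at $0$ (Definition~\ref{defrais}(1)) and that the identification of $BG\langle\OO_v\rangle$ with $BG\langle\FF_{q_v}\rangle = \CCC_G(\leq 0)(\FF_{q_v})$ (conductor $0$) is compatible with the structures from Proposition~\ref{condmostn}. A secondary subtlety is passing from $\FF_{q_v}$-points to $\overline{\FF_q}$-points in order to apply the approachability hypothesis, but since $c$ is constructible and isomorphism-invariant this base change does not change the value, so it is routine.
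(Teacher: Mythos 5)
Your approach is the same as the paper's: $m_G$-approachability bounds each local factor $H_v((y_i)_v x_v)/H_v(x_v)$, and only finitely many places contribute nontrivially because $(y_i)_v \in BG\langle \OO_v\rangle$ for all but finitely many $v$. The paper's proof is a two-sentence version of yours.

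One point worth tightening: your opening reduction — ``it suffices to bound the exponent $\sum_v\bigl(c((y_i)_v \cdot i_v(x)) - c(i_v(x))\bigr)$ above and below by constants'' — is not actually a valid reduction, because the bases $q_v$ vary with $v$; a bound on the sum of exponents alone does not control the product $\prod_v q_v^{c_v}$ (one could have a large positive exponent at a place of small residue cardinality cancelling a large negative exponent at a place of large residue cardinality in the sum, without the product being controlled). What saves the argument is that you then bound each summand individually by the approachability constants and observe that only the finitely many $v$ with $(y_i)_v\not\in BG\langle \OO_v\rangle$ contribute; that is the correct sufficient condition, and it is what the paper uses. It would be cleaner to skip the ``sum of exponents'' phrasing entirely and say directly that each local factor $H_v((y_i)_v x_v)/H_v(x_v)$ lies in $[q_v^{C_1}, q_v^{C_2}]$ when $(y_i)_v\neq 0$ and equals $1$ otherwise, then take the (finite) product. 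Your handling of the $w_v=0$ edge case, where the difference is $0$ rather than in $[C_1,C_2]$, is correctly flagged and necessary, since the definition of $m$-approachability (with $0<C_1$) tacitly excludes $w=0$.
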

\begin{proof}
	Clearly for each~$v$, one has that $$x\mapsto \frac{H_v((y_i)_vx_v)}{H_v(x_v)}$$ is bounded from below and above, by the definition of $m$-approachable. Multiplying over finitely many~$v$ for which $y_j\not\in BG(\OO_v)$, gives the wanted claim.
\end{proof}
\subsubsection{}Let $c:\Delta_G\to\RR_{\geq 0}$ be a raising function. Given $r\in c(\Delta_G)$, we write $c^{-1}(r)$ for the constructible subset of~$\Delta_G$ which is the preimage of~$c^{-1}(r)$. If $\FF_{q'}/\FF_q$ is a finite extension, we write $$c^{-1}(r)(\FF_{q'})$$ for the subcategory of $\Delta_G(\FF_{q'})$ given by $\Spec(\FF_{q'})\to\Delta_G$ which induce the elements of $c^{-1}(r)$ and we write $c^{-1}(r)\langle \FF_{q'}\rangle$ for the corresponding isomorphism classes. All the counts of elements of $\Delta_G\langle\FF_r\rangle$ will be understood with weights $\#  G^{-1}$.
\begin{mydef}\label{suitable}
A raising function $c:|\Delta_G|\to\RR_{\geq 0}$ will be called semisuitable, if the following conditions are satisfied:
\begin{enumerate} 
	\item Suppose that $\sup_{r\neq 0}\frac{1+\dim(c^{-1}(r))}{r}<\infty.$
	\item For every $\varepsilon>0$, there exists $C>0$, such that for all $r>0$ and all finite extension $\FF_{q'}/\FF_q$, one has that $$\# c^{-1}(r)\langle\FF_{q'}\rangle\leq C (q')^{\dim(c^{-1}(r))+\varepsilon r}.$$
	\item There exists $M>0$ such that $Mc(\Delta_G)
\subset\ZZ$.
\end{enumerate}
It is said to be suitable if:
\begin{enumerate}
	\setcounter{enumi}{3}
	\item  	if it is also  $m_G$-approachable.
\end{enumerate}
\end{mydef}
\begin{lem}\label{discsuitable}
	One has that the discriminant exponent $c_{\disc}:\Delta_G\to\RR_{\geq 0}$ is a suitable counting function.
\end{lem}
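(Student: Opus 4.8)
The plan is to deduce the statement from the structural results on the discriminant loci proved in Section~\ref{Discriminantloci} together with the approachability already obtained. Two of the four conditions are essentially free: condition~(4) of Definition~\ref{suitable} is Lemma~\ref{discap} (which gives $k$-approachability for every $k>0$, in particular $m_G$-approachability), and condition~(3) holds with $M=1$ since $c_{\disc}(f)=\disc(S(f))$ is a non-negative integer. Properties~(1) and~(2) of Definition~\ref{defrais} for $c_{\disc}$ are standard (cf.\ \cite[Lemma 8.6 and Example 4.3]{dardayasuda2025}), so the real work is to verify that $c_{\disc}$ is constructible and that conditions~(1) and~(2) of Definition~\ref{suitable} hold.

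The first key step I would carry out is the identification $c_{\disc}^{-1}(n)=\mathfrak D(n)$ for every non-negative integer $n$. This rests on the equality $c_{\disc}(f)=\disc(S(f))$ established above (proved for points over finite fields, hence valid on all of $|\Delta_G|$ since both sides are constructible functions) and on the observation that, because the jumps of a long flag $T$ partition $G^*$ into the pieces $T_m\setminus T_{m-1}$, membership $f\in Z(T)$ forces $S_m(f)\setminus S_{m-1}(f)\supseteq T_m\setminus T_{m-1}$ for all $m$, and comparing these two partitions of the finite set $G^*$ forces $S(f)=T$. Thus $\mathfrak D(n)=\bigsqcup_{\disc(T)=n}Z(T)$ is precisely the locus where $c_{\disc}=n$. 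As a byproduct, constructibility of $c_{\disc}$ is exactly Theorem~\ref{discirred}, so $c_{\disc}$ is a raising function in the sense of Definition~\ref{defrais}. Granting this identification, condition~(1) of Definition~\ref{suitable} becomes $\sup_{n\neq 0}\frac{1+\dim\mathfrak D(n)}{n}<\infty$, which is Lemma~\ref{finsupdisc} once one notes $\dim\mathfrak D(n)=\sup_{\disc(T)=n}\dim Z(T)$.

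For condition~(2), fix $\varepsilon>0$ and a finite extension $\FF_{q'}/\FF_q$. By Proposition~\ref{irrflag} each nonempty $Z(T)$ equals $BG\times Z'(T)$ with $Z'(T)$ an open subvariety of an affine space of dimension $\dim Z(T)$; since the $BG$ factor contributes weighted count $\#H^1(\FF_{q'},G)/\#G=1$, I get $\#Z(T)\langle\FF_{q'}\rangle=\#Z'(T)(\FF_{q'})\le (q')^{\dim Z(T)}\le (q')^{\dim c_{\disc}^{-1}(n)}$, with no stray complexity constant because $Z'(T)$ is literally open in an affine space. Summing over the $O(n^{\#G})$ long flags $T$ with $\disc(T)=n$ (Lemma~\ref{numbofflag}), uniformly in $n$ and $q'$, yields $\#c_{\disc}^{-1}(n)\langle\FF_{q'}\rangle\ll n^{\#G}(q')^{\dim c_{\disc}^{-1}(n)}$; and since $q'\ge q\ge 2$ one has $n^{\#G}\le C_\varepsilon 2^{\varepsilon n}\le C_\varepsilon (q')^{\varepsilon n}$ for all $n\ge 1$ with $C_\varepsilon=\sup_{n\ge 1}n^{\#G}2^{-\varepsilon n}<\infty$, so the polynomial factor is absorbed into $(q')^{\varepsilon n}$, giving condition~(2) (the case $n=0$ being trivial since $c_{\disc}^{-1}(0)=\{0\}$). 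I expect the only genuinely delicate point to be the clean proof of $c_{\disc}^{-1}(n)=\mathfrak D(n)$ and of the vanishing of the $BG$-factor from all the weighted counts; everything after that is a direct application of Lemmas~\ref{numbofflag}, \ref{finsupdisc}, \ref{discap}, Theorem~\ref{discirred} and Proposition~\ref{irrflag}, plus the elementary domination of a polynomial by $(q')^{\varepsilon n}$.
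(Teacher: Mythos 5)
Your proposal is correct and follows essentially the same route as the paper's proof: identify $c_{\disc}^{-1}(n)$ with $\mathfrak D(n)$, then invoke Theorem~\ref{discirred} and Lemma~\ref{numbofflag} for constructibility and condition~(2), Lemma~\ref{finsupdisc} for condition~(1), integrality for condition~(3), and Lemma~\ref{discap} for condition~(4). You supply two details the paper leaves implicit: the verification that $f\in Z(T)$ forces $S(f)=T$ (whence $c_{\disc}^{-1}(n)=\mathfrak D(n)$, in fact as a disjoint union), and the elementary absorption of the polynomial bound $O(n^{\#G})$ into $(q')^{\varepsilon n}$.
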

\begin{proof}
	We have verified in \cite[Example 3.4]{dardayasuda2025} that~$c_{\disc}$ is locally constructible and satisfying conditions (1) and (2) of Definition~\ref{defrais}. To see that~$c_{\disc}$ is constructible, and hence a raising function, it suffices to see that $c_{\disc}^{-1}(r)$ are quasi-compact, but this follows from Theorem~\ref{discirred}. We now verify that it is suitable. The first condition of Definition~\ref{suitable} is verified in Lemma~\ref{finsupdisc}. 	Let~$r\in c_{\disc}(\Delta_G)-\{0\}$. Let $\FF_{q'}/\FF_q$ be a finite extension of~$\FF_q$. It follows from Theorem~\ref{discirred}  that $c_{\disc}^{-1}(r)(\FF_{q'}) $ is a union of spaces which are $\FF_{q'}$-points of a union of no more than $O(n^{\# G})$ open subsets of affine spaces. We deduce that the second condition of Definition~\ref{suitable} is satisfied. 
	 The third condition is automatic, because $c(\Delta_G)\subset\NN_{0}$. The fourth condition of Definition~\ref{suitable}, has been verified in Lemma~\ref{discap}
\end{proof} 
\begin{mydef}\label{aandb}
For any semisuitable raising function, we let $$a(c):=\sup_{r\in c(\Delta_G)-\{0\}}\frac{1+\dim(c^{-1}(r))}{r}.$$
By the first property of Definition~\ref{suitable}, one has that $a(c)<\infty$. Let us denote by $$D(c):=\{r\in c(\Delta_G)-\{0\}|\hspace{0,1cm} 1+\dim(c^{-1}(r))=a(c)r\}.$$
We 
let \begin{multline*}b(c):=\sum_{r\in D(c)}\#\text{ irreducible components of }c^{-1}(r)\\\text{ of dimension equal to }\dim (c^{-1}(r).
\end{multline*}
Hence, formally if $D(c)$ is infinite, $b(c)=\infty$, while if $D(c)=\emptyset$, then $b(c)=0$. 
\end{mydef}
\begin{rem}
\normalfont Let us verify that the $a$- and $b$-invariants agree with the ones in \cite{dardayasuda2025}. From Remark 8.2 of that paper, it is clear that the $a$-invariants coincide. The following expression is given for $b$: $$b:=\#\{x\in |\Delta_G||\hspace{0,1cm} 1+\dim (\overline{\{x\}})=ac(x)\}.$$ But then clearly, one must have that $x\in c^{-1}(D(c))$ and for each $r\in D(c)$, precisely the generic points of components of maximal dimension are counted.
\end{rem}
\begin{mydef}\label{strongsuitdef}
A suitable raising function $c:\Delta_G\to\RR_{\geq 0}$ will be called strongly suitable if the set $D(c)$ is finite and non-empty and there exists $\varepsilon(c)>0$ such that for any $r'\in c(\Delta_G)-D(c)-\{0\}$, one has that $$ a(c)-\varepsilon(c)>(1+\dim(c^{-1}(r')))/r'.$$
\end{mydef}
\begin{lem}\label{condss}
The conductor exponent is a strongly suitable raising function.
\end{lem}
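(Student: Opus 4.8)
The plan is to verify, in turn, that $c_{\cond}$ is a raising function, that it satisfies the three conditions of Definition~\ref{suitable} and the $m_G$-approachability condition (so is suitable), and finally that $D(c_{\cond})$ is finite and non-empty and admits a spectral gap as in Definition~\ref{strongsuitdef}. The first step is to identify the level sets: by Proposition~\ref{condmostn} together with the identification $\mathcal C_G(\leq m)=BG\times\CCC_G(\leq m)^{\infty}$, the preimage $c_{\cond}^{-1}(m)$ is exactly $\mathcal C_G(m)=\mathcal C_G(\leq m)\setminus\mathcal C_G(\leq m-1)$, hence a trivial $G$-gerbe over the perfection of an open subscheme of the affine space $\CCC_G(\leq m)\cong\AAA^{N_m}$, where $N_m:=\dim\CCC_G(\leq m)$. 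In particular $c_{\cond}^{-1}(m)$ is quasi-compact, so — exactly as for $c_{\disc}$ in Lemma~\ref{discsuitable} — $c_{\cond}$ is constructible and, combined with \cite[Lemma 8.6]{dardayasuda2025}, a raising function.

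For semisuitability I would use the dimension count for the cyclic groups, which telescopes to
\[
\dim\CCC_G(\leq m)=\sum_{u=1}^{d}\Bigl((m-1)-\bigl\lfloor\tfrac{m-1}{p^{e_u}}\bigr\rfloor\Bigr)\le d(m-1),
\]
so that $(1+\dim c_{\cond}^{-1}(m))/m\le d$, giving condition~(1); condition~(2) holds with $C=1$ and no $\varepsilon r$ term, since $\#c_{\cond}^{-1}(m)\langle\FF_{q'}\rangle\le\#\AAA^{N_m}(\FF_{q'})=(q')^{\dim c_{\cond}^{-1}(m)}$ when $c_{\cond}^{-1}(m)\ne\emptyset$ (and the left side is $0$ otherwise); and condition~(3) is immediate because $c_{\cond}(\Delta_G)\subset\NN_0$. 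Condition~(4) is Remark~\ref{condapp}. Hence $c_{\cond}$ is suitable.

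The heart of the argument is the analysis of $f(m):=(1+\dim c_{\cond}^{-1}(m))/m$ on the value set $V:=c_{\cond}(\Delta_G)\setminus\{0\}=\{m\ge1:\mathcal C_G(m)\ne\emptyset\}$. Writing $L:=\sum_{u=1}^{d}p^{-e_u}$, the displayed formula gives $f(m)=d-h(m)$ with $h(m)=\bigl(d-1+\sum_{u}\lfloor\frac{m-1}{p^{e_u}}\rfloor\bigr)/m$, so that $a(c_{\cond})=d-h^{\ast}$ and $D(c_{\cond})=\{m\in V:h(m)=h^{\ast}\}$ for $h^{\ast}:=\inf_{m\in V}h(m)$. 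I would then establish two elementary facts. First, comparing $\dim\CCC_G(\leq m)$ with $\dim\CCC_G(\leq m-1)$ shows the jump equals $d-\#\{u:p^{e_u}\mid m-1\}$, which is $d>0$ at $m=p^{e_{\max}}$ with $e_{\max}:=\max_u e_u$, so $p^{e_{\max}}\in V$; and a short computation gives $h(p^{e_{\max}})=L-p^{-e_{\max}}$, whence $h^{\ast}\le L-p^{-e_{\max}}<L$. Second, $\lfloor x\rfloor>x-1$ yields $h(m)>L-(L+1)/m$ for all $m\ge1$. Combining the two: for $m>M_0:=2p^{e_{\max}}(L+1)$ one has $h(m)>L-\tfrac1{2p^{e_{\max}}}=h(p^{e_{\max}})+\tfrac1{2p^{e_{\max}}}>h^{\ast}$, so $h^{\ast}$ is attained on the finite set $V\cap[1,M_0]$; in particular $D(c_{\cond})\subset V\cap[1,M_0]$ is finite and non-empty. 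For the gap, if $m\in V\setminus(D(c_{\cond})\cup\{0\})$ with $m>M_0$ then $h(m)\ge h^{\ast}+\tfrac1{2p^{e_{\max}}}$, while on the finite set $V\cap[1,M_0]\setminus D(c_{\cond})$ the positive quantity $h(m)-h^{\ast}$ is bounded below by some $\delta_1>0$ (dropping $\delta_1$ if this set is empty); taking $\varepsilon(c_{\cond}):=\tfrac12\min\{\delta_1,\tfrac1{2p^{e_{\max}}}\}$ then gives $a(c_{\cond})-\varepsilon(c_{\cond})>f(m)$ for all such $m$, which is Definition~\ref{strongsuitdef}.

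I expect the main obstacle to be precisely this last combinatorial optimisation: one must exhibit a point of $V$ at which $h$ drops strictly below its limiting value $L$, and $m=p^{e_{\max}}$ does the job, whereas the naive candidate $m=p^{\min_u e_u}$ — which gives $h=(d-1)/p^{\min_u e_u}$ — need not beat $L$ as soon as two or more of the $e_u$ exceed $\min_u e_u$. Everything else is bookkeeping with the structure theory of $\Delta_G$ from Section~\ref{msfg}, in particular the identification of the conductor stratification with the stratification of $\Delta_G$ by the substacks $\mathcal C_G(\leq m)$ and the fact that each stratum is a gerbe over an open piece of an affine space.
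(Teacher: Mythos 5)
Your proof is correct and it takes a genuinely different combinatorial route from the paper. Both arguments begin with the same dimension formula for $\CCC_G(\leq m)$; the paper then argues by periodicity of the dimension modulo $p^{e_d}$, claiming a constant increment $\dim c_{\cond}^{-1}(r+p^{e_d})-\dim c_{\cond}^{-1}(r)$ and deducing via mediant inequalities that the supremum is attained and that a gap exists. You instead telescope the formula to $\dim \CCC_G(\leq m)=\sum_u\bigl((m-1)-\lfloor(m-1)/p^{e_u}\rfloor\bigr)$, write $f(m)=d-h(m)$, and identify the asymptotic limit $L=\sum_u p^{-e_u}$ of $h$; the explicit estimate $h(m)>L-(L+1)/m$ together with the single test value $h(p^{e_{\max}})=L-p^{-e_{\max}}<L$ immediately confines $D(c_{\cond})$ to a bounded set and yields the spectral gap. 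Your route has two advantages: it correctly restricts attention to the value set $V=c_{\cond}(\Delta_G)\setminus\{0\}$ (which can be a proper subset of $\NN$, since $\mathcal C_G(m)$ is empty whenever $p^{e_u}\mid m-1$ for every $u$, a point the paper elides); and it avoids the paper's increment formula, which as stated reads $\sum_u(p^{e_d-e_u}-1)$ and appears to be off — for $G=\ZZ/p\ZZ$ it gives $0$, whereas the true increment is $p-1$, and in general the telescoped formula gives $\sum_u(p^{e_d}-p^{e_d-e_u})$. The paper's conclusion stands (the limiting slope $d-L$ is what matters, and this is what your $L$ computes directly), but your argument is cleaner and sidesteps the arithmetic. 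Your observation that $m=p^{\min_u e_u}$ would not in general produce $h<L$ while $m=p^{e_{\max}}$ always does is also correct and is the decisive point the paper does not make explicit.
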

\begin{proof} Let us verify that the conductor exponent~$c_{\cond}$ is a strongly suitable raising function. Let $G=\prod_{u=1}^d\ZZ/p^{e_u}\ZZ$, with $e_u\geq 1$. We have verified the conditions (1) and (2) of Definition~\ref{defrais} in \cite[Lemma 8.6]{dardayasuda2025}. It follows from Proposition~\ref{condmostn} that~$c_{\cond}$ is constructible and hence a raising function. We have seen in Remark~\ref{condapp}, that it is $k$-approachable for any~$k>0$. For any $r\in\NN$, one has that $$\dim(c_{\cond}^{-1}(r))=\sum_{u=1}^d\sum_{j=0}^{e_u-1}\bigg(\bigg\lfloor\frac{r-1}{p^{e_u-1-j}}\bigg\rfloor -\bigg\lfloor\frac{r-1}{p^{e_u-j}}\bigg\rfloor\bigg).$$ Clearly, \begin{align*} \dim(c_{\cond}^{-1}(r+p^{e_d}))-\dim(c_{\cond}^{-1}(r))=\sum_{u=1}^d(p^{e_d-e_u}-1).
	\end{align*}
Note that $$\max_{1\leq r\leq p^{e_d}}\frac{1+\dim(c_{\cond}^{-1}(r))}{r}\geq \frac{1+\sum_{u=1}^d(p^{e_d-e_u}-1)}{p^{e_d}}>\frac{\sum_{u=1}^d(p^{e_d-e_u}-1)}{p^{e_d}}.$$
Now it is clear that for any $1\leq r\leq p^{e_d}$ and any $\ell\geq 1$ that \begin{align*}\frac{1+\dim(c_{\cond}^{-1}(r))}{r}&>\frac{1+\dim(c_{\cond}^{-1}(r))+\ell \sum_{u=1}^{d}(p^{e_d-e_u}-1)}{r+\ell p^{e_d}}\\&=\frac{1+\dim(c_{\cond}^{-1}(r+\ell p^{e_d}))}{r+\ell p^{e_d}}. 
	\end{align*}
Thus the supremum $$\sup_{r\in c_{\cond}(\Delta_G)-\{0\}}\frac{1+\dim(c^{-1}_{\cond}(r))}{r}$$is finite and attained. Moreover, one has that $D(c_{\cond})$ is finite, because for any~$r$, one has $$\lim_{\ell\to\infty}\frac{1+\dim(c^{-1}_{\cond}(r))+\ell \sum_{u=1}^d(p^{e_d-e_u}-1)}{r+\ell p^{e_d}}=\frac{\sum_{u=1}^d(p^{e_d-e_u}-1)}{p^{e_d}}. $$ Moreover, there exists $\varepsilon>0$, such that for any $r'\in c_{\cond}(\Delta_G)-D(c)-\{0\}$, we have that $a(c)-\varepsilon>(1+\dim(c_{\cond}^{-1}(r')))/r' .$
\end{proof}
\begin{rem}\label{newheights}
\normalfont We construct new strongly suitable counting functions, with arbitrarily big $b$-invariant. This explains why the ``pseudo--effective'' cones from \cite{dardayasuda2025} need to be infinite dimensional. 
Let $m>0$ be an integer. By the fact established in the Proof of Lemma~\ref{condss} that for $\ell\geq 1$ and $1\leq r\leq p^{e_d}$, one has that $$\frac{1+\dim(c^{-1}_{\cond}(r+\ell p^{e_d}))}{r+\ell p^{e_d}}=\frac{1+\dim(c^{-1}_{\cond}(r))+\ell\sum_{u=1}^d(p^{e_d-e_u}-1)}{r+\ell p^{e_d}},$$ we deduce that there exists a sequence $r_1<\cdots <r_m\in c(\Delta_G)-\{0\}$ such that \begin{equation}\label{pruj}
\frac{1+\dim(c_{\cond}^{-1}(r_1))}{r_1}>\cdots >\frac{1+\dim(c_{\cond}^{-1}(r_m))}{r_m}.\end{equation} Let us set $M:=\{r_1\doots r_m\}.$ For $t\in\QQ_{>0}$, we define a constructible function $c^t:|\Delta_G|\to\RR_{\geq 0}$ by $$c(x):=\begin{cases}
\dfrac{c_{\cond}(x)t}{1+\dim(c^{-1}_{\cond}(c_{\cond}(x)))},&\text{ if $x\not\in c^{-1}(M); $}\\
c_{\cond}(x)&\text{ otherwise.}\\
\end{cases}$$

By using inequalities~(\ref{pruj}) and the fact that $c_{\cond}(\Delta_G)\subset\ZZ$, that for $t\ll 1$, one has that $c^t(x)=c^t(y)$ if and only if $c_{\cond}(x)=c_{\cond}(y)$. 
For any $r\in M$, set $r':= rt/(1+\dim(c_{\cond}^{-1}(r)))$. We have that $$\dfrac{1+\dim(c^{-1}(r'))}{r'}=\dfrac{1+\dim(c^{-1}(r'))}{\frac{rt}{1+\dim(c_{\cond}^{-1}(r))}}=\frac{1+\dim(c_{\cond}^{-1}(r'))}{\frac{rt}{1+\dim(c_{\cond}^{-1}(r))}}=\frac{1}{rt}.$$
This goes to infinity when $t\to 0$. We deduce that for $t\ll 1$, one has that $D(c^t)=\{r_1'\doots r_m'\}$ and that its cardinality is~$m$. Moreover, each $c^{-1}(r)$ is irreducible. We deduce that the $b$-invariant is~$m$. It is immediately verified that~$c^t$ is strongly suitable. 
\end{rem}
\begin{rem}
\normalfont We do not know if the discriminant exponent is a strongly suitable raising function.
\end{rem}

\subsubsection{}
More examples of strongly suitable counting functions come from representations of the cyclic group $G=\ZZ/p\ZZ$ of order $p$, where $p$ is the characteristic of the ground field. For $ 1 \le i \le p$, let $V_i$ denote the unique $i$-dimensional indecomposable linear representation of $G$ over $\FF_q$ and define the representation $V=\bigoplus_{\lambda=1}^l V_{d_\lambda}$, $1\le d_\lambda \le p$ of dimension $d=\sum_{\lambda=1}^l d_\lambda$. Following \cite{Yasuda_2014}, we define its invariant $D_V$ by 
\[
D_V := \sum_{\lambda=1}^l \frac{(d_\lambda -1)d_\lambda}{2},
\]
and for a positive integer $j$ coprime to $p$,  define 
\[
\mathrm{sht}_V(j) :=\sum_{\lambda=1}^l \sum_{i=1}^{d_\lambda -1} \left \lfloor \frac{ij}{p} \right \rfloor. 
\]
(This is equal to $-\mathbf{w}$ with the notation from \cite{woodyasuda}.)
The $\mathbf{v}$-function on $|\Delta_G|$ associated to $V$ \cite[Definition 3.7]{woodyasuda} is then defined by 
\[
\mathbf{v}(x) =
\begin{cases}
	0 & (x=0) \\
	d-l + \mathrm{sht}_V(j) & (\text{$x$ has ramification jump $j$}).
\end{cases}
\]
This function is closely related to the geometry of the quotient variety associated to the representation $V$. 
Assuming that $D_V = p$, we will show that $\mathbf{v}$ is a strongly suitable raising function and compute $a(\mathbf{v})$ and $b(\mathbf{v})$ through Lemmas \ref{AtMostMinus1} to \ref{v-str-suit} below. 
Examples of representations $V$ with $D_V=p$ include:
\begin{itemize}
	\item $V_2^{\oplus p}$ ($\forall p$),
	\item $V_2^{\oplus 2}$ ($ p = 2 $),
	\item $V_3$ ($ p = 3 $),
	\item $V_3\oplus V_2^{\oplus 2}$ ($p=5$),
	\item $V_4 \oplus V_2$ ($p=7$).
\end{itemize}

For $j>0$ with $p\nmid j$, 
the locus $C_j \subset \Delta_G$ of points with  ramification jump $j$  has coarse moduli space which is the ind-perfection of $\mathbb{G}_m \times \mathbb{A}^{j-\lfloor j/p\rfloor -1 }$. In particular, it has dimension $j-\lfloor j/p \rfloor$. 
In what follows, we use the notation $\mathbf{v}(j) = d-l+\mathrm{sht}_V(j)$ for $j>0$ with $p\nmid j$ so that for $x$ with ramification jump $j > 0$, we have $\mathbf{v}(x)=\mathbf{v}(j)$. 

\begin{lem}\label{AtMostMinus1}
	If $D_V=p$, then for every $j>0$ with $p\nmid j$, we have
	\[
	\dim C_j -\mathbf{v}(j) \le -1. 
	\]
	Morever, we have 
	\[
	\dim C_{p-1} -\mathbf{v}(p-1) = -1. 
	\]
\end{lem}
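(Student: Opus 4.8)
The plan is to reduce everything to an explicit inequality among the two quantities $\dim C_j = j - \lfloor j/p\rfloor$ and $\mathbf{v}(j) = d - l + \mathrm{sht}_V(j)$, and then exploit the hypothesis $D_V = p$. First I would write both sides out coordinate-wise over the summands $V_{d_\lambda}$: we have $\dim C_j - \mathbf v(j) = \bigl(j - \lfloor j/p\rfloor\bigr) - \sum_{\lambda=1}^l\Bigl((d_\lambda-1) + \sum_{i=1}^{d_\lambda-1}\lfloor ij/p\rfloor\Bigr)$. Using $j = \sum_{i=1}^{p-1}\lfloor j/p\rfloor \cdot 0 + \dots$ is not quite the right bookkeeping; instead the cleaner move is to use the identity $\sum_{i=0}^{p-1}\lfloor ij/p\rfloor = \tfrac{(p-1)(j-1)}{2}$ (valid since $p\nmid j$, each residue $ij \bmod p$ hitting $\{0,1,\dots,p-1\}$ once), so that $\sum_{i=1}^{d_\lambda-1}\lfloor ij/p\rfloor$ is a ``truncated'' version of this full sum. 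The key structural input is $D_V = \sum_\lambda \tfrac{(d_\lambda-1)d_\lambda}{2} = p$, which I would use to control the sum $\sum_\lambda(d_\lambda - 1)$ and the partial floor-sums simultaneously.

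The main step is a term-by-term comparison. For a single indecomposable $V_{d_\lambda}$ with $e := d_\lambda - 1$, I would show that the ``local deficit'' $(e - \text{something}) - \sum_{i=1}^{e}\lfloor ij/p\rfloor$ is bounded in terms of $\tfrac{e(e+1)}{2}$, with the borderline case being $j = p-1$. For $j = p-1$ one has $\lfloor i(p-1)/p\rfloor = i - 1$ for $1 \le i \le p-1$, so $\sum_{i=1}^{e}\lfloor i(p-1)/p\rfloor = \sum_{i=1}^e (i-1) = \tfrac{e(e-1)}{2}$, and $\dim C_{p-1} = (p-1) - (p-2) = 1$ independent of the representation. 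Plugging in: $\dim C_{p-1} - \mathbf v(p-1) = 1 - \sum_\lambda\bigl(e_\lambda + \tfrac{e_\lambda(e_\lambda-1)}{2}\bigr) = 1 - \sum_\lambda \tfrac{e_\lambda(e_\lambda+1)}{2} = 1 - D_V = 1 - p$. Hmm — that gives $1-p$, not $-1$, so I have the wrong normalization of $\mathrm{sht}_V$ or of $D_V$; I would need to recheck against \cite[Definition 3.7]{woodyasuda} whether there is a shift (e.g. $\mathrm{sht}_V(j)$ might actually be $\sum_\lambda\sum_{i=1}^{d_\lambda - 1}\lceil ij/p\rceil$ or involve $\lfloor ij/p\rfloor$ with a different index range). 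The correct reading should be the one making $\dim C_{p-1} - \mathbf v(p-1) = -1$ reduce exactly to $D_V = p$ via the telescoping above. Once the indexing is pinned down, the equality case is immediate.

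For the inequality $\dim C_j - \mathbf v(j) \le -1$ for general $j$ with $p \nmid j$, the plan is: (i) observe that $j \mapsto \dim C_j - \mathbf v(j)$ (extended $p$-periodically after subtracting the linear-in-$j$ trend, or just examined on $1 \le j \le p-1$ plus a monotonicity/periodicity argument for larger $j$ using $\lfloor i(j+p)/p\rfloor = \lfloor ij/p\rfloor + i$) is controlled by its values in a single period; (ii) on that period, show $j = p-1$ maximizes it. For (ii) the essential point is that $\sum_{i=1}^{e}\lfloor ij/p\rfloor$ is, for fixed $e$, maximized over $j \in \{1,\dots,p-1\}$, $p\nmid j$ at $j = p-1$ (since $\lfloor ij/p\rfloor \le \lfloor i(p-1)/p\rfloor = i-1$ always, as $ij/p < i$ forces $\lfloor ij/p\rfloor \le i-1$ with equality iff $ij \equiv$ something $\ge 0$... actually $\lfloor ij/p \rfloor \le i-1$ is automatic, and equals $i-1$ precisely when $j = p-1$ simultaneously for all $i$), so $j=p-1$ gives the common maximum of every summand at once. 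Therefore $j = p-1$ maximizes $\mathbf v(j)$, hence maximizes $\dim C_j - \mathbf v(j)$ among $j$ in one residue window — modulo the contribution of $\dim C_j = j - \lfloor j/p\rfloor$, which for $1 \le j \le p-1$ is just $j$, and is largest at $j = p-1$ too, so one must check the combination; the telescoping identity handles this cleanly because both $\dim C_j$ and the floor-sums are "full" at $j = p-1$. I expect the main obstacle to be exactly this monotonicity/periodicity reduction for $j \ge p$: one needs that increasing $j$ by $p$ changes $\dim C_j$ by $p - 1$ but changes $\mathbf v(j)$ by $\sum_\lambda (d_\lambda - 1) = \sum_\lambda e_\lambda$, and one must verify $\sum_\lambda e_\lambda \ge p - 1$ under $D_V = p$ (true since $\sum \tfrac{e_\lambda(e_\lambda+1)}{2} = p$ and $e_\lambda \ge 0$ forces, e.g. via convexity, $\sum e_\lambda \ge$ the value forced when one $e_\lambda$ is as large as possible, which one checks case by case against the listed examples $V_2^{\oplus p}, V_3\ (p{=}3), V_4\oplus V_2\ (p{=}7)$, etc.), so that the deficit $\dim C_j - \mathbf v(j)$ is non-increasing in $j$ along each residue class and the sup over all $j$ is attained within the first period at $j = p - 1$, where it equals $-1$.
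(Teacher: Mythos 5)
Your strategy — telescoping in $j\mapsto j+p$ to reduce to $1\le j\le p-1$, then a boundary check — is the same as the paper's, but your execution has several arithmetic errors that break the argument, and you never actually supply the inequality you need on the fundamental window.

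First, $\dim C_{p-1}=(p-1)-\lfloor (p-1)/p\rfloor = p-1$, not $1$; you seem to have dropped the $\mathbb G_m$ factor and subtracted an extra $1$. With the correct value your own telescoping computation goes through cleanly: $\mathbf v(p-1)=(d-l)+\mathrm{sht}_V(p-1)=\sum_\lambda e_\lambda+\sum_\lambda\tfrac{e_\lambda(e_\lambda-1)}{2}=\sum_\lambda\tfrac{e_\lambda(e_\lambda+1)}{2}=D_V=p$, whence $\dim C_{p-1}-\mathbf v(p-1)=(p-1)-p=-1$. There is no ``normalization'' problem to resolve — the discrepancy you noticed was just your miscomputation of $\dim C_{p-1}$.

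Second, the increment of $\mathbf v$ under $j\mapsto j+p$ is not $\sum_\lambda e_\lambda$. From $\lfloor i(j+p)/p\rfloor=\lfloor ij/p\rfloor+i$, one gets $\mathrm{sht}_V(j+p)-\mathrm{sht}_V(j)=\sum_\lambda\sum_{i=1}^{e_\lambda}i=D_V$, hence $\mathbf v(j+p)-\mathbf v(j)=D_V=p$. Since $\dim C_{j+p}-\dim C_j=p-1$, the deficit drops by exactly $1$ per period — no auxiliary lemma is required. The auxiliary claim you try to invoke, $\sum_\lambda e_\lambda\ge p-1$ under $D_V=p$, is in fact false: for $V=V_4\oplus V_2$ with $p=7$ one has $\sum e_\lambda=4<6$.

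Third, your treatment of the window $1\le j\le p-1$ is not a proof. You argue that $j=p-1$ maximizes $\mathbf v(j)$; but maximizing the quantity you are \emph{subtracting} does not maximize $\dim C_j-\mathbf v(j)$, and since both $\dim C_j=j$ and $\mathbf v(j)$ are increasing on this range the comparison is genuinely in competition. What is actually needed is a bound on $\mathrm{sht}_V$. The paper reaches it via the reflection identity $\mathrm{sht}_V(j)+\mathrm{sht}_V(p-j)=D_V-(d-l)$ (valid for $1\le j\le p-1$, from $\lfloor ij/p\rfloor+\lfloor i(p-j)/p\rfloor=i-1$ when $p\nmid ij$), giving $\dim C_j-\mathbf v(j)=\mathrm{sht}_V(p-j)+j-p$, combined with $\mathrm{sht}_V(k)\le k-1$ for $1\le k\le p-1$. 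That last bound does use $D_V=p$: from $\lfloor ik/p\rfloor\le (ik-1)/p$ one gets $\mathrm{sht}_V(k)\le kD_V/p-(d-l)/p=k-(d-l)/p<k$, and integrality gives $\le k-1$. Your proposal never produces this bound; the estimate $\lfloor ij/p\rfloor\le i-1$ that you do invoke only gives $\mathrm{sht}_V(k)\le D_V-(d-l)$, which is the wrong quantity and does not imply the lemma.
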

\begin{proof}
	We have 
	\[
	\dim C_{j} -\mathbf{v}(j) = 
	j-\left\lfloor \frac{j}{p}\right\rfloor
	+l-d -\mathrm{sht}_V(j).
	\]
	From \cite[Proof of Proposition 6.9]{Yasuda_2014}, $\mathrm{sht}_V(j+p)$=$\mathrm{sht}_V(j)+p$. 
	Thus, 
	\begin{align*}
		\dim C_{j+p} -\mathbf{v}(j+p)& = 
		j+p-\left(\left\lfloor \frac{j}{p}\right\rfloor +1 \right)
		+l-d -(\mathrm{sht}_V(j)+p)\\
		&=\dim C_{j} -\mathbf{v}(j)-1.
	\end{align*}
	Therefore, $\dim C_{j} -\mathbf{v}(j)$ attains the maximum only at some elements of $\{1,2,\dots,p-1\}$. 
	From \cite[Lemmas 3.2 and 3.3]{Yasuda_discrepancies},  for $j\in \{1,2,\dots,p-1\}$, we have
	\begin{align*}
		&\dim C_{j} -\mathbf{v}(j)\\
		&= l-d+j-\mathrm{sht}_V(j) \\
		&= \mathrm{sht}_V(p-j)+j-p  \\
		&\le (p-j-1)+j-p  \\
		& = -1.
	\end{align*}
	We have proved the inequality of the lemma.
	For $j=p-1$, we have
	\begin{align*}
		\dim C_{p-1} -\mathbf{v}(p-1)
		= \mathrm{sht}_V(1)-1 = -1. 
	\end{align*}
\end{proof}

\begin{lem}
	We have 
	\begin{align*}
		a(\mathbf{v})&= 1,\\
		b (\mathbf{v})&= \# \{ j \in \{1,2,\dots, p-1\} \mid l-d+j-\mathrm{sht}_V(j)= -1 \}.
	\end{align*}
\end{lem}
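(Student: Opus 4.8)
The plan is to read off both invariants directly from their definitions in Definition~\ref{aandb}, using the stratification of $\Delta_G$ by ramification jump and the dimension/value computations of Lemma~\ref{AtMostMinus1}. First I would note that the nonzero values of $\mathbf v$ on $|\Delta_G|$ are exactly the numbers $\mathbf v(j) = d-l+\mathrm{sht}_V(j)$ for $j>0$ with $p\nmid j$, and that the fiber $\mathbf v^{-1}(\mathbf v(j))$ is the union of the strata $C_{j'}$ over those $j'$ with $\mathbf v(j')=\mathbf v(j)$. Hence $\dim \mathbf v^{-1}(\mathbf v(j)) = \max_{j':\,\mathbf v(j')=\mathbf v(j)}\dim C_{j'}$, and for computing $a(\mathbf v)$ I need
$$
a(\mathbf v)=\sup_{j>0,\ p\nmid j}\frac{1+\dim \mathbf v^{-1}(\mathbf v(j))}{\mathbf v(j)}\le \sup_{j>0,\ p\nmid j}\frac{1+\dim C_j}{\mathbf v(j)},
$$
with equality because each $C_j$ does occur as (a component of maximal dimension in) some fiber. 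By Lemma~\ref{AtMostMinus1}, $1+\dim C_j-\mathbf v(j)\le 0$ for all such $j$, i.e. $1+\dim C_j\le \mathbf v(j)$, so each ratio is $\le 1$; and the ``moreover'' part of that lemma, $\dim C_{p-1}-\mathbf v(p-1)=-1$, gives ratio exactly $1$ for $j=p-1$. (One should check $\mathbf v(p-1)>0$: indeed $D_V=p\ge 2$ forces some $d_\lambda\ge 2$, so $d-l\ge 1$ and $\mathrm{sht}_V(p-1)\ge 0$.) This yields $a(\mathbf v)=1$.

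For $b(\mathbf v)$ I would unwind Definition~\ref{aandb}: $D(\mathbf v)$ is the set of nonzero values $r=\mathbf v(j)$ with $1+\dim \mathbf v^{-1}(r)=a(\mathbf v)\,r = r$, i.e. with $\dim\mathbf v^{-1}(r)=r-1$, and $b(\mathbf v)$ counts the irreducible components of $\mathbf v^{-1}(r)$ of maximal dimension, summed over $r\in D(\mathbf v)$. From the computation above, a value $r=\mathbf v(j)$ lies in $D(\mathbf v)$ iff some stratum $C_{j'}$ with $\mathbf v(j')=r$ has $\dim C_{j'}=r-1=\mathbf v(j')-1$, i.e. $\dim C_{j'}-\mathbf v(j')=-1$. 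By the reduction step in the proof of Lemma~\ref{AtMostMinus1} (the identity $\dim C_{j+p}-\mathbf v(j+p)=\dim C_j-\mathbf v(j)-1$), the quantity $\dim C_{j'}-\mathbf v(j')$ achieves its maximum $-1$ only for $j'\in\{1,\dots,p-1\}$, and for such $j'$ it equals $l-d+j'-\mathrm{sht}_V(j')$. So the maximal strata are precisely the $C_j$ with $j\in\{1,\dots,p-1\}$ and $l-d+j-\mathrm{sht}_V(j)=-1$; since each $C_j$ is irreducible (its coarse space is the ind-perfection of $\mathbb G_m\times\mathbb A^{j-\lfloor j/p\rfloor-1}$, connected, and $\Delta_G$ over such a point has the $BG$-factor which contributes no extra components for counting purposes) and distinct such $j$ give distinct pairs $(\mathbf v(j),\text{component})$, the total count is $\#\{j\in\{1,\dots,p-1\}\mid l-d+j-\mathrm{sht}_V(j)=-1\}$.

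The one point requiring care — and the step I expect to be the main obstacle — is the bookkeeping for when two distinct admissible jumps $j_1\neq j_2$ in $\{1,\dots,p-1\}$ happen to give the \emph{same} value $\mathbf v(j_1)=\mathbf v(j_2)$: then $C_{j_1}$ and $C_{j_2}$ are two components of the single fiber $\mathbf v^{-1}(r)$, and one must verify that they both have maximal dimension in that fiber (they do, since $\dim C_{j_i}=r-1$ for both) and are genuinely distinct irreducible components (they are, as $C_{j_1}\cap C_{j_2}=\emptyset$, strata for different jumps being disjoint). Once this is granted, the sum in the definition of $b$ counts each admissible $j$ exactly once regardless of how the values $\mathbf v(j)$ collide, and the stated formula follows. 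I would also record, en passant, that this same analysis shows $D(\mathbf v)$ is finite and that the ratio is bounded away from $1$ off $D(\mathbf v)$ — which is exactly what is needed for the companion claim (Lemma~\ref{v-str-suit}) that $\mathbf v$ is strongly suitable — but for the present statement only $a(\mathbf v)=1$ and the component count are required.
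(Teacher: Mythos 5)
Your proof is correct and follows essentially the same route as the paper's: the paper's own argument also reduces everything to the equivalence $\frac{1+\dim\mathbf v^{-1}(r)}{r}\le 1 \Leftrightarrow \dim\mathbf v^{-1}(r)-r\le -1$, invokes Lemma~\ref{AtMostMinus1} for the inequality and the equality at $j=p-1$, and uses the reduction $\dim C_{j+p}-\mathbf v(j+p)=\dim C_j-\mathbf v(j)-1$ to see the extremal strata lie only over $j\in\{1,\dots,p-1\}$. Your version spells out the fiber-by-fiber bookkeeping (collisions of values $\mathbf v(j_1)=\mathbf v(j_2)$, disjointness of the strata $C_j$, positivity of $\mathbf v(j)$) which the paper leaves implicit, but the mathematical content and the decomposition of $\mathbf v^{-1}(r)$ into the strata $C_j$ are the same.
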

\begin{proof}
	For every $r>0$, we have the equivalence
	\[
	\frac{1+\dim \mathbf{v}^{-1}(r)}{r} \le 1 \Leftrightarrow \dim \mathbf{v}^{-1} (r) - r \le -1.
	\]
	Moreover, the equality on the left is strict if and only if so is the one on the right. 
	From  the last lemma, the right inequality holds for every $r=\mathbf{v}(j)$ and the equality holds for $r=\mathbf{v}(p-1)$. This shows that $a(\mathbf{v})=1$.
	Moreover, from the proof of the last lemma, the equality $\mathbf{v}^{-1} (r) - r = -1$ fails for $j>p$. This shows the assertion for $b(\mathbf{v})$.
\end{proof}

\begin{lem}\label{v-str-suit}
	Suppose $D_V=p$. Then, $\mathbf{v}$ is strongly suitable. 
\end{lem}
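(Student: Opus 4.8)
The plan is to verify the four conditions of Definition~\ref{suitable} together with the conditions of Definition~\ref{strongsuitdef}, reusing the structural information about the loci $C_j$ and the computations of $a(\mathbf v)$ and $b(\mathbf v)$ already obtained. First I would check that $\mathbf v$ is a raising function: conditions (1) and (2) of Definition~\ref{defrais} follow from the description of $\mathbf v$ (it vanishes only at $0$, and the ramification jump is invariant under automorphisms of $\FF_{q'}\llparenthesis t\rrparenthesis$), and constructibility follows because $\mathbf v^{-1}(r)$ is a finite union of the loci $C_j$ (only finitely many $j$ satisfy $\mathbf v(j)=r$ since $\mathrm{sht}_V$ is strictly increasing in $j$), each of which is quasi-compact with coarse space the ind-perfection of $\Gm\times\AAA^{j-\lfloor j/p\rfloor-1}$.

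Next I would dispatch conditions (1), (2), (3) of Definition~\ref{suitable}. For (3): $\mathbf v$ takes values in $\NN_0$, so $M=1$ works. For (1): since $a(\mathbf v)=1<\infty$ by the previous lemma, and $\sup_{r\neq 0}\frac{1+\dim\mathbf v^{-1}(r)}{r}=a(\mathbf v)$, the supremum is finite. For (2): $\mathbf v^{-1}(r)$ is a finite union of the $C_j$, each having at most $q'^{\dim C_j}\cdot(q'-1)$ or so $\FF_{q'}$-points (polynomially many in terms of the dimension), and because $\mathrm{sht}_V$ grows linearly the number of $j$ contributing to a fixed $r$ is bounded independently of $r$; hence $\#\mathbf v^{-1}(r)\langle\FF_{q'}\rangle\ll (q')^{\dim\mathbf v^{-1}(r)}$, which is far stronger than the required $C(q')^{\dim+\varepsilon r}$. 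For condition (4), $m_G$-approachability: since $G=\ZZ/p\ZZ$ we have $m_G=1$ (or whatever small value the adelic representatives force), and $\mathbf v(x+w)-\mathbf v(x)$ is controlled because adding an element of $\CCC_G(\leq m)(\overline\FF_q)$ changes the ramification jump by a bounded amount, hence changes $\mathrm{sht}_V$ (a locally Lipschitz-type function of the jump) by a bounded amount; one extracts the uniform bounds $C_1,C_2$ exactly as in the proof of Lemma~\ref{discap}, the key point being that $\mathrm{sht}_V(j)$ changes by $O(1)$ under bounded changes of $j$.

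Finally, for strong suitability (Definition~\ref{strongsuitdef}) I would invoke Lemma~\ref{AtMostMinus1}: the set $D(\mathbf v)$ consists exactly of those $r$ of the form $\mathbf v(j)$ with $j\in\{1,\dots,p-1\}$ and $\dim C_j-\mathbf v(j)=-1$, which is a finite nonempty set (nonempty because $j=p-1$ always qualifies, by the second assertion of Lemma~\ref{AtMostMinus1}). For the gap condition, Lemma~\ref{AtMostMinus1} shows $\dim C_{j+p}-\mathbf v(j+p)=\dim C_j-\mathbf v(j)-1$, so for $j>p$ one has $\dim C_j-\mathbf v(j)\le -2$; translating back, $\frac{1+\dim\mathbf v^{-1}(r')}{r'}<1$ with a quantitative gap for all $r'\in\mathbf v(\Delta_G)-D(\mathbf v)-\{0\}$. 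One must be slightly careful that distinct $j$ can give the same $r=\mathbf v(j)$ — but since $D_V=p$ forces $\dim C_j-\mathbf v(j)$ to decrease by one each time $j$ increases by $p$, and is $\le -1$ on $\{1,\dots,p-1\}$, the finitely many residual values of $\frac{1+\dim\mathbf v^{-1}(r')}{r'}$ lying strictly below $1$ are bounded away from $1$, giving the desired $\varepsilon(\mathbf v)>0$. The main obstacle I anticipate is the bookkeeping in condition (2) and in the gap estimate when several ramification jumps $j$ collapse to the same value of $\mathbf v$: one has to argue that $\dim\mathbf v^{-1}(r)=\max_j\{\dim C_j:\mathbf v(j)=r\}$ and track which $j$ realizes the maximum, but this is exactly the content already extracted from \cite{Yasuda_2014} and \cite{Yasuda_discrepancies} in Lemma~\ref{AtMostMinus1}, so it should go through without new ideas.
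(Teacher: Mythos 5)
Your proposal follows the same route as the paper: verify conditions (1)--(3) of Definition~\ref{suitable} from the explicit description of the loci $C_j$, establish $m$-approachability from the structure of $k\llparenthesis t\rrparenthesis/\wp k\llparenthesis t\rrparenthesis$ for $G=\ZZ/p\ZZ$, and derive the strong-suitability gap from Lemma~\ref{AtMostMinus1}, which is exactly how the paper proceeds. One small point to tighten: in the gap argument you speak of ``the finitely many residual values of $\frac{1+\dim\mathbf v^{-1}(r')}{r'}$,'' but there are in fact infinitely many $r'\in\mathbf v(\Delta_G)\setminus D(\mathbf v)$ and hence infinitely many such quotients; what makes the argument work is the quantitative decay $\dim C_{j'+np}-\mathbf v(j'+np)=\dim C_{j'}-\mathbf v(j')-n$ combined with $\mathbf v(j'+np)=\mathbf v(j')+np$, which shows the quotients converge to $(p-1)/p<1$ and are therefore uniformly bounded away from $1$. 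The paper pins this down by choosing an explicit $\varepsilon$ and doing the case analysis $j\le p-1$ versus $j=j'+np$; you should do something equally explicit rather than rely on finiteness. Otherwise the decomposition, the cited lemmas, and the logic all match the paper's proof.
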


\begin{proof}
	(1) and (3) of Definition \ref{suitable} are obvious. (2) follows from the fact that 
	the coarse moduli space of $c^{-1}(r)$ is an open subset of an affine space.
	
	We can show that $\mathbf{v}$ is $m$-approachable for any $m$ by using the fact that
	\[
	k\llparenthesis t \rrparenthesis / \wp k\llparenthesis t \rrparenthesis =\bigoplus _{j>0;p\nmid j} kt^{-j} \oplus k/\wp k
	\] 
	and a Laurent polynomial in $\bigoplus _{j>0;p\nmid j} kt^{-j} \oplus k/\wp k$ of order $-j$ corresponds to a torsor of ramification jump $j$. 
	
	Lastly, we show that $\mathbf{v}$ is strongly suitable. In the current situation, for $r'=\mathbf{v}(j)$, the inequality in Definition \ref{strongsuitdef} is equivalent to
	\[
	\dim C_j -\mathbf{v}(j)= \dim \mathbf{v}^{-1}(r')-r' < -1 -\varepsilon r'= -1 -\varepsilon \mathbf{v}(j).
	\]
	To have this inequality, it is enough to put  $\varepsilon $ to be a positive real number less than
	\[
	\min \left \{\frac{1}{\mathbf{v}(j)+1} \mid 1\le j \le p-1 \right \}. 
	\]
	Indeed, for $1\le j \le p-1$ with $\dim C_j -\mathbf{v}(j)<-1$, we have
	\[
	\dim C_j-\mathbf{v}(j) \le -2 < 
	-1 - \varepsilon \mathbf{v}(j) .
	\]
	For $j>p$, we write $j=j'+np$, $1\le j' \le p-1$, $n >0$. Then, since  
	\[
	\frac{1}{\mathbf{v}(j')+1} \le  \frac{np}{\mathbf{v}(j')+np} = \frac{np}{\mathbf{v}(j)},
	\]
	we get
	\begin{align*}
		\dim C_j-\mathbf{v}(j)
		&= \dim C_{j'}-\mathbf{v}(j')-np \\
		&\le -1 -np  \\
		&=  -1 - \frac{np}{\mathbf{v}(j)}\mathbf{v}(j) \\
		& <  -1 - \varepsilon \mathbf{v}(j).
	\end{align*}
\end{proof}

\subsubsection{}We now let $c:\Delta_G\to\RR_{\geq 0}$ be a strongly suitable raising function. For every $r\in c(\Delta_G)-\{0\}$, let $W_r$ be the set of irreducible components of $c^{-1}(r)$ of dimension $\dim(c^{-1}(r))$. For $r\in D(c)$, let us set $$b_{r,q'}:=\# (W_r^{\Gal(\overline{\FF_q}/\FF_{q'})})$$ and $$R_{r, q'}:=b_{r, q'}\cdot (q')^{\dim(c^{-1}(r))}- \#(c^{-1}(r)(\FF_{q'})).$$
 It follows from Lang--Weil estimates~\cite[Theorem 1]{LangWeil} and the fact that $$c^{-1}(r)\langle\FF_{q'}\rangle=BG\langle\FF_{q'}\rangle\times X(\FF_{q'}),$$ with~$X$ finite union of locally closed subsets of a variety, that for each $r\in D(c)$, there exists a constant~$Q_r$ such that $$|R_{r,q'}|\leq Q_r\cdot (q')^{\dim(c^{-1}(r))-\frac 12}.$$
\begin{lem}\label{zsrn} Suppose that $c$ is strongly suitable.
There exists $\varepsilon'>0$ such that $$\sum_{r\in D(c))}\# (c^{-1}(r)\langle\FF_{q'}\rangle)\cdot (q')^{-sr}
	= O((q')^{-7/6})+\sum_{r\in D(c)}b_{r, q'}\cdot (q')^{\dim(c^{-1}(r))-sr}$$
	whenever $\Re(s)>a-\varepsilon'$.
\end{lem}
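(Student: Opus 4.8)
The plan is to reduce everything to a bound on a finite error sum coming from the Lang--Weil estimates recalled immediately before the statement. First I would use the definition $R_{r,q'}=b_{r,q'}(q')^{\dim(c^{-1}(r))}-\#(c^{-1}(r)(\FF_{q'}))$ to rewrite, for each $r\in D(c)$, the term $\#(c^{-1}(r)\langle\FF_{q'}\rangle)\cdot(q')^{-sr}$ as $b_{r,q'}(q')^{\dim(c^{-1}(r))-sr}-R_{r,q'}(q')^{-sr}$; here one uses that $\#(c^{-1}(r)\langle\FF_{q'}\rangle)$ and $\#(c^{-1}(r)(\FF_{q'}))$ denote the same weighted count, which follows from the product description $c^{-1}(r)\langle\FF_{q'}\rangle=BG\langle\FF_{q'}\rangle\times X(\FF_{q'})$ together with $\#BG\langle\FF_{q'}\rangle/\#G=1$. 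Summing over $r\in D(c)$ --- which is a finite set precisely because $c$ is strongly suitable --- immediately produces the desired main term $\sum_{r\in D(c)}b_{r,q'}(q')^{\dim(c^{-1}(r))-sr}$ together with an error term $-\sum_{r\in D(c)}R_{r,q'}(q')^{-sr}$, so the whole lemma reduces to showing the latter is $O((q')^{-7/6})$ uniformly for $\Re(s)>a-\varepsilon'$.

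Next I would estimate this error term by exponent arithmetic. The Lang--Weil bound gives $|R_{r,q'}|\le Q_r(q')^{\dim(c^{-1}(r))-1/2}$, and for $r\in D(c)$ the defining equality $1+\dim(c^{-1}(r))=a r$ yields $\dim(c^{-1}(r))=ar-1$; substituting gives $|R_{r,q'}(q')^{-sr}|\le Q_r(q')^{-3/2+(a-\Re(s))r}$. To control the exponent I would set $r_{\max}:=\max D(c)$ (finite) and then simply \emph{choose} $\varepsilon'$ with $0<\varepsilon'<1/(3r_{\max})$. For $\Re(s)>a-\varepsilon'$ one always has $(a-\Re(s))r<1/3$: if $\Re(s)\ge a$ the quantity is $\le 0$, while if $a-\varepsilon'<\Re(s)<a$ it is $\le(a-\Re(s))r_{\max}<\varepsilon' r_{\max}<1/3$. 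Hence the exponent is $<-3/2+1/3=-7/6$, and since $q'\ge 2$ each term is $\le Q_r(q')^{-7/6}$; summing over the finite set $D(c)$ gives $O((q')^{-7/6})$ with implied constant $\sum_{r\in D(c)}Q_r$. Combined with the displayed identity of the first paragraph, this is the assertion.

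I do not expect a genuine obstacle: the argument is bookkeeping with exponents, and the finiteness of $D(c)$ --- exactly the extra hypothesis built into \emph{strongly suitable} --- is what makes the error sum finite and the implied constant finite. The only mild points of care are the identification of the two weighted counts mentioned above and the observation that the value $7/6$ in the statement is not special --- it is just some number strictly between $1$ and $3/2$, and any choice of $\varepsilon'$ small enough relative to $r_{\max}$ will force the error exponent below it, so there is room to spare in the choice of $\varepsilon'$ (which may then be shrunk further to accommodate any constraints imposed later on).
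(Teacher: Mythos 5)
Your proof is correct and takes essentially the same route as the paper's: both apply the Lang--Weil bound $|R_{r,q'}|\le Q_r(q')^{\dim(c^{-1}(r))-1/2}$ and use finiteness of $D(c)$ (the key feature of strong suitability) to pick $\varepsilon'$ small enough to push the exponent below $-7/6$. The paper states the constraint as ``$\dim(c^{-1}(r))-\Re(s)r<-2/3$ for all $r\in D(c)$,'' which after substituting $\dim(c^{-1}(r))=ar-1$ is exactly your condition $(a-\Re(s))r<1/3$, so the explicit choice $\varepsilon'<1/(3\max D(c))$ you make is just the unpacked version of the paper's ``$\varepsilon'$ sufficiently small.''
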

\begin{proof}
	If $\varepsilon'>0$ is sufficiently small, then $\dim(c^{-1}(r))-\Re(s)r<-\frac{2}{3}$ for $\Re(s)>a-\varepsilon'$ and every $r\in D(c)$. In this domain we have that 
	\begin{align*}
		\bigg|\sum_{r\in D(c)}\# c^{-1}(r)\langle\FF_{q'}\rangle(q')^{-sr}-\sum_{r\in D(c)}b_{r,q'}\cdot (q')^{\dim(c^{-1}(r))-sr}\bigg|\hskip-6cm&\\&\leq\sum_{r\in D(c)}(q')^{-\Re(s)r}|\# c^{-1}(r)(\FF_{q'})-b_{r,q'}\cdot (q')^{\dim(c^{-1}(r))}| \\
		&\leq \sum_{r\in D(c)}(q')^{-\Re(s)r}\cdot Q_r\cdot (q')^{\dim(c^{-1}(r))-\frac 12}\\
		&\leq \sum_{r\in D(c)}(q')^{\dim(c^{-1}(r))-\frac 12-\Re(s)r}\\
		&=O((q')^{-7/6}).
	\end{align*}
\end{proof}
\begin{prop}\label{localheightzeta}  Suppose that~$c$ is strongly suitable.  One has that  $$s\mapsto\int_{\Delta_G\langle\FF_{q'}\rangle}H_{q'}^{-s}\mu_{q'}$$ is a holomorphic function in the domain $\Re(s)>a-\min(\varepsilon(c)-\varepsilon_0,\varepsilon')$ for $\varepsilon'>0$ as in Lemma~\ref{zsrn} and one has $$\int_{\Delta_G\langle\FF_{q'}\rangle}H_{q'}^{-s}\mu_{q'}=1+\sum_{r\in D(c)}b_{r,q'}\cdot (q')^{\dim(\cc^{-1}(r))-sr} +O((q')^{-1-\delta})$$for certain $\delta>0$ in this domain. 
\end{prop}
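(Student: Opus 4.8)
The plan is to write the local height zeta integral as a sum over the level sets of the raising function and then isolate the contribution of the dominant set $D(c)$. First I would recall from Definition~\ref{suitable} that $H_{q'}(x)=(q')^{c(x)}$ on $\Delta_G\langle\FF_{q'}\rangle$, and that $\mu_{q'}$ is $(\#G)^{-1}$ times the counting measure, so that $\int_{\Delta_G\langle\FF_{q'}\rangle}H_{q'}^{-s}\mu_{q'}=\sum_{r\in c(\Delta_G)}\#(c^{-1}(r)\langle\FF_{q'}\rangle)\,(q')^{-sr}$, where the count carries the weight $\#G^{-1}$ as stipulated just before Definition~\ref{suitable}. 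The $r=0$ term contributes $\#(c^{-1}(0)\langle\FF_{q'}\rangle)=\mu_{q'}(BG\langle\OO_v\rangle)=1$ by the computation preceding Definition~\ref{approachable}. So it remains to analyze $\sum_{r\neq 0}\#(c^{-1}(r)\langle\FF_{q'}\rangle)(q')^{-sr}$, splitting it as the sum over $r\in D(c)$ plus the sum over $r\in c(\Delta_G)-D(c)-\{0\}$.

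For the $D(c)$ part I would invoke Lemma~\ref{zsrn} directly: since $c$ is strongly suitable, $D(c)$ is finite, and in the domain $\Re(s)>a-\varepsilon'$ we get $\sum_{r\in D(c)}\#(c^{-1}(r)\langle\FF_{q'}\rangle)(q')^{-sr}=\sum_{r\in D(c)}b_{r,q'}(q')^{\dim(c^{-1}(r))-sr}+O((q')^{-7/6})$, which already produces the main term of the asserted formula together with an error absorbed into $O((q')^{-1-\delta})$ for any $\delta\le 1/6$. For the tail $r\in c(\Delta_G)-D(c)-\{0\}$, I would use the bound from condition (2) of Definition~\ref{suitable}: for any $\varepsilon>0$ there is $C>0$ with $\#(c^{-1}(r)\langle\FF_{q'}\rangle)\le C(q')^{\dim(c^{-1}(r))+\varepsilon r}$, combined with the strong suitability inequality $1+\dim(c^{-1}(r))< (a-\varepsilon(c))r$. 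Hence each tail term is $\le C(q')^{-1}(q')^{(a-\varepsilon(c)+\varepsilon-\Re(s))r}$, and choosing $\varepsilon<\varepsilon(c)$ and $\Re(s)>a-\varepsilon(c)+\varepsilon+\varepsilon_0$ for a small $\varepsilon_0>0$ makes the exponent of $(q')$ attached to $r$ strictly negative; summing the resulting geometric-type series over $r$ (using condition (3), $Mc(\Delta_G)\subset\ZZ$, so the $r$'s are $\gg$ spaced and the series converges) gives a total of $O((q')^{-1-\delta})$ for a suitable $\delta>0$. One must also check that the same estimate forces \emph{absolute} and \emph{locally uniform} convergence of the whole series in the stated half-plane $\Re(s)>a-\min(\varepsilon(c)-\varepsilon_0,\varepsilon')$, which yields holomorphy by the standard Weierstrass argument, since each summand $\#(c^{-1}(r)\langle\FF_{q'}\rangle)(q')^{-sr}$ is entire in $s$.

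The main obstacle I anticipate is purely bookkeeping: reconciling the two competing error bounds — the $O((q')^{-7/6})$ from Lemma~\ref{zsrn} (governed by Lang--Weil, uniform in $q'$ because $D(c)$ is finite) and the $O((q')^{-1-\delta})$ from the tail — and making sure the exponents $\varepsilon_0$, $\varepsilon$, $\delta$ and the final half-plane are chosen consistently so that \emph{both} errors are $O((q')^{-1-\delta})$ for one common $\delta>0$; one also has to make sure the tail estimate is uniform over all finite extensions $\FF_{q'}/\FF_q$, which is exactly what the uniform constant $C$ in Definition~\ref{suitable}(2) provides. Beyond that, the only subtlety is that $\#(c^{-1}(r)\langle\FF_{q'}\rangle)$ might vanish for many $r$ (if $r\notin c(\Delta_G)$), which only helps. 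Everything else is assembling results already proved.
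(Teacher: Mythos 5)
Your proposal is correct and follows essentially the same route as the paper's own proof: expand the integral as a weighted count over level sets, isolate the $D(c)$ contribution via Lemma~\ref{zsrn}, bound the tail term-by-term using condition~(2) of Definition~\ref{suitable} together with the strong-suitability gap $1+\dim(c^{-1}(r))<(a-\varepsilon(c))r$, and appeal to the $\ZZ$-spacing from condition~(3) for convergence and to uniform absolute convergence for holomorphy. The only differences are cosmetic (explicitly noting the $r=0$ term equals $1$, and a slight relabeling of the small parameters $\varepsilon$, $\varepsilon_0$), so the two arguments are the same in substance.
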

\begin{proof}
Whenever all the quantities converge, by using Lemma~\ref{zsrn}, we obtain that in the domain $\Re(s)>a-\varepsilon'$ one has that:
	\begin{align*}\int_{\Delta_G\langle\FF_{q'}\rangle}H_{q'}^{-s}\mu_{q'}\hskip-3cm&\\&= 1+\sum_{r\in c(\Delta_G)-\{0\}}\# c^{-1}(r)\langle\FF_{q'}\rangle\cdot (q')^{-sr}\\
		&= 1+\sum_{r\in D(c)}\# c^{-1}(r)\langle\FF_{q'}\rangle\cdot (q')^{\dim(c^{-1}(r))-sr}+\hskip-0,4cm\sum_{r\not\in D(c)\cup\{0\}}\# (c^{-1}(r)\langle\FF_{q'}\rangle)(q')^{-sr}\\
		&=1+O((q')^{-7/6})+\sum_{r\in D(c)}b_{r,q'}(q')^{\dim(c^{-1}(r))-sr}+\hskip-0,4cm\sum_{r\not\in D(c)\cup\{0\}}\hskip-0,4cm\# (c^{-1}(r)\langle\FF_{q'}\rangle)(q')^{-sr}.
	\end{align*} By using that for $r\not\in D(c)\cup\{0\}$, one has that $1+\dim(c^{-1}(r))  <(a-\varepsilon(c))r,$ we obtain, whenever all quantities converge, that:
\begin{align*}
\bigg|\sum_{r\not\in D(c)\cup\{0\}}\# c^{-1}(r)\langle\FF_{q'}\rangle\cdot(q')^{-sr}\bigg|&\leq\sum_{r\not\in D(c)\cup\{0\}}\# c^{-1}(r)\langle\FF_{q'}\rangle\cdot (q')^{-\Re(s)r}\\
&\leq \sum_{r\not\in D(c)\cup\{0\}}C\cdot (q')^{\dim(c^{-1}(r))+\varepsilon_0 r-\Re(s)r}\\ 
&=\sum_{r\not\in D(c)\cup\{0\}}C\cdot (q')^{-1+(\varepsilon_0+a-\varepsilon(c)-\Re(s))r}.
\end{align*}
  By the assumption that there exists~$\varepsilon_1>0$ such that whenever $r\neq r'\in c(\Delta_G),$  then $|r-r'|>\varepsilon_1$, we deduce that in the domain $\Re(s)>a-(\varepsilon( c)-\varepsilon_0)$ the sum $$\sum_{r\not\in D( c)\cup\{0\}}\#  c^{-1}(r)(\FF_{q'})\cdot(q')^{-sr}$$ converges absolutely and uniformly to a holomorphic function. Moreover, in the absolute value, it can be bounded by $O((q')^{-1-\delta})$ for certain $\delta>0$. The claim is proven. 
\end{proof}
	%
\subsubsection{} We continue assuming that $c:\Delta_G\to\RR_{\geq 0}$ is strongly suitable. In this paragraph, we analyse the product of local height zeta functions. If~$v$ is a place of~$F$, we let $q_v$ be the cardinality of the residue field at~$F_v$ and we write~$H_{v}$ for $H_{q_v}$,~$b_{r,v}$ for~$b_{r,q_v}$, etc. \begin{prop}\label{brv}
	Let~$r\in D(c)$. The product \begin{equation}\label{lemprodbrv}\prod_v\bigg(1-b_{r,v}q_v^{\dim(c^{-1}(r))-sr}\bigg),
	\end{equation}converges absolutely to a holomorphic function in the domain $\Re(s)>(1+\dim(c^{-1}(r)))/r$ which extends to a holomorphic function in the domain $\Re(s)\geq (1+\dim( c^{-1}(r)))/r$ with a zero at $s=(1+\dim( c^{-1}(r)))/r$ of multiplicity $$\sum_{\substack{\mathcal Y\subset c^{-1}(r)\text{ irreducible}\\ \dim(\mathcal Y)=\dim( c^{-1}(r))}}1.$$
	Moreover, there exists $\varepsilon>0$ such that in the domain $\Re(s)>(1+\dim(c^{-1}(r)))r^{-1}-\varepsilon$, the function does not have poles outside of the set $\{(1+\dim(c^{-1}(r)))r^{-1}+2ik\pi|\hspace{0,1cm}k\in\ZZ\}$.
\end{prop}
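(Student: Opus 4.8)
The plan is to recognize the product (\ref{lemprodbrv}) as an Euler product closely related to a Dedekind-type zeta function of the global field $F$, and to exploit the Lang--Weil count of $\overline{\FF_q}$-rational irreducible components of $c^{-1}(r)$. Write $d_r:=\dim(c^{-1}(r))$ and let $\mathcal W_r$ be the set of irreducible components of $c^{-1}(r)$ of dimension $d_r$; the Galois group $\Gal(\overline{\FF_q}/\FF_q)$ acts on $\mathcal W_r$, and $b_{r,v}=\#(\mathcal W_r^{\Gal(\overline{\FF_q}/\FF_{q_v})})$ is exactly the number of components fixed by the $q_v$-power Frobenius. Decomposing $\mathcal W_r$ into Galois orbits, each orbit $O$ of size $f$ contributes to $b_{r,v}$ the quantity $f$ if $f\mid [\FF_{q_v}:\FF_q]$ and $0$ otherwise; hence
$$\prod_v\Big(1-b_{r,v}q_v^{d_r-sr}\Big)=\prod_{O}\prod_v\Big(1-(\#O)\,[\,\#O\mid f_v\,]\,q_v^{d_r-sr}\Big)$$
where $f_v:=[\FF_{q_v}:\FF_q]$ and $[\,\cdot\,]$ is the Iverson bracket. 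It therefore suffices to treat one orbit $O$ at a time, and then the total vanishing order at the critical point will be $\sum_O 1=\#(\text{components of maximal dimension, up to Galois})$... but the statement asks for the count \emph{without} the quotient, namely $\sum_{\mathcal Y}1$ over all $\overline{\FF_q}$-irreducible components of maximal dimension, so the orbit of size $f$ must contribute a zero of order $f$. I expect this is where the combinatorics of the Euler factors of an orbit of size $f$ comes in.

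Concretely, for a single orbit $O$ of size $f$, I would compare $\prod_v\big(1-(\#O)[\#O\mid f_v]q_v^{d_r-sr}\big)$ with the zeta function $\zeta_{F_f}(sr-d_r)^{-1}$ (or a finite modification thereof), where $F_f$ is the constant field extension $F\cdot\FF_{q^f}$ of $F$: the places of $F$ that split completely in $F_f$ are precisely those with $f\mid f_v$, and each such place of $F$ has $f$ places above it in $F_f$ of the same residue cardinality $q_v$. The factor for such a place in $\zeta_{F_f}$ is $(1-q_v^{-(sr-d_r)})^{-f}$, and up to the effect of finitely many ramified/non-split places and of replacing $f=\#O$ in the coefficient (a bounded correction absorbed into an Euler product converging on $\Re(s)>(1+d_r)/(2r)$), the factor $1-(\#O)q_v^{d_r-sr}$ differs from $1-q_v^{d_r-sr}$ raised appropriately by a function holomorphic and non-vanishing in a neighbourhood of $\Re(s)=(1+d_r)/r$. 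Since $\zeta_{F_f}(w)$ has a simple pole at $w=1$ and no zeros on $\Re(w)=1$ (the analogue of the prime number theorem / non-vanishing on the line for function fields, which is classical here because $\zeta_{F_f}$ is a rational function in $q^{-w}$ with Riemann hypothesis known), the reciprocal $\zeta_{F_f}(sr-d_r)^{-1}$ has a simple zero exactly where $sr-d_r=1$, i.e. at $s=(1+d_r)/r$, and is holomorphic and nonzero on $\Re(s)\geq (1+d_r)/r$ apart from that zero along the critical line $\{(1+d_r)/r+2\pi i k/(r\log q_v)\}$—which I must phrase in terms of the common period, giving the stated set $\{(1+d_r)r^{-1}+2ik\pi\}$ after the appropriate normalization of logarithms. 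Multiplying over the $\#\mathcal W_r$-many... rather, over the orbits, each of size $f$ contributing a zero of order $f$, yields total order $\sum_O \#O=\#\mathcal W_r=\sum_{\mathcal Y}1$, as required.

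The routine part is the absolute convergence on $\Re(s)>(1+d_r)/r$: since $0\le b_{r,v}\le \#\mathcal W_r$ is bounded and $q_v^{d_r-\Re(s)r}$ is summable over places there (again because $\sum_v q_v^{-\sigma}$ converges for $\sigma>1$ in a global function field), the product converges and defines a holomorphic function with no zeros in that region. The main obstacle, as indicated, is the bookkeeping that turns the coefficient $b_{r,v}$—which counts \emph{Galois-fixed} components—into the correct vanishing order counting \emph{all geometric} components: one must carefully handle orbits of size $f>1$, show their combined Euler product over $F$ equals (up to a harmless holomorphic non-vanishing factor near the line) $\zeta_{F_f}(sr-d_r)^{-1}$, and invoke the non-vanishing of $\zeta_{F_f}$ on $\Re(w)=1$ together with its simple pole at $w=1$. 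A secondary technical point is to make precise the statement about the location of poles in a slightly larger half-plane $\Re(s)>(1+d_r)r^{-1}-\varepsilon$: this follows from the fact that $\zeta_{F_f}(w)$, as a rational function of $q^{-w}$, has only finitely many zeros, all on $\Re(w)=1/2$ by the function-field Riemann hypothesis, so for $\varepsilon$ small enough the only possible poles of the reciprocal product in that strip are the translates of $s=(1+d_r)/r$ by the imaginary period, and I would quote the precise statement of the function-field RH (Weil) to close this off.
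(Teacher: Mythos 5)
Your proposal goes wrong at the crucial counting step, and the error stems from a misreading of the statement combined with an internal contradiction in your own computation.

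First, the misreading: the sum in the proposition is over $\mathcal Y\subset c^{-1}(r)$ irreducible, where $c^{-1}(r)$ is a constructible subset of the $\FF_q$-scheme $\CCC_G(\leq m)$. Its irreducible components are $\FF_q$-irreducible components, \emph{not} geometric ($\overline{\FF_q}$-irreducible) components. This is consistent with Definition~\ref{aandb} and the Remark following it, where $b(c)$ is expressed via generic points of $|\Delta_G|$. Each such component corresponds to exactly one $\Gal(\overline{\FF_q}/\FF_q)$-orbit in your set $\mathcal W_r$ of geometric components of maximal dimension. So the target vanishing order is the \emph{number of orbits}, $\#(\mathcal W_r/\Gal)$ — exactly the $\sum_O 1$ that you wrote down initially and then talked yourself out of.

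Second, the internal contradiction: you correctly observe that for a single orbit $O$ of size $f$, the Euler product $\prod_v\bigl(1-(\#O)[\#O\mid f_v]\,q_v^{d_r-sr}\bigr)$ compares (up to a factor holomorphic and nonvanishing near $\Re(s)\ge(1+d_r)/r$) with $\zeta_{F_f}(sr-d_r)^{-1}$ where $F_f=\FF_{q^f}F$, and you correctly state that $\zeta_{F_f}(w)$ has a \emph{simple} pole at $w=1$ (the only character contributing a pole in the factorization $\zeta_{F_f}=\prod_{\chi\in\widehat{\ZZ/f\ZZ}}L(\cdot,\chi)$ is the trivial one). But a simple pole of $\zeta_{F_f}$ produces a \emph{simple} zero of $\zeta_{F_f}^{-1}$, i.e.\ the orbit of size $f$ contributes a zero of order $1$, not $f$. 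Your final sentence asserting "each of size $f$ contributing a zero of order $f$" directly contradicts your own computation two sentences earlier. The correct bookkeeping — one zero per orbit, and $\sum_{\mathcal Y}1$ counts orbits — makes the two sides match.

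With those two corrections your overall strategy (orbit decomposition, comparison of each orbit factor with the Dedekind zeta of a constant field extension) is workable, though you should also note that $1-b_{r,v}q_v^{d_r-sr}$ does not literally factor as $\prod_O\bigl(1-(\#O)[\#O\mid f_v]q_v^{d_r-sr}\bigr)$; the two differ by $O(q_v^{-2(\Re(s)r-d_r)})$, which is absorbed into a product converging in the relevant region. The paper's own proof avoids the orbit-by-orbit decomposition: it works directly with the $L$-function of the permutation representation $\CC^{W_r}$ of $\Gal(F'/F)$ (set up via the surjection $\Gal(F'/F)\twoheadrightarrow\Gal(\FF_{q'}/\FF_q)$ of \cite[Proposition 9.1]{geometrichermite}), identifies $b_{r,v}=\tr(\Frob_v)$, and reads off the pole order at $w=1$ as the multiplicity of the trivial representation, i.e.\ $\#(\mathcal W_r/\Gal)$.
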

\begin{proof}
	For almost all places~$v$, we will compare the $v$-adic factor in~(\ref{lemprodbrv}) with the $v$-adic factor of a translated~$L$-function that we will define. Recall that we defined a $\Gal(\overline{\FF_q}/\FF_q)$-set~$W_r$ as the set of irreducible geometric components of~$c^{-1}(r)$ of dimension $\dim(c^{-1}(r))$. 
	 There exists a finite extension $\FF_{q'}/\FF_q$ such that each element of~$W_r$ is defined over~$\FF_{q'}$, so~$W_r$ is $\Gal(\FF_{q'}/\FF_q)$-set.  Set $F':=\FF_{q'}F$. Then, by \cite[Proposition 8.3]{geometrichermite}, one has that~$\FF_{q'}$ is the field of constants of~$F'$.  
Now	\cite[Proposition 9.1]{geometrichermite} provides a canonical surjective homomorphism 
	$$\psi:\Gal(F'/F)\to\Gal(\FF_{q'}/\FF_q),\hspace{1cm}\psi(\sigma)=\sigma|_{\FF_{q'}}$$ and for a place~$v$ and its extension~$v'$ to~$F'$ a canonical surjective homomorphism $$\psi_v:\Gal(F'_{v'}/F_v)\to\Gal(\FF_{q_{v'}}/\FF_{q_v}),\hspace{1cm}\psi_v(\sigma)=\sigma|_{\FF_{q_{v'}}}$$ where we note that the fields of constants of~$F'_{v'}$ and~$F_v$ are~$\FF_{q_{v'}}$ and~$\FF_{q_v}$, respectively. Moreover, there is a commutativity relation $$\psi\circ (\Gal(F'_{v'}/F_v)\to\Gal(F'/F))= (\Gal(\FF_{q_{v'}}/\FF_{q_v})\to\Gal(\FF_{q'}/\FF_q))\circ\psi_v$$where the maps on Galois groups are the restrictions. The map~$\psi$ endows the set~$W_r$ with a structure of $\Gal(F'/F)$-set. Now, define a $\Gal(F'/F)$-representation to be the vector space~$\CC^{W_r}$ endowed with the action $$\gamma\cdot (z_{w})_w:=(z_{\gamma\cdot w})_w\hspace{1cm}\gamma\in\Gal(F'/F), (z_w)_w\in\CC^{W_r}.$$
	For any place~$v$ we let $\Frob_v\in\Gal(F'_{v'}/F_v)$ be a preimage under~$\psi_v$ of the Frobenius in~$\Gal(\FF_{q_v'}/\FF_{q_v})$. Then whenever~$W_r$ is unramified at~$v$, one has that: \begin{align*}W_r^{\Frob_v}=W_r^{\Gal(F'_{v'}/\Fv)}&=W_r^{\psi(\Gal(F'_{v'}/\Fv))}\\&=W_r^{((\Gal(\FF_{q_{v'}}/\FF_{q_v})\to\Gal(\FF_{q'}/\FF_q))(\psi_v(\Gal(\FF_{q_{v'}}/\FF_{q_v})))}\\&=W_r^{\Imm(\Gal(\FF_{q_{v'}}/\FF_{q_v})\to \Gal(\FF_{q'}/\FF_q))}\\
		&=W_r^{\Imm(\Gal(\FF_{q'}\FF_{q_v}/\FF_{q_v})\to\Gal(\FF_{q'}/\FF_q))}\\
		&=W_r^{\Gal(\FF_{q'}/(\FF_{q'}\cap \FF_{q_v}))}.\end{align*}
	Note that the last set is precisely the set of components of~$W_r$ which are defined over~$\FF_{q_v}$. We deduce that $$b_{r,v}=\# W_r^{\Frob_v}=\tr(\Frob_v),$$
	where in the last equality~$\Frob_v$ is seen as a linear map on~$\CC^{W_r}$ via the above representation. Now, it follows that in the domain $\Re(s)>\dim( c^{-1}(r))/r+\varepsilon_1$, where $\varepsilon_1$ is sufficiently small, for all~$v$ and all~$\delta>0$, one has that $$(1-b_{r,v}q_v^{\dim( c^{-1}(r))-sr})(1+\tr(\Frob_v)q_v^{\dim(c^{-1}(r))-sr}+O(q_v^{-1-\delta}))=1+O(q_v^{-1-\delta'})$$for some~$\delta'>0$. The second factor is the~$v$-adic part of the translated $L$-function $L(sr-\dim( c^{-1}(r)))$. Taking the product over all~$v$, and using the fact that for all~$v$ the functions $ 1-b_{r,v}q_v^{\dim( c^{-1}(r))-sr}$ are non-vanishing and holomorphic in the domain $\dim( c^{-1}(r))/r+\varepsilon_1$, we obtain that $$\prod_v(1-b_{r,v}q_v^{\dim( c^{-1}(r))-sr})$$ converges to a holomorphic function in the domain where the product of factors defining the translated $L$-function converges to a holomorphic function, that is, in $\Re(s)>(1+\dim(c^{-1}(r)))/r=a$. Moreover, one has that the product extends holomorphically to the domain where the translated $L$-function extends meromorphically and without zeros, that is, in $\Re(s)\geq (1+\dim(c^{-1}(r)))/r$. The order of the zero at $(1+\dim(c^{-1}(r)))/r$ coincides with the order of the pole of the translated $L$-function, which is precisely $$\# (W_r/\Gal(F'/F))= \# (W_r/\Gal(\FF_{q'}/\FF_q))= \sum_{\substack{\mathcal Y\subset c^{-1}(r)\text{ irreducible}\\ \dim(\mathcal Y)=\dim(c^{-1}(r))}}1.$$
	The possible poles in the domain $\Re(s)\geq (1+\dim(c^{-1}(r)))r^{-1}$ are in the set $$\{t|\hspace{0,1cm}rt-\dim(c^{-1}(r)) \in \{1+2ki\pi|\hspace{0,1cm}k\in\ZZ\}\},$$
	which is the set $$\{(1+\dim(c^{-1}(r)))r^{-1}+2ik\pi|\hspace{0,1cm}k\in\ZZ\}.$$
	The statement is proven.
\end{proof}
\begin{prop}\label{labav}
	The product $$\int_{BG\langle \AAF\rangle}H^{-s}\mu=\prod_{v\in M_F}\int_{BG\langle F_v\rangle}H_v^{-s}\mu_v$$converges absolutely to a holomorphic function in the domain $\Re(s)>a=a(c)$ and extends to a meromorphic function in the domain $\Re(s)\geq a$ with a pole at $s=a$ which is of order $b=b(c).$ Moreover, for some $\varepsilon>0$ the possible poles of the function in the domain $\Re(s)\geq a(c)-\varepsilon$ are in the set $\{a(c)+2ik\pi|\hspace{0,1cm}k\in\ZZ\}.$
\end{prop}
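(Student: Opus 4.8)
The plan is to write the Euler product $\prod_v\int_{BG\langle F_v\rangle}H_v^{-s}\mu_v$ as a finite product of translated $L$-functions times a convergent ``error'' product over the places. First I would record the reduction: since $\mu=\bigotimes_v\mu_v$, $H=\prod_v H_v$, and $H_v$ is identically $1$ on $BG\langle\OO_v\rangle$ with $\mu_v(BG\langle\OO_v\rangle)=1$ (because $c$ vanishes exactly at $0\in|\Delta_G|$, which on $BG\langle F_v\rangle=\Delta_G\langle\FF_{q_v}\rangle$ is the unramified locus $BG\langle\OO_v\rangle$), the stated equality $\int_{BG\langle\AAF\rangle}H^{-s}\mu=\prod_v\int_{BG\langle F_v\rangle}H_v^{-s}\mu_v$ holds, and by Proposition~\ref{localheightzeta} (applied with $\FF_{q'}=\FF_{q_v}$),
\[
\int_{BG\langle F_v\rangle}H_v^{-s}\mu_v=1+\sum_{r\in D(c)}b_{r,v}\,q_v^{\dim(c^{-1}(r))-sr}+O(q_v^{-1-\delta})
\]
uniformly in $v$ on a half-plane $\Re(s)>a-\delta_0$. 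Since $\dim(c^{-1}(r))-\Re(s)r=-1-r(\Re(s)-a)<-1$ for $r\in D(c)$ and $\Re(s)>a$, this already gives $\int_{BG\langle F_v\rangle}H_v^{-s}\mu_v=1+O(q_v^{-1-\epsilon})$ with $\epsilon>0$ on $\Re(s)>a$, and as $F$ has $O(q^n)$ places of each degree $n$ the product converges absolutely to a holomorphic function there.

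Next I would peel off, for each $r\in D(c)$, the translated $L$-function used in the proof of Proposition~\ref{brv}: let $L_r(w):=\prod_v\det(1-q_v^{-w}\Frob_v\mid\CC^{W_r})^{-1}$ be the $L$-function of the permutation representation $\CC^{W_r}$ of $\Gal(\overline{\FF_q}/\FF_q)$, so that $b_{r,v}=\tr(\Frob_v\mid\CC^{W_r})$, and recall $(1+\dim(c^{-1}(r)))/r=a$ for $r\in D(c)$. Setting $E_v(s):=\bigl(\int_{BG\langle F_v\rangle}H_v^{-s}\mu_v\bigr)\prod_{r\in D(c)}\det\bigl(1-q_v^{-(sr-\dim(c^{-1}(r)))}\Frob_v\mid\CC^{W_r}\bigr)$, a rearrangement of the absolutely convergent products on $\Re(s)>a$ gives
\[
\int_{BG\langle\AAF\rangle}H^{-s}\mu=\Bigl(\prod_{r\in D(c)}L_r\bigl(sr-\dim(c^{-1}(r))\bigr)\Bigr)\cdot\prod_v E_v(s).
\]
Using $\det(1-q_v^{-w}\Frob_v\mid\CC^{W_r})=1-b_{r,v}q_v^{-w}+O(q_v^{-2\Re(w)})$ together with the local formula, a direct computation yields $E_v(s)=1+O(q_v^{-1-\delta'})$ uniformly for $\Re(s)>a-\delta_1$ with $\delta_1>0$ small; hence $\prod_v E_v(s)$ converges absolutely to a holomorphic function on $\Re(s)>a-\delta_1$. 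Moreover it is non-vanishing at $s=a$: there $E_v(a)=\bigl(\int_{BG\langle F_v\rangle}H_v^{-a}\mu_v\bigr)\prod_{r\in D(c)}\det(1-q_v^{-1}\Frob_v\mid\CC^{W_r})$, the first factor is a real number $\ge\mu_v(BG\langle\OO_v\rangle)=1$, and each $\det(1-q_v^{-1}\Frob_v\mid\CC^{W_r})=\prod_{\lambda}(1-\lambda q_v^{-1})$ is nonzero since the eigenvalues $\lambda$ of a permutation matrix have $|\lambda|=1<q_v$; so every $E_v(a)>0$ and $\prod_v E_v(a)$ converges to a positive real.

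Finally I would read off the behaviour of $\prod_{r\in D(c)}L_r(sr-\dim(c^{-1}(r)))$ from the proof of Proposition~\ref{brv}. For $r\in D(c)$ the trivial representation occurs in $\CC^{W_r}$ with multiplicity $m_r:=\#(W_r/\Gal(\overline{\FF_q}/\FF_q))$, the number of top-dimensional irreducible components of $c^{-1}(r)$ over $\FF_q$; thus $L_r(w)=\zeta_F(w)^{m_r}\cdot(\text{holomorphic and non-vanishing near }w=1)$, so $L_r(sr-\dim(c^{-1}(r)))$ is holomorphic for $\Re(s)>a$, extends meromorphically to $\Re(s)\ge a$ with a pole of order $m_r$ at $s=a$, and for some $\varepsilon_r>0$ has, in $\Re(s)>a-\varepsilon_r$, poles only in $\{a+2ik\pi\mid k\in\ZZ\}$. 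Because $D(c)$ is finite and $b(c)=\sum_{r\in D(c)}m_r$ by Definition~\ref{aandb}, the product $\prod_{r\in D(c)}L_r(sr-\dim(c^{-1}(r)))$ is holomorphic for $\Re(s)>a$, meromorphic on $\Re(s)\ge a$ with a pole of order exactly $b(c)$ at $s=a$, and has poles in $\Re(s)>a-\min_r\varepsilon_r$ only in $\{a+2ik\pi\}$. Multiplying by $\prod_v E_v(s)$ — holomorphic on $\Re(s)>a-\delta_1$ and nonzero at $s=a$ — and invoking analytic continuation of the identity, one obtains all the assertions, with $\varepsilon=\min(\delta_1,\min_r\varepsilon_r)$.

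The step I expect to be the main obstacle is the simultaneous control of the error factors $E_v$: one needs the uniform estimate $E_v=1+O(q_v^{-1-\delta})$ sharply enough that $\prod_v E_v$ stays holomorphic across the line $\Re(s)=a$, and one needs $\prod_v E_v(a)\ne 0$ — and it is precisely the renormalization by the true local $L$-factors $\det(1-q_v^{-(sr-\dim(c^{-1}(r)))}\Frob_v\mid\CC^{W_r})$, rather than the bare quantities $1-b_{r,v}q_v^{\dim(c^{-1}(r))-sr}$ of Proposition~\ref{brv}, that makes the non-vanishing manifest. The remaining ingredients (the Euler factorization, Propositions~\ref{localheightzeta} and~\ref{brv}, and the count of places of bounded degree) are routine.
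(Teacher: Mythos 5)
Your proposal is correct and follows essentially the same route as the paper: both rest on Proposition~\ref{localheightzeta} for the local expansion and on comparison with the translated $L$-functions attached to the permutation representations $\CC^{W_r}$. The one organizational difference is that the paper renormalizes by the truncated factors $1-b_{r,v}q_v^{\dim(c^{-1}(r))-sr}$ and invokes Proposition~\ref{brv} as a black box for their zero at $s=a$, whereas you renormalize by the full local $L$-factor determinants $\det(1-q_v^{-(sr-\dim(c^{-1}(r)))}\Frob_v\mid\CC^{W_r})$; this inlines the $L$-function comparison from the proof of Proposition~\ref{brv}, makes the non-vanishing of $\prod_v E_v$ at $s=a$ manifest (each factor is a local integral, positive at real $s$, times a determinant with eigenvalues of modulus $1<q_v$), and reads off the pole order $b(c)=\sum_r m_r$ directly from the multiplicity of the trivial subrepresentation in $\CC^{W_r}$, rather than via the zero of the truncated product.
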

\begin{proof} 
	 Set~$\varepsilon_1:=\min(\varepsilon(c)-\varepsilon_0, \varepsilon')$, with the notation as in Proposition~\ref{localheightzeta}. Then this proposition gives that whenever $\Re(s)>a-\varepsilon_1$, one has that:
	\begin{equation}
		\label{ketr}
		\prod_v\int_{BG\langle F_v\rangle}H_v^{-s}\mu_v=\prod_v\bigg(1+\sum_{r\in D( c)}b_{r,v}q_v^{\dim(c^{-1}(r))-sr}+O(q_v^{-7/6})\bigg).
	\end{equation}
		We multiply this product by \begin{equation}\label{appprod}\prod_v\prod_{r\in  D(c)} \bigg(1-b_{r,v}q_v^{\dim(c^{-1}(r))-sr}\bigg),
		\end{equation}which, by Proposition~\ref{brv} converges absolutely to a holomorphic function in the domain $\Re(s)>a$ and which extends to a holomorphic function in the domain $\Re(s)\geq a$ with a zero at $s=a$ of multiplicity $\sum_{r\in D(c)}(\# W_r/\Gal(\overline {\FF_q}/\FF_q))=b$. Also, the set of its possible poles  in the domain $\Re(s)>a(c)-\varepsilon''$ for some $\varepsilon''>0$ is contained in the set $\{a(c)+2ik\pi|\hspace{0,1cm}k\in\ZZ\}$. For sufficiently small~$\varepsilon_2>0$ in the domain $\Re(s)>a-\varepsilon_2$ each term in~(\ref{appprod})
		expands to \begin{multline*}1-\sum_{r\in D(c)} b_{r,v}q_v^{\dim(c^{-1}(r))-sr}+\sum_{(r,t)\in (D(c))^2}\hskip-0,5cm b_{r,v}b_{t,v}q_v^{\dim(c^{-1}(r))+\dim(c^{-1}(t))-s(r+t)}\cdots\\=1-\sum_{r\in D(c)} b_{r,v}q_v^{\dim(c^{-1}(r))-sr}+O(q_v^{-1-\delta_0}).\end{multline*} for certain $\delta_0>0$. The multiplication produces a product of terms over~$v$ and each term in absolute value can be bounded by  
		\begin{multline*}
			\bigg|1+(O(q_v^{-1-\delta'})+O(q_v^{-1-\delta_0}))\bigg(\sum_{r\in D(c)}b_{r,v}q_v^{\dim(c^{-1}(r))-sr}\bigg)\bigg|\\
			=1+O(q_v^{-1-\max(\delta',\delta_0)})
		\end{multline*}
		for $\Re(s)>a-\min (\varepsilon_1,\varepsilon_2)$. The wanted claim follows.
	\end{proof}
\subsubsection{} We now prove a Northcott property. 
\begin{lem}\label{bounddisc}
There are only finitely many $G$-torsors over~$F$, which are unramified everywhere.
\end{lem}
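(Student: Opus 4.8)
The plan is to deduce the finiteness directly from the structural facts about $BG\langle\AAF\rangle$ recalled just above, with no further input. A $G$-torsor $x\in BG\langle F\rangle$ is unramified at a place $v\in M_F$ exactly when its local image $i_v(x)\in BG\langle F_v\rangle$ lies in the subgroup $BG\langle\Ov\rangle$; hence the set $U$ of $G$-torsors over $F$ that are unramified at \emph{every} place is precisely
$$U=i^{-1}\Big(\prod_{v}BG\langle\Ov\rangle\Big),$$
where $i\colon BG\langle F\rangle\to BG\langle\AAF\rangle$ is the diagonal map and $\prod_v BG\langle\Ov\rangle$ is the subgroup of $BG\langle\AAF\rangle$ already used above, which is compact, being a product of the finite groups $BG\langle\Ov\rangle$ (each of order $\#G$, as computed above). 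In particular $U$ is a subgroup of $BG\langle F\rangle$, being the preimage of a subgroup under a homomorphism.

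First I would observe that $i$ restricts to a surjection $U\twoheadrightarrow i(U)=i(BG\langle F\rangle)\cap\prod_v BG\langle\Ov\rangle$ whose kernel is $\ker(i)=\Sh^1(F,G)$, which is finite by \cite[Lemma 4.4(b)]{Cesnaviciuss}. Next, by \cite[Proposition 4.12]{Cesnaviciuss} the image $i(BG\langle F\rangle)$ is closed and discrete in $BG\langle\AAF\rangle$; intersecting a closed discrete subset with the compact set $\prod_v BG\langle\Ov\rangle$ produces a finite set, so $i(U)$ is finite. An extension of a finite group by a finite group is finite, whence $U$ is finite, which is the assertion.

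The argument is entirely soft, so there is no serious obstacle; the only point needing a moment's care is the very first sentence, namely that being unramified at $v$ is detected by membership in the subgroup $BG\langle\Ov\rangle\subset BG\langle F_v\rangle$ that is used to form the restricted product $BG\langle\AAF\rangle$ — this is immediate from the definitions of $\OO_v$-integrality of a torsor and of the restricted product. Alternatively one could argue through geometric class field theory: everywhere-unramified $G$-torsors over $F$ are classified by the continuous homomorphisms to $G$ from $\pi_1$ of the smooth projective model $X$ of $F$ (equivalently from its abelianization, as $G$ is abelian), and unramified class field theory exhibits $\pi_1^{\ab}(X)$ as an extension of $\widehat{\ZZ}$ by the finite group $\Pic^0(X)$, hence topologically finitely generated, so it admits only finitely many continuous homomorphisms to the finite group $G$; but the adelic argument above is shorter given what has been set up.
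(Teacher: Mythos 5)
Your proposal is correct, but it takes a genuinely different route from the paper's. The paper proves the lemma in one line by invoking \cite[Theorem 1.1]{geometrichermite} (a geometric Hermite--Minkowski finiteness theorem, which handles the unramified $G$-torsors giving rise to geometric, i.e.\ non-constant, extensions) together with the elementary finiteness of constant extensions of $F$ of bounded degree. You instead argue adelically: the closed, discrete image $i(BG\langle F\rangle)$ meets the compact subgroup $\prod_v BG\langle\Ov\rangle$ in a finite set, and the finiteness of $\ker(i)=\Sh^1(F,G)$ then gives the claim. Your argument is entirely self-contained given the facts recalled just above from \cite{Cesnaviciuss}, which is a real advantage in context, while the paper's route draws on an independent geometric Hermite--Minkowski theorem and in exchange makes the geometric/constant decomposition of everywhere-unramified torsors visible. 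Your sketched alternative via $\pi_1^{\ab}$ of the smooth projective model is actually closer in spirit to what the paper does. One point worth being explicit about, which you only flag in passing: the identification of ``unramified at $v$'' with membership in $BG\langle\Ov\rangle\subset BG\langle F_v\rangle$ uses that the restriction map $BG\langle\Ov\rangle\to BG\langle F_v\rangle$ is injective; this holds because $\Ov$ is integrally closed in $F_v$, so an Artin--Schreier--Witt class that becomes trivial over $F_v$ is already trivial over $\Ov$.
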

\begin{proof}
This follows from \cite[Theorem 1.1]{geometrichermite} and the fact that there are only finitely many constant extensions of~$F$ of bounded degree.
\end{proof}
\begin{prop}\label{northcott}
	Suppose that $c:\Delta_G\to\RR_{\geq 0}$ is semi-suitable. There exists $C,d>0$ such that $$\#\{x\in BG\langle F\rangle|\hspace{0,1cm}H(x)\leq B\}\leq CB^{d}.$$
\end{prop}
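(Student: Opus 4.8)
The plan is to bound the counting function by passing to the adelic side and relating $\#\{x \in BG\langle F\rangle \mid H(x) \le B\}$ to the volume of a height ball in $BG\langle \AAF\rangle$. First I would reduce to the case where the raising function $c$ is suitable (not just semi-suitable): using the first condition of Definition~\ref{suitable}, modify $c$ to a suitable $c'$ that dominates it up to a bounded factor, so that a bound for $H(c')$ yields one for $H=H(c)$; this is the ``modification'' step alluded to in the introduction. Alternatively, and more directly, one can work with $c$ itself since for the mere Northcott-type upper bound the $m_G$-approachability is not strictly needed — only conditions (1) and (2) of Definition~\ref{suitable} are used. I would keep both options in mind and present the cleaner one.

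Next I would set up the adelic comparison. Write $S_0$ for a finite set of places containing those where the representatives $y_1,\dots,y_k$ of Definition~\ref{approachable} are nontrivial, and decompose a global torsor $x \in BG\langle F\rangle$ via its image in $BG\langle\AAF\rangle$. Since $i(BG\langle F\rangle)$ is discrete and the compact-open subgroup $\prod_v BG\langle\Ov\rangle$ meets each coset of $i(BG\langle F\rangle)$ in a bounded number of points, it suffices (up to the finite constant $\#\Sh^1$ and the finite index) to count points $x$ lying in a fixed translate $y_i \cdot \prod_v BG\langle \Ov\rangle$ with $H(x) \le B$. By Lemma~\ref{hrprime} the ratio $H(y_i x)/H(x)$ is bounded above and below, so we may absorb the $y_i$ into the constant and reduce to counting $x \in BG\langle F\rangle$ with $x_v \in BG\langle\Ov\rangle$ for all $v$ and $H(x) \le B$. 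These are exactly the everywhere-unramified $G$-torsors, of which there are only finitely many by Lemma~\ref{bounddisc} — but this over-reduces; the correct move is to count $x$ in a fixed coset, where the local components at places in $S_0$ range over finitely many possibilities and elsewhere lie in $BG\langle\Ov\rangle$ only up to the global rational structure. So instead I would count directly: $H(x) = \prod_v q_v^{c(i_v(x))}$, and $H(x)\le B$ forces $\sum_v c(i_v(x))\log q_v \le \log B$, which for a torsor ramified at a set $T$ of places bounds both $\#T$ and the local conductors/discriminants at each $v\in T$.

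The heart is then an elementary counting estimate. For each place $v$ and each value $r \in c(\Delta_G)$, the number of $x_v \in BG\langle F_v\rangle$ with $c(i_v(x)) = r$ is at most $\#\,c^{-1}(r)\langle \FF_{q_v}\rangle \le C q_v^{\dim(c^{-1}(r)) + \varepsilon r}$ by condition (2) of Definition~\ref{suitable}, and $\dim(c^{-1}(r)) \le (a-1)r + O(1)$ with $a = a(c)$ by condition (1). So the number of torsors $x$ over $F$ ramified exactly along $T = \{v_1,\dots,v_t\}$ with local invariants $r_1,\dots,r_t$ is at most $\prod_{j} C q_{v_j}^{a r_j}$ (after absorbing $\varepsilon$ and the $O(1)$), while $H(x) = \prod_j q_{v_j}^{r_j}$; hence every such $x$ contributes to the count for $B$ only if $\prod_j q_{v_j}^{r_j} \le B$, and the number of $x$ with $H(x)\le B$ is at most $\sum_{\prod q_{v_j}^{r_j}\le B} \prod_j C q_{v_j}^{a r_j} \le B^{a} \sum_{\prod q_{v_j}^{r_j}\le B}\prod_j C$. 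The remaining sum over the combinatorial data $(T, (r_j))$ with $\prod q_{v_j}^{r_j}\le B$ is a divisor-type sum, bounded by $B^{\delta}$ for any $\delta>0$ (standard estimate: the number of ways to write an integer $\le B$ as an ordered product is $B^{o(1)}$, and the extra factor $C^t$ is absorbed since $t = O(\log B/\log 2)$). Choosing $\varepsilon,\delta$ small gives $\#\{x : H(x)\le B\}\le C' B^{a+\varepsilon'} \le C' B^d$ for suitable $d > a$, which is the claim. The main obstacle I expect is the bookkeeping in the adelic reduction — controlling the finitely many cosets and the constant conversion between counting rational torsors and counting tuples of local data — rather than the final divisor-sum estimate, which is routine; care is also needed to ensure the constant $C$ in condition (2) is uniform over all finite extensions $\FF_{q'}/\FF_q$, which is exactly why that uniformity is built into Definition~\ref{suitable}.
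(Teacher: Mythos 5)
Your argument takes a genuinely different route from the paper's. The paper first shows $\#\{x\in BG\langle F\rangle : H(x)\le B\}\ll \mu(\{x\in BG\langle\AAF\rangle : H(x)\le B\})$ by comparing the counting measure on $i(BG\langle F\rangle)$ with the Haar measure $\mu$ restricted to $i(BG\langle F\rangle)\prod_v BG\langle\Ov\rangle$ (using Lemma~\ref{bounddisc}, $K$-invariance of $H$, and $\#\Sh^1$-finiteness), then covers the adelic height ball by sets $V(I)$ indexed by ideals $I$ with $N(I)<B^2$, bounds $\mu(V(I))$ via condition~(2), and finishes by counting ideals. You instead count $x\in BG\langle F\rangle$ directly by ramification profile $(T,(r_v)_{v\in T})$ and then estimate the resulting divisor-type sum. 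Both hinge on the same local estimates from conditions (1) and (2), and you are right that $m_G$-approachability plays no role here. Your approach is more elementary and self-contained for this single proposition; the paper's passes deliberately through the Haar measure on $BG\langle\AAF\rangle$ because the subsequent Tauberian argument (Theorem~\ref{strsuitthm}) compares $Z(s)$ with $\int H^{-s}\mu$, so the adelic setup is needed anyway.

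Two places in your proposal need tightening. First, the step ``the number of torsors over $F$ ramified exactly along $T$ with local invariants $r_1,\dots,r_t$ is at most $\prod_j Cq_{v_j}^{ar_j}$'' needs a justification for why the number of \emph{global} torsors with a given $T$-tuple of local data is bounded: the relevant fibers are not just $\Sh^1$, but the set of global torsors unramified everywhere that are trivial at $T$, and their boundedness is exactly Lemma~\ref{bounddisc}. Your opening paragraph already invokes this lemma (and briefly drifts toward the wrong use of it), so the ingredient is on the table, but you should state that it supplies the uniform fiber bound you absorb into $C$. Second, your final ``$\le C'B^{a+\varepsilon'}$'' overshoots as written: the factor $\prod_j C = C^t$ with $t=O(\log B)$ contributes $B^{\log_2 C}$, which is polynomial, not $B^{o(1)}$. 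This does not hurt the Proposition (which asks only for some $d$), but the sharper exponent requires redistributing: use $\dim(c^{-1}(r))\le ar-1$ so each local factor is $\le Cq_v^{(a+\varepsilon)r-1}$, note that $Cq_v^{-1}\le 1$ once $q_v>C$, and absorb the finitely many small places into the constant. Also make explicit that the discreteness of $c(\Delta_G)$ (condition~(3)) is what makes the divisor-type sum finite after rescaling.
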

\begin{proof}
Let us first establish that for every $B>0$, one has that $$\#\{x\in BG\langle F\rangle|\hspace{0,1cm}H(x)\leq B\}\ll \mu(\{x\in BG\langle \AAF\rangle|\hspace{0,1cm}H(x)\leq B\}).$$
First, one has that $$\#\{x\in BG\langle F\rangle|\hspace{0,1cm}H(x)\leq B\}\asymp \#\{x\in i(BG\langle F\rangle)|\hspace{0,1cm}H(x)\leq B\},$$because, as~$G$ is commutative, all the fibers of~$i$ have the same finite cardinality $\#\Sh^1(G)$. Let $K:=\prod_v BG\langle \OO_v\rangle$. By Lemma~\ref{bounddisc}, the set of elements in $i(BG\langle F\rangle)$ lying in~$K$ is finite. The height~$H$ is~$K$-invariant. The Haar measure~$\mu$ on~$BG\langle \AAF\rangle$, restricted to~$i(BG\langle F\rangle)K$ is, up to the constant the product of the probability measure on~$K$ and the discrete measure on~$i(BG\langle F\rangle)$. We deduce that $$\#\{x\in i(BG\langle F\rangle)|\hspace{0,1cm}H(x)\leq B\}\asymp \mu(\{x\in i(BG\langle F\rangle)K|\hspace{0,1cm}H(x)\leq B\}).$$  
It is clear that $$\mu(\{x\in i(BG\langle F\rangle)K|\hspace{0,1cm}H(x)\leq B\})\leq \mu(\{x\in BG\langle \AAF\rangle|\hspace{0,1cm}H(x)\leq B\}),$$hence  one has $$ \#\{x\in BG\langle F\rangle|\hspace{0,1cm}H(x)\leq B\}\ll\mu(\{x\in BG\langle \AAF\rangle|\hspace{0,1cm}H(x)\leq B\})$$as we have claimed. To prove the claim of the statement, we will show that $$\mu(\{x\in BG\langle \AAF\rangle|\hspace{0,1cm}H(x)\leq B\})\ll B^d$$ for certain $d\geq 1$. We are clearly allowed to change the function~$c$ by $\lambda c$, where $\lambda>1$ is such that for any $r_1\neq r_2\in c(\Delta_G)$ with $|\lambda c(r_1)-\lambda c(r_2)|>1$.  
 Given an ideal $I\subset \OO_F$ and a place~$v$, we write $\alpha_v(I)$ for the exponent of  the prime corresponding to~$v$ in the prime factorization of~$I$. We define a subset $V(I)$ of $BG\langle \AAF\rangle$ by asking that $H_v(x_v)\leq q_v^{{\alpha_v(I)}}.$ 
 We verify that the sets $V(I)$ cover $\{x\in BG\langle \AAF\rangle|\hspace{0,1cm}H(x)\leq B\}$, when $I$ runs in the set of ideals of~$\OO_F$ with $N(I)<B^2$. Indeed, let $x\in BG\langle \AAF\rangle$ such that $H(x)\leq B$. Let as define $r_v$ for every $v$ by $H_v(x_v)=q_v^{r_v}$. Define $b_v:=\lceil r_v\rceil.$ Then the ideal $I:=\prod_v \mathfrak p_v^{b_v}$, where~$\mathfrak p_v$ corresponds to~$v$, satisfies that $x\in V(I)$ and its norm is $$N(I)\leq N\big(\prod_{v} \mathfrak p_v^{r_v+1}\big)\leq N\big(\prod_{v}\mathfrak p_v^{2r_v}\big)=B^2.$$  Let~$C$ be such that for all~$v$ and all $r>0$, one has that $$\# c^{-1}(r)\langle\FF_{q_v}\rangle\leq Cq_v^{\dim(c^{-1}(r))+r/2},$$then also $$\# c^{-1}(r)\langle\FF_{q_v}\rangle\leq Cq_v^{(a(c)+1/2)r}.$$ We note that then for all~$v$ and all $r\geq 1$, one has that $$\bigcup_{j=0}^{r}\# c^{-1}(j)\langle\FF_{q_v}\rangle\leq \sum_{j=0}^{r}Cq_v^{(a(c)+1/2)j}\leq Cq_v^{(a(c)+1/2 )(r+1)}\leq Cq_v^{(2a(c)+1)r} .$$One has that \begin{multline*}\mu(V(I))\leq \prod_{\alpha_v(I)>0}C q_v^{(2a(c)+1){\alpha_v(I)}}=C^{\#\{v:\alpha_v(I)>0\}}N(I)^{2a(c)+1}\ll B N(I)^{2a(c)+1}\\\leq B^{2a(c)+2}.\end{multline*}
 There are no more than $O(B^2)$ ideals of norm at most~$B^2$.  The statement follows.
\end{proof}
	\begin{thm}\label{strsuitthm}
		One has that $$\#\{x\in BG\langle F\rangle|\hspace{0,1cm}H(x)\leq B\}\asymp B^{a(c)}\log(B)^{b(c)-1}.$$ 
	\end{thm}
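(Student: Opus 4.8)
The plan is to apply a Tauberian theorem to the height zeta function
\[
Z(s):=\sum_{x\in BG\langle F\rangle}H(x)^{-s},
\]
using the analytic information about the adelic integral $\int_{BG\langle\AAF\rangle}H^{-s}\mu$ obtained in Proposition~\ref{labav}, namely that it converges for $\Re(s)>a=a(c)$, extends meromorphically to $\Re(s)\geq a$ with a pole of order exactly $b=b(c)$ at $s=a$, and has no other poles in a strip $\Re(s)\geq a-\varepsilon$ apart from those at $a+2ik\pi$, $k\in\ZZ$. The heart of the matter is to compare $Z(s)$ with this adelic integral. Concretely I would write, using the set of representatives $y_1=1,y_2\doots y_k$ of $BG\langle\AAF\rangle$ modulo $i(BG\langle F\rangle)\prod_vBG\langle\OO_v\rangle$ from Definition~\ref{approachable}, that
\[
\int_{BG\langle\AAF\rangle}H^{-s}\mu=\sum_{i=1}^{k}\ \sum_{x\in i(BG\langle F\rangle)\prod_vBG\langle\OO_v\rangle}H(y_ix)^{-s}\cdot(\text{volume factor}),
\]
and by Lemma~\ref{hrprime} each ratio $H(y_ix)/H(x)$ lies in a fixed interval $(C_1,C_2)$; since $H$ is $\prod_vBG\langle\OO_v\rangle$-invariant and (by Lemma~\ref{bounddisc}) only finitely many $x\in i(BG\langle F\rangle)$ lie in $\prod_vBG\langle\OO_v\rangle$, each inner sum is, up to a bounded factor depending on $i$, comparable to $Z(s)$ minus finitely many terms. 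This shows $Z(s)$ converges for $\Re(s)>a$ and that $Z(s)$ and $\int H^{-s}\mu$ differ by factors and finite Dirichlet polynomials that are harmless for the Tauberian argument, so $Z(s)$ also extends to $\Re(s)\geq a$ with a pole of order exactly $b$ at $s=a$ and no other poles in a punctured strip except at $a+2ik\pi$.

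**Next I would** upgrade the pointwise comparison to a comparison of counting functions. The above gives $Z(s)$ is a finite positive-coefficient combination of translates and rescalings of pieces of $\int H^{-s}\mu$; since all Dirichlet series involved have nonnegative coefficients, the abscissa of convergence of $Z(s)$ is exactly $a$, and the analytic continuation and order of pole are inherited. From Proposition~\ref{northcott} (which applies since $c$ is in particular semisuitable) we already have the polynomial upper bound $\#\{x:H(x)\leq B\}\leq CB^d$, which guarantees $Z(s)$ is an honest Dirichlet series with finite abscissa of convergence and lets us invoke a Tauberian theorem. I would use a Tauberian theorem of Delange/Ikehara type for Dirichlet series with nonnegative coefficients whose analytic continuation has a single pole of order $b$ on the line $\Re(s)=a$ at the real point $s=a$ (the poles at $a+2ik\pi$, $k\neq 0$, contribute nothing to the main term by the standard argument, since the coefficients are supported on a lattice of exponents $\log q_v$ with the main exponent a multiple of a single $\log q$ — more precisely the exponents $c(\Delta_G)$ are discrete and after rescaling contained in $\ZZ$, so $H$ takes values in powers of a fixed real number and the periodicity matches). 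This yields
\[
\#\{x\in BG\langle F\rangle\mid H(x)\leq B\}\sim c\,B^{a}(\log B)^{b-1}
\]
for some $c>0$, which in particular gives the claimed $\asymp$.

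**The main obstacle** I expect is the passage from the adelic integral to $Z(s)$: one must carefully account for the group $BG\langle\AAF\rangle/(i(BG\langle F\rangle)\prod_vBG\langle\OO_v\rangle)$ being nontrivial, so that $\int H^{-s}\mu$ is not literally $Z(s)$ but a sum of twisted copies, and control the twisting via Lemma~\ref{hrprime} uniformly in the coset representative. A secondary subtlety is justifying that the finitely many "bad" poles at $a+2ik\pi$ genuinely do not affect the asymptotic order $(\log B)^{b-1}$; this is where one uses that $c(\Delta_G)$ is discrete (condition (3) of Definition~\ref{suitable}, $Mc(\Delta_G)\subset\ZZ$) together with the explicit shape $\prod_v(1-b_{r,v}q_v^{\dim(c^{-1}(r))-sr})$ of the polar part coming from Proposition~\ref{brv}, whose zeros are all at $s\in\{(1+\dim c^{-1}(r))r^{-1}+2ik\pi\}$, so that the full polar divisor on $\Re(s)=a$ is concentrated at $s=a$ modulo the imaginary period and a Tauberian theorem à la Delange applies with the stated main term. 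The lower bound $\gg B^a\log(B)^{b-1}$ then follows from the same Tauberian theorem, completing the $\asymp$.
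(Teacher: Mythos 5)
Your overall plan — Northcott for convergence, compare the height zeta function with the adelic integral $\int_{BG\langle\AAF\rangle}H^{-s}\mu$, invoke the analytic continuation from Proposition~\ref{labav}, then apply a lattice-type Tauberian theorem (using condition (3) of Definition~\ref{suitable}) — is the same strategy as the paper. But there is a genuine gap at the step where you transfer the analytic continuation to $Z(s)$.

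You write that $Z(s)=\sum_{x\in BG\langle F\rangle}H(x)^{-s}$ and $\int H^{-s}\mu$ ``differ by factors and finite Dirichlet polynomials that are harmless for the Tauberian argument, so $Z(s)$ also extends.'' That inference does not work. The identity coming from the coset decomposition is (up to harmless constants)
\[
\int_{BG\langle\AAF\rangle}H^{-s}\mu \;=\; \sum_{j=1}^{k}\;\sum_{x\in i(BG\langle F\rangle)} H(y_j x)^{-s},
\]
so the adelic integral is a \emph{sum of several distinct Dirichlet series} $Z_j(s):=\sum_x H(y_jx)^{-s}$, one of which is (essentially) your $Z(s)$. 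Lemma~\ref{hrprime} only tells you that the coefficients of $Z_j$ and $Z_1=Z$ are termwise comparable, hence the abscissae of convergence agree. That is a statement about absolute values on the real axis; it does \emph{not} let you deduce meromorphic continuation of $Z$ from meromorphic continuation of the sum $\sum_j Z_j$. (A finite positive-coefficient sum can extend while each summand does not.) So the claim ``$Z(s)$ also extends to $\Re(s)\geq a$ with a pole of order $b$'' is unsupported as written, and the subsequent Tauberian step applied to $Z(s)$ does not go through.

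The paper avoids this by never attempting to continue $Z(s)$ over $BG\langle F\rangle$ itself. Instead it forms the ``thickened'' zeta function over $A:=\bigcup_{j}y_ji(BG\langle F\rangle)$; by $K$-invariance of $H$ this equals $\int_{BG\langle\AAF\rangle}H^{-s}\mu$ on the nose, so it genuinely inherits the analytic continuation and polar data from Proposition~\ref{labav}. After the change of variable $t=q^{-s}$ (which collapses all the poles $a+2ik\pi/\log q$ into the single point $t=q^{-a}$) one applies the power-series Tauberian theorem of Lagemann to get $\#\{x\in A : H(x)\le B\}\asymp B^{a}\log(B)^{b-1}$. The final — and crucial — step is the elementary counting lemma Theorem~\ref{countx} in the appendix: given height functions $H_j(x)=H(y_jx)$ on $X=i(BG\langle F\rangle)$ that are pairwise comparable (Lemma~\ref{hrprime}) and whose \emph{counting functions} sum to something $\asymp B^a\log(B)^{b-1}$, each individual counting function has the same order. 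This lemma, operating purely at the level of counting functions rather than Dirichlet series, is exactly the tool needed to bridge the gap you leave open; the pairwise comparability that is useless for analytic continuation is precisely what Theorem~\ref{countx} can exploit. You should restructure your argument to apply the Tauberian theorem to the sum over $A$ and then invoke Theorem~\ref{countx}, rather than trying to continue $Z(s)$ directly.
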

	\begin{proof}
		Clearly, one can replace $BG\langle F\rangle$ in the statement by $i(BG\langle F\rangle)$. 	Let us denote by~$D$ the group $i(BG\langle F\rangle)K$. The group~$D$ is of finite index in~$BG\langle \AAF\rangle$ and let $y_1=1, y_2\doots y_r\in BG\langle\AAF\rangle$ be the representatives modulo~$D$. Let us write $$A:=\bigcup_{i=1}^ry_ii(BG(F)).$$ Observe that \begin{align*}\#\{x\in A|\hspace{0,1cm}H(x)\leq B\}
		=\sum_{i=1}^n\#\{x\in i(BG(F))|\hspace{0,1cm}H(y_ix)\leq B\}.
	\end{align*}
By the fact that $c$ is $m_G$-approachable, Lemma~\ref{hrprime} gives that $x\mapsto H(y_ix)/H(x)$ are bounded functions from below and above by strictly positive constants.	By Theorem~\ref{countx}, it suffices to show that $$\# \{x\in A|\hspace{0,1cm}H(x)\leq B\} \asymp B^{a(c)}\log(B)^{b(c)-1}.$$
By Proposition~\ref{northcott}, one has that $$Z(s):=\sum_{x\in \cup_{j=1}^n y_ji(BG\langle F\rangle)}H(x)^{-s}$$
 converges for $\Re(s)\gg 0$. 
As $H$ is $K$-invariant, we get $$\sum_{x\in \cup _{j=1}^ny_ji(BG\langle F\rangle)}H(x)^{-s}=\int_{BG\langle \AAF\rangle}H^{-s}\mu.$$ 
It is immediate that the claim is valid for~$c$, then it is valid for~$Mc$. We can thus assume that $c(\Delta_G)\subset\ZZ$. For $n\in\NN_0$, we denote by~$a_n$ the number of $x\in A$ such that $H(x)=q^n.$ Let us introduce variable $t=q^{-s}$. We define power series $Q(t):=\sum_{n=0}^\infty a_nt^n$ so that $$Q(t)=Q(q^{-s})=Z(s).$$
We have that $$\int_{BG\langle\AAF\rangle}H^{-s}\mu=Z(s)$$is holomorphic for $\Re(s)>a(c)$, extends to a meromorphic function in the domain $\Re(s)> a(c)-\varepsilon_0$ for some $\varepsilon_0$ with a pole at $s=a(c)$ which is of order~$b(c)$ and no other poles with $$-\frac{\pi}{\log(q)}\leq \Im(s)<\frac{\pi}{\log(q)}$$ in the domain $\Re(s)>a(c)-\varepsilon_0$.
We deduce that the series $Q(t)$ converges absolutely for $|t|\leq q^{-a(c)}$ to a holomorphic function and that this function extends to a meromorphic function on the disk $|t|<q^{-a(c)}+\varepsilon$ for some $\varepsilon>0$ with only pole at $t=q^{-a(c)}$ of order $b(c)$. By \cite[Theorem A.5(c)]{LAGEMANN2015288} there exists $e_0\in\NN_0,$ $\ell\in\NN$ and $C>0$ such that whenever $M\in\{e_0+k\ell|\hspace{0,1cm}k\in\NN_0\}$ one has that 
\begin{align*}\sum_{n=0}^Ma_n&\sim_{M\to\infty} Cq^{Ma(c)}\log(q^{Ma(c)})^{b(c)-1}\\&\sim_{M\to\infty}Cq^{Ma(c)}M^{b(c)-1}\log(q^{a(c)})^{b(c)-1}.
	\end{align*} In particular
	$$ \sum_{n=0}^Ma_n\asymp_{B\to\infty}  q^{Ma(c)}M^{b(c)-1}\log(q^{a(c)})^{b(c)-1}$$ 
	Finally, $B\asymp_{B\to\infty}q^{\lfloor \log_q(B)\rfloor}$ and 
	\begin{align*}
  	\#\{x\in A|\hspace{0,1cm}H(x)\leq B\}
  	&=\sum_{n=0}^{\lfloor\log_q(B) \rfloor}a_n\\
  	&\asymp_{B\to\infty} q^{ \lfloor\log_q(B)\rfloor a(c)}\lfloor\log_q(B)\rfloor^{b(c)-1}\\
  	&\asymp_{B\to\infty} B^{a(c)}\log(B)^{b(c)-1}.
  \end{align*}
%
The statement follows.
	\end{proof}
\subsubsection{}Let us now treat the case when $c$ is assumed only to be suitable.
\begin{lem}\label{ssuitable}
Let $\varepsilon>1$. There exists $\varepsilon_0\in ]1,\varepsilon[\cap \QQ$ 
and $r_0\in c(\Delta_G)-\{0\}$ such that $c':\Delta_G\to\RR_{\geq 0}$ defined by $c'(x)=c(x)$ if $c(x)\neq r_0$ and $c'(x)=(\varepsilon_0)^{-1}r_0$ if $c(x)=r_0,$ is strongly suitable.
\end{lem}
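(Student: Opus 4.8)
The plan is to leave $c$ unchanged except on the single fibre $c^{-1}(r_0)$ over a carefully chosen value $r_0\in c(\Delta_G)-\{0\}$, rescaling that value down by a rational factor $\varepsilon_0\in{]1,\varepsilon[}$; if $r_0$ is chosen close enough to a maximiser of $r\mapsto(1+\dim(c^{-1}(r)))/r$ and $\varepsilon_0$ is large enough, the new value $r_0':=\varepsilon_0^{-1}r_0$ becomes the \emph{unique} maximiser, which forces $D(c')$ to be a singleton and produces the required gap automatically.

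First I would fix the data. Write $f(r):=(1+\dim(c^{-1}(r)))/r$ for $r\in c(\Delta_G)-\{0\}$, so that $a(c)=\sup_r f(r)<\infty$. As $\varepsilon>1$, we have $a(c)/\varepsilon<a(c)$, hence there is $r_0$ with $f(r_0)>a(c)/\varepsilon$. Let $J$ be the set of real $\varepsilon_0$ with $1<\varepsilon_0<\varepsilon$ and $\varepsilon_0 f(r_0)>a(c)$; since $a(c)/f(r_0)<\varepsilon$, this is a non-empty open interval. Because $Mc(\Delta_G)\subset\ZZ$ for some $M$, the set $c(\Delta_G)$ is discrete, so it meets the bounded interval $\{\varepsilon_0^{-1}r_0\mid\varepsilon_0\in J\}$ in only finitely many points; therefore only finitely many $\varepsilon_0\in J$ satisfy $\varepsilon_0^{-1}r_0\in c(\Delta_G)$, and I pick $\varepsilon_0\in J\cap\QQ$ avoiding them. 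With $r_0':=\varepsilon_0^{-1}r_0$, the fibres of the resulting $c'$ coincide with those of $c$ except that $c'^{-1}(r_0')=c^{-1}(r_0)$ and $c'^{-1}(r_0)=\emptyset$; in particular $\dim(c'^{-1}(r_0'))=\dim(c^{-1}(r_0))$ and hence $f'(r_0')=\varepsilon_0 f(r_0)$, while $f'(r)=f(r)$ for every other value.

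Next I would verify that $c'$ is a suitable raising function. It is a raising function: $c'(x)=0\iff c(x)=0\iff x=0$ since $r_0,r_0'\neq0$, and invariance and constructibility are inherited fibrewise. For semisuitability, $(1)$ gives $a(c')=\max\bigl(\sup_{r\neq r_0}f(r),\varepsilon_0 f(r_0)\bigr)=\varepsilon_0 f(r_0)<\infty$, using $\varepsilon_0 f(r_0)>a(c)$. For $(2)$, given $\eta>0$: the count of $c'^{-1}(r)\langle\FF_{q'}\rangle$ equals that of $c$ when $r\neq r_0'$, while for $r=r_0'$ I apply the counting bound for $c$ with parameter $\eta r_0'/r_0>0$, which produces the exponent $\dim(c'^{-1}(r_0'))+\eta r_0'$; the maximum of the two constants works uniformly in $r$. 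Condition $(3)$: writing $\varepsilon_0=a/b$ with $a,b\in\NN$, the integer $aM$ clears all denominators of $c'(\Delta_G)$, since $aMr_0'=bMr_0\in\ZZ$. Finally, $c'$ is $m_G$-approachable because $c'-c$ takes values in the bounded set $\{0,\,r_0'-r_0\}$, so $c'(x+w)-c'(x)$ differs from $c(x+w)-c(x)$ by at most $2|r_0-r_0'|$ and thus remains bounded.

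It then remains to see that $c'$ is strongly suitable. We have $D(c')=\{r\mid f'(r)=a(c')\}=\{r_0'\}$, since $f'(r_0')=\varepsilon_0 f(r_0)=a(c')$ whereas $f'(r)=f(r)\le a(c)<a(c')$ for every other value $r$; this set is finite and non-empty. For the gap condition I would take $\varepsilon(c'):=(a(c')-a(c))/2>0$, so that for any $r'\in c'(\Delta_G)-D(c')-\{0\}$ one gets $f'(r')=f(r')\le a(c)=a(c')-2\varepsilon(c')<a(c')-\varepsilon(c')$, which is exactly $(1+\dim(c'^{-1}(r')))/r'<a(c')-\varepsilon(c')$. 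The one step that needs care is the choice in the second paragraph: $\varepsilon_0$ must be rational, lie in the prescribed open interval, and avoid the finitely many values for which $r_0'$ would collide with an existing value of $c$ (so that the fibre identification $c'^{-1}(r_0')=c^{-1}(r_0)$ holds on the nose) — all of which is possible precisely because condition $(3)$ forces $c(\Delta_G)$ to be discrete. Note in particular that one does \emph{not} need $r_0-r_0'$ to be small: $m_G$-approachability only controls the size of the increments of $c'$, not their sign, so any admissible rational $\varepsilon_0$ works.
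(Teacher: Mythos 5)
Your argument follows essentially the same route as the paper's own proof: choose $r_0$ with $f(r_0):=(1+\dim c^{-1}(r_0))/r_0$ close enough to $a(c)$ that some $\varepsilon_0\in{]1,\varepsilon[}\cap\QQ$ satisfies $\varepsilon_0 f(r_0)>a(c)$, adjust $\varepsilon_0$ so that $\varepsilon_0^{-1}r_0\notin c(\Delta_G)$, and conclude that $D(c')=\{\varepsilon_0^{-1}r_0\}$ with a uniform gap $a(c')-a(c)$. If anything you are somewhat more thorough: the paper only exhibits the gap and leaves the remaining suitability axioms implicit, and where the paper invokes mere countability of $c(\Delta_G)$ to find an admissible \emph{rational} $\varepsilon_0$, your appeal to discreteness (coming from axiom (3), $Mc(\Delta_G)\subset\ZZ$) is the cleaner justification, since countability alone does not preclude the bad set from exhausting all rationals in the interval.
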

\begin{proof}
It is clear that as $c(\Delta_G)-\{0\}$ has a minimal value,  the supremum $$\sup_{r\neq 0}\frac{1+\dim(c^{-1}(r))}{r}$$ is a positive real number. There exists $\varepsilon'>0$ and~$r_0\in c(\Delta_G)-\{0\}$ such that $$\varepsilon'+ \sup_{r\neq 0}\frac{1+\dim (c^{-1}(r))}{r} <\varepsilon\cdot \frac{1+\dim(c^{-1}(r_0))}{r_0}.$$
By the fact that $c(\Delta_G)$ is countable, we can choose rational $\varepsilon>\varepsilon_0>1$ such that $\varepsilon_0^{-1}r_0\not\in c(\Delta_G)$ and that the above inequality holds for $\varepsilon$ replaced by $\varepsilon_0$. The function $$c':x\mapsto\begin{cases}
	c(x)&\text{ if $c(x)\neq r_0$}\\
	\varepsilon_0^{-1}r_0&\text{ if $c(x)=r_0$}
\end{cases}$$ then satisfies that the supremum of $\frac{1+\dim ((c')^{-1}(r))}{r}$ is attained for $r=\varepsilon_0^{-1}r_0$. Moreover, for any $r_1\not\in\{ \varepsilon^{-1} r_0,0\}$ it is clear that \begin{multline*}\frac{1+\dim((c')^{-1}(\varepsilon_0^{-1} r_0))}{(\varepsilon_0)^{-1}r_0}-\frac{1+\dim ((c')^{-1}(r_1))}{r_1}\\=\frac{1+\dim((c)^{-1}( r_0))}{(\varepsilon_0)^{-1}r_0}-\frac{1+\dim (c^{-1}(r_1))}{r_1}>\varepsilon'.
	\end{multline*}
	The claim has been verified.
\end{proof}
\begin{cor}
For every height~$H$ defined by a suitable counting function $c:\Delta_G\to\RR_{\geq 0}$ and every $\varepsilon>1$, there exists a height $H'$, defined by a strongly suitable counting function, satisfying that $H\geq H'\geq H^{1/\varepsilon}$ is bounded above by~$1$.
\end{cor}
\begin{proof}
Let $c'$ be defined as in Lemma~\ref{ssuitable}. Then $H'$ defined by~$c'$, satisfies that $H\geq H'$ and that $(H')^{\varepsilon}\geq H$.
\end{proof}
\begin{cor}\label{ainvariantcorrect}
Let $H$ be a suitable counting function. For any $\varepsilon>0$, one has that $$B^{a(c)} \ll \#\{x\in BG\langle F\rangle|\hspace{0,1cm}H(x)\leq B\}\ll B^{a(c)+\varepsilon}.$$
\end{cor}

\subsubsection{}\label{ExamplesFromReps}
Examples of strongly suitable counting functions come from representations of the cyclic group $G=\ZZ/p\ZZ$ of order $p$, where $p$ is the characteristic of the ground field. For $ 1 \le i \le p$, let $V_i$ denote the unique $i$-dimensional indecomposable linear representation of $G$ over $\FF_q$ and define the representation $V=\bigoplus_{\lambda=1}^l V_{d_\lambda}$, $1\le d_\lambda \le p$ of dimension $d=\sum_{\lambda=1}^l d_\lambda$. Following \cite{Yasuda_2014}, we define its invariant $D_V$ by 
\[
D_V := \sum_{\lambda=1}^l \frac{(d_\lambda -1)d_\lambda}{2},
\]
and for a positive integer $j$ coprime to $p$,  define 
\[
 \mathrm{sht}_V(j) :=\sum_{\lambda=1}^l \sum_{i=1}^{d_\lambda -1} \left \lfloor \frac{ij}{p} \right \rfloor. 
\]
(This is  $-\mathbf{w}$ with the notation from \cite{woodyasuda}.)
The $\mathbf{v}$-function on $\Delta_G$ associated to $V$ \cite[Definition 3.7]{woodyasuda} is then defined by 
\[
 \mathbf{v(x)} =
 \begin{cases}
  0 & (x=0) \\
  d-l + \mathrm{sht}_V(j) & (\text{$x$ has ramification jump $j$}).
 \end{cases}
\]
Assuming that $D_V\ge p$, we will show that $\mathbf{v}$ is a strongly suitable raising function and compute $a(\mathbf{v})$ and $b(\mathbf{v})$ below. 
Examples of representations $V$ with $D_V=p$ include:
\begin{itemize}
\item $V_2^{\oplus p}$ ($\forall p$),
\item $V_2^{\oplus 2}$ ($ p = 2 $),
\item $V_3$ ($ p = 3 $),
\item $V_3\oplus V_2^{\oplus 2}$ ($p=5$),
\item $V_4 \oplus V_2$ ($p=7$).
\end{itemize}

For $j>0$ with $p\nmid j$, 
 the locus $C_j \subset \Delta_G$ of points with  ramification jump $j$  has coarse moduli space which is the ind-perfection of $\mathbb{G}_m \times \mathbb{A}^{j-\lfloor j/p\rfloor -1 }$. In particular, it has dimension $j-\lfloor j/p \rfloor$. 

\begin{lem}
If $D_V=p$, then for every $j>0$ with $p\nmid j$, we have
\[
 \dim C_j -\mathbf{v}(j) \le -1. 
\]
Morever, we have 
\[
\dim C_{p-1} -\mathbf{v}(p-1) = -1. 
\]
\end{lem}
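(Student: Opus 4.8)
The plan is to argue directly from the explicit formula $\dim C_j-\mathbf{v}(j)=j-\lfloor j/p\rfloor-(d-l)-\mathrm{sht}_V(j)$, first reducing to the range $1\le j\le p-1$ and then exploiting the reflection $j\mapsto p-j$ inside that range; this is essentially the argument already carried out for Lemma~\ref{AtMostMinus1}. For the reduction, I would record that $\mathrm{sht}_V(j+p)=\sum_{\lambda}\sum_{i=1}^{d_\lambda-1}\left(\lfloor ij/p\rfloor+i\right)=\mathrm{sht}_V(j)+D_V$, so under the hypothesis $D_V=p$ the function $f(j):=\dim C_j-\mathbf{v}(j)$ satisfies $f(j+p)=f(j)-1$, because $\lfloor(j+p)/p\rfloor=\lfloor j/p\rfloor+1$. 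Writing any $j>0$ with $p\nmid j$ as $j=j_0+np$ with $1\le j_0\le p-1$ and $n\ge 0$, one gets $f(j)=f(j_0)-n$, so it suffices to prove $f(j_0)\le -1$ for $1\le j_0\le p-1$, and the ``moreover'' clause will follow from the value $f(p-1)=-1$, which lies in this base range.

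For $1\le j\le p-1$ I would next use a reflection identity for $\mathrm{sht}_V$. Since $1\le i\le d_\lambda-1\le p-1$ and $1\le j\le p-1$, one has $p\nmid ij$ (this is where the bound $d_\lambda\le p$ is used), hence $ij/p$ is not an integer and $\lfloor ij/p\rfloor+\lfloor i(p-j)/p\rfloor=i-1$. Summing over $\lambda$ and $i$ gives $\mathrm{sht}_V(j)+\mathrm{sht}_V(p-j)=D_V-(d-l)$. Since $\lfloor j/p\rfloor=0$ in this range, substituting and using $D_V=p$ yields $f(j)=j-(d-l)-\mathrm{sht}_V(j)=j-p+\mathrm{sht}_V(p-j)$.

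To finish, I would bound $\mathrm{sht}_V(j')\le j'-1$ for $1\le j'\le p-1$: every term satisfies $\lfloor ij'/p\rfloor<ij'/p$ (again because $p\nmid ij'$), so $\mathrm{sht}_V(j')<(j'/p)\sum_{\lambda}\sum_{i=1}^{d_\lambda-1}i=(j'/p)D_V=j'$, the sum being nonempty since $D_V=p>0$ forces some $d_\lambda\ge 2$; as $\mathrm{sht}_V(j')$ is an integer this gives $\mathrm{sht}_V(j')\le j'-1$. Putting $j'=p-j$ into the identity of the previous paragraph yields $f(j)\le j-p+(p-j-1)=-1$, which is the desired inequality. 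For $j=p-1$ we have $\mathrm{sht}_V(1)=0$ (each $\lfloor i/p\rfloor=0$), so $f(p-1)=(p-1)-p+0=-1$, the asserted equality.

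The one point that needs care is the \emph{direction} of the estimate: a crude bound only controls $\mathrm{sht}_V(j)$ from above, which pushes $f(j)$ the wrong way; the trick is to route the argument through the complementary jump $p-j$ via the reflection identity, after which the upper bound on $\mathrm{sht}_V(p-j)$ is exactly what is needed to land on $-1$. Everything else is routine floor-function bookkeeping, and the hypothesis $D_V=p$ is precisely what makes the slope in the reduction step and the constants in the reflection and bounding steps combine to give the value $-1$.
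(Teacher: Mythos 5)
Your proof is correct and follows the same route as the paper's: reduce to $1\le j\le p-1$ via $\mathrm{sht}_V(j+p)=\mathrm{sht}_V(j)+D_V=\mathrm{sht}_V(j)+p$, then reflect through $j\mapsto p-j$ and bound $\mathrm{sht}_V(p-j)\le p-j-1$. The only difference is that the paper outsources the reflection identity $l-d+j-\mathrm{sht}_V(j)=\mathrm{sht}_V(p-j)+j-p$ and the bound $\mathrm{sht}_V(j')\le j'-1$ to Lemmas~3.2 and~3.3 of \cite{Yasuda_discrepancies}, whereas you derive both from the elementary floor identity $\lfloor ij/p\rfloor+\lfloor i(p-j)/p\rfloor=i-1$ (valid since $p\nmid ij$, which uses $d_\lambda\le p$), making the argument self-contained.
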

\begin{proof}
We have 
\[
\dim C_{j} -\mathbf{v}(j) = 
j-\left\lfloor \frac{j}{p}\right\rfloor
+l-d -\mathrm{sht}_V(j).
\]
From \cite[Proof of Proposition 6.9]{Yasuda_2014}, $\mathrm{sht}_V(j+p)$=$\mathrm{sht}_V(j)+p$. 
Thus, 
\begin{align*}
\dim C_{j+p} -\mathbf{v}(j+p)& = 
j+p-\left(\left\lfloor \frac{j}{p}\right\rfloor +1 \right)
+l-d -(\mathrm{sht}_V(j)+p)\\
&=\dim C_{j} -\mathbf{v}(j)-1.
\end{align*}
Therefore, $\dim C_{j} -\mathbf{v}(j)$ attains the maximum only on $\{1,2,\dots,p-1\}$. 
From \cite[Lemmas 3.2 and 3.3]{Yasuda_discrepancies},  for $j\in \{1,2,\dots,p-1\}$, we have
\begin{align*}
&\dim C_{j} -\mathbf{v}(j)\\
&= l-d+j-\mathrm{sht}_V(j) \\
&= \mathrm{sht}_V(p-j)+j-p  \\
&\le (p-j-1)+j-p  \\
& = -1.
\end{align*}
We have proved the inequality of the lemma.
For $j=p-1$, we have
\begin{align*}
	\dim C_{p-1} -\mathbf{v}(p-1)
 = \mathrm{sht}_V(1)-1 = -1. 
\end{align*}
\end{proof}

\begin{lem}
We have 
\begin{align*}
a(\mathbf{v})&= 1,\\
b (\mathbf{v})&= \# \{ s \in \{1,2,\dots, p-1\} \mid l-d+s-\mathrm{sht}_V(s)= -1 \}.
\end{align*}
\end{lem}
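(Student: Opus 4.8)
The plan is to reduce both computations to the previous lemma, which controls $\dim C_j-\mathbf{v}(j)$, after re-expressing the fibers of $\mathbf{v}$ in terms of the ramification loci $C_j$. First I would record the elementary equivalence: for $r\in\mathbf{v}(\Delta_G)-\{0\}$ one has $\tfrac{1+\dim\mathbf{v}^{-1}(r)}{r}\le 1$ if and only if $\dim\mathbf{v}^{-1}(r)-r\le -1$, the left inequality being strict exactly when the right one is. Since every nonzero point of $\Delta_G$ has a well-defined ramification jump $j>0$ with $p\nmid j$ and $\mathbf{v}$ is constant equal to $d-l+\mathrm{sht}_V(j)$ on $C_j$, the fiber $\mathbf{v}^{-1}(r)$ is the disjoint union $\bigsqcup_{j:\ \mathbf{v}(j)=r}C_j$ of nonempty irreducible locally closed subsets, whence $\dim\mathbf{v}^{-1}(r)-r=\max_{j:\ \mathbf{v}(j)=r}\bigl(\dim C_j-\mathbf{v}(j)\bigr)$. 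Combining this with the bound $\dim C_j-\mathbf{v}(j)\le -1$ of the previous lemma gives $\dim\mathbf{v}^{-1}(r)-r\le -1$ for every $r$ in the image, so $a(\mathbf{v})\le 1$. For equality I would take $r=\mathbf{v}(p-1)$, which is a nonzero value of $\mathbf{v}$ since $D_V=p>0$ forces some $d_\lambda\ge 2$ and hence $d>l$; the previous lemma gives $\dim C_{p-1}-\mathbf{v}(p-1)=-1$, so $\dim\mathbf{v}^{-1}(r)-r=-1$ there, and therefore $a(\mathbf{v})=1$.

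For $b(\mathbf{v})$, observe that $D(\mathbf{v})=\{r\in\mathbf{v}(\Delta_G)-\{0\}\mid \dim\mathbf{v}^{-1}(r)-r=-1\}$, which by the displayed maximum is the set of values $r=\mathbf{v}(j)$ attained at some $j$ with $\dim C_j-\mathbf{v}(j)=-1$. The relation $\dim C_{j'+np}-\mathbf{v}(j'+np)=\dim C_{j'}-\mathbf{v}(j')-n$ extracted from the proof of the previous lemma (valid for $1\le j'\le p-1$ and $n\ge 0$), together with $\dim C_{j'}-\mathbf{v}(j')\le -1$, shows that $\dim C_j-\mathbf{v}(j)=-1$ forces $n=0$, i.e.\ $j\in\{1,\dots,p-1\}$ with $l-d+j-\mathrm{sht}_V(j)=-1$. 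Since each $C_j$ is irreducible and the $C_j$ appearing in $\mathbf{v}^{-1}(r)$ are pairwise disjoint, the irreducible components of $\mathbf{v}^{-1}(r)$ are exactly those $C_j$, and for $r\in D(\mathbf{v})$ the ones of maximal dimension $\dim\mathbf{v}^{-1}(r)=r-1$ are precisely the $C_j$ with $j\in\{1,\dots,p-1\}$, $\mathbf{v}(j)=r$, and $l-d+j-\mathrm{sht}_V(j)=-1$. Summing over $r\in D(\mathbf{v})$, and noting that distinct such $j$ contribute distinct components regardless of whether they share the same value $r$, yields $b(\mathbf{v})=\#\{s\in\{1,\dots,p-1\}\mid l-d+s-\mathrm{sht}_V(s)=-1\}$.

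The only genuinely delicate point is the bookkeeping around the possible non-injectivity of $j\mapsto\mathbf{v}(j)$: several jumps may land in the same fiber $\mathbf{v}^{-1}(r)$, so one must check that exactly the $C_j$ with $j\le p-1$ attaining equality realize the maximal dimension of that fiber (the jumps $j\ge p$ contributing strictly smaller dimension by the recursion) and that each is counted once in $b(\mathbf{v})$. Everything else follows directly from the previous lemma and the description of $C_j$ as the ind-perfection of $\mathbb{G}_m\times\mathbb{A}^{j-\lfloor j/p\rfloor-1}$.
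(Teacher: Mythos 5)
Your argument is correct and follows the same route as the paper: pass from $\frac{1+\dim\mathbf v^{-1}(r)}{r}\le 1$ to $\dim\mathbf v^{-1}(r)-r\le -1$, invoke the previous lemma's bound $\dim C_j-\mathbf v(j)\le -1$ with equality at $j=p-1$, and use the recursion $\dim C_{j+p}-\mathbf v(j+p)=\dim C_j-\mathbf v(j)-1$ to rule out $j>p$ in the count for $b(\mathbf v)$. The one place you go beyond the paper's terse proof is in explicitly decomposing $\mathbf v^{-1}(r)=\bigsqcup_{j:\,\mathbf v(j)=r}C_j$ and tracking the maximum over all jumps landing in the same fiber; the paper tacitly identifies $\dim\mathbf v^{-1}(r)-r$ with $\dim C_j-\mathbf v(j)$, so your extra bookkeeping closes a small but genuine gap without changing the underlying idea.
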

\begin{proof}
For every $r>0$, we have the equivalence
\[
 \frac{1+\dim \mathbf{v}^{-1}(r)}{r} \le 1 \Leftrightarrow \dim \mathbf{v}^{-1} (r) - r \le -1.
\]
Moreover, the equality on the left is strict if and only if so is the one on the right. 
From  the last lemma, the right inequality holds for every $r=\mathbf{v}(j)$ and the equality holds for $r=\mathbf{v}(p-1)$. This shows that $a(\mathbf{v})=1$.
Moreover, from the proof of the last lemma, the equality $\mathbf{v}^{-1} (r) - r = -1$ fails for $j>p$. This shows the assertion for $\mathbf{v}$.
\end{proof}

\begin{lem}
Suppose $D_V=p$. Then, $\mathbf{v}$ is strongly suitable. 
\end{lem}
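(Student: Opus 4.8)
The goal is to verify the four conditions in Definition~\ref{suitable} (to get "suitable") together with the extra conditions of Definition~\ref{strongsuitdef} (to upgrade to "strongly suitable") for the function $\mathbf{v}$ attached to a representation $V$ with $D_V=p$. Conditions (1) and (3) of Definition~\ref{suitable} are essentially formal: $\mathbf{v}$ takes values in $\NN_0$ (so $M=1$ works for (3)), and the bound $\sup_{r\neq 0}(1+\dim\mathbf{v}^{-1}(r))/r<\infty$ follows from the already-established fact $a(\mathbf{v})=1$, since $a(\mathbf v)$ is exactly this supremum and it is finite. Condition (2) follows from the structural description recalled just before Lemma~\ref{AtMostMinus1}: the coarse moduli space of $C_j$ is the ind-perfection of $\Gm\times\AAA^{j-\lfloor j/p\rfloor-1}$, hence $\mathbf{v}^{-1}(r)$ has coarse space an open subvariety of an affine space, so counting its $\FF_{q'}$-points is trivially bounded by $(q')^{\dim(\mathbf v^{-1}(r))}$, which absorbs into the required $(q')^{\dim+\varepsilon r}$.

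For condition (4), $m_G$-approachability, I would use the explicit Artin--Schreier description
\[
k\llparenthesis t\rrparenthesis/\wp\, k\llparenthesis t\rrparenthesis \;\cong\; \bigoplus_{j>0,\ p\nmid j} k\,t^{-j}\ \oplus\ k/\wp k,
\]
so that a class of $\wp$-order $-j$ corresponds to a torsor of ramification jump $j$. Adding an element of $\CCC_G(\leq m)$ changes the representing Laurent polynomial only in bounded degree (at most $\lfloor(m-1)/1\rfloor$, crudely), hence changes the ramification jump $j$ by a bounded amount, and therefore changes $\mathbf{v}(j)=d-l+\mathrm{sht}_V(j)$ by a bounded amount since $\mathrm{sht}_V$ grows at a controlled (essentially linear) rate; this gives the two-sided bound $C_1\le c(x+w)-c(x)\le C_2$ required by $m$-approachability, for any $m$.

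For strong suitability I need $D(\mathbf v)$ finite and nonempty, and a spectral gap $\varepsilon(\mathbf v)>0$. Finiteness and nonemptiness of $D(\mathbf v)$ are immediate from the companion lemma computing $a(\mathbf v)=1$ and $b(\mathbf v)=\#\{j\in\{1,\dots,p-1\}\mid l-d+j-\mathrm{sht}_V(j)=-1\}$: the maximum of $(1+\dim\mathbf v^{-1}(r))/r$ is $1$, attained exactly at those finitely many $r=\mathbf v(j)$, $1\le j\le p-1$. For the gap, I translate the condition $a(\mathbf v)-\varepsilon(\mathbf v)>(1+\dim\mathbf v^{-1}(r'))/r'$ into $\dim C_j-\mathbf v(j)<-1-\varepsilon\,\mathbf v(j)$. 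For the finitely many $j\in\{1,\dots,p-1\}$ not achieving equality one has $\dim C_j-\mathbf v(j)\le-2$, so any $\varepsilon$ small enough (namely $\varepsilon<\min\{1/(\mathbf v(j)+1): 1\le j\le p-1\}$) works. For $j>p$ write $j=j'+np$ with $1\le j'\le p-1$, $n>0$; using $\mathrm{sht}_V(j'+p)=\mathrm{sht}_V(j')+p$ repeatedly gives $\dim C_j-\mathbf v(j)=\dim C_{j'}-\mathbf v(j')-np\le-1-np$, and since $\tfrac{1}{\mathbf v(j')+1}\le\tfrac{np}{\mathbf v(j')+np}=\tfrac{np}{\mathbf v(j)}$ we conclude $\dim C_j-\mathbf v(j)\le-1-np<-1-\varepsilon\,\mathbf v(j)$. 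Assembling these verifications completes the proof.

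**Main obstacle.** The part requiring the most care is the $m$-approachability estimate: one must be precise about how the ramification jump of an Artin--Schreier torsor moves under addition of a $\CCC_G(\leq m)$-element (it can go up or down, and can even jump to "unramified" if cancellation occurs, which must be excluded or handled since $\mathbf v(0)=0$ would violate a lower bound), and one needs the near-linear growth $\mathrm{sht}_V(j+p)=\mathrm{sht}_V(j)+p$ to turn a bounded change in $j$ into a bounded change in $\mathbf v$. Everything else is bookkeeping built on the dimension formula $\dim C_j=j-\lfloor j/p\rfloor$ and the inequality $\dim C_j-\mathbf v(j)\le -1$ already proved in Lemma~\ref{AtMostMinus1}.
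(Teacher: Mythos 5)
Your proof follows the paper's own argument: verify the conditions of Definition~\ref{suitable} and of Definition~\ref{strongsuitdef} using $\dim C_j = j - \lfloor j/p\rfloor$, the one-step identity $\mathrm{sht}_V(j+p)=\mathrm{sht}_V(j)+D_V=\mathrm{sht}_V(j)+p$, and the Artin--Schreier description of $\Delta_{\ZZ/p\ZZ}$ for $m$-approachability. One arithmetic slip, however, should be corrected (interestingly, it is also present in one of the paper's two printed copies of this proof): writing $j=j'+np$ with $1\le j'\le p-1$ and $n\ge1$, iterating $\dim C_{j+p}-\mathbf{v}(j+p)=\dim C_j-\mathbf{v}(j)-1$ $n$ times gives a decrement of $n$, not $np$. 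Thus $\dim C_j-\mathbf{v}(j)=\dim C_{j'}-\mathbf{v}(j')-n\le -1-n$, while $\mathbf{v}(j)=\mathbf{v}(j')+np$. The required inequality $-1-n<-1-\varepsilon\,\mathbf{v}(j)$ becomes $\varepsilon<n/(\mathbf{v}(j')+np)$, whose infimum over $n\ge1$ is attained at $n=1$ and equals $1/(\mathbf{v}(j')+p)$. So the correct threshold is $\varepsilon<\min_{1\le j'\le p-1}\{1/(\mathbf{v}(j')+p)\}$, which is strictly smaller than the stated $\min\{1/(\mathbf{v}(j)+1)\}$. Since $d-l\ge 1$ whenever $D_V=p$, every $\mathbf{v}(j')$ is positive and this corrected $\varepsilon$ is still a positive number, so the existence of a spectral gap --- all that strong suitability asks --- does go through; only the explicit constant needs fixing.

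Your closing concern about cancellation driving $c(x+w)$ below $c(x)$ (even to $0$) is real but is a defect in the definition of $m$-approachability, not in your verification. As written, Definition~\ref{suitable}(4) demands $0<C_1\le \inf\{c(x+w)-c(x)\}$; taking $w=0$ forces this infimum to be $\le 0$, so no raising function can satisfy it literally (including the conductor exponent, which the paper asserts is approachable). What is actually verified in Lemma~\ref{discap} and used in Lemma~\ref{hrprime} is a two-sided bound of the form $|c(x+w)-c(x)|\le C$. Under that intended reading your Artin--Schreier argument is fine: adding a class supported in $\CCC_G(\leq m)$ changes the ramification jump only within a bounded window, and the near-linear growth $\mathrm{sht}_V(j+p)=\mathrm{sht}_V(j)+p$ converts a bounded change of $j$ into a bounded change of $\mathbf{v}(j)$.
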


\begin{proof}
(1) and (3) of Definition \ref{suitable} are obvious. (2) follows from the fact that 
the coarse moduli space of $c^{-1}(r)$ is an open subset of an affine space.

We can show that $\mathbf{v}$ is $m$-approachable for any $m$ by using the fact that
\[
k\llparenthesis t \rrparenthesis / \wp k\llparenthesis t \rrparenthesis =\bigoplus _{j>0;p\nmid j} kt^{-j} \oplus k/\wp k
\] 
and a Laurent polynomial in $\bigoplus _{j>0;p\nmid j} kt^{-j} \oplus k/\wp k$ of order $-j$ corresponds to a torsor of ramification jump $j$. 

Lastly, we show that $\mathbf{v}$ is strongly suitable. In the current situation, for $r'=\mathbf{v}(j)$, the inequality in Definition \ref{strongsuitdef} is equivalent to
\[
\dim C_j -\mathbf{v}(j)= \dim \mathbf{v}^{-1}(r')-r' < -1 -\varepsilon r'= -1 -\varepsilon \mathbf{v}(j).
\]
To have this inequality, it is enough to put  $\varepsilon $ to be a positive real number less than
\[
 \min \left \{\frac{1}{\mathbf{v}(j')+1} \mid 1\le j' \le p-1 \right \}. 
\]
 Indeed, for $1\le j \le p-1$ with $\dim C_j -\mathbf{v}(j)<-1$, we have
\[
\dim C_j-\mathbf{v}(j) \le -2 < 
-1 - \varepsilon \mathbf{v}(j) .
\]
For $j>p$, we write $j=j'+np$, $1\le j' \le p-1$. Then, since for $n \ge 1$, 
\[
 \frac{1}{\mathbf{v}(j')+1} \le  \frac{n}{\mathbf{v}(j')+n} = \frac{n}{\mathbf{v}(j)},
\]
we get
\begin{align*}
\dim C_j-\mathbf{v}(j)
&= \dim C_{j'}-\mathbf{v}(j')-n \\
&\le -1 -n  \\
&=  -1 - \frac{n}{\mathbf{v}(j)}\mathbf{v}(j) \\
& <  -1 - \varepsilon \mathbf{v}(j).
\end{align*}
\end{proof}

\appendix
\section{Appendix: a counting result}
\begin{thm}\label{countx}
Suppose that $X$ is a discrete set. For $i=1\doots n$, where $n\in\NN$, let $H_i:X\to\RR_{>0}$ be height functions satisfying that:
\begin{enumerate}
	\item For any $1\leq i, j\leq n$, there exists $C_{ij}>0$ such that $$C_{ij}^{-1}H_j(x)\leq H_i(x)\leq C_{ij}H_j(x) $$ for all $x\in X$.
	\item One has that $$\sum_{i=1}^n\#\{x\in X|\hspace{0,1cm}H_i(x)\leq B\} \asymp B^a\log(B)^b$$ for some $a>0$ and $b\geq 0$.
\end{enumerate}
Then for every $i=1\doots n$, one has that $$\#\{x\in X|\hspace{0,1cm}H_i(x)\leq B\}\asymp B^a\log(B)^b.$$
\end{thm}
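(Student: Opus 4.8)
The plan is to reduce everything to the single aggregate counting function $N(B) := \sum_{i=1}^n N_i(B)$, where $N_i(B) := \#\{x \in X \mid H_i(x) \le B\}$. By hypothesis (2) we have $N(B) \asymp B^a \log(B)^b$, and the task is to transfer this to each individual $N_i$. The only tool needed is that hypothesis (1) lets us sandwich each $N_i$ between dilates of the others, together with the trivial monotonicity of $B \mapsto N_i(B)$.

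First I would record the elementary comparison of counting functions coming from (1). From $C_{ij}^{-1} H_j(x) \le H_i(x) \le C_{ij} H_j(x)$ one gets in particular $H_j(x) \le C_{ij} H_i(x)$, so $\{x \in X \mid H_i(x) \le B\} \subseteq \{x \in X \mid H_j(x) \le C_{ij} B\}$, i.e. $N_i(B) \le N_j(C_{ij} B)$ for all $i,j$ and all $B > 0$. The upper bound is then immediate: $N_i(B) \le N(B) \ll B^a \log(B)^b$ for every $i$.

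For the lower bound, fix $j$ and set $C_j := \max_{1 \le i \le n} C_{ij}$. Summing the comparison over $i$ and using that $N_j$ is nondecreasing gives $N(B) = \sum_{i=1}^n N_i(B) \le \sum_{i=1}^n N_j(C_{ij} B) \le n\, N_j(C_j B)$, hence $N_j(C_j B) \ge \tfrac1n N(B) \gg B^a \log(B)^b$. Replacing $B$ by $B/C_j$ and using that $(B/C_j)^a \log(B/C_j)^b \asymp B^a \log(B)^b$ as $B \to \infty$ — the factor $C_j^{-a}$ is a harmless constant and $\log(B/C_j) = \log B - \log C_j \sim \log B$ — we get $N_j(B) \gg B^a \log(B)^b$. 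Combined with the upper bound this yields $N_j(B) \asymp B^a \log(B)^b$, which is the claim.

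I do not expect any genuine obstacle: the whole argument is monotonicity of counting functions plus the observation that rescaling the parameter $B$ by a bounded factor does not change the order of growth $B^a \log(B)^b$. The only step requiring a line of care is this last asymptotic identification of $\log(B/C_j)$ with $\log B$, which is immediate for $B$ large, together with keeping the finitely many constants $C_{ij}$ and the factor $n$ absorbed into the $\asymp$.
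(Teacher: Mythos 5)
Your proof is correct and follows essentially the same route as the paper's: sandwich each individual counting function against the aggregate by rescaling the height parameter via the constants $C_{ij}$, then observe that replacing $B$ by a bounded multiple does not change the order $B^a\log(B)^b$. The only cosmetic difference is that you obtain the upper bound $N_i(B)\ll B^a\log(B)^b$ directly from the trivial containment $N_i\leq N$, whereas the paper runs the rescaling argument in both directions; your version is slightly more streamlined but the key idea is identical.
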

\begin{proof}
We choose $i=1$ and write $C_j$ for $C_{1j}$, when $j\geq 2$. Set $C_1=1$. Let $C:=\max C_j$. Let us write $$N_j(B):=\#\{x\in X|\hspace{0,1cm}H_j(x)\leq B\}.$$ We verify that $N_1(B)\asymp B^a\log(B)^b.$ Let $A>0$ be such that  $$\sum_{i=1}^nN_i(B)\leq AB^a\log(B)^b$$ for all $B\geq B_0$, where $B_0>0$.  One has that 
\begin{align*}
\sum_{i=1}^nN_i(B)&=\sum_{i=1}^n\#\{x\in X|\hspace{0,1cm}H_i(x)\leq B\}\\
&\geq\sum_{i=1}^n\#\{x\in X|\hspace{0,1cm}CH_1(x)\leq B\}\\
&=\sum_{i=1}^n\#\{x\in X|\hspace{0,1cm}H_1(x)\leq C^{-1}B\}\\
&=n \cdot \#\{x\in X|\hspace{0,1cm}H_1(x)\leq C^{-1}B\}\\
&=n N_1(C^{-1}B).
\end{align*}
We deduce that $$ N_1(C^{-1}B)\leq \frac{1}{n} \sum_{i=1}^nN_i(B)\leq AB^a\log(B)^b\leq 2^{b} AC^a\cdot (C^{-1}B)^a\log(C^{-1}B)^b $$ for $B$ sufficiently large. Thus $N_1(B)\ll B^a\log(B)^b$ and analogously one verifies that $N_1(B)\gg B^a\log(B)^b$. The statement is proven. 
\end{proof}
\section{Appendix: A result on commutative group stacks}
Let~$G_0$ and~$G_{-1}$ be commutative group schemes over a field~$k$, which are isomorphic to affine spaces as schemes.  Let $\phi:G_{-1}\to G_0$ be a $k$-homomorphism. We let $G_{-1}$ acts on $G_0$ via~$\phi$. For a $k$-ring~$A$, giving a $G_{-1}$-equivariant morphism $G_{-1}\times_kA\to G_0\times_kA,$ where the actions of~$G_{-1}$ are via the first coordinate, is equivalent to giving an element of~$G_0(A)$. Consider a homomorphism $f:A\to B.$ Let $g:G_{-1}\times_kA\to G_0\times_kA $  and $h:G_{-1}\times _kB\to G_0\times_kB$ be given $G_{-1}$-equivariant homomorphisms. Giving a~$G_{-1}$-equivariant morphism $t: G_{-1}\times _kB\to (G_{-1}\times_kA)\times_AB=G_{-1}\times_kB$ such that $t\circ(g\times_AB)=h$ is equivalent to giving  an element $t\in G_0(B)$ such that $tg|_B=h$.
\begin{prop}\label{gogi}
Consider the category of affine schemes $\Aff_k$ over~$k$, endowed with fppf topology. Consider the full subcategory $(G_{0}/G_{-1})^*$ of the stack $G_{0}/G_{-1}$ on $\Aff_k$, having for objects over a ring~$A$ the elements of~$G_0(A)$. 
One has that $(G_0/G_{-1})^*$ is a commutative group stack and the canonical inclusion is an equivalence $(G_0/G_{-1})^*\xrightarrow{\sim}G_0/G_{-1}.$
\end{prop}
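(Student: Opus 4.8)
The plan is to verify two things: first, that the commutative group stack structure on $G_0/G_{-1}$ restricts to the full subcategory $(G_0/G_{-1})^*$; and second, that the canonical inclusion is an equivalence of categories fibered in groupoids. Together these yield that $(G_0/G_{-1})^*$ is itself a stack, hence a commutative group stack, and that the inclusion is an equivalence of commutative group stacks.

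\emph{Restriction of the group structure.} Recall from \cite[Paragraphs 1.4.11--1.4.12]{Dualite} that $G_0/G_{-1}$ is the strictly commutative Picard stack attached to the two-term complex $[G_{-1}\xrightarrow{\phi}G_0]$ placed in degrees $-1$ and $0$. On objects over a $k$-ring $A$, its addition functor is induced by $+_{G_0}\colon G_0\times G_0\to G_0$, its unit is $0\in G_0(A)$, and its inversion is induced by $-_{G_0}$; since $G_0$ is an honest commutative $k$-group scheme, the associativity and commutativity $2$-isomorphisms are the trivial ones. All of these operations visibly send elements of $G_0(A)$ to elements of $G_0(A)$, so they restrict to $(G_0/G_{-1})^*$. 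Thus for every $A$ the fibre $(G_0/G_{-1})^*(A)$, equipped with the restricted data, is exactly the strictly commutative Picard category whose underlying groupoid is the one described before the statement (objects $G_0(A)$, morphisms the $G_{-1}(A)$-translations compatible with the maps to $G_0$) and whose structure functors are those just listed; this is functorial in $A$.

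\emph{The inclusion is an equivalence.} Full faithfulness is immediate, since $(G_0/G_{-1})^*$ is by construction a full subcategory of $G_0/G_{-1}$. For essential surjectivity, fix a $k$-ring $A$ and an object of $(G_0/G_{-1})(A)$, that is, a $G_{-1}$-torsor $P$ over $\Spec(A)$ together with a $G_{-1}$-equivariant morphism $P\to G_0$. Since $G_{-1}$ is a commutative $k$-group scheme isomorphic to an affine space, Lemma~\ref{awnz} gives $H^1_{\fppf}(A,G_{-1})=0$, so $P$ is the trivial torsor; choosing a trivialization $P\cong G_{-1}\times_kA$ and using the identification recalled just before the statement, the equivariant morphism $P\to G_0$ becomes an element of $G_0(A)$, i.e.\ an object of $(G_0/G_{-1})^*(A)$ isomorphic to the given one. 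Hence the inclusion is an equivalence of categories fibered in groupoids.

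\emph{Conclusion.} A category fibered in groupoids equivalent to a stack is a stack, so $(G_0/G_{-1})^*$ is a stack; combined with the first step it is a commutative group stack, and the inclusion, being compatible with the structure data by construction and an equivalence of underlying fibered categories, is an equivalence of commutative group stacks. The one nontrivial input is the vanishing $H^1_{\fppf}(A,G_{-1})=0$ from Lemma~\ref{awnz}; the remaining work is the bookkeeping needed to match the restricted structure functors with the axioms of a strictly commutative Picard category in \cite[Exposé XVIII, 1.4]{Dualite}, which I expect to be the only mildly delicate point.
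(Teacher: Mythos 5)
Your proof is correct and follows essentially the same route as the paper: the crucial input is Lemma~\ref{awnz} giving $H^1_{\fppf}(A,G_{-1})=0$ for essential surjectivity, fullness is automatic from the definition of $(G_0/G_{-1})^*$ as a full subcategory, and the addition/commutativity/associativity functors visibly preserve the subcategory because they are given by the group operation on $G_0$ and by the element $0$. The only minor difference is that you explicitly record the observation that a fibered category equivalent to a stack is a stack, which the paper leaves implicit; this is a reasonable clarification and does not change the argument.
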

\begin{proof}
It is clear that the morphisms over~$\phi:A\to B$ in $(G_0/G_{-1})^*$ are elements~$t$ as above. By Lemma~\ref{awnz}, any object in $(G_0/G_{-1})(A)$ is isomorphic to an object in $(G_0/G_{-1})^*(A).$ Any morphism in $(G_0/G_{-1})$ is isomorphic to the one given by some element~$t$. We deduce that the inclusion $(G_0/G_{-1})^*\to (G_0/G_{-1})$ is equivalence of categories fibered in groupoids over~$\Aff_k$.  Now it remains to see that $(G_0/G_{-1})^*$ is preserving the addition, the commutativity and associativity functors. The sum of a morphisms $1\mapsto g$ and $1\mapsto g'$ is the morphism $1\mapsto g+g'$, hence is preserved. The commutativity and associativity functors \cite[Paragraph 1.4.11]{Dualite} are given by element~$0$, hence are preserved. The proof is completed.  
\end{proof}
\bibliographystyle{alpha}
\bibliography{bibliography}
\end{document}